\definecolor{shadecolor}{rgb}{1,0.9,0.7}
\newtheorem{theorem}{Theorem}[section]
\newtheorem{lemma}[theorem]{Lemma}
\newtheorem{lemma-definition}[theorem]{Lemma-Definition}
\newtheorem{proposition}[theorem]{Proposition}
\newtheorem{corollary}[theorem]{Corollary}
\newtheorem{claim}{Claim}
\theoremstyle{definition}
\newtheorem{definition}[theorem]{Definition}
\newtheorem{example}[theorem]{Example}
\newtheorem*{acknowledgement}{Acknowledgement}
\newtheorem*{conventions}{Conventions}
\theoremstyle{remark}
\newtheorem{remark}[theorem]{Remark}
\numberwithin{equation}{section}
\numberwithin{figure}{section}
\newcommand {\lfor} {\llbracket}
\newcommand {\rfor} {\rrbracket}
\newcommand{\DD} {\mathbb{D}}
\newcommand{\HH} {\mathbb{H}}
\newcommand{\NN} {\mathbb{N}}
\newcommand{\ZZ} {\mathbb{Z}}
\newcommand{\QQ} {\mathbb{Q}}
\newcommand{\RR} {\mathbb{R}}
\newcommand{\CC} {\mathbb{C}}
\newcommand{\FF} {\mathbb{F}}
\newcommand{\PP} {\mathbb{P}}
\renewcommand{\AA} {\mathbb{A}}
\newcommand {\shA}  {\mathcal{A}}
\newcommand {\shB}  {\mathcal{B}}
\newcommand {\shD}  {\mathcal{D}}
\newcommand {\shE}  {\mathcal{E}}
\newcommand {\shF}  {\mathcal{F}}
\newcommand {\shG}  {\mathcal{G}}
\newcommand {\shH}  {\mathcal{H}}
\newcommand {\shHom} {\mathcal{H}\!\text{\textit{om}}}
\newcommand {\shL}  {\mathcal{L}}
\newcommand {\shM}  {\mathcal{M}}
\newcommand {\shN}  {\mathcal{N}}
\newcommand {\shO}  {\mathcal{O}}
\newcommand {\shS}  {\mathcal{S}}
\newcommand {\shT}  {\mathcal{T}}
\newcommand {\shP}  {\mathcal{P}}
\newcommand {\shV}  {\mathcal{V}}
\newcommand {\shX}  {\mathcal{X}}
\newcommand{\codim}[2]{\mathrm{codim}(#1, #2)}
\newcommand {\coker} {\operatorname{coker}}
\newcommand {\dlog} {\operatorname{dlog}}
\newcommand {\eps}  {\varepsilon}
\newcommand {\gp}  {{\operatorname{gp}}}
\newcommand {\Hom}  {\operatorname{Hom}}
\newcommand {\hra} {\hookrightarrow}
\newcommand {\id}  {\operatorname{id}}
\newcommand {\invneg}  {\mathrel{\llcorner}}
\renewcommand {\ker } {\operatorname{ker}}
\newcommand {\kk} {\Bbbk}
\newcommand {\liminv} {\varprojlim}
\newcommand {\lra}  {\longrightarrow}
\newcommand {\M} {\mathcal{M}}
\renewcommand {\max} {{\operatorname{max}}}
\newcommand {\ol} {\overline}
\newcommand {\one} {\mathbbm{1}}
\let \op \operatorname
\renewcommand{\P}  {\mathscr{P}}
\newcommand {\ra}  {\to}
\newcommand {\red}[1]{{\color{red} #1}}
\newcommand {\rk} {\operatorname{rk}}
\newcommand {\shLS} {\mathcal{LS}}
\newcommand {\sing} {\mathrm{sing}}
\newcommand {\Spec} {\operatorname{Spec}}
\newcommand {\sra} {\twoheadrightarrow}
\newcommand  {\todo}[1]{{\marginpar{\tiny #1}}}
\newcommand {\Tor}  {\operatorname{Tor}}
\newcommand {\X} {\mathfrak X}
\newcommand {\Y} {\mathfrak Y}
\newcommand{\cO}{\mathcal{O}}
\renewcommand{\log}{\operatorname{log}}
\renewcommand{\P}{\mathscr{P}}
\renewcommand{\log}{\operatorname{log}}
\newcommand{\cM}{\mathcal{M}}
\newcommand{\cE}{\mathcal{E}}
\newcommand{\al}{\alpha}
\newcommand{\cK}{\mathcal{K}}
\newcommand{\cI}{\mathcal{I}}
\newcommand{\F}{\mathcal{F}}
\newcommand{\cL}{\mathcal{L}}
\newcommand{\cS}{\mathcal{S}}
\newcommand{\E}{\mathcal{E}}
\newcommand{\K}{\mathcal{K}}
\newcommand{\cP}{\mathcal{P}}
\newcommand{\cT}{\mathcal{T}}
\newcommand{\dime}{\mathrm{dim}}
\newcommand{\cH}{\mathcal{H}}
\newcommand{\G}{\mathcal{G}}
\newcommand{\C}{\mathcal{C}}
\newcommand{\oS}{{{}^0\!S}}
\newcommand{\oX}{{{}^0\!X}}
\newcommand{\kS}{{{}^k\!S}}
\newcommand{\kX}{{{}^k\!X}}
\newcommand{\kV}{{{}^k\!V}}
\newcommand{\PV}{\operatorname{PV}}
\def\mydate{\ifcase\month \or January\or February\or March\or
April\or May\or June\or July\or August\or September\or October\or 
November\or December\fi \space\number\day,\space\number\year}
\begin{document}


\title
[Smoothing toroidal crossing spaces]
{Smoothing toroidal crossing spaces}
\author{Simon Felten, Matej Filip, Helge Ruddat}

\begin{abstract}
We prove the existence of a smoothing for a toroidal crossing space under mild assumptions. By linking log structures with infinitesimal deformations, the result receives a very compact form for normal crossing spaces. The main approach is to study log structures that are incoherent on a subspace of codimension two and prove a Hodge--de Rham degeneration theorem for such log spaces which also settles a conjecture by Danilov. 
We show that the homotopy equivalence between Maurer--Cartan solutions and deformations combined with Batalin--Vilkovisky theory can be used to obtain smoothings. The construction of new Calabi--Yau and Fano manifolds as well as Frobenius manifold structures on moduli spaces are potential applications.
\end{abstract}

\address{JGU Mainz, Institut f\"ur Mathematik, Staudingerweg 9, 55128 Mainz, Germany}
\email{sfelten@uni-mainz.de}
\email{ruddat@uni-mainz.de}

\address{University of Ljubljana, Institute of Mathematics, Physics and Mechanics, Trzaska cesta 25, Slovenia}
\email{matej.filip@fe.uni-lj.si}

\address{Universit\"at Hamburg, Fachbereich Mathematik, Bundesstraße 55, 20146 Hamburg, Germany}
\email{helge.ruddat@uni-hamburg.de}

\thanks{This work was supported by DFG research grant RU 1629/4-1 and the Carl Zeiss foundation.}

\maketitle
\setcounter{tocdepth}{1}
\tableofcontents

\section{Introduction}
For two smooth components $Y_1,Y_2$ meeting in a smooth divisor $D$ a folkloristic statement says that a necessary condition for $X=Y_1\cup Y_2$ to have a smoothing is that the two normal bundles are dual to each other, i.e., $\shN_{D/Y_1}\otimes\shN_{D/Y_2}\cong\shO_D$. This statement is actually incorrect. It is true only with the further requirement that the total space of the smoothing be itself smooth. 
Conceptually, $\shN_{D/Y_1}\otimes\shN_{D/Y_2}\cong \shE xt^1(\Omega_X,\shO_X)=:\shT^1_X$ and Friedman famously coined the notion of \emph{d-semistability} which is saying $\shT^1_X\cong\shO_D$ \cite{fri}. 
We are going to generalize the situation by only requiring $\shT^1_X$ to be \emph{generated by global sections} (and beyond). 
For a choice of $s\in\Gamma(D,\shT^1_X)$, the total space of the smoothing will then be of the local form $xy=tf$ where $t$ is the deformation parameter, $Y_1=V(x), Y_2=V(y)$ and $f$ represents $s$ in a local trivialization of $\shT^1_X$. The total space of the smoothing has singularities precisely along $s=0$. 
The local form $xy=tf$ has been found to be abundant in mirror symmetry applications \cite{CKYZ,GrossSiebertI,GrossSiebertII,gs,CLL12,GKR,AAK,RS19}.

We work more generally with a \emph{normal crossing space}, that is, a connected variety $X$ over $\CC$ \'etale locally of the form $z_1\cdot...\cdot z_k=0$ for varying $k\le \dim X+1$. We call a flat map $\shX\ra\DD$ for $\DD$ a holomorphic disk a \emph{smoothing} of $X$ if the central fiber is isomorphic to $X$ and the general fiber is smooth. 
If a smoothing exists, then we call $X$ \emph{smoothable}. 
We say that a normal crossing space has \emph{effective anti-canonical class} if the dual of its dualizing sheaf $\omega_X$ can be represented by a reduced divisor $E$ that meets the strata of $X$ transversely, that is, \'etale locally along $E$,  $X$ is equivalent to $E\times\AA^1$.
We prove the following theorem. 
\begin{theorem} \label{maintheorem-nc}
Let $X$ be a proper normal crossing space with effective anti-canonical class.
If $\shT^1_X$ is generated by global sections and $X_\sing$ is projective, then $X$ is smoothable.
\end{theorem}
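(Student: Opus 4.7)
The plan is to read the hypothesis on $\shT^1_X$ as the existence of a suitable log structure on $X$, invoke the Hodge--de Rham degeneration and Batalin--Vilkovisky unobstructedness developed earlier in the paper to obtain a formal smoothing, and then algebraize using projectivity of $X_\sing$. Since $\shT^1_X$ is an invertible sheaf on $D := X_\sing$ generated by global sections, I would first pick a section $s \in \Gamma(D, \shT^1_X)$ whose vanishing locus $Z \subset D$ has codimension at least two in $X$ and is in general position with respect to both the stratification of $X$ and to the anti-canonical divisor $E$. A local representative $f$ of $s$ provides the Friedman-type smoothing chart $xy = tf$, from which one extracts a log structure $\shM_X$ on $X$ over the standard log point $\oS$ that is log smooth on $X \setminus Z$ and incoherent along $Z$ in the controlled way studied in the body of the paper. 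The effective anti-canonical hypothesis further supplies, together with $\shM_X$, a trivialization of $\omega_{X/\oS}(\log \shM_X)$ away from $E$, i.e., a log volume form on the log smooth locus.

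The volume form equips the complex of log polyvector fields with a Batalin--Vilkovisky operator $\Delta$ of square zero, and the Hodge--de Rham degeneration for log structures incoherent in codimension two (which also settles Danilov's conjecture) supplies the degeneration property needed for the abstract BV argument. Combined with the homotopy equivalence between Maurer--Cartan solutions and deformations, this lets one run the standard unobstructedness scheme: the primary obstruction $\tfrac12[\varphi_n, \varphi_n]$ to extending a Maurer--Cartan element $\varphi_n$ to the next order is automatically $\Delta$-closed, and degeneration makes it $\Delta$-exact up to a gauge transformation, so one lifts order by order. The output is a formal smoothing $\widehat{\shX} \to \Spf \CC[[t]]$ of $X$ equipped with its log structure, whose reduction modulo $t$ is the trivial deformation of $(X, \shM_X)$.

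To conclude, I would algebraize. Properness of $X$ together with projectivity of $X_\sing$ allows the construction of a line bundle on $X$ that restricts to an ample class on the singular locus and whose obstructions to lifting through the formal deformation vanish in the relevant degree-two cohomology; Grothendieck's existence theorem then promotes the formal smoothing to an algebraic one, which analytically yields a flat family $\shX \to \DD$ over a disk with smooth general fiber. The central obstacle is the second step, where Hodge--de Rham degeneration in the incoherent log setting must be combined with BV unobstructedness while accounting for the divisor $E$; the delicate part is choosing the section $s$ so that the incoherent locus $Z$, the anti-canonical divisor $E$, and the stratification of $X$ sit in sufficiently general position for both the degeneration theorem and the BV formalism to apply simultaneously, and then checking that the resulting formal family actually lives over an honest disk rather than only over a formal neighborhood of the origin.
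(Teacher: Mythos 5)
Your first four steps track the paper's architecture closely: Bertini applied to the line bundle $\shT^1_X$ on $X_\sing$ produces a section whose zero locus $Z$ is sch\"on, Theorem~\ref{thm-inject-T1} identifies this with a section of $\shL\shS_X$ away from $Z$ and hence a log smooth structure over the standard log point on $U=X\setminus Z$, and then the Hodge--de Rham degeneration (Theorems~\ref{absDegen} and \ref{rel-degen}) feeds the BV/Maurer--Cartan machinery of \S\ref{section-smoothing} to produce a formal smoothing. One small inaccuracy: you speak of ``a trivialization of $\omega_{X/\oS}(\log\shM_X)$ away from $E$,'' but what the paper actually does (Lemmas~\ref{lemma-add-E} and \ref{W-dim-trivial}) is \emph{enlarge} the log structure so that $E$ becomes its horizontal divisor, after which $W^d_{X(\log E)/S}\cong\shO_X$ holds on \emph{all} of $X$; this matters because the BV operator needs a log volume form on the whole log smooth locus, not merely on its complement of $E$.

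The genuine gap is in your final paragraph. You misattribute the role of the projectivity of $X_\sing$ and propose a different and unsubstantiated last step. In the paper, $X_\sing$ projective is used \emph{only} to invoke Bertini and obtain the sch\"on section --- this is stated explicitly after Theorem~\ref{maintheorem-nc} in the introduction and is what Proposition~\ref{nc-to-log-toroidal} does. The passage from the formal family $\mathfrak{X}\to\Spf\CC\lfor t\rfor$ to an honest analytic smoothing $\shX\to\DD$ is not done by Grothendieck existence / algebraization at all: it is Theorem~\ref{thm-approx-main} (Artin approximation applied to the Grauert--Douady space, quoted from \cite{RS19}), with Theorems~\ref{thm-approx-nbd} and \ref{thm-approx-log} guaranteeing that the approximated family is still a smoothing. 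Your route would require exhibiting an ample line bundle on $X$ itself (not merely on $X_\sing$) whose obstruction to lifting through the formal deformation vanishes; nothing in the hypotheses makes $X$ projective, and the relevant $H^2$ is not shown (and need not) vanish. This is a step that would fail as written, and the paper avoids it precisely by staying analytic rather than trying to algebraize.
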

The only purpose of the projectivity condition is to apply Bertini's theorem to have available a ``nice'' section of the line bundle $\shT^1_X$ on $X_\sing$. 
Both the projectivity assumption as well as the global generatedness assumption on $\shT^1_X$ can thus be removed if there exists a \emph{schön} section of $\shT^1_X$, that is, a section whose vanishing locus $Z$ is reduced and $X_\sing$ is locally along $Z$ equivalent to $Z\times\AA^1$.
We also prove a more general theorem for toroidal crossing spaces that we give down below (Theorem~\ref{maintheorem-tc}). Theorem~\ref{maintheorem-nc} provides a lot more flexibility than existing smoothing results, notably Friedman's \cite{fri} for surfaces, Kawamata--Namikawa's \cite{KawamataNamikawa1994} for d-semistable Calabi--Yaus and Gross--Siebert's \cite{gs} allowing a singular total space but with much stronger requirements on $X$, see also \cite{tzi,NYot,NYot2,Sano,Lee21}.
\begin{example}
The union $X$ of $d$ hyperplanes in general position in $\PP^n$ is smoothable to a degree $d$ hypersurface, but none of the existing results is able to predict the smoothability of $X$ abstractly.
Indeed, the total space of the smoothing is singular since it requires blowing up the base locus of the smoothing pencil. On the other hand, $\shT^1_X$ is generated by global sections. Theorem~\ref{maintheorem-nc} predicts the smoothability if $d\le n+1$.
\end{example}

\begin{example} 
The simplest type of normal crossing space is one with two smoothly intersecting components:
let $Y$ be a smooth Fano manifold with $-K_Y$ very ample, let $D$ be a smooth section of $-K_Y$ and $X$ be the normal crossing space obtained by identifying two copies of $Y$ along $D$. 
Then $\shT^1_X\cong\shN^{\otimes 2}_{D/Y}$ is generated by global sections and $X$ is Calabi--Yau, so Theorem~\ref{maintheorem-nc} provides a smoothing of $X$.
For Fano threefolds $Y$ that are complete intersections in products of weighted projective spaces the smoothing gives Calabi--Yau threefolds of Euler numbers $-106, -122, -138, -156,$ $-128, -156, -176, -256, -260, -296$. While double intersection situations can be birationally modified to be tractable by the smoothing result in \cite{KawamataNamikawa1994}, this is no longer true for triple (and higher) intersection situations \cite{Lee2018}, but Theorem~\ref{maintheorem-nc} provides smoothings.
\end{example}
Theorem~\ref{maintheorem-nc} considerably facilitates the construction of Calabi--Yau and Fano manifolds. 
Our work generalizes the Gross--Siebert program toward allowing non-toric components in the central fiber as well as more flexibility in the local structure, cf.~Example~\ref{exampleGS}.
We generalize Tziolas's smoothing result for Fanos by dropping its cohomological condition \cite{tzi}.
While we work with toric local models, non-toric deformations of toric local models have applications for smoothing singular toric Fanos or the construction of versal deformations of non-isolated Gorenstein singularities, see.~\cite{CFP,coa2}.
For Whitney umbrella local models, the $\shT^1$-sheaf has recently been computed in \cite{Fantechi2020}.
If the pushforward of the sheaf of differentials from the log smooth locus can be verified to commute with base change for other types of local models, then our smoothing result extends to such situations.

Our results enable the construction of versal Calabi--Yau families and conjecturally a logarithmic Frobenius manifold structure in a formal neighborhood of the extended moduli space, see \cite{bar-kon}, \cite[Theorem~1.3]{CLM}. 
Theorem~\ref{rel-degen} below can be viewed as the statement that the Hodge bundles extend trivially over boundary divisors in the moduli space that have toroidal families above them, see also \cite{LSC}.
Since the smoothing deformations are constructed via the Batalin--Vilkovisky formalism in the Gerstenhaber algebra of (log) polyvector fields \S\ref{sec-maurer-Cartan}, the connection to homological mirror symmetry can be made via  \cite{bar-kon},\cite{kkp}.

\subsection{Method of Proof}
The first step toward proving Theorem~\ref{maintheorem-nc} is to furnish $X$ with a log structure, an idea already found in \cite{KawamataNamikawa1994,gs}. We build a connection between these two works.
A sheaf of sets $\shL\shS_X$ on $X$ classifying log smooth structures locally on $X$ over the standard log point $S$ has been defined and studied in \cite{GrossSiebertI}.
We show in \S\ref{sec-log-infini} there is a canonical map $\shL\shS_X\ra\shT^1_X$ with the property that a section $s\in\Gamma(X_\sing,\shT^1_X)$ yields a log smooth structure on $U:=X\setminus V(s)$, i.e., we obtain a log smooth morphisms $U\ra S$. 
The complement $Z:=V(s)$ has codimension two in $X$. Using Bertini's theorem with the projectivity of $X_\sing$, we can assume that $Z$ is schön as defined above.

In the fashion of Zariski--Steenbrink--Danilov, we consider the differential forms $W^k_{X/S}:=j_*\Omega^k_{U/S}$ for $j:U\hra X$ the inclusion. 
In the logarithmic context, these complexes were defined and studied independently by \cite{Nakayama2010} and \cite{GrossSiebertII}.
A key ingredient for the smoothing of $X$ is the knowledge that the Hodge--de Rham spectral sequence for $W^\bullet_{X/S}$ degenerates at $E_1$, a very hard problem. 
We use the close control over $W^k_{X/S}$\label{define-W} along $Z$ which we gain by using~\cite{GrossSiebertII,rud} to obtain a particular type of \emph{elementary log toroidal} local models for the log structure near $Z$. 
For the proof of the Hodge--de Rham degeneration, we follow the approach by Deligne--Illusie \cite{Deligne1987}: spreading out to finite characteristic and using the Cartier isomorphism. 
However, the lack of coherence poses serious new challenges. 
The hardest technical part is to show the sheaves $W^\bullet_{X/S}$ commute with base change because $j_*$ and $\otimes$ do not commute in general. 
Base change may fail for low characteristics by Example~\ref{baChaViolation}.
However, if the characteristic of the base field is sufficiently large we prove by explicit computation in the elementary log toroidal local models that the sheaves $W^\bullet_{X/S}$ commute with base change.
As a second difficulty, underived pushforward $j_*$ does not ordinarily pass to the derived category and we find a workaround here.
We settle a conjecture by Danilov \cite[15.9]{Danilov1978} along the way (Theorem~\ref{danilov-thm} below).

To show the unobstructedness of log deformations of $X$, we use recent advancements of the Bogomolov--Tian--Todorov theory motivated by the study of mirror symmetry, starting with \cite{kkp} and \cite{bar-kon} which got cultivated to work in algebraic geometry by \cite{iac-man}.
All these works however produce equisingular deformations (because they are intended for deforming smooth spaces). 
The crucial difference to our setup is that while we prescribe local deformations by the log structure, these are not locally trivial deformations.
Building on \cite{FMM12}, recently this difficulty in the theory has been addressed in \cite{CLM,CM} which adapts perfectly to our situation to produce a formal deformation in the prescribed local models, see \S\ref{section-smoothing}. We found the framework of Gerstenhaber algebras to be the most effective to think about the theory which governs our way of parsing \cite{CLM} in \S\ref{sec-maurer-Cartan}, see also \cite{SFel}.
At this point, the assumption about effectiveness of $\omega_X^{-1}$ enters the proof, so that one obtains an isomorphism of $W^\bullet_{X/S}(\log E)$ with the Gerstenhaber algebra of log polyvector fields $\PV^\bullet$ and has the Batalin--Vilkovisky operator $\Delta$ available by transporting the de Rham differential to $\PV^\bullet$ which is used in \S\ref{MC-from-BV}.

To improve the resulting formal smoothing to an analytic smoothing, we use the Grauert--Douady space and Artin approximation as already done in \cite{RS19}. 

\subsection{Toroidal Pairs and Danilov's Conjecture}
A \emph{toroidal pair} $(X,D)$ is a variety $X$ over a field $\kk$ of characteristic zero with Weil divisor $D\subset X$ such that $X$ is \'etale locally equivalent to an affine toric variety with $D$ identified with a reduced toric divisor (not necessarily the entire toric boundary). Danilov defined the sheaf of differentials $\tilde\Omega^p_X(\log D)$ as the push-forward of the usual K\"ahler differentials $\Omega^p_{X_{reg}}(\log D|_{X_{reg}})$ with log poles from the locus $X_{reg} \subset X$ where the space is regular.
\begin{theorem}[Danilov's conjecture] \label{danilov-thm} 
Given a proper toroidal pair $(X,D)$, the Hodge--de Rham spectral sequence
$$E_1^{p,q}=H^q(X,\tilde\Omega^p_X(\log D))\Rightarrow \HH^{p+q}(X,\tilde\Omega^\bullet_X(\log D))$$
degenerates at $E_1$.
\end{theorem}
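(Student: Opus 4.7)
The plan is to deduce Danilov's conjecture from the more general Hodge--de Rham degeneration for log toroidal spaces alluded to earlier in the introduction, applied in the special case where the base log scheme is the trivial one $\Spec \kk$. The first step is to realize the toroidal pair $(X,D)$ as an elementary log toroidal scheme: equip $X$ with the fine log structure $\shM$ coming from the toric charts together with the divisor $D$. On the regular locus $X_\reg$ this is log smooth in the usual sense, and globally the local affine toric models with their subdivisors $D$ furnish exactly the elementary log toroidal charts studied in the paper. Under this identification, Danilov's sheaves $\tilde\Omega^p_X(\log D) = j_*\Omega^p_{X_\reg}(\log D|_{X_\reg})$ coincide with the sheaves $W^p_{X/\kk}$ of the log toroidal theory, and the spectral sequence in the statement is exactly the Hodge--de Rham spectral sequence treated in the paper.

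Given this identification, I would execute the Deligne--Illusie strategy. Spread the toric charts out to a model $(\shX,\shD)$ over a finitely generated $\ZZ$-subalgebra $R \subset \kk$; this is automatic because the toric combinatorics already lives over $\ZZ$. For a closed point of $\Spec R$ of sufficiently large residue characteristic $p$, obtain a toroidal pair $(X_0,D_0)$ over a perfect field $\kk_0$, together with an automatic compatible lift of the log smooth locus $U_0 \subset X_0$ to $W_2(\kk_0)$ (again because the toric models are defined over $\ZZ$). The classical Deligne--Illusie construction then produces a Cartier-type quasi-isomorphism
\[
F_*\Omega^\bullet_{U_0/\kk_0}(\log D_0|_{U_0}) \simeq \bigoplus_i \Omega^i_{U_0/\kk_0}(\log D_0|_{U_0})[-i]
\]
in the derived category of $U_0$, where $F$ is the relative Frobenius.

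The decisive step, and the main obstacle, is to push this decomposition forward along $j_0 : U_0 \hookrightarrow X_0$ to obtain an analogous decomposition of $\tilde\Omega^\bullet_{X_0}(\log D_0)$ on all of $X_0$, and to verify that the resulting Hodge numbers are preserved under specialization from characteristic zero to characteristic $p$. The difficulty is precisely the one flagged in the introduction: the underived pushforward $j_{0,*}$ does not ordinarily pass to the derived category and does not commute with base change. This is what the paper's explicit analysis of elementary log toroidal local models is designed to handle: for $p$ bounded below in terms of the toric combinatorics of $X$, the local computation shows that $W^p_{X_0/\kk_0}$ commutes with the specialization $\shX \rightsquigarrow X_0$ and that $j_{0,*}$ computes the correct derived object, so the Cartier decomposition on $U_0$ pushes forward to a decomposition of $\tilde\Omega^\bullet_{X_0}(\log D_0)$ in the derived category of $X_0$.

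From here the argument is standard. The pushed-forward decomposition yields the hypercohomology identity
\[
\sum_{p+q=n} \dim H^q(X_0,\tilde\Omega^p_{X_0}(\log D_0)) = \dim \HH^n(X_0,\tilde\Omega^\bullet_{X_0}(\log D_0)),
\]
forcing $E_1$-degeneration in characteristic $p$. Combining upper semicontinuity of coherent cohomology on $\shX$ with the constancy of hypercohomology dimensions along the spreading out (which is guaranteed by the same base change statement), one concludes that the Hodge numbers in characteristic zero sum to the same total, so the spectral sequence degenerates at $E_1$ over $\kk$. The essential new input, compared to Deligne--Illusie, is thus entirely concentrated in the base change analysis for the non-coherent sheaves $j_*\Omega^p_{X_\reg}(\log D|_{X_\reg})$.
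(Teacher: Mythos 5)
Your proposal matches the paper's proof in essence: deduce the statement from the general log toroidal Hodge--de Rham degeneration (the paper's Theorem~\ref{absDegen}) via the identification $\tilde\Omega^p_X(\log D) \cong W^p_{X/\kk}$ over the log trivial point, then execute a Deligne--Illusie argument with spreading out, a positive-characteristic Cartier decomposition, and a derived base-change lemma, with the genuinely new work concentrated --- exactly as you say --- in proving base change and $Z$-closure for the non-coherent sheaves $j_*\Omega^p$ using the explicit elementary log toroidal local models. One small calibration relative to the paper: the lower bound on the characteristic $p$ is needed only for base-change compatibility (Corollary~\ref{locBaChaInteger}) and the usual truncation $\tau_{<p}$, while the Cartier isomorphism property and the local existence of Frobenius liftings hold in every positive characteristic, so the ``automatic lift to $W_2$'' you invoke is really the automatic local existence of Frobenius liftings (Proposition~\ref{posCharSplit}), which forms a torsor rather than being globally given.
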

Special cases of this theorem were known before: 
when $X$ has at worst orbifold singularities \cite{Steenbrink1976}, for $D=\emptyset$ \cite{Blickle2001}, and for $D$ locally the entire toric boundary \cite{Tsuji1999,Illusie2002a}.
We believe that our methods can be extended to prove generalizations of the Akizuki--Nakano--Kodaira vanishing theorem.

\subsection{Toroidal Crossing Spaces, their Log Structures, and Orbifold Smoothings}
If $V=\Spec\kk[P]$ is an affine toric variety given by some toric monoid $P$, consider the map of sheaves $a:\underline P\ra\shO_V, p\mapsto z^p,$ with $\underline P$ denoting the constant sheaf. 
We obtain a sheaf of monoids $\shP_V=\underline P/a^{-1}(\shO_V^\times)$. 
Now $V$ is Gorenstein if and only if the toric boundary $D$ in $V$ is a Cartier divisor, hence given as the zero locus of a monomial $\one\in P$.
\begin{definition}[Siebert,\,Schr\"oer\,\cite{sch-sie}]
A \emph{toroidal crossing space} is an algebraic space $X$ over $\kk$ together with a sheaf of monoids $\shP$ with global section $\one\in\Gamma(X,\shP)$ such that for every point $x\in X$, \'etale locally at $x$, $X$ permits a smooth map to the toric boundary $D_x$ in $V_x=\Spec \kk[\shP_x]$ so that $\shP$ is isomorphic to the pullback of $\shP_{V_x}$ and mapping $\one_x$ to the monomial in $\shP_x$ whose divisor is $D_x$.
\end{definition}

A toroidal crossing space $X$ is automatically Gorenstein, we denote its dualizing line bundle by $\omega_X$. 
The boundary divisor in a Gorenstein toric variety is naturally a toroidal crossing space. 
General hyperplane sections of projective toroidal crossing spaces are again naturally toroidal crossing spaces.
\begin{lemma} 
A normal crossing space is naturally a toroidal crossing space by setting $\shP_x:=\NN^k$ and $\one_x=(1,1,...,1)\in\NN^k$ whenever $X$ is locally at $x$ given by $z_1\cdot...\cdot z_k=0$. 
(While there are other possibilities to turn a normal crossing space into a toroidal crossing space, we will always refer to this one.)
\end{lemma}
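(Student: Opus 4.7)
The plan is to construct the sheaf of monoids $\shP$ globally on $X$ via the normalization of $X$, define the distinguished global section $\one$ accordingly, and then verify the toroidal crossing axiom by exhibiting, for each point, a smooth map from a local chart onto the toric boundary of the appropriate affine space. The intrinsic, chart-independent definition of $\shP$ is what makes the stalkwise data in the lemma unambiguous.

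Concretely, let $\pi: \tilde X \to X$ denote the normalization. Because $X$ is a normal crossing space, $\pi$ is finite, and étale-locally where $X$ is cut out by $z_1 \cdots z_k = 0$, the normalization decomposes as $\tilde X = \coprod_{i=1}^{k} V(z_i)$, one smooth sheet per local branch. Set
\[
\shP := \pi_{*}\, \underline{\NN}_{\tilde X},
\]
the pushforward of the constant sheaf of monoids. For any $x \in X$ with $k$ local branches one computes $\shP_x = \NN^{\pi^{-1}(x)} \cong \NN^k$, recovering the prescription in the lemma. Define $\one \in \Gamma(X, \shP)$ as the image of the globally constant section $1 \in \Gamma(\tilde X, \underline{\NN})$; at every stalk it realizes the element $(1, \ldots, 1) \in \NN^k$.

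To verify the toroidal axiom at $x$, choose an étale neighborhood $U$ with an isomorphism onto $V(z_1 \cdots z_k) \subset \AA^N$. Put $V_x := \Spec \kk[\NN^k] = \AA^k$ and $D_x := V(z_1 \cdots z_k) \subset V_x$. The factorization $V(z_1 \cdots z_k) \subset \AA^N \cong D_x \times \AA^{N-k}$ exhibits the projection $f : U \to D_x$ onto the first $k$ coordinates as smooth. To identify $\shP|_U$ with $f^{*}\shP_{V_x}$, compare stalks: for $p \in D_x$ vanishing exactly in coordinates $I \subset \{1, \ldots, k\}$, the definition $\shP_{V_x} = \underline{\NN^k}/a^{-1}(\shO_{V_x}^{\times})$ yields $\shP_{V_x, p} = \NN^k/\NN^{k \setminus I} = \NN^I$, while for any $q \in f^{-1}(p)$ the fiber $\pi^{-1}(q)$ consists of the $|I|$ branches through $q$, giving $\shP_q = \NN^I$ as well. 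These identifications are compatible with the restriction maps of both sheaves as $I$ varies, so they assemble to the required isomorphism of sheaves of monoids; under it, $\one|_U$ corresponds to the element $(1, \ldots, 1) \in \NN^k$, which is precisely the monomial $z_1 \cdots z_k$ whose divisor in $V_x$ is $D_x$.

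The main point requiring care is the coherence of the stalkwise identifications into an honest isomorphism $\shP|_U \cong f^{*}\shP_{V_x}$: any chart-by-chart attempt to define $\shP$ as $\underline{\NN^k}$ is ambiguous because a change of étale chart can permute the local branches and therefore the $\NN$-factors, but the intrinsic construction $\pi_{*}\underline{\NN}_{\tilde X}$ eliminates this ambiguity since the $\NN$-summands are canonically indexed by the set $\pi^{-1}(x)$ of branches through $x$, and all local comparisons glue automatically.
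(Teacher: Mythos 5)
The paper states this lemma without proof, treating it as a routine observation. Your argument is correct and supplies exactly the verification the paper leaves implicit. The one subtlety you single out — that a chart-by-chart assignment $\shP_x := \NN^k$ does not by itself define a sheaf, because a change of étale chart may permute the branches and hence the $\NN$-factors — is the genuine content here, and your global construction $\shP = \pi_*\underline{\NN}_{\tilde X}$ resolves it cleanly, since the summands are canonically indexed by $\pi^{-1}(x)$ and $\pi$ is finite (so $(\pi_*\underline{\NN})_{\bar x} \cong \NN^{\pi^{-1}(\bar x)}$ in the étale topology). Your stalk computation of $\shP_{V_x,p} = \NN^k/\NN^{k\setminus I} \cong \NN^I$ and the matching identification of $\shP_q$ via the $|I|$ local branches through $q$ are both correct, as is the smoothness of the projection $U \to D_x$ via the product decomposition $\AA^N \cong \AA^k \times \AA^{N-k}$. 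In short: correct, complete, and carefully argued; the paper simply omitted this.
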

The class of toroidal crossing spaces is closed under forming products (but not so the class of normal crossing spaces).
The sheaf $\shP$ provides what Gross and Siebert call a \emph{ghost structure} for $X$ (\cite[Definition~3.16]{GrossSiebertI}), an ingredient to define the sheaf $\shLS_X$ (\cite[Definition~3.19]{GrossSiebertI}) whose sections are in bijection with log structures on $X$ together with a log smooth map to the standard log point $S$. 
By \cite{GrossSiebertI}, $\shLS_X$ embeds into the coherent sheaf $\bigoplus_C j_{C,*}\shN_{\tilde C}$ where the sum is over the irreducible components $C$ of $X_\sing$, $j_C: \tilde C\ra C\ra X$ is the composition of normalization and closed embedding, and $\shN_{\tilde C}$ is a line bundle on $\tilde C$. 
The sheaf $\shLS_X$ often does not have global sections. 
It suffices however to give a section $s$ of $\shLS_X$ on a dense open set $U$ that contains the generic points of the minimal strata of $X$ so that each component $s_C\in \Gamma(U\cap C,\shN_C)$ of $s$ extends to a section of $\shN_C$ on all of $C$ by acquiring simple zeros. 
The zeros define a reduced Cartier divisor $Z_{\tilde C}$ for each $\tilde C$. 
Set $Z=\bigcup_C j_C(Z_{\tilde C}) \subset X$.
The construction of local models along $Z$ in \cite{GrossSiebertII} was generalized in \cite{rud}: locally the coherent log structure, given by $s$ on $U$, extends to an incoherent log structure on $X$ that is still given by certain toric local models, namely from a divisor in an affine toric variety that is not the entire toric boundary, e.g.~like in the definition of toroidal pair above.
A section $s$ of $\shLS_X$ on a dense open set $U$ will be called \emph{simple} if it extends to $X$ by simple zeros and the resulting $Z_{\tilde C}$ satisfy the simpleness~criterion in \S\ref{sec-log-TC}.
Our most general smoothing result is the following.
\begin{theorem} \label{maintheorem-tc}
Let $X$ be a proper toroidal crossing space with a simple~section $s$ of $\shLS_X$ on a dense open set $U$. 
Assume that $\omega^{-1}_X$ permits a section whose divisor of zeros $E$ meets all strata of $X$ and $Z$ transversely (e.g.~when $\omega^{-1}_X\cong\shO_X$, $E=\emptyset$), then $X$ is smoothable to an orbifold with terminal singularities.
\end{theorem}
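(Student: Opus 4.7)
The plan is to reduce the statement to a Batalin--Vilkovisky deformation problem on an appropriately log-enriched version of $X$, solve it to produce a formal smoothing with prescribed local models, and then algebraize to an analytic smoothing using the Grauert--Douady space together with Artin approximation.

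First I would use the simple section $s \in \Gamma(U, \shLS_X)$ to equip $U = X \setminus Z$ with a log smooth structure over the standard log point $S$. Following the local-model construction of \cite{GrossSiebertII, rud}, I would extend this to an \emph{incoherent} log structure on all of $X$ whose stalks along $Z$ are of \emph{elementary log toroidal} type, i.e., obtained from a reduced, not necessarily full, toric boundary in an affine Gorenstein toric variety in the sense of Danilov's toroidal pairs. Setting $W^k_{X/S} := j_* \Omega^k_{U/S}$ for $j\colon U \hookrightarrow X$ gives a sheaf of log differentials on $X$ that captures the log structure also along $Z$; using the section of $\omega_X^{-1}$ with zero divisor $E$, I would analogously define $W^k_{X/S}(\log E)$.

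Next I would prove Hodge--de Rham degeneration at $E_1$ for $W^\bullet_{X/S}(\log E)$, adapting the Deligne--Illusie strategy \cite{Deligne1987}: spread out to positive characteristic and split Frobenius by a Cartier-type isomorphism coming from the log smooth locus $U$. The main obstacle, in my view the crux of the argument, is that $W^\bullet_{X/S}$ is not coherent and $j_*$ commutes with neither $\otimes$ nor derived functors, so that the usual base-change and Cartier inputs to the Deligne--Illusie machinery must be rebuilt by hand. I would handle this by an explicit computation in the elementary log toroidal local models, restricting to characteristic $p$ sufficiently large relative to the local combinatorial data; this simultaneously settles Danilov's conjecture (Theorem~\ref{danilov-thm}) when applied with $D = E$.

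With degeneration in hand, the effectiveness of $\omega_X^{-1}$ is exactly what is needed to turn the de~Rham package on $W^\bullet_{X/S}(\log E)$ into a Batalin--Vilkovisky structure: contraction with the global section dual to $E$ yields an isomorphism with the Gerstenhaber algebra $\PV^\bullet$ of log polyvector fields, and transporting the de Rham differential gives the BV operator $\Delta$. Following the framework of \cite{FMM12, CLM, CM} (parsed through \S\ref{sec-maurer-Cartan} and \S\ref{MC-from-BV}), the $E_1$-degeneration translates into enough $\Delta$-closedness to solve the Maurer--Cartan equation recursively on $\PV^\bullet$. The resulting formal deformation is \emph{not} equisingular: at each order it realizes precisely the log toroidal local models prescribed by $s$, so along $Z$ the total space acquires exactly the singularities dictated by the elementary local models.

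Finally, I would upgrade from formal to analytic: the Grauert--Douady space parametrizes flat analytic families, and Artin approximation as in \cite{RS19} turns the formal Maurer--Cartan solution into a convergent analytic smoothing $\shX \to \DD$. The general fiber is smooth away from the loci corresponding to $Z$, while along $Z$ it is locally modeled on a finite abelian (toric) quotient of a smooth variety by a group acting without pseudo-reflections outside a codimension-two set; such quotients are orbifolds with terminal singularities, which gives the final clause of the theorem.
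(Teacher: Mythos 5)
Your proposal follows essentially the same route as the paper: extend the log structure given by the simple section to an incoherent one with elementary log toroidal local models along $Z$ (as in \cite{GrossSiebertII,rud}), enlarge by $E$ to trivialize the top $W$-differentials, prove Hodge--de Rham degeneration via the Deligne--Illusie spread-out and Cartier splitting with the base-change obstruction controlled by hand in the ETD models, transport the de Rham differential to a Batalin--Vilkovisky operator on log polyvector fields, solve Maurer--Cartan order by order, and pass from formal to analytic via the Grauert--Douady space and Artin approximation. The one ingredient you leave implicit that the paper must make explicit is the local uniqueness of infinitesimal log toroidal deformations in the simple case (Theorem~\ref{locally-unique-defos}), which is what produces the gluing isomorphisms ${}^k\phi_{\alpha\beta}$ that the Thom--Whitney/Maurer--Cartan machinery of \cite{CLM} requires; the paper also separates the roles of the absolute degeneration (Theorem~\ref{absDegen}, yielding freeness) and the relative degeneration (Theorem~\ref{rel-degen}, yielding the surjectivity needed for the inductive lifting), which your sketch collapses into a single statement.
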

There is a precise derivation of the types of singularities of the orbifold smoothing from knowing $\shP$ and $Z$, e.g.~for a normal crossing space there will be no singularities in the general fiber of the smoothing and thus combined with the Bertini argument and linking $\shLS_X$ with $\shT^1_X$, we find that Theorem~\ref{maintheorem-tc} implies Theorem~\ref{maintheorem-nc}, see Proposition~\ref{prop-tc-implies-nc}. 
A section of $\shLS_X$ is of complete intersection type (c.i.t.)~as defined in \cite{rud}, roughly speaking, if the log singular set satisfies a transversality assumption. A c.i.t.~section gives rise to a log toroidal morphism. Theorem~\ref{locally-unique-defos} does not hold for the general c.i.t.~case but we obtain the following.

\begin{example} \label{exampleGS}
We follow \cite{GrossSiebertI}. Let $(B,\P,\varphi)$ be a closed oriented tropical manifold with singular locus combinatorially c.i.t.; then the associated space $X_0(B,\shP,s)$ with its vanilla gluing data and log structure satisfies the assumptions of Theorem~\ref{maintheorem-tc} for $E=\emptyset$ if the orbifold nearby fiber models are terminal (given by elementary simplices). 
Smoothings for such spaces had been constructed in \cite{gs} under the stronger assumption of local rigidity, e.g.~the quintic threefold degeneration in $\PP^4$ is not locally rigid but c.i.t..
\end{example}

\subsection{The Hodge--de Rham Spectral Sequence} 
We refer to \cite{kkatoFI,Kato1996,GrossSiebertI,LoAG18} for basic notions of log geometry.
Let $f:X\ra S$ be a log toroidal family as defined in Definition~\ref{toroidDef} below. 
A toroidal pair $(X,D)$ yields an example by giving $X$ the divisorial log structure from $D$ and making $S$ the log trivial point.
The families $X$ over the standard log point featured in Theorem~\ref{maintheorem-tc} give further examples. 
Also, a saturated relatively log smooth morphism $f:X\ra S$ in the sense of \cite{Nakayama2010} is an example.
The complex $W^\bullet_{X/S}$ (see page~\pageref{define-W}) gives rise to a spectral sequence 
$$E(X/S): E^{pq}_1 = R^qf_*W^p_{X/S} \Rightarrow R^{p + q}f_*W^\bullet_{X/S}.$$
Let $Q$ be a sharp toric monoid and $\kk$ be a field of characteristic zero. We prove the following theorems.

\begin{theorem}\label{absDegen} 
Let $S=\Spec (Q\ra\kk)$ and $f: X \to S$ be a proper log toroidal family (with respect to $S \to A_Q$). 
 Then $E(X/S)$ degenerates at $E_1$.
\end{theorem}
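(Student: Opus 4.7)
The plan is to adapt the Deligne--Illusie method \cite{Deligne1987} to the log toroidal setting. Since $\kk$ has characteristic zero and all data is of finite type, I would first spread out: choose a finitely generated $\ZZ$-subalgebra $R \subset \kk$ over which $Q$, $S$, $X$ and $f$ are defined, then reduce modulo a closed point of $\Spec R$ of sufficiently large residue characteristic $p$. The threshold is chosen so that (i) the sheaves $W^p_{X/S}$ commute with the base change $\Spec R \to \Spec R/p$ --- as indicated in the introduction this only fails in small $p$ --- and (ii) the relative dimension of $f$ is strictly less than $p$. By semicontinuity, degeneration at one such closed point implies degeneration over the original $S$, so it suffices to handle the case $\op{char}\kk = p$.

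In characteristic $p$, I would construct a flat lift of $(X \to S)$ modulo $p^2$. Thanks to the toroidal local structure, the elementary log toroidal charts carry canonical lifts coming from their toric presentations, and to run Deligne--Illusie it suffices to have local lifts together with a chosen cocycle of transition automorphisms modulo $p^2$ rather than a genuine global lift, as in the log version due to \cite{Kato1996}. The heart of the proof is then to establish a Cartier-type isomorphism
$$C^{-1}\colon\, W^p_{X'/S'} \xrightarrow{\ \sim\ } \shH^p\!\left(F_*W^\bullet_{X/S}\right)$$
(with $F$ the relative Frobenius and primes denoting Frobenius twists) and, using the chosen lift modulo $p^2$, to upgrade this into a quasi-isomorphism
$$\tau_{<p}\, F_*W^\bullet_{X/S} \;\simeq\; \bigoplus_{i<p} W^i_{X'/S'}[-i]$$
in the derived category. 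I would produce $C^{-1}$ by explicit calculation in the elementary log toroidal local models, where $W^\bullet_{X/S}$ admits a concrete monomial description and Frobenius can be computed combinatorially, and then glue the local isomorphisms. Given the decomposition, applying $Rf_*$ and using properness of $f$ yields the degeneration of $E(X/S)$ at $E_1$ by the classical dimension count, since the truncation $\tau_{<p}$ is trivial in our range.

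I expect the main obstacle to be the non-coherence of $W^\bullet_{X/S}$, together with the fact that $j_*$ along the inclusion of the log smooth locus $U \hookrightarrow X$ does not interact well with either Frobenius pushforward or the derived category; in particular one cannot simply replace $W^\bullet$ by $Rj_*\Omega^\bullet_{U/S}$ in any step of the argument. The workaround has to be explicit: perform the Cartier calculation in local charts where $W^\bullet$ is described by monomials on a toric cone, track compatibilities along transition maps, and exploit the large-characteristic hypothesis both to suppress higher-derived correction terms and to guarantee that $p$-th powers do not accidentally collide with the combinatorial data defining $W^\bullet$. The base-change input behind the spreading-out step is similarly delicate and must itself be verified by direct computation in the local models, which is also where the necessity of restricting to large $p$ enters.
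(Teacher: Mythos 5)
Your proposal is correct and follows essentially the same route as the paper: spread out over a finitely generated $\ZZ$-algebra, pass to a closed point of large residue characteristic $p>d$ where the $W^\bullet$ sheaves are verified to commute with base change via explicit computation in the ETD charts, construct local Frobenius liftings modulo $p^2$ from the toric local models, prove the Cartier isomorphism for $W^\bullet$ by the same explicit ETD calculation, glue via a torsor/cocycle argument into a decomposition $\bigoplus_{m<p} W^m_{X'/S'}[-m]\simeq \tau_{<p}F_*W^\bullet_{X/S}$, and conclude by the Deligne--Illusie dimension count. You even identify the same technical pitfalls (non-coherence of $W^\bullet$, failure of $j_*$ to pass to the derived category or commute with $\otimes$) that the paper's argument is built to circumvent.
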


Theorem~\ref{absDegen} implies Theorem~\ref{danilov-thm} since $W^p_{X/S}=\tilde\Omega^p_X(\log D)$ whenever $f$ comes from a toroidal pair.
We conjecture the statement of Theorem~\ref{absDegen} to hold also for an arbitrary coherent base $S$ over a field of characteristic zero.
To prove Theorem~\ref{absDegen}, we adapt the proof of the degeneration in \cite{Deligne1987} as follows: since $f$ is proper, 
it suffices to verify 
\begin{equation}
 \sum_{p + q = n} \dime\ R^qf_*W^p_{X/S} = \dime\ R^nf_*W^\bullet_{X/S}. \tag{\textasteriskcentered} \label{ast}
\end{equation}
In \S\ref{TorSpreadOut}, we show that $f: X \to S$ spreads out to a log toroidal family $\phi: \X \to \cS = \Spec (Q \to B)$ where 
$\ZZ \subset B \subset \kk$ is a subring such that $B/\ZZ$ is of finite type. 
Spreading out of log smooth morphisms over a log trivial base has been done before in \cite[Lemma~4.11.1]{Tsuji1999} and we generalize the construction to more general bases and show that the local model in the log toroidal case can be preserved.
Then for suitable fields $k \supset \FF_p$, with $W_2(k)$ denoting the ring of second Witt vectors,
we obtain by base change a diagram with Cartesian squares
\begin{equation}
\tag{SO}\label{SO}
\quad \begin{aligned}
  \xymatrix{
  X \ar[r]\ar_f[d]& \X\ar^{\phi}[d] &\ar[l]\ar^{\phi_W}[d] \X_W & \ar[l] \X_k\ar^{\phi_k}[d] \\
  S \ar[r]& \cS &\ar[l] \Spec W_2(k)& \ar[l] \Spec k. 
  } 
\end{aligned}
\end{equation}

In \S\ref{TorBaCha} we investigate the behavior of $W^\bullet$ under base change which leads to 
equalities like $\dime_\kk R^qf_*W^p_{X/S} = \dime_k R^q(\phi_k)_*W^p_{\X_k/k}$, i.e., it suffices to show \eqref{ast} for 
$\phi_k: \X_k \to \Spec k$. In \S\ref{TorCarIso} we construct the Cartier isomorphism for log toroidal families in 
positive characteristic which we then apply in \S\ref{TorDecompo} to obtain the Frobenius decomposition of 
$F_*W^\bullet_{\X_k/k}$ where $F$ is the relative Frobenius. Finally, in \S\ref{TorAbsDeg}, we put the pieces together and prove Theorem~\ref{absDegen}.

We prove a modest but important generalization of Theorem~\ref{absDegen} to the relative case using Katz's method that first appeared in \cite{Steenbrink1976}.
This requires a detailed understanding of the analytification of the absolute differentials $W^{\bullet,an}_{X}$ with respect to base change as given in \S\ref{subsec-local-analytic} and \S\ref{sec-relative-degen}.
\begin{theorem}\label{rel-degen}
 Let $S = S_m := \Spec (\NN \stackrel{1\mapsto t}\to \CC[t]/(t^{m + 1}))$ and let $f: X \to S$ be a proper log toroidal family with respect to $S \to A_\NN$. Then:
 \begin{enumerate}
  \item $R^qf_*W^p_{X/S}$ is a free $\CC[t]/(t^{m + 1})$-module whose formation commutes with base change.
  \item The spectral sequence $R^qf_*W^p_{X/S} \Rightarrow R^{p + q}f_*W^\bullet_{X/S}$ degenerates at $E_1$.
 \end{enumerate}
\end{theorem}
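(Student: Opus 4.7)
I would prove Theorem~\ref{rel-degen} by induction on $m$, with the base case $m = 0$ furnished by Theorem~\ref{absDegen} applied to $f_0: X_0 \to S_0$. The inductive step is a Katz-style length argument over the Artinian local ring $R_m := \CC[t]/(t^{m+1})$, in the spirit of \cite{Steenbrink1976}.

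\textbf{Step 1: Base change and flatness for $W^p$.} The first and hardest step is to show that the sheaves $W^p_{X/S_m}$ are flat over $R_m$ and commute with base change along every closed immersion $S_k \hookrightarrow S_m$, i.e., the canonical map
\[
W^p_{X/S_m} \otimes_{R_m} R_k \;\longrightarrow\; W^p_{X_k/S_k}
\]
is an isomorphism, where $X_k := X\times_{S_m} S_k$ and $R_k = \CC[t]/(t^{k+1})$. Since $W^p = j_*\Omega^p_{U/S}$ is defined by pushforward from the log-smooth locus and $j_*$ does not commute with $\otimes$ in general, this is not formal. The resolution is to exploit the analytic log toroidal local models developed in \S\ref{subsec-local-analytic}, on which $W^p$ admits an explicit description that manifestly verifies flatness and base change.

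\textbf{Step 2: Upper length bounds via long exact sequences.} Step~1 yields the short exact sequence
\[
0 \to tW^p_{X/S_m} \to W^p_{X/S_m} \to W^p_{X_0/S_0} \to 0
\]
together with an identification $tW^p_{X/S_m} \cong W^p_{X_{m-1}/S_{m-1}}$ of $R_{m-1}$-modules, and similarly at the level of complexes. Write $h^{p,q} := \dime_\CC H^q(X_0, W^p_{X_0/S_0})$ and $h^n := \dime_\CC \HH^n(X_0, W^\bullet_{X_0/S_0})$; by Theorem~\ref{absDegen} we have $\sum_{p+q=n} h^{p,q} = h^n$. The associated long exact sequences combined with the inductive hypothesis on $S_{m-1}$ yield
\[
\ell_{R_m}\bigl(R^q f_* W^p_{X/S_m}\bigr) \le (m+1)\,h^{p,q}, \qquad \ell_{R_m}\bigl(\HH^n(X, W^\bullet_{X/S_m})\bigr) \le (m+1)\,h^n.
\]

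\textbf{Step 3: Matching lower bound via Katz's argument.} Consider the $t$-adic filtration on $W^\bullet_{X/S_m}$, whose graded pieces are all isomorphic to $W^\bullet_{X_0/S_0}$ by Step~1. The resulting filtration spectral sequence has $E_1$-contribution of total dimension $(m+1)\,h^n$ in degree $n$. Following Katz's argument from \cite{Steenbrink1976}, the differentials on this spectral sequence factor through Gauss--Manin-type operators whose vanishing is forced by the $E_1$-degeneration over $S_0$ (Theorem~\ref{absDegen}) together with the inductive hypothesis on $S_{m-1}$, via strictness of the Hodge filtration. This yields the matching lower bound $\ell_{R_m}(\HH^n(X, W^\bullet_{X/S_m})) \ge (m+1)\,h^n$.

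\textbf{Step 4: Conclusion.} Combining Steps~2 and~3,
\[
(m+1)\,h^n \;\le\; \ell_{R_m}\bigl(\HH^n(X, W^\bullet_{X/S_m})\bigr) \;\le\; \sum_{p+q=n} \ell_{R_m}\bigl(R^qf_* W^p_{X/S_m}\bigr) \;\le\; (m+1)\,h^n,
\]
where the middle inequality is the abutment bound of the Hodge--de Rham spectral sequence. Hence all inequalities are equalities: each $R^qf_* W^p_{X/S_m}$ has length $(m+1)\,h^{p,q}$, and since its closed fibre has $\CC$-dimension at most $h^{p,q}$ it must be free of that rank over $R_m$, yielding part~(1); equality in the spectral sequence bound yields $E_1$-degeneration, proving part~(2).

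\textbf{Main obstacle.} The decisive difficulty is Step~1: establishing flatness and base change for $W^p = j_*\Omega^p_{U/S}$ along the non-flat closed immersion $S_0 \hookrightarrow S_m$. Because $j_*$ does not commute with $\otimes$ in general, this cannot be handled formally; it requires the analytic local model calculation of \S\ref{subsec-local-analytic}, which parallels but is subtler than the positive-characteristic base change of \S\ref{TorBaCha} owing to the non-reduced base. Once Step~1 is in hand, the Katz-style length count of Steps~2--4 runs cleanly.
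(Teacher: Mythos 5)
Your framework of Steps~2 and~4 (induction on $m$, length counting over $R_m$, and the chain of inequalities that forces $E_1$-degeneration and freeness) is the standard reduction and is essentially how the paper proceeds, once the key surjectivity
\[
\HH^k(X,W^\bullet_{X/S}) \;\longrightarrow\; \HH^k(X_0,W^\bullet_{X_0/S_0})
\]
is in hand. You also correctly identify Step~1 as a prerequisite, though you overstate its difficulty: base change of $W^p$ along $S_k \hookrightarrow S_m$ is already Corollary~\ref{locBaChaField} with $R=\kk=\CC$, which is purely algebraic --- the hypothesis $R\to R[Q]\to\cT$ flat is automatic over a field, and flatness of $W^p$ over $R_m$ is Corollary~\ref{cor-genRelFlatW-local} plus base change. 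The analytic local models of \S\ref{subsec-local-analytic} are not needed for this; they enter elsewhere, as noted below.

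The genuine gap is Step~3. You assert that the differentials of the $t$-adic filtration spectral sequence ``factor through Gauss--Manin-type operators whose vanishing is forced by the $E_1$-degeneration over $S_0$ together with the inductive hypothesis on $S_{m-1}$, via strictness of the Hodge filtration.'' This is not true as stated: the $d_1$ of that filtration spectral sequence \emph{is} (the log-residue of) the Gauss--Manin connection acting on $\HH^n(X_0,W^\bullet_{X_0/S_0})$, and this is generically nonzero --- it encodes the Kodaira--Spencer/monodromy data of the degeneration. Degeneration of the Hodge--de Rham spectral sequence over a point does not kill it, and ``strictness of the Hodge filtration'' gives no constraint either (there is no variation of Hodge structure over an Artinian point to be strict about). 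If your argument were correct, the cohomology of \emph{any} proper flat family with degenerate special fibre Hodge--de Rham would be free, which is false.

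What actually proves the surjectivity in the paper is a different device, going back to Steenbrink's \cite[(2.6)]{Steenbrink1976} and \cite[Lemma~4.1]{KawamataNamikawa1994}: one introduces the auxiliary complex $\cL^\bullet := W^{\bullet,an}_X[u]$ with the Koszul-twisted differential $d(\alpha_s u^s)=d\alpha_s\cdot u^s + s\,\delta(\rho)\wedge\alpha_s\cdot u^{s-1}$, maps it to $W^{\bullet,an}_{X/S}$ by projecting to $u^0$, and then shows that the composite $\cL^\bullet\to W^{\bullet,an}_{X_0/S_0}$ is a quasi-isomorphism by proving that its kernel $\K^\bullet$ is acyclic. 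This last acyclicity (Lemma~\ref{lem-K-acyclic}) is the hard new content --- it is a stalk-level computation in the analytic local models (Corollaries~\ref{relative-on-Y}, \ref{absolute-on-Y}, Remark~\ref{rem-compute-anal-stalk}), with delicate norm estimates to control convergence on the non-reduced base. This is where the analytic local theory of \S\ref{subsec-local-analytic} is used, not in the base-change step you flagged. Your plan would need to be rewired to incorporate this twist-complex mechanism; the ``vanishing of Gauss--Manin differentials'' step cannot carry the proof.
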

There are problems with similar theorems in earlier works: the generalization from a one-dimensional base to higher dimensions in \cite[p.\,404]{KawamataNamikawa1994} is flawed which then also affects \cite[Theorem 4.1]{GrossSiebertII}. 
In addition, there is a gap in the proof of \cite[Theorem~4.1]{GrossSiebertII} related to the fact that the de Rham differential of $\Omega^\bullet_{X/S}$ is not $\shO_X$-linear. 
Since our result encompasses the one-parameter base case of \cite[Theorem~4.1]{GrossSiebertII}, Theorem~\ref{rel-degen} closes the latter gap.

\begin{acknowledgement} 
\small The last author feels indebted to Arthur Ogus for almost a decade of communication on the challenges in proving Theorem~\ref{absDegen}.
We thank Mark Gross for connecting the authors with K.\,Chan, C.\,Leung and Z.N.\,Ma whom we also thank for supportive communication and harmonization of our projects. 
We thank H\'el\`ene Esnault and Bernd Siebert for valuable communication and Stefan M\"uller-Stach for bringing the first and last author together. 
Our gratitude for hospitality goes to JGU Mainz and what concerns the last author also to IAS Princeton and Univ.~Hamburg.
\end{acknowledgement}

\begin{conventions} 
We use $\underline X$ to refer to the underlying scheme of a log scheme $X$. 
Given a map $P\ra A$ from a monoid $P$ into the multiplicative monoid of a ring $A$, we refer to the associated log scheme by $\Spec\big(P\ra A\big)$.
\end{conventions}

\section{Generically Log Smooth Families}\label{Pretoroid}
A \emph{log toroidal family} will be a generalization of a saturated log smooth morphism.
We first introduce the weaker notion of a \emph{generically log smooth family} that already enjoys some useful properties.
Log structures in the entire article are assumed to be in the \'etale (or analytic) topology.
If $f: X \to S$ is a finite type morphism of Noetherian schemes, we say a Zariski open $U \subset X$ satisfies the \emph{codimension condition} \eqref{CC} if 
the relative codimension of $Z := X \setminus U$ is $\geq 2$, i.e., for every point $s\in S$ with $X_s,U_s$ the fibers,
\begin{equation}
\tag{CC}\label{CC} \codim{X_s \setminus U_s}{X_s} \geq 2.
\end{equation}
A Cohen--Macaulay morphism is a flat morphism with Cohen--Macaulay fibers.
\begin{definition}
 A \emph{generically log smooth family} consists of: 
 \begin{itemize} 
  \item a finite type Cohen--Macaulay morphism $f: X \to S$ of Noetherian schemes,
  \item a Zariski open $j: U \subset X$ satisfying \eqref{CC},
  \item a saturated and log smooth morphism $f: (U, \M_U) \to (S, \M_S)$ of fine saturated log schemes. 
 \end{itemize}
 The complement $Z := X \setminus U$ we refer to as the \emph{log singular locus} even though $f$ might extend log smoothly to it. 
 We say two generically log smooth families $f,f': X \to S$ with the same underlying morphism of schemes are \emph{equivalent}, if there is some $\tilde U \subset U \cap U'$ satisfying $(CC)$ with $\M_U|_{\tilde U} \cong \M'_{U'}|_{\tilde U}$ 
 compatibly with all data.
\end{definition}
If $T \to S$ is a morphism of fine saturated log schemes, then the base change $X_T \to T$ as a generically log smooth family is defined in the obvious way, taking fiber products in the category of \emph{all} log schemes. 
Note that we need $f: U \to S$ saturated to ensure that $U_T$ is again a fine saturated log scheme. 
The notion of equivalence is due to the fact that we do not care about the precise $U$. However, for technical simplicity we assume some $U$ fixed. 
The name \emph{log singular locus} is in analogy with \cite{GrossSiebertI}. 

\begin{definition}
 For a generically log smooth family $f: X \to S$, the \emph{de Rham complex} is defined as $W^\bullet_{X/S} := j_*\Omega^\bullet_{U/S}$ where $\Omega^\bullet_{U/S}$ denotes the log de Rham complex.
We also define the $\shO_X$-module of degree $m$ log polyvector fields $\Theta^m_{X/S}:=j_*\bigwedge^m\shD er_{U/S}(\shO_U)$.
\end{definition}

\begin{lemma}\label{relaNormal}
Let $f: X \to S$ be a Cohen--Macaulay morphism of Noetherian schemes, and let $j: U \subset X$ satisfy \eqref{CC}. 
Then $j_*\cO_U \cong \cO_X$.
\end{lemma}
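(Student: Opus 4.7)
The question is local on $X$, so I would check the claim stalk by stalk. For $x \notin Z := X \setminus U$ the statement is trivial, so fix $x \in Z$ with image $s := f(x)$, and set $A := \cO_{S,s}$, $B := \cO_{X,x}$, $\bar B := B/\maxid_A B = \cO_{X_s,x}$, and $I := I_{Z,x} \subset B$. Using the standard local cohomology sequence
\[
0 \to H^0_I(B) \to B \to \Gamma(\spec(B) \setminus V(I),\cO) \to H^1_I(B) \to 0
\]
and the fact that on a Noetherian scheme $(j_*\cO_U)_x$ coincides with $\Gamma(\spec(B)\setminus V(I),\cO)$, it suffices to prove $H^0_I(B) = H^1_I(B) = 0$, equivalently $\operatorname{grade}(I,B) \geq 2$.

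To produce a regular sequence of length two in $I$, I would first work in the fiber. Since $f$ is a Cohen--Macaulay morphism, $\bar B$ is a Cohen--Macaulay local ring; the ideal $\bar I := I\bar B$ cuts out the trace of $Z$ in $X_s$ at $x$, so condition \eqref{CC} gives $\operatorname{ht}(\bar I) \geq 2$. In a CM local ring height equals grade, so one can find $\bar f_1, \bar f_2 \in \bar I$ forming a $\bar B$-regular sequence.

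Now lift $\bar f_1,\bar f_2$ to $f_1,f_2 \in I$. Since $B$ is $A$-flat and $\bar f_1,\bar f_2$ is regular on the closed fiber $\bar B$, the classical lifting lemma for regular sequences in flat families (see e.g.~Matsumura, \emph{Commutative Ring Theory}, Thm.~22.5) guarantees that $f_1,f_2$ is already $B$-regular. Hence $\operatorname{grade}(I,B) \geq 2$, and the first step concludes the proof.

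\textbf{Main obstacle.} The delicate point is that we have no Cohen--Macaulay hypothesis on $X$ itself, only on the fibers of $f$, so the clean identity $\operatorname{grade} = \operatorname{height}$ is not directly available in $B$. This is circumvented by combining the fiberwise CM property (which provides a regular sequence on $\bar B$) with flatness of $B/A$ (which lifts this sequence into $B$ without loss of regularity). A secondary care point is matching $(j_*\cO_U)_x$ with the sections over $\spec(B)\setminus V(I)$; this is harmless in our Noetherian, finite type setting but should be mentioned.
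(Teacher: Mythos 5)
Your argument is correct. The paper itself just cites Hassett--Kov\'acs~\cite[3.5]{Hassett2004}, which is a general statement about pushing forward the structure sheaf across a relative-$S_2$ open (with a weaker codimension hypothesis than \eqref{CC}); you instead give a direct, self-contained proof. The two are of course closely related in spirit --- the content of the Hassett--Kov\'acs lemma is precisely the relative depth estimate you establish --- but your version makes the mechanism transparent: reduce to showing $\operatorname{grade}(I,B)\geq 2$ via the local cohomology sequence, obtain a regular sequence of length two in $\bar I\subset\bar B$ from the Cohen--Macaulay property of the fiber together with \eqref{CC} (height $=$ grade in a CM local ring), and lift it to $B$ using flatness over $A$ and Matsumura Thm.\ 22.5. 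You correctly flag the two points where care is needed (no CM hypothesis on $X$ itself, and identifying $(j_*\cO_U)_x$ with $\Gamma(\operatorname{Spec}B\setminus V(I),\cO)$, which holds because local cohomology commutes with localization). What the direct argument buys you is that it isolates exactly which hypotheses are used --- flatness plus CM fibers plus fiberwise codimension $\geq 2$ --- whereas the citation imports a more general result. One small presentational remark: it would be worth explicitly stating that $H^0_I(B)=H^1_I(B)=0$ is equivalent to the natural map $B\to(j_*\cO_U)_x$ being an isomorphism (injective from $H^0$ vanishing, surjective from $H^1$ vanishing), and that the case $x\notin Z$ is trivial; you gesture at both but the reader may want them spelled out.
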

\begin{proof}
 This is a special case of \cite[3.5]{Hassett2004}. Note that our \eqref{CC} is a stronger assumption than the condition on the codimension in \cite[3.5]{Hassett2004}.
\end{proof}
Let $X\ra S$ be a generically log smooth family.
Using the language of \cite[Definition~5.9.9]{Grothendieck1965}, a sheaf $\shF$ we call \emph{$Z$-closed} if the natural map $\shF\ra j_*(\shF|_U)$ is an isomorphism. 
Most notably, two $Z$-closed sheaves that agree on $U$ are entirely equal. By their definition, $W^m_{X/S}$ as well as $\Theta^m_{X/S}$ are $Z$-closed.
Furthermore, every reflexive sheaf is $Z$-closed.

\begin{lemma} \label{lem-W-reflexive}
The $\shO_X$-modules $W^m_{X/S}$ and $\Theta^m_{X/S}$ are coherent and reflexive and these depend only on the equivalence class of $f:X\ra S$.
\end{lemma}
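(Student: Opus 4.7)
The plan is to prove coherence first, then deduce reflexivity, and finally handle invariance under equivalence.

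Since $f|_U$ is saturated and log smooth, Kato's theorem gives that $\Omega^1_{U/S}$ is a locally free $\shO_U$-module of finite rank, so $\Omega^m_{U/S}$ and $\bigwedge^m \shD er_{U/S}(\shO_U)$ are locally free of finite rank on $U$. The first real step is to upgrade this to coherence on $X$, and this is the principal obstacle: $j_*$ applied to a coherent sheaf is in general only quasi-coherent. I would work locally on $X$: around any $x \in Z$ with image $s \in S$, produce a coherent $\shO_X$-module $\shG$ extending $\Omega^m_{U/S}$ (e.g.~via a local finite presentation), and show that the natural map $\shG^{\vee\vee} \to j_*\Omega^m_{U/S}$ is an isomorphism. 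The Cohen--Macaulay hypothesis on $f$, combined with \eqref{CC}, yields $\depth_{Z_s}\shO_{X_s,x} \geq 2$ fibrewise; via the flatness depth-formula $\depth \shO_{X,x} = \depth \shO_{S,s} + \depth \shO_{X_s,x}$ this forces the kernel and cokernel of $\shG^{\vee\vee} \to j_*\Omega^m_{U/S}$, both supported on $Z$, to vanish. The analogous argument handles $\Theta^m_{X/S}$.

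Once coherence is in hand, reflexivity follows formally using Lemma~\ref{relaNormal}. Setting $\shF := W^m_{X/S}$, note that $\shF$ is $Z$-closed by construction and $\shF|_U$ is locally free. Using $\shO_X = j_*\shO_U$ and sheaf Hom--adjunction,
\[
\shF^{\vee\vee} = \shHom_{\shO_X}(\shF^\vee, \shO_X) = \shHom_{\shO_X}(\shF^\vee, j_*\shO_U) \cong j_*\shHom_{\shO_U}(\shF^\vee|_U, \shO_U) = j_*(\shF|_U)^{\vee\vee} = j_*(\shF|_U) = \shF,
\]
which proves reflexivity. The identical chain of identities applies to $\Theta^m_{X/S}$.

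For invariance under the equivalence relation, let $\tilde U \subset U \cap U'$ be a common sub-open witnessing the equivalence of $f$ and $f'$, and factor $\tilde j = j \circ \iota$ through $\iota: \tilde U \hookrightarrow U$. Since log smooth morphisms are flat with Cohen--Macaulay fibres, $U \to S$ is itself a Cohen--Macaulay morphism and $\iota$ satisfies \eqref{CC} with respect to it. Lemma~\ref{relaNormal} applied to $\iota$ together with the reflexivity argument above yields $\iota_*\Omega^m_{\tilde U/S} = \Omega^m_{U/S}$, hence $W^m_{X/S} = j_*\Omega^m_{U/S} = \tilde j_*\Omega^m_{\tilde U/S}$, and the same computation with $f'$ produces the same sheaf on $X$. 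The argument for $\Theta^m_{X/S}$ is identical.
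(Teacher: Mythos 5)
Your proof is correct and follows essentially the same route as the paper's: both hinge on Lemma~\ref{relaNormal} giving $j_*\shO_U\cong\shO_X$, both produce coherence by extending $\Omega^m_{U/S}$ to a coherent sheaf and taking double duals, and your sheaf-Hom adjunction chain for reflexivity is a concrete reformulation of what the paper phrases as ``$(W^m_{X/S})^{\vee\vee}$ and $W^m_{X/S}$ are both $Z$-closed and agree on $U$.'' Two small remarks: for the coherent extension the paper simply invokes the extension theorem \cite[9.4.8]{Grothendieck1960a} applied to the quasi-coherent sheaf $j_*\Omega^m_{U/S}$, which avoids the gluing of locally constructed presentations implicit in your sketch; and your claim that ``log smooth morphisms are flat with Cohen--Macaulay fibres'' requires saturatedness (or at least integrality), which holds here since $f|_U$ is saturated by definition.
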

\begin{proof}
Let $\tilde U \subset U$ also satisfy $(CC)$.
We have by Lemma~\ref{relaNormal} that $j_*\Omega^\bullet_{\tilde U/S} = j_*\Omega^\bullet_{U/S}$ since $\Omega^m_{U/S}$ is finite locally free. 
Thus $W^\bullet_{X/S}$ depends only on the equivalence class of $f$.
It is clear that it is quasi-coherent.
For every sheaf $\shG$ on $U$, $j_*\shG$ is $Z$-closed, so in particular $W^m_{X/S}$ is $Z$-closed. 
Set $\shF^\vee:=\Hom_{\shO_X}(\shF,\shO_X)$. By Lemma~\ref{relaNormal}, $\shF^\vee$ is $Z$-closed for all $\shF$, so in particular
$(W^m_{X/S})^{\vee\vee}$ is a $Z$-closed sheaf and it coincides with $W^m_{X/S}$ on $U$, hence
$(W^m_{X/S})^{\vee\vee}=W^m_{X/S}$ and $W^m_{X/S}$ is reflexive.
By the extension theorem \cite[9.4.8]{Grothendieck1960a}, there is a coherent $\shG$ that restricts to $W^m_{X/S}$ on $U$. 
Now $\shG^{\vee\vee}=W^m_{X/S}$ since both are $Z$-closed and agree on $U$, hence $W^m_{X/S}$ is also coherent.
The argument for $\Theta^m_{X/S}$ is similar.
\end{proof}

\begin{lemma} \label{lem-W-theta-duals}
$W^m_{X/S}=\shHom(\Theta^m_{X/S},\shO_X)$ and $\Theta^m_{X/S}=\shHom(W^m_{X/S},\shO_X)$.
\end{lemma}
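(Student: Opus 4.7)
The plan is to establish both isomorphisms by the same two-step strategy: check that each side is $Z$-closed, and check that both sides agree on the open subset $U$.

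First I would observe that on $U$, the morphism $f|_U : (U, \M_U) \to (S, \M_S)$ is log smooth, so by \cite[(3.10)]{Kato1996} the sheaf $\Omega^1_{U/S}$ is locally free of finite rank. Hence $\Omega^m_{U/S} = \bigwedge^m \Omega^1_{U/S}$ and $\bigwedge^m \shD er_{U/S}(\shO_U) = \bigwedge^m (\Omega^1_{U/S})^\vee$ are naturally dual finite locally free $\shO_U$-modules. In particular, the natural pairing yields isomorphisms
\[
\Omega^m_{U/S} \cong \shHom_{\shO_U}\bigl(\textstyle\bigwedge^m \shD er_{U/S}(\shO_U), \shO_U\bigr), \qquad \textstyle\bigwedge^m \shD er_{U/S}(\shO_U) \cong \shHom_{\shO_U}(\Omega^m_{U/S}, \shO_U).
\]

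Next I would use that for any coherent sheaf $\shF$ on $X$, the dual $\shF^\vee = \shHom_{\shO_X}(\shF, \shO_X)$ is $Z$-closed, exactly as noted in the proof of Lemma~\ref{lem-W-reflexive}: this follows from Lemma~\ref{relaNormal} since $\shHom_{\shO_X}(\shF, j_*\shO_U) = j_*\shHom_{\shO_U}(\shF|_U, \shO_U)$ and $j_*\shO_U = \shO_X$. In particular, $(\Theta^m_{X/S})^\vee$ and $(W^m_{X/S})^\vee$ are $Z$-closed. On the other hand, $W^m_{X/S}$ and $\Theta^m_{X/S}$ are themselves $Z$-closed by construction, since they are defined as pushforwards along $j$.

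Now I would restrict $(\Theta^m_{X/S})^\vee$ to $U$. Since $\Theta^m_{X/S}|_U = \bigwedge^m \shD er_{U/S}(\shO_U)$ and $\shHom$ commutes with restriction to open subsets, the first isomorphism from the opening paragraph identifies $(\Theta^m_{X/S})^\vee|_U$ with $\Omega^m_{U/S} = W^m_{X/S}|_U$. Two $Z$-closed coherent sheaves that agree on $U$ are canonically isomorphic, so $(\Theta^m_{X/S})^\vee = W^m_{X/S}$. The second identity $\Theta^m_{X/S} = (W^m_{X/S})^\vee$ follows by the same argument, using the dual pairing on $U$.

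The argument is entirely formal once one has reflexivity/$Z$-closedness in hand; the only place where log smoothness genuinely enters is the local freeness of $\Omega^1_{U/S}$ on $U$, which is what makes the duality on $U$ perfect. There is no real obstacle to overcome.
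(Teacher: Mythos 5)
Your proof is correct and follows the same strategy as the paper's: verify the duality on $U$ using local freeness of $\Omega^1_{U/S}$ coming from log smoothness, then conclude globally from the $Z$-closedness of all four sheaves involved. The paper states this in one line; you have merely written out the same argument in full detail.
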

\begin{proof}
The statement is clear on $U$ where all sheaves are locally free and then it follows since all sheaves are $Z$-closed.
\end{proof}

\begin{remark} \label{lem-canonical-log-ext} 
The pushforward $j_*\shM_U\ra j_*\shO_U=\shO_X$ to $X$ yields a log structure which is compatible with $S$, so every generically log smooth family is canonically a log morphism $X\ra S$. We do not know whether this pushforward is compatible with base change (and we do not care). 
\end{remark}

\begin{remark} \label{remark-log-structure-on-X}
In view of Remark~\ref{lem-canonical-log-ext}, neither the so defined log structure $\shM_X$ nor the associated sheaf of log differentials $\Omega_{X/S}$ will be coherent in general, see Example~\ref{ex-standard-example}. 
On the the other hand, $W^m_{X/S}$ and $\Theta^m_{X/S}$ are coherent and have further good properties in the case of log toroidal families as we will see.
\end{remark}
Let $X\ra S$ be a generically log smooth family.
One defines for the log smooth morphism $U\ra S$ the \emph{horizontal divisor} $D_U\subset U$ (see e.g. \cite[Definition~2.4]{Tsu}, also Remark~\ref{rem-horizontal-div} below). This is only a Weil divisor in general. We denote by $D$ its closure in $X$ and by $I_D$ the corresponding ideal sheaf. We define $W^m_{X/S}(-D):=j_*((I_D W^m_{X/S})|_U)$. (This does not need to agree with $I_D W^m_{X/S}$.)

\begin{proposition} \label{prop-top-trivial} 
Let $S=\Spec(\NN\ra \kk)$ for $\kk$ a field where $1\mapsto 0$.
Let $f:X\ra S$ be a generically log smooth family of relative dimension $d$ and let $\omega_f=f^!\shO_S$ denote the (globally normalized) relative dualizing sheaf, then $$W^d_{X/S}(-D)= \omega_f.$$
\end{proposition}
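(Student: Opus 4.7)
The plan is to prove the identity by showing that both sides are $Z$-closed coherent sheaves on $X$ and then identifying them on the log smooth locus $U$. By its very construction, $W^d_{X/S}(-D) := j_*\big((I_D W^d_{X/S})|_U\big)$ is $Z$-closed. For the other side, since $\underline{S} = \Spec \kk$ is a point, the relative dualizing sheaf $\omega_f = f^!\shO_S$ coincides (after the standard degree shift) with $\omega_X$, the dualizing sheaf of the Cohen--Macaulay scheme $X$. Such a sheaf satisfies Serre's condition $S_2$, and combined with $\codim{Z}{X}\ge 2$ from \eqref{CC} this forces $\omega_f \cong j_*(\omega_f|_U)$, making $\omega_f$ also $Z$-closed. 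It therefore suffices to identify the two sheaves after restriction to $U$.

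On $U$ the morphism $f|_U : U \to S$ is saturated and log smooth of relative dimension $d$. The standard log-canonical formula in this setting reads
$$\omega_{U/S} \;\cong\; \Omega^d_{U/S}(-D_U),$$
where $D_U$ is the reduced horizontal divisor. Using $\omega_f|_U = \omega_{U/S}$ and $(I_D W^d_{X/S})|_U = I_{D_U}\Omega^d_{U/S}$, this gives the desired equality on $U$. I would establish the formula étale-locally by choosing a saturated chart $\NN \to P$ for $f|_U$: Kato's description presents $\Omega^1_{U/S}$ as a locally free $\shO_U$-module arising from $P^{\gp}/\NN$, and the top log-differential then identifies with the ordinary relative dualizing sheaf twisted by $\shO(D_U)$ via the relative trace map. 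The vertical part of the log structure (pulled back from the generator of $\NN$ on $S$) contributes no extra twist, since the global normalization of $\omega_f$ absorbs it.

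The main obstacle lies in this local step: separating vertical from horizontal generators in the chart $P$, and checking that the trace identification matches the globally normalized relative dualizing sheaf on the nose, i.e., without any residual line bundle correction coming from the fact that $S$ itself carries a nontrivial log structure. Once the log smooth local comparison is in place, the combination of $Z$-closedness of both sides together with the agreement on $U$ yields the global isomorphism $W^d_{X/S}(-D) \cong \omega_f$ by uniqueness of $Z$-closed extensions.
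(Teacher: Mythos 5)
Your proposal matches the paper's proof: reduce to the log smooth locus $U$ via $Z$-closedness of both sides and invoke the log-canonical formula $\Omega^d_{U/S}(-D_U)\cong\omega_{U/S}$ for the saturated log smooth morphism $U\to S$. The paper simply cites Tsuji for that $U$-level formula rather than sketching its chart-level proof as you do; your remark that the "global normalization absorbs" the vertical twist is a bit vague (the honest mechanism is the relation $\sum \dlog z_i = \dlog t$ imposed by the map to the standard log point), but as a sketch standing in for Tsuji's theorem it does not affect the correctness of the argument.
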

\begin{proof} 
On $U$, this is \cite[Theorem~2.21,\,(ii)]{Tsu} and since both sheaves are $Z$-closed, the statement follows.
\end{proof}

\begin{example}\label{logsmoothsatLisse}
 Let $f: X \to S$ be a log smooth and saturated morphism of Noetherian fine saturated log schemes. 
Then $f$ is flat by \cite[4.5]{kkatoFI} and has Cohen--Macaulay fibers by \cite[II.4.1]{Tsuji1997}. 
We see that $f: X \to S$ gives a generically log smooth family for $U=X$ and $W^\bullet_{X/S}$ is the ordinary log de Rham complex.
\end{example} 
Not every log smooth morphism is saturated, e.g.~see \cite[Remark\,9.1]{Kato1996} for a log smooth morphism that is not even integral. 

\begin{example} \label{ex-Danilov} 
 Let $X/\Spec R$ be a toric variety over a Noetherian base ring $R$.
 The fibers over points in $\Spec R$ are normal (and Cohen--Macaulay), so there is a regular open $U \subset X$ whose 
 complement has relative codimension $\geq 2$ over $\Spec R$. 
For every divisorial log structure on $X$ coming from a torus invariant divisor $D$ on $X$, the map $U\ra \Spec R$ is log smooth and saturated when using the trivial log structure on $\Spec R$. 
Hence $X\ra \Spec R$ is a generically log smooth family. 
The differentials $W^\bullet_{X/S}$ coincide with what is called \emph{reflexive} or \emph{Danilov} or \emph{Zariski--Steenbrink differentials} with log poles in $D$.
This example extends to toroidal pairs $(X,D)$ over $\Spec R$.
\end{example}

\begin{example} 
\label{ex-standard-example}
The $\ZZ[t]$-algebra $A=\ZZ[x,y,t,w]/(xy-tw)$ defines a map $f:\Spec A\ra \AA^1$ that is log smooth and saturated away from the origin when using the divisorial log structure given by $t=0$ 
on source and target, hence a generically log smooth family. The log structure on $\Spec A$ is not coherent at the origin, so $f$ is not log smooth. 
Even worse, $\Omega_f$ is not a coherent sheaf at the origin, see \cite[Example~1.11]{GrossSiebertII}.
\end{example}
Another type of generically log smooth families with application to Gromov--Witten theory can be found in \cite{BN}.

\subsection{Analytification}\label{analytification}

Given a generically log smooth family $f: X \to S$ of finite type over $\CC$, we denote the associated family of complex analytic spaces by $f^{an}: X^{an} \to S^{an}$. 
Induced by $f$, the open $U^{an} \subset X^{an}$ carries an fs log structure so that $U^{an} \to S^{an}$ is a log smooth and saturated morphism of fs log analytic spaces. 
The analog of Lemma~\ref{relaNormal} holds if $X^{an},S^{an}$ are Cohen--Macaulay by \cite[Theorem~3.6]{BanicaStanasila1976}. For $S = \Spec (Q \to A)$ with $A$ an Artinian ring and
$$W^{\bullet,an}_{X/S} := j^{an}_*\Omega^\bullet_{U^{an}/S^{an}},$$
we have $W^{m,an}_{X/S} \cong (W^m_{X/S})^{an}$ since 
both are reflexive coherent $\cO_{X^{an}}$-modules that 
coincide on $U^{an}$. If $f$ is proper then GAGA gives
$H^q(X^{an},W^{p,an}_{X/S}) \cong H^q(X,W^p_{X/S})$ and 
also
$$\HH^{p + q}(X^{an},W^{\bullet,an}_{X/S}) \cong \HH^{p + q}(X,W^\bullet_{X/S}),$$
e.g. via the comparison of the Hodge--de Rham spectral sequences.

\section{Elementary Log Toroidal Families}\label{ElemPretoroid}
For basic notions and constructions of monoids, see \cite{LoAG18}.
\begin{definition} \label{def-ETD}
An \emph{elementary (log) toroidal datum} $(Q\subset P,\F)$ (ETD for short) consists of an injection $Q \hra P$ of sharp toric monoids that turns $P$ into a free $Q$-set whose canonical basis is a union of faces of $P$. We furthermore record a set $\F$ of facets of $P$ with the property that it contains all facets that do not contain $Q$. In other words, if we define
$$\shF_{\min}:= \underbrace{\{F\subset P\hbox{ a facet}\,|\, Q\not\subset F \}}_{\hbox{\tiny vertical facets}},$$
then
$\shF_{\min}\ \subset\ \shF\ \subset\ \shF_{\max}$ where $\shF_{\max}$ is the set of all facets.
\end{definition}
\begin{remark}  \label{rem-horizontal-div}
The facets in $\shF\setminus\shF_{\min}$ will give the \emph{horizontal divisor} that we referred to as $D$ before.
\end{remark}
\begin{figure} 
\includegraphics[width=.9\textwidth]{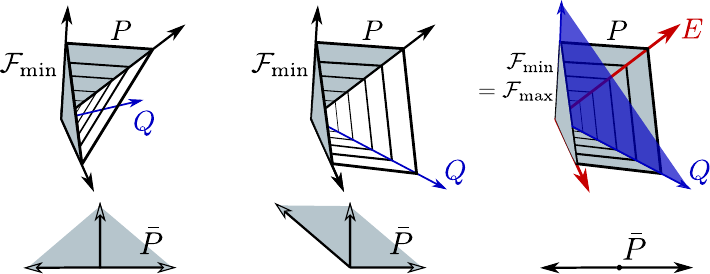}
\caption{Three examples of a saturated injection $Q\subset P$ and the projection $\bar P$, the outer two are log smooth, the middle one gives Example~\ref{ex-standard-example}.}
\label{example-ETDs}
\end{figure}
\begin{lemma}(\cite[Corollary~I.4.6.11, Theorem~I.4.8.14, Corollary~I.1.4.3]{LoAG18}) \label{lemma-eq-to-saturated}
The requirement on the injection $Q\hra P$ in Definition~\ref{def-ETD} is equivalent to saying this map is saturated.
\end{lemma}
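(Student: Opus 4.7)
The plan is to invoke the three cited results from Ogus's book in sequence, since the lemma is essentially a repackaging of the structural characterization of saturated morphisms. First, I would use Corollary I.4.6.11, which identifies integral morphisms $h: Q \to P$ of integral monoids as precisely those for which $P$ decomposes as a free $Q$-set via $h$. This matches the first clause of our condition in Definition~\ref{def-ETD} with integrality of $h$, so the freeness requirement alone is equivalent to integrality.

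Next, Theorem I.4.8.14 supplies the upgrade from integral to saturated: among local integral morphisms of sharp fs monoids, saturation is equivalent to the additional combinatorial condition that the canonical basis of $P$ (as a free $Q$-set) be compatible with the face structure of $P$, namely that the basis arise as a union of faces of $P$. This matches the second clause of Definition~\ref{def-ETD} essentially verbatim, using that $Q,P$ are sharp toric monoids. Corollary I.1.4.3 enters to identify the canonical basis intrinsically, e.g.\ via the projection $\bar P = P/Q$ onto the quotient set, so that the phrase ``the canonical basis is a union of faces of $P$'' has an unambiguous meaning and does not depend on auxiliary choices.

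With these three ingredients assembled, the equivalence is immediate: the conjunction of (i) $P$ being free over $Q$ and (ii) the canonical basis being a union of faces is, by I.4.6.11 combined with I.4.8.14, equivalent to $h$ being saturated. The main (minor) obstacle, and the step I would write out carefully, is the precise bookkeeping between our formulation and Ogus's: one has to verify that what we call ``the canonical basis'' of the free $Q$-set $P$ coincides with the set of minimal $Q$-orbit representatives used in Ogus, and that ``union of faces of $P$'' agrees with the face-theoretic condition appearing in the saturation criterion of I.4.8.14. Once these identifications are made, no further computation is needed; in particular, the picture in Figure~\ref{example-ETDs} confirms the expected behavior in the basic examples, where the vertical facets of $P$ correspond exactly to the faces swept out by the canonical basis.
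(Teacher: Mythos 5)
The paper offers no proof beyond the three citations to Ogus, and your reconstruction is a faithful unpacking of how those citations assemble: Corollary~I.4.6.11 identifies integrality with $P$ being a free $Q$-set, Theorem~I.4.8.14 characterizes saturation among such integral morphisms via the condition that the canonical basis $E = P\setminus(Q^+ + P)$ be a union of faces (this is the same result the paper invokes again for the decomposition \eqref{eq-decomp}), and Corollary~I.1.4.3 supplies the face-theoretic bookkeeping. Your bookkeeping caveat at the end—verifying that "canonical basis" and "union of faces" agree with Ogus's formulations—is precisely the right thing to be careful about, and the proposal is correct in both substance and approach.
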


See Figure~\ref{example-ETDs} for examples. Even the case $Q=0$ can be interesting since then $\shF_{\min}=\emptyset$.
We denote the union of faces of $P$ that gives the generating set of the free $Q$-action by $E$. 
A face $F$ of $P$ contained in $E$ we call an \emph{essential face}.
Every $p \in P$ has a unique decomposition $p = e + q$ with $e \in E, q \in Q$, hence
\begin{equation} \label{eq-decomp}
E \times Q \to P, \quad (e, q) \mapsto e + q,
\end{equation}
is bijective (\cite[Theorem~I.4.8.14]{LoAG18},\,cf.\,\cite[Lemma~1.1]{Kato2000}). 
Furthermore, we see that $E = P \setminus (Q^+ + P)$ where $Q^+ = Q \setminus 0$ is the maximal ideal. 
Moreover, projecting $E$ to $P^\gp/Q^\gp$ is injective and the set of essential faces gives a fan in $P^\gp/Q^\gp$ whose support $\bar P$ is convex in $(P^\gp/Q^\gp)\otimes_\ZZ\RR$ since it is the convex hull of the projection of $P$. Note that $\bar P^\gp=P^\gp/Q^\gp$.
A choice of splitting $P^\gp\cong\bar P^\gp\oplus Q^\gp$ yields a unique map of sets
$\varphi:\bar P\ra Q^\gp$ so that $\id\times\varphi: \bar P\ra \bar P\oplus Q^\gp$ is a section of the projection $P\ra\bar P$
with the property that its image is $E$, so
\begin{equation}\label{eq-rep-varphi}
P=\{(\bar p,q)\in \bar P\oplus Q^\gp\,|\, \exists \tilde{q}\in Q: q=\varphi(\bar p)+ \tilde{q}\}.
\end{equation}

\begin{lemma} \label{lemma-flat-CM}
The morphism $\underline f:\Spec\ZZ[P]\ra\Spec\ZZ[Q]$ induced by the injection $Q \subset P$ is a Cohen--Macaulay morphism of fiber dimension $d=\rk (P^\gp/Q^\gp)$.
\end{lemma}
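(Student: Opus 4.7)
The strategy is to equip $\underline f$ with the canonical toric log structures coming from the charts $P \to \ZZ[P]$ and $Q \to \ZZ[Q]$, thereby reducing the statement to Example~\ref{logsmoothsatLisse}, which packages Kato's flatness theorem and Tsuji's Cohen--Macaulayness of fibers for saturated log smooth morphisms. The fiber dimension is then read off from the free $Q$-set structure \eqref{eq-decomp}.

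First I would verify that, with these log structures, the morphism $f: \Spec(P \to \ZZ[P]) \to \Spec(Q \to \ZZ[Q])$ is log smooth and saturated. Log smoothness follows from Kato's chart criterion: the homomorphism $Q \hra P$ is injective with finitely generated torsion-free cokernel $P^\gp/Q^\gp$, and the induced map of group algebras $\ZZ[Q^\gp] \to \ZZ[P^\gp]$ is smooth, so $f$ is log smooth of relative dimension $\rk(P^\gp/Q^\gp)$. Saturation of the morphism reduces to saturation of the chart homomorphism $Q \hra P$, which is precisely the content of Lemma~\ref{lemma-eq-to-saturated}. Example~\ref{logsmoothsatLisse} then applies verbatim: by \cite[4.5]{kkatoFI} the underlying map $\underline f$ is flat, and by \cite[II.4.1]{Tsuji1997} its fibers are Cohen--Macaulay. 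Note that the facet set $\F$ plays no role here since $\underline f$ depends only on the underlying monoid injection.

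For the fiber dimension I would use the bijection \eqref{eq-decomp} directly to obtain
$$\ZZ[P] = \bigoplus_{e \in E} \ZZ[Q] \cdot z^e$$
as a free $\ZZ[Q]$-module, which gives flatness by hand and shows that the generic fiber $\ZZ[P] \otimes_{\ZZ[Q]} \ZZ[Q^\gp]$ has a generating set indexed by $E$. Since $E$ projects bijectively onto a fan in $P^\gp/Q^\gp$ whose support is the full-dimensional convex cone $\bar P$, the generic fiber has Krull dimension $\rk(\bar P^\gp) = \rk(P^\gp/Q^\gp) = d$. This matches the log-smooth relative dimension computed above, so all fibers have dimension $d$.

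I do not expect a serious obstacle: the lemma is essentially a repackaging of standard toric and log-geometric facts. The only point requiring a moment of care is the translation from the combinatorial data of an ETD (a saturated injection $Q \hra P$ together with a choice of horizontal facets $\F$) into the hypotheses of Kato's and Tsuji's theorems; once one observes that the Cohen--Macaulay statement is independent of $\F$, the proof reduces to quoting those two results.
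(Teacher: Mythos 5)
Your proof is correct but takes a genuinely different route from the paper. The paper's argument is purely commutative-algebraic and never invokes log smoothness: flatness comes from the free $Q$-set decomposition $\ZZ[P] = \bigoplus_{e\in E}\ZZ[Q]\cdot z^e$, and Cohen--Macaulayness of the fibers is obtained by applying Hochster's theorem to $\Spec\ZZ[P]$ and $\Spec\ZZ[Q]$ (as schemes over $\ZZ$, so both total spaces are CM) and then descending along the faithfully flat $\underline f$ using \cite[Corollary~6.3.5]{Grothendieck1965}. Your argument instead upgrades $\underline f$ to the log morphism $A_P\to A_Q$ and reduces to Example~\ref{logsmoothsatLisse}, bundling Kato's flatness theorem for integral log smooth morphisms with Tsuji's CM-fibers result for saturated ones. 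Both routes work; the paper's has the advantage of not needing any facts about the structure of log smooth or saturated morphisms beyond what has already been set up, and in particular it sidesteps the following subtlety in your approach.

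Since you invoke Kato's chart criterion over $\ZZ$, you need $P^\gp/Q^\gp$ to be torsion-free (no prime being invertible on $\Spec\ZZ[P]$), and you state this without justification. It is in fact a consequence of the ETD hypotheses, but it deserves an argument: suppose $n(e_1 - e_2) = a - b\in Q^\gp$ with $e_1,e_2\in E$ and $a,b\in Q$; then $ne_1 + b = ne_2 + a$, and since $E$ is a union of faces of $P$ (hence closed under $\NN$-scaling) one has $ne_1, ne_2\in E$, so uniqueness of the decomposition \eqref{eq-decomp} gives $ne_1 = ne_2$ and hence $e_1 = e_2$ because $P^\gp$ is torsion-free. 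Combined with the observation that $E$ injects into $P^\gp/Q^\gp$, this shows the quotient is torsion-free. Without that step the appeal to Kato's criterion over $\ZZ$ is unjustified, so you should supply it (or cite the relevant result in \cite{LoAG18}) before quoting Example~\ref{logsmoothsatLisse}.
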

\begin{proof} 
Since $P$ is free as a $Q$-set (generated by $E$), $\Spec\ZZ[P]$ is a flat $\Spec\ZZ[Q]$-module.
By \cite[Corollary~6.3.5]{Grothendieck1965} the total space of a faithfully flat morphism of Noetherian schemes is Cohen--Macaulay if and only if the base and all fibers are. By Hoechster's theorem, the fibers of $\Spec\ZZ[P]\ra\Spec\ZZ$ are Cohen--Macaulay, hence $\Spec\ZZ[P]$ and $\Spec\ZZ[Q]$ are Cohen--Macaulay. Now flatness of $\underline f$ implies it is Cohen--Macaulay.
\end{proof}
We next want to define an open set $U$ in the domain of $\underline f$ that satisfies \eqref{CC}.
We will actually define its complement and for this we need a good understanding of the faces of $P$.

\begin{lemma} \label{face-form}
Let $F\subset P$ be a face. Set $\bar F:=F\cap E$, $Q':=Q\cap F$, then
$$F=\bar F+Q' :=\{\bar f+q'|\bar f\in \bar{F},q'\in Q'\}.$$
Since $E$ is a union of faces of $P$, so is $\bar F$. Note also that $Q'$ is a face of $Q$.
\end{lemma}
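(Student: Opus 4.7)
The whole statement is driven by the unique decomposition \eqref{eq-decomp}: every $p\in P$ is written as $p=e+q$ with $e\in E$ and $q\in Q$. The strategy is to feed this decomposition into the defining property of a face.

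\textbf{Step 1 (the inclusion $F\subset \bar F+Q'$).} Take $p\in F$ and write $p=e+q$ with $e\in E$, $q\in Q$ using \eqref{eq-decomp}. Since $e,q\in P$ and their sum $p$ lies in the face $F$, the face property forces $e\in F$ and $q\in F$. Hence $e\in F\cap E=\bar F$ and $q\in F\cap Q=Q'$, giving the inclusion.

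\textbf{Step 2 (the reverse inclusion and the claim on $Q'$).} Both $\bar F$ and $Q'$ are contained in $F$, and $F$ is closed under addition, so $\bar F+Q'\subset F$. To see that $Q'$ is a face of $Q$, note that it is the intersection of the two faces $F\subset P$ and $Q\subset P$ (where $Q\subset P$ is a face by Lemma~\ref{lemma-eq-to-saturated}, as the image of a saturated injection of sharp toric monoids), and intersections of faces are faces. One can also argue directly: if $q_1+q_2\in Q'$ with $q_i\in Q$, then $q_1+q_2\in F$, so $q_i\in F$ by the face property, whence $q_i\in F\cap Q=Q'$.

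\textbf{Step 3 ($\bar F$ is a union of faces of $P$).} By hypothesis, $E=\bigcup_i G_i$ for certain faces $G_i\subset P$. Then
\[
\bar F\;=\;F\cap E\;=\;F\cap \bigcup_i G_i\;=\;\bigcup_i(F\cap G_i),
\]
and each $F\cap G_i$ is a face of $P$ as the intersection of two faces. The main (and only) conceptual point is the use of \eqref{eq-decomp} in Step 1; everything else is a formal manipulation of faces, so no real obstacle is expected.
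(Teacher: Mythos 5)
Your proof of the central equality $F=\bar F+Q'$ (Steps 1 and 2, first sentence) is exactly the paper's argument: use the unique decomposition \eqref{eq-decomp} and the face property of $F$ to get $F\subset\bar F+Q'$, and close under addition for the reverse inclusion. That part is fine and matches.

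One flaw in Step 2: your first argument that $Q'$ is a face of $Q$ rests on the assertion that $Q$ is a face of $P$ "by Lemma~\ref{lemma-eq-to-saturated}, as the image of a saturated injection." This is not what Lemma~\ref{lemma-eq-to-saturated} says, and it is false in general. For instance, for the ETD of Example~\ref{ex-standard-example} (the middle picture in Figure~\ref{example-ETDs}), $Q=\NN$ embeds in $P\subset\ZZ^2$ along a diagonal ray, which is not a face of $P$ even though the injection is saturated; being a saturated submonoid is much weaker than being a face. Fortunately, the direct argument you give immediately afterward (if $q_1+q_2\in Q'\subset F$ with $q_i\in Q$, then $q_i\in F\cap Q=Q'$ by the face property of $F$) is correct and self-contained, so your proof survives, but you should drop the claim that $Q$ is a face of $P$. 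Step 3 is a routine verification that the paper leaves implicit and your argument there is fine.
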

\begin{proof}
By the decomposition \eqref{eq-decomp}, any element in $F$ has the form $\bar f+q$ with $\bar f \in E, q\in Q$. Since $F$ is a face, $\bar f,q$ are both in $F$, hence $F\subset\bar F+Q'$. The reverse inclusion is clear.
\end{proof}

Consider the set of \emph{bad faces} of $P$ defined as
$$\shB=\left\{ \bar F+Q' \,\left|\, {\bar F\hbox{ is a union of essential faces of rank at most }d-2}\atop{Q'\hbox{ is a face of }Q,\, \bar F+Q'\hbox{ is a face of }P}\right.\right\}.$$ 

Recall that there is a 1-1 correspondence between faces $F$ of $P$ and torus orbits closures $V_F:=\Spec \ZZ[F]$ in $\Spec \ZZ[P]$.
Similarly, for $Q'$ a face of $Q$, we have a torus orbit closure $V_{Q'}:=\Spec \ZZ[Q']\subset\Spec \ZZ[Q]$.

\begin{lemma} \label{lemma-U} 
Given $\bar F+Q'\in\shB$, we find that $V_{\bar F+Q'}$ is flat over $V_{Q'}\subset\Spec\ZZ[Q]$. 
Furthermore, if $X$ is a fiber of $\underline f$, then $\codim{X\cap V_{\bar F+Q'}}{X}\ge 2$.
\end{lemma}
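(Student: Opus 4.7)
The plan is to extract both conclusions from the single structural observation that the face $F := \bar F + Q'$ of $P$ inherits a free $Q'$-action with canonical basis $\bar F = F \cap E$, mirroring the free $Q$-action on $P$ with basis $E$. First I would verify that every element of $F$ has a unique decomposition $f = \bar f + q'$ with $\bar f \in \bar F$ and $q' \in Q'$: existence is built into $F = \bar F + Q'$, and uniqueness follows from the bijection $E \times Q \to P$ combined with the face property of $F$, which forces the $E\times Q$-components of any $f \in F$ to lie in $F \cap E = \bar F$ and $F \cap Q = Q'$ respectively. Consequently $\ZZ[F]$ is free as a $\ZZ[Q']$-module on $\{z^{\bar f}\}_{\bar f \in \bar F}$, giving flatness of $V_{\bar F + Q'} \to V_{Q'}$.

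For the codimension statement, I would first note that the composite $V_F \hookrightarrow \Spec \ZZ[P] \xrightarrow{\underline f} \Spec \ZZ[Q]$ factors through $V_{Q'}$, because every $q \in Q \setminus Q'$ lies outside $F$ and hence maps to $0$ in $\ZZ[F]$. Thus for a fiber $X$ of $\underline f$ over a point $x$, the intersection $X \cap V_F$ is empty (and the codimension bound is vacuous) unless $x \in V_{Q'}$; in that case $X \cap V_F$ is the fiber of the flat map $V_F \to V_{Q'}$ over $x$, which by flatness has constant dimension $\dim V_F - \dim V_{Q'} = \rk F^\gp - \rk Q'^\gp$.

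The final step identifies this difference of ranks with $\rk \bar F^\gp$ via the short exact sequence
$$0 \to Q'^\gp \to F^\gp \to \bar F^\gp \to 0,$$
where the right map is induced by $P^\gp \twoheadrightarrow \bar P^\gp = P^\gp/Q^\gp$ and $\bar F^\gp$ denotes the subgroup of $\bar P^\gp$ generated by (the image of) $\bar F$. Surjectivity is immediate from $F = \bar F + Q'$, and the kernel is identified with $Q'^\gp$ using the injectivity of $E \hookrightarrow \bar P$: any element of $F^\gp \cap Q^\gp$, once its constituents are $E\times Q$-decomposed, has cancelling essential parts and therefore lies in $Q'^\gp$. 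Combined with $\dim X = d$ from Lemma~\ref{lemma-flat-CM} and the hypothesis $\rk \bar F^\gp \leq d-2$, this yields $\codim{X \cap V_F}{X} \geq 2$. I expect exactness at $F^\gp$ to be the trickiest point; once that combinatorial identification is settled, everything else is essentially bookkeeping with no hard geometry entering.
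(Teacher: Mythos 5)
Your route is genuinely different from the paper's. The paper picks out the fiber over the origin of $V_{Q'}$, computes it directly as $X_0\cap V_{\bar F+Q'}=\bigcup_{G\subset\bar F}V_G$ (union over faces $G$ of $P$ contained in $\bar F$), and bounds each piece by the rank hypothesis; you instead use flatness to read off the fiber dimension from the generic fiber as $\rk F^\gp-\rk Q'^\gp$ and then compute that difference via the short exact sequence $0\to Q'^\gp\to F^\gp\to \bar F^\gp\to 0$. The flatness argument, the observation that $V_{\bar F+Q'}\to\Spec\ZZ[Q]$ factors through $V_{Q'}$ (so fibers over $\Spec\ZZ[Q]\setminus V_{Q'}$ are empty), and the verification of the short exact sequence via the injectivity of $E\hookrightarrow\bar P^\gp$ are all correct.

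The gap is in the final step, where you invoke ``the hypothesis $\rk\bar F^\gp\le d-2$''. The membership condition for $\shB$ is that $\bar F$ is a union of essential faces \emph{each of which} has rank at most $d-2$; it does not literally say that the subgroup $\bar F^\gp\subset\bar P^\gp$ generated by the entire union has rank $\le d-2$. If $\bar F=F_1\cup\dots\cup F_k$ for several incomparable essential faces, then $\bar F^\gp=F_1^\gp+\dots+F_k^\gp$ could a priori have rank strictly larger than $\max_i\rk F_i^\gp$. The inequality $\rk\bar F^\gp\le d-2$ does hold, but it needs an argument: for instance, since $\bar F+Q'$ is a face $F$ of $P$, the image of $\bar F$ in $\bar P^\gp$ is precisely the image of the monoid $F$ under $F^\gp\to F^\gp/Q'^\gp$, hence a convex cone of dimension $\rk\bar F^\gp$; the $F_i$ map to cones of a fan supported on that convex cone, so one of them must be full-dimensional, giving $\rk\bar F^\gp=\max_i\rk F_i^\gp\le d-2$. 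Your proof omits this, treating the needed inequality as definitional. The paper's proof avoids the issue entirely, because the dimension of $\bigcup_{G\subset\bar F}V_G$ involves only the individual ranks of faces contained in $\bar F$, never the group they collectively generate.
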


\begin{proof} 
Since $\bar F+Q'$ is free as a $Q'$-set, $\ZZ[\bar F+Q']$ is a free $\ZZ[Q']$-module, so the flatness statement follows. 
The origin $0$ given by the prime ideal $(z^q|q\in Q^+)$ is contained in $V_{Q'}$, let $X_0$ be the fiber over it. 
It suffices to check the codimension condition for this particular fiber.
But note that $X_0\cap V_{\bar F+Q'}= \bigcup_{F\subset \bar F}V_F$ where the union runs over faces $F$ of $P$ contained in $\bar F$ and we have $\dim V_F\le d-2$ by the assumption on $\bar F$.
\end{proof}

Set
\begin{equation} \label{eq-def-U}
U_P := \Spec\ZZ[P]\setminus \left(\bigcup_{B\in\shB} V_B\right).
\end{equation}
For every face $F$ of $P$, we have an open subset $\Spec\ZZ[P_F]$ of $\Spec\ZZ[P]$ where $P_F$ is the localization of $P$ in $F$, i.e.,~$P_F$ is the submonoid of $P^\gp$ generated by $P$ and $-F$.
\begin{lemma} \label{lem-cover-U}
 We find $U_P =\bigcup_{F} U_F$ where the union is over the essential faces $F$ of rank $d-1$.
\end{lemma}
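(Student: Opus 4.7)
The plan is to work through the torus-orbit stratification of $\Spec\ZZ[P]$ and reduce the asserted equality to a combinatorial identity on faces of $P$. Every $x \in \Spec\ZZ[P]$ lies in a unique torus orbit corresponding to a face $G_x \subset P$, and from the prime-ideal description of $V_H$ and $U_F$ one reads off that $x \in V_H$ iff $G_x \subseteq H$, and $x \in U_F$ iff $F \subseteq G_x$. Thus $x \in U_P$ is equivalent to $G_x \notin \shB$, while membership of $x$ in $\bigcup_F U_F$ (union over essential faces $F$ of rank $d-1$) is equivalent to $G_x$ containing some essential face of rank $d-1$. It therefore suffices to show that these two conditions on $G_x$ coincide.

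For the inclusion $\bigcup_F U_F \subseteq U_P$, suppose $x \in U_F$ for some essential $F$ of rank $d-1$ and, for contradiction, $x \in V_B$ for some $B = \bar B + Q'' \in \shB$. Then $F \subseteq G_x \subseteq B$. Using the uniqueness of the decomposition $p = e+q$ with $e \in E$, $q \in Q$ from \eqref{eq-decomp}, one checks $B \cap E = \bar B$, so $F \subseteq \bar B$. But $\bar B$ is a union of essential faces of rank $\leq d-2$; after projecting to $(P^\gp/Q^\gp) \otimes \RR$, this would make the $(d-1)$-dimensional cone $F$ sit inside a finite union of cones of dimension $\leq d-2$, which is impossible because $F$ has relative interior points spanning a $(d-1)$-dimensional affine subspace.

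For the reverse inclusion, let $x \in U_P$ and write $G := G_x = \bar G + Q'$ as in Lemma~\ref{face-form}. Since $E$ is a union of faces of $P$, the set $\bar G = G \cap E$ is itself a union of essential faces. If every essential face contained in $\bar G$ had rank $\leq d-2$, then $G$ itself would lie in $\shB$, and $x \in V_G$ would contradict $x \in U_P$. Hence $\bar G$ contains an essential face of rank $\geq d-1$; passing to a face if needed, we obtain an essential $F \subseteq \bar G \subseteq G_x$ of rank exactly $d-1$, so $x \in U_F$.

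The technical core of the argument is the convex-geometric input in the first step: a $(d-1)$-dimensional cone in the fan on $\bar P$ cannot be covered set-theoretically by finitely many cones of strictly smaller dimension in the same ambient space. Everything else is a direct translation between the torus-orbit stratification of $\Spec\ZZ[P]$ and the combinatorics of faces of $P$ supplied by Lemma~\ref{face-form}.
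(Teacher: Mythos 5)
Your proof is correct and follows essentially the same route as the paper's: reduce to checking torus orbits via their associated faces $G_x$, use Lemma~\ref{face-form} to write $G_x = \bar G_x + Q'$, and observe that $G_x \notin \shB$ is equivalent to $\bar G_x$ containing an essential face of rank $d-1$. The only difference is that you spell out the implicit step that a rank-$(d-1)$ essential face cannot lie inside a union of essential faces of rank $\leq d-2$ (via a dimension-count on the fan in $(P^\gp/Q^\gp)\otimes\RR$), whereas the paper leaves this to the reader; one could also argue combinatorially by picking a point in the relative interior of $F$.
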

\begin{proof}
Since $U_P$ is a union of torus orbits, it suffices to check that any torus orbit contained in $U_P$ is contained in some $U_F$ for $F$ essential of rank $d-1$. Every torus orbit is given by $O_G:=\Spec\ZZ[G^\gp]$ for $G$ a face of $P$. 
Assume $O_G\subset U$. 
We use Lemma~\ref{face-form} to write $G=\bar G+Q'$. If $\rk\bar G\le d-2$, then $G\in\shB$, so $O_G\not\subset U$. 
Hence, $\dim\bar G\ge d-1$ and $\bar G$ contains some essential face $F$ of rank $d-1$. Then $F$ is also contained in $G$ and thus $O_G$ is contained in $U_F$. Conversely, since $O_F$ is not in any $V_B$, the assertion follows.
\end{proof}

Let $A_Q:=\Spec(Q\ra \ZZ[Q])$ denote the log scheme with standard toric log structure and let 
$A_{P,\shF}$ be the log scheme with underlying scheme $\Spec\ZZ[Q]$ and divisorial log structure given by the divisor $\bigcup_{F\in\shF} \Spec\ZZ[F]$. 
The map $f:A_{P,\shF}\ra A_Q$ induced by $\theta$ is naturally a log morphism by the condition on $\shF$ to contain the vertical faces.
We work here with Zariski log structures which however coincide with the pushforward of the corresponding  \'etale log structures by \cite[Proposition~III.1.6.5]{LoAG18}.
\begin{lemma}[Theorem~3.5 in \cite{kkatoFI} or Theorem~4.1 in \cite{Kato1996}] 
\label{lemma-log-smooth} 
If $\shF=\shF_{\max}$, then $f$ is log smooth.
\end{lemma}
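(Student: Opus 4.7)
My plan is to reduce the statement to a direct application of Kato's criterion for log smoothness of a morphism of log schemes induced by a chart-level monoid homomorphism.

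First, I would verify that when $\shF = \shF_{\max}$, the divisorial log structure on $\underline{A}_{P,\shF} = \Spec \ZZ[P]$ coming from the union of all toric divisors $\bigcup_{F\in\shF_{\max}}\Spec\ZZ[F]$ coincides with the standard toric log structure associated to the chart $P \to \ZZ[P]$, $p \mapsto z^p$. This is a standard identification: a regular function on a toric chart is supported on the complement of the full toric boundary precisely when it is invertible, so both log structures have the same image in $\shO_{A_{P,\shF}}$ and induce the same sheaf of monoids $\shM \to \shO$. In particular, $A_{P,\shF_{\max}} = \Spec(P \to \ZZ[P])$ as log schemes, and the morphism $f$ to $A_Q = \Spec(Q \to \ZZ[Q])$ is the morphism of fs log schemes induced in the standard way by the monoid homomorphism $\theta : Q \hookrightarrow P$.

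With this identification at hand, the plan is to invoke Kato's criterion (the cited \cite[Theorem~3.5]{kkatoFI} or \cite[Theorem~4.1]{Kato1996}): for a homomorphism $\theta : Q \to P$ of fs monoids, the induced morphism $\Spec(P \to \ZZ[P]) \to \Spec(Q \to \ZZ[Q])$ is log smooth provided that $\ker(\theta^\gp)$ and the torsion subgroup of $\coker(\theta^\gp)$ are finite of order invertible on the base. Working over $\Spec \ZZ$, this means I need both groups to be trivial, i.e., $\theta^\gp$ is injective with torsion-free cokernel.

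Both conditions follow from the ETD data. Injectivity of $\theta^\gp$ follows from injectivity of $\theta : Q \hookrightarrow P$ together with the fact that $Q^\gp$ is a lattice (since $Q$ is a sharp toric monoid and in particular integral). For the torsion-freeness of $\coker \theta^\gp = P^\gp/Q^\gp$, I would use the ETD structure: by the decomposition $P = E \times Q$ established after \eqref{eq-decomp}, the quotient $P^\gp/Q^\gp$ is canonically identified with $\bar P^\gp$, the group generated by the image of the essential faces, which is a sublattice of the lattice $(P^\gp/Q^\gp)_{\mathrm{free}}$ and hence torsion-free. Equivalently, this torsion-freeness is precisely the saturatedness of $Q \hookrightarrow P$ recorded in Lemma~\ref{lemma-eq-to-saturated}. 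Combining these observations, Kato's criterion applies and $f$ is log smooth. There is no real technical obstacle here; the content of the lemma is essentially bookkeeping that identifies the divisorial log structure with the canonical toric one and checks the two numerical hypotheses of Kato's theorem.
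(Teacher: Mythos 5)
The paper gives no independent proof of this lemma; it is a direct citation to Kato's chart criterion for log smoothness, and your strategy of reducing to that criterion and checking its two hypotheses is exactly what is implied. The identification $A_{P,\shF_{\max}} = \Spec(P\to\ZZ[P])$ and the verification that $\theta^\gp$ is injective (using that $Q\hra P$ is an injection of integral monoids) are both fine.

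Where I see a gap is in your verification that $\coker(\theta^\gp)=P^\gp/Q^\gp$ is torsion-free. Your first argument is circular: you assert that $\bar P^\gp = P^\gp/Q^\gp$ "is a sublattice of the lattice $(P^\gp/Q^\gp)_{\mathrm{free}}$ and hence torsion-free," but since $\bar P^\gp = P^\gp/Q^\gp$, saying it embeds in its own torsion-free quotient is exactly the statement you are trying to prove. Your fallback — that torsion-freeness "is precisely the saturatedness of $Q\hra P$" — also misstates the relationship: saturatedness of $\theta$ in the sense of Tsuji/Kato/Ogus is a considerably stronger condition than $Q^\gp$ being a primitive sublattice of $P^\gp$, and the two are not equivalent (saturated $\Rightarrow$ torsion-free cokernel, but not conversely). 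What is actually needed is a short direct argument from the ETD structure. Sketch: suppose $x\in P^\gp$ with $nx\in Q^\gp$ for some $n\ge 1$. Writing $x = (e_1 - e_2) + (q_1 - q_2)$ via the free decomposition $P = E\cdot Q$, one gets $n e_1 - n e_2 \in Q^\gp$. Since $E$ is a union of faces of $P$, $ne_i\in E$, and adding suitable elements of $Q$ to both sides and invoking the uniqueness of the $E\times Q$ decomposition forces $n e_1 = n e_2$, hence $e_1=e_2$ because $P^\gp$ is a lattice, hence $x\in Q^\gp$. This is the content that your paragraph glosses over; with it supplied, Kato's criterion applies and the proof is complete.
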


\begin{proposition} \label{ETD-is-pretoroidal}
The map $f:A_{P,\shF}\ra A_Q$ is a generically log smooth family with $U_P$ serving as the specified dense open of log smoothness.
\end{proposition}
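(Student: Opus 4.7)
Proof plan. The statement requires verifying the three conditions in the definition of a generically log smooth family. The finite-type Cohen--Macaulay property of the underlying morphism $\underline f : \Spec\ZZ[P] \to \Spec\ZZ[Q]$ is exactly Lemma \ref{lemma-flat-CM}. For the codimension condition \eqref{CC}, I use that $Z := X \setminus U_P = \bigcup_{B\in\shB} V_B$: by Lemma \ref{lemma-U}, each $V_B$ is flat over $V_{B\cap Q}$ with fibers of dimension $\leq d-2$, while $\underline f$ has equidimensional fibers of dimension $d$ by Lemma \ref{lemma-flat-CM}, forcing $\codim(V_B\cap X_s, X_s)\geq 2$ for every $s\in S$ and every $B\in\shB$.

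The remaining task is saturatedness and log smoothness of $f$ on $U_P$. By Lemma \ref{lem-cover-U}, $U_P = \bigcup_F U_F$ over essential $F$ of rank $d-1$, so I work on a fixed $U_F = \Spec\ZZ[P_F]$. On this open all elements of $F$ become invertible, and I identify the chart of the restricted divisorial log structure as
\[
M_F \;:=\; \bigcap_{G\in \shG_F\setminus \shF} G_F \;\subseteq\; P_F,
\]
where $\shG_F$ denotes the facets of $P$ containing $F$ and $G_F := G+\ZZ F$ is the corresponding face of $P_F$; concretely, $M_F$ is the monoid of those $p\in P_F$ whose monomial $z^p$ is invertible away from $D_\shF\cap U_F$. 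Because $\shF \supseteq \shF_{\min}$, every facet in $\shG_F\setminus \shF$ is horizontal and hence contains $Q$, so the chart map of $f|_{U_F}$ factors as $Q\hookrightarrow M_F \subseteq P_F$.

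Log smoothness now follows from Kato's chart criterion applied to $Q\to M_F$: the map $Q^\gp\to M_F^\gp$ is injective with torsion-free cokernel (a subquotient of the torsion-free $\bar P^\gp$), and the map of underlying schemes $\underline{U_F}\to \Spec\ZZ[M_F]$ is smooth, because $P_F$ is a polynomial extension of $M_F$ generated freely by the essential generators at $F$ not already absorbed into $M_F$. Saturatedness of $Q\to M_F$ is inherited from the saturated inclusion $Q\hookrightarrow P$ (Lemma \ref{lemma-eq-to-saturated}) after localization at $F$ and intersection with the faces defining $M_F$.

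The delicate point is the smoothness assertion just invoked. When $F$ lies in the interior of the support of the essential fan $\bar P$, no horizontal facet of $P$ contains $F$, so $\shG_F \setminus \shF = \emptyset$, forcing $M_F = P_F$ and making the map the identity. When $F$ lies on the boundary, a unique horizontal facet $F+Q$ contains $F$; if $F+Q\notin\shF$ then $M_F\subsetneq P_F$, and one verifies using the decomposition $P = E\times Q$ of \eqref{eq-decomp} that $P_F$ is a polynomial extension of $M_F$ by a single essential generator adjacent to $F$, yielding the required smoothness.
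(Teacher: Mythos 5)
Your proof follows the same reduction to the cover $\{U_F\}$ from Lemma~\ref{lem-cover-U} and uses Lemmas~\ref{lemma-flat-CM} and~\ref{lemma-U} for the Cohen--Macaulay and codimension conditions, matching the paper. Where you diverge is in handling log smoothness on $U_F$: you identify the chart monoid $M_F = \bigcap_{G\in\shG_F\setminus\shF} G_F$ of the divisorial log structure and verify Kato's chart criterion for $Q\to M_F$ directly, whereas the paper factors $U_F \to \bar U_F := \Spec\ZZ[\bar P_F] \to A_Q$ with $\bar P_F = P_F/F^\gp$, observes that the first map is a strict smooth projection, and recognizes $\bar U_F\to A_Q$ as the log morphism of an ETD with $d=1$, handled by Lemma~\ref{lemma-d1}. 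Both routes work; the paper's structural reduction to the $d=1$ case is cleaner (and gets reused in Corollary~\ref{smooth-type-decompo}), while your chart approach is more explicit.

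That said, the central claims in your last two paragraphs are asserted, not proved, and they contain the real content. Specifically: (i) the assertion that when $\bar F$ lies on the boundary of $\bar P$ there is a \emph{unique} horizontal facet containing $F$, and that this facet equals $F+Q$, needs an argument --- one must show that for any horizontal facet $G\supseteq F$, the essential part $\bar G := G\cap E$ is precisely $F$ (this follows because the essential set of $G$ over $Q$ is a fan of dimension $d-1$ with $F$ as its only top-dimensional cone, but that deserves a sentence); (ii) the assertion that $P_F$ is a free (polynomial) extension of $M_F$ by one generator is exactly where the paper invokes the decomposition $P_F\cong F^\gp\times\bar P_F$ together with the $d=1$ analysis showing $\bar P_F\cong\NN\times Q$, and your phrase ``one verifies using \eqref{eq-decomp}'' hides precisely this step; and (iii) your saturatedness claim (``inherited after localization and intersection'') is stated too loosely --- the paper instead notes that $A_{P,\shF_{\max}}\to A_{P,\shF}$ is exact and cites \cite[I.4.8.5(2)]{LoAG18}, or alternatively in your framework one should observe that in the boundary case $Q\to M_F\cong F^\gp\times Q$ is an inclusion of a direct factor (trivially saturated), and in the case $M_F=P_F$ use stability of saturated maps under localization. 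You also omit the trivial case $d=0$. The plan is sound, but as written it would need these justifications to stand on its own.
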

\begin{proof} 
If $\shF=\shF_\max$ then $f$ is saturated since $\theta$ is saturated. 
More generally, since $A_{P,\shF_\max}\ra A_{P,\shF}$ is locally given by embedding a face, it is exact. Now by \cite[I.4.8.5(2)]{LoAG18}, $f$ is saturated.

The assertion is clear if $d=0 \iff P=Q$, so assume $d>0$.
Given Lemma~\ref{lemma-flat-CM}, we still need to verify that $U$ satisfies \eqref{CC} and that $f$ is log smooth on $U_P$. 
Note that Lemma~\ref{lemma-U} implies that $U_P$ satisfies \eqref{CC} since the complement of $U_P$ is the union of closed sets each of which has codimension at least two in each fiber.

To see that $f$ is log smooth on $U_P$, by Lemma~\ref{lem-cover-U}, it suffices to check that $f$ is log smooth on $U_F$ for $F$ essential of rank $d-1$. Let $F$ be such a face.
Set $\bar P_F:=P_F/F^\gp$ and note that the projection of $Q$ to $\bar P_F$ is injective because $F^\gp\cap Q=\{0\}$.
There is an isomorphism $P_F\cong F^\gp\times \bar P_F$ commuting with the injection of $Q$ that is $\{0\}\times Q$ on the right.

The log structure on $U_F$ is a divisorial log structure given by a set of divisors each of which pulls back from $\Spec\ZZ[\bar P_F]$, so we may consider the corresponding divisorial log structure on $\Spec\ZZ[\bar P_F]$ to upgrade this to a log scheme $\bar U_F$. 
We have a factorization $U_F\ra \bar U_F\ra A_Q$ with the first map a smooth projection from a product that is therefore strict, hence log smooth. It thus suffices to show that
$\bar U_F\ra A_Q$ is log smooth. Note that $\bar U_F\ra A_Q$ is the log morphism of an ETD with $d=1$. The following lemma finishes the proof.
\end{proof}

\begin{lemma} \label{lemma-d1}
Assume that $f:A_{P,\shF}\ra A_Q$ has one-dimensional fibers (i.e., $d=1$), then $f$ is log smooth. (The third situation of Figure~\ref{example-ETDs} is an example.)
\end{lemma}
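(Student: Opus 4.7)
The plan is to split into two cases by the shape of $\bar P$: since $d = 1$, $\bar P^\gp$ is a rank-one lattice and $\bar P$ is the convex support in $\RR$ of the fan of essential faces, so $\bar P$ is either $\ZZ$ or isomorphic to $\NN$. In both cases the goal is to reduce to Lemma~\ref{lemma-log-smooth}.

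For $\bar P \cong \ZZ$ I would show that $P$ admits no horizontal facet, so $\shF_{\min} = \shF_{\max}$ and the conclusion is Lemma~\ref{lemma-log-smooth}. A hypothetical horizontal facet $F \supset Q$ would have $\rk F = \rk Q$, and Lemma~\ref{face-form} would then force $F \cap E = \{0\}$, hence $F = Q$. But when $\bar P = \ZZ$ the essential faces cover two opposite rays of $\RR$, so nonzero $e_1, e_2 \in E$ with opposite images can be chosen; the sum $e_1 + e_2 \in P$ projects to $0$, so by \eqref{eq-decomp} $e_1 + e_2 \in Q$ while neither summand lies in $Q$, contradicting $Q$ being a face.

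For $\bar P \cong \NN$ the unique essential ray $E$ together with $Q$ exhaust $P$, and sharpness of $\NN$ promotes the bijection \eqref{eq-decomp} to a monoid isomorphism $P \cong \NN \oplus Q$, with $Q$ automatically realized as a face. Under this identification the facets of $P$ are $\{0\} \oplus Q$ (the unique horizontal one) together with the vertical facets $\NN \oplus F'$ for $F'$ a facet of $Q$, so $\shF$ is either $\shF_{\max}$ or $\shF_{\min}$. The former case is Lemma~\ref{lemma-log-smooth}; in the latter, the divisorial log structure on $A_{P, \shF_{\min}}$ is the pullback of the log structure on $A_Q$ along the smooth projection $\AA^1 \times A_Q \to A_Q$, so $f$ is strict with smooth underlying morphism, hence log smooth. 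The only mildly technical point is the promotion of the set bijection \eqref{eq-decomp} to a monoid isomorphism when $\bar P \cong \NN$; once this is in hand, the remainder is case-checking and Lemma~\ref{lemma-log-smooth}.
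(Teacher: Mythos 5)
Your proposal is correct and takes essentially the same approach as the paper: split according to whether $\bar P \cong \ZZ$ or $\bar P \cong \NN$ (equivalently, whether $Q$ meets the interior of $P$ or lies in a facet), apply Lemma~\ref{lemma-log-smooth} when $\shF = \shF_{\max}$, and when $P \cong \NN \times Q$ with $\shF = \shF_{\min}$ note $f$ is strict with smooth underlying morphism. The only cosmetic difference is that the paper gets ``no horizontal facet'' directly from $Q$ meeting the interior, whereas you derive it by contradiction from $Q$ failing to be a face when $\bar P \cong \ZZ$; both routes lean on Lemma~\ref{face-form} and the decomposition \eqref{eq-decomp} in the same way.
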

\begin{proof}
We are done by Lemma~\ref{lemma-log-smooth} if $\shF=\shF_{\max}$ and this always holds if $Q$ meets the interior of $P$. So assume $Q$ is contained in a proper face of $P$, then by Lemma~\ref{face-form} it is in fact a facet of $P$ and then $\bar P=\NN$ and consequently $P=\NN\times Q$.
A facet of $P$ that is not $Q$ is in 
$\shF_{\min}= \{\NN\times F\,|\, F\hbox{ is a facet of Q}\}$. 
Hence $\shF\subsetneq\shF_{\max}$ implies $\shF=\shF_{\min}$ and thus $f$ is strict. 
Since $\underline f$ is smooth, we find $f$ is log smooth.
\end{proof}

\begin{corollary}\label{smooth-type-decompo}
It is possible to find open subsets $U_1$ and $U_2$ so that $U_P = U_1 \cup U_2$ and $A_P|_{U_1} = A_{P,\F}|_{U_1}$ and 
 $f: U_2 \subset A_{P,\F} \to A_Q$ is strict and smooth.
\end{corollary}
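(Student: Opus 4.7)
The plan is to define
\[
U_1 := U_P \setminus \bigcup_{G \in \F_{\max} \setminus \F} V_G,
\]
the complement in $U_P$ of the torus-invariant divisors that we have chosen to exclude from $\F$. At every point of $U_1$, by construction, every facet of $P$ whose divisor passes through the point lies in $\F$, so the divisorial log structures on $U_1$ induced by $\F$ and by $\F_{\max}$ coincide. This immediately gives $A_P|_{U_1} = A_{P,\F}|_{U_1}$, handling the first requirement.

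For $U_2$, I will reuse the covering $U_P = \bigcup_F U_F$ of Lemma~\ref{lem-cover-U}, where $F$ ranges over essential faces of rank $d - 1$ and $U_F = \Spec \ZZ[P_F]$. The proof of Proposition~\ref{ETD-is-pretoroidal} factors $f|_{U_F}$ as $U_F \to \bar U_F \to A_Q$, where the first arrow is a strict smooth projection arising from the splitting $P_F \cong F^\gp \oplus \bar P_F$, and the second is an ETD of relative dimension $1$. I define $U_2$ to be the union of those $U_F$ for which the second arrow $\bar U_F \to A_Q$ is itself strict and smooth; composition with the first arrow then makes $f|_{U_2}$ strict smooth.

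The only remaining task is to check that $U_1 \cup U_2 = U_P$, which I plan to do by running the case split from the proof of Lemma~\ref{lemma-d1} on each reduced ETD. Facets of $\bar P_F$ correspond bijectively to facets of $P$ containing $F$ (via localization at $F$ followed by quotient by $F^\gp$), and under this bijection a facet of $\bar P_F$ contains the image of $Q$ if and only if the corresponding facet of $P$ contains $Q$. If $Q$ meets the interior of $\bar P_F$, then every facet of $\bar P_F$ is vertical, hence every facet of $P$ meeting $U_F$ lies in $\F_{\min} \subset \F$, so $U_F \subset U_1$. Otherwise $Q$ sits in a proper face of $\bar P_F$ which must be a facet, and there is a unique horizontal facet $F_\top$ of $P$ containing $F$: if $F_\top \in \F$, every facet of $P$ meeting $U_F$ again lies in $\F$ and $U_F \subset U_1$; while if $F_\top \notin \F$, the reduced ETD is precisely the strict case of Lemma~\ref{lemma-d1} and $U_F \subset U_2$. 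The only bookkeeping subtlety is the facet correspondence under localization, which is standard for toric monoids and presents no real obstacle.
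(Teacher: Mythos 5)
Your proof is correct and takes essentially the same route as the paper: both cover $U_P$ by the charts $U_F$ of Lemma~\ref{lem-cover-U} and run the dichotomy of Lemma~\ref{lemma-d1} on each $\bar U_F \to A_Q$, placing $U_F$ in $U_1$ when $\bar\F = \bar\F_{\max}$ and in $U_2$ when $\bar\F = \bar\F_{\min} \subsetneq \bar\F_{\max}$. The one cosmetic difference is that you define $U_1$ intrinsically as $U_P \setminus \bigcup_{G \in \F_{\max}\setminus\F} V_G$ rather than as a union of the $U_F$ from the first case; this is a (possibly larger) open with the same property, and the coverage check then reduces to the same case analysis the paper uses.
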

\begin{proof}
 Let $\E_1$ be the set of essential faces of rank $d - 1$ such that when applying the proof of Lemma 
 \ref{lemma-d1} to $\bar U_F \to A_Q$ from the proof of the proposition, we are in the case 
 $\F = \F_{max}$, and let $\E_2$ be the set of faces where we are in case $\F = \F_{min}$.
 Then for $F \in \E_1$ we have $A_P|_{U_F} = A_{P,\F}|_{U_F}$, and for $F \in \E_2$, the morphism 
 $U_F \to A_Q$ is strict and smooth. Now we define $U_1 = \bigcup_{F \in \E_1} U_F$ and 
 $U_2 = \bigcup_{F \in \E_2} U_F$.
\end{proof}

\begin{example}\label{ex etd}
If $(Q\subset P,\F)$ is an ETD and $r\ge 0$, then we obtain another ETD $(Q\times\{0\}\subset P\times\NN^r,\shF')$ where $\shF'=\{F\times \NN^r\,|\,F\in\shF\}$.
\end{example}

\section{Log Toroidal Families}\label{Toroidal}
We define log toroidal families and investigate their basic properties.

\begin{definition}\label{toroidDef}
We say that a generically log smooth family $f: X \to S$ is \emph{log toroidal} if for every geometric point $\bar x \to X$, we have a commutative diagram 
\begin{equation}
\tag{LM}\label{LM}
\quad \begin{aligned}
  \xymatrix@R-2pc{
   & (V,g^{-1}(U)) \ar[ldd]_g \ar@{.>}[ddr]^h \ar[dddd] & & \\
   \\
   (X,U) \ar[dddd]_f & & (L, U_L) \ar[ddl] \ar[ddr]^c & \\ 
   \\
   & \tilde S \ar[ldd] \ar[ddr]^a & & (A_{P,\F}, U_P) \ar[ddl]  \\
   \\
   S & & A_Q & \\
  } 
\end{aligned}
\end{equation}
where $g: V \to X$ is an \'etale neighborhood of $\bar x$, $\tilde S \to S$ is a strict \'etale neighborhood of $f(\bar x)$ and $a$ is given by a chart 
$Q\ra \M_{\tilde S}$ of $\tilde S$. The bottom right diagonal map is required to be given by an ETD $(Q\subset P,\F)$ and $U_{P}\subset A_{P,\F}$ denotes the open set from \eqref{eq-def-U}.
The solid arrows are morphisms of schemes and log morphisms on the specified opens, whereas $h: V \to L$
 is an \'etale morphism only of underlying schemes. The bottom right diamond is Cartesian, in particular $U_L=c^{-1}(U_{P})$. Moreover, we have an open $\tilde U \subset V$ 
 satisfying \eqref{CC}, such that $\tilde U \subset g^{-1}(U) \cap h^{-1}(U_L)$ and there is an isomorphism $g^*\M_X \cong h^*\M_L$ of the two log structures on $\tilde U$ compatible with the maps to $S$. 

The diagram \eqref{LM} is called a \emph{local model} for $f: X \to S$ at $\bar x$.
If $S \cong \Spec (Q \to B)$, every point has a local model with $\tilde S=S$ and $a$ is given by the chart $Q \to B$ then we say $f: X \to S$ is \emph{log toroidal with respect to $a: S \to A_Q$}.
 \end{definition}

Log toroidal families are stable under strict base change.
\begin{remark}\label{local-model-localize} 
Note that Definition~\ref{toroidDef} only requires a covering of $X$ by \eqref{LM} but does not say that an arbitrary geometric point $\bar x\in X$ permits a diagram \eqref{LM} that identifies $\bar x$ with the origin in $A_P$.
However, if $\kk$ is algebraically closed, one can show that by localizing the ETD in \eqref{LM} and using Example~\ref{ex etd} one can assume that $\bar x\in X$ becomes the origin in $A_P$. We will make use of this fact in the proof of Theorem~\ref{rel-degen}.
\end{remark}

\begin{example} \label{ex-ETD} 
Every elementary log toroidal family $f: A_{P,\F} \to A_Q$ is log toroidal.
\end{example}

\begin{example} \label{example-toroidal}
The generically log smooth families given in Example~\ref{ex-Danilov} are log toroidal families with $Q=0$ in every ETD.
\end{example}

\begin{example}
 A saturated log smooth morphism $f: X \to S$ is log toroidal with $U = X$. Indeed, locally starting from a neat chart of $f$, set $\shF=\shF_\max$ and then apply Example~\ref{ex etd} to have local models.
That this works is \emph{not} a trivial consequence of Theorem 3.5 in \cite{kkatoFI}. Instead, use \cite[Theorem~VI.3.3.3]{LoAG18}.
\end{example}

\begin{example}\label{example-GS}
 In the setting and notation of the Gross--Siebert program, \cite[Theorem~2.6]{GrossSiebertII} shows that if $(B,\cP)$ is positive and simple, and $s$ is lifted open gluing data, then $X_0^\dagger(B,\cP,s) \to \Spec( \NN\ra k)$ is log toroidal. 
More generally, it was shown in \cite[Proposition~2.8]{rud} that c.i.t.~log Calabi--Yau spaces are log toroidal over $\Spec (\NN\ra k)$.
The divisorial deformations defined in \cite{GrossSiebertII} are also log toroidal families.
\end{example}

\section{Log Structures and Infinitesimal Deformations} 
\label{sec-log-infini}
Let $X$ be a toroidal crossing space over a field $\kk$. 
As mentioned in the introduction, $X$ can be equipped with a sheaf of sets $\shL\shS_X$ which we recall next.
An alternative categorical approach to study log smooth morphisms for a fixed underlying morphism has been developed in \cite{Olsson2003} though we follow the sheaf approach.
Let $S=\Spec(\NN\stackrel{1\mapsto 0}\ra \kk)$ be the standard log point.
The pair $(\shP,\one)$ gives a \emph{ghost structure} in the sense of \cite[Definition~3.16]{GrossSiebertI}. 
Indeed, the \emph{type} of the ghost structure is fixed by requiring it to be the one given by the local chart that comes with the definition of a toroidal crossing space. We will refer to this type as the \emph{given type} below.
By \cite[Definition~3.19 and Proposition~3.20]{GrossSiebertI}, there is a sheaf $\shL\shS_{X}$ (denoted $\shL\shS_{X^g}$ in loc.cit.)~with the property that for every \'etale open $U\subset X$, there is a natural bijection
$$
\resizebox{\textwidth}{!}{
$
\Gamma(U,\shL\shS_X) = \left\{
\begin{array}{c}
\shM_U\ra\shO_U\hbox{ a log structure of}\\
\hbox{the given type},\tilde\one\in\Gamma(U,\shM_U),\\ \ol\shM_U\stackrel{\sim}\ra\shP\hbox{ an isomorphism}
\end{array}
\left|
\begin{array}{c}
{(U,\shM_U)\ra S \hbox{ via } 1\mapsto\tilde\one\hbox{ is a}}\\
{\hbox{log smooth morphism and}}\\
{\shM_U\ra\ol\shM_U\ra\shP\hbox{ sends }\tilde\one\hbox{ to }\one}
\end{array}
\right.\right\}
$
}
$$
where the set on the right is to be taken modulo isomorphisms. 
The support of $\shP/\one$ agrees with $X_\sing$, so the sheaf $\shL\shS_{X}$ is supported on $X_\sing$. 

Set $S_\eps:=\Spec(\NN\stackrel{1\mapsto \eps}\ra \kk[\eps]/(\eps^2))$. 
If $V\ra S$ is a log smooth morphism with $V$ affine, then there is a unique log smooth lifting $V_\eps\ra S_\eps$ up to isomorphism.
For $(\M,\tilde\one) \in \cL\cS_X(U)$ and an affine $V \subset U$, the deformation $i:V\ra V_\eps$ yields an extension
\begin{equation} \label{log-smooth-extension-sequence}
0 \to \cO_V \to i^*\Omega^1_{\underline {V_\eps}} \to \Omega^1_{\underline V} \to 0
\end{equation}
where on the left $1 \mapsto i^*d\eps$. 
The classes of such local extensions glue to a well-defined class in 
$\E xt^1(\Omega^1_U,\cO_U)$ (though neither the extensions nor the deformations need to glue).
We have thus defined a map of sheaves of sets 
\begin{equation}\label{define-eta}
\eta: \cL\cS_X \to \E xt^1(\Omega^1_X, \cO_X)=\shT^1_X.
\end{equation}

\begin{remark}
In this form, the map $\eta$ seems to be new. However, a close relationship between log structures and $\shT^1_X$ has been observed before in \cite[Proposition\,1.1]{KawamataNamikawa1994}, \cite[Remark~(3.11)]{Steenbrink1995}, \cite[Theorem\,11.7]{Kato1996}, \cite[Example~3.30]{GrossSiebertI}, \cite[Theorem~3.18]{Olsson2003}, \cite[Theorem~7.5]{sch-sie}. 
\end{remark}

Both sheaves in \eqref{define-eta} have a natural action of $\cO^\times_X$: indeed, $\shT^1_X$ because it is coherent and $\cL\cS_X$ for we let a section $\lambda$ of $\cO^\times_X$ act by $\tilde\one\mapsto \lambda^{-1}\tilde\one$.
\begin{proposition} \label{prop-eta-equiv}
The map $\eta$ is $\cO^\times_X$-equivariant.
\end{proposition}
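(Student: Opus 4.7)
The assertion is local in nature, so the plan is to restrict to an affine \'etale open $V \subset X_\sing$, fix a section $(\shM, \tilde\one) \in \cL\cS_X(V)$ and a unit $\lambda \in \Gamma(V, \cO^\times_V)$, and show
\[
\eta(\shM, \lambda^{-1}\tilde\one) \;=\; \lambda \cdot \eta(\shM, \tilde\one) \qquad \text{in } \Ext^1(\Omega^1_V, \cO_V).
\]

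The heart of the argument will be to compare the two first-order deformations of the underlying scheme $\underline V$ that the two sections prescribe. Let $i : V \hookrightarrow V_\eps$ be the essentially unique log smooth lift of $V \to S$ determined by $(\shM, \tilde\one)$, with $\tilde\one_\eps \in \Gamma(V_\eps, \shM_{V_\eps})$ the image of $1 \in \NN$; by construction $\alpha(\tilde\one_\eps) = \eps$. I will pick a unit lift $\tilde\lambda \in \Gamma(V_\eps, \cO^\times_{V_\eps})$ of $\lambda$, observe that $\tilde\lambda^{-1}\tilde\one_\eps$ lifts $\lambda^{-1}\tilde\one$ but has $\alpha$-value $\tilde\lambda^{-1}\eps$ rather than $\eps$, and then rescale the base parameter to correct this. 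Concretely, I will form $V'_\eps$ from $\underline{V_\eps}$ by substituting $\tilde\lambda\cdot\eps$ for $\eps$ in the structure map $\kk[\eps]/(\eps^2) \to \cO_{V_\eps}$ (well-defined since $\eps$ is nilpotent); equipping $V'_\eps$ with the same log structure and the marker $\tilde\lambda^{-1}\tilde\one_\eps$ produces a log smooth lift attached to $(\shM, \lambda^{-1}\tilde\one)$. Although $\underline{V_\eps}$ and $\underline{V'_\eps}$ are isomorphic as $\kk$-schemes, they represent different classes as deformations of $\underline V$ over $S_\eps$.

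Applying $i^*\Omega^1_{-/\kk}$ to both deformations will then produce a diagram
\[
\xymatrix{
0 \ar[r] & \cO_V \ar[r]^-{1 \mapsto i^*d\eps} \ar[d]^{\cdot \lambda} & i^*\Omega^1_{\underline{V_\eps}/\kk} \ar[r] \ar[d]^{\cong} & \Omega^1_{V/\kk} \ar[r] \ar[d]^{\id} & 0 \\
0 \ar[r] & \cO_V \ar[r]^-{1 \mapsto i^*d\eps} & i^*\Omega^1_{\underline{V'_\eps}/\kk} \ar[r] & \Omega^1_{V/\kk} \ar[r] & 0
}
\]
whose middle vertical is induced by the $\kk$-algebra isomorphism $\underline{V_\eps} \cong \underline{V'_\eps}$: it sends $d\eps \mapsto d(\tilde\lambda \cdot \eps) = \tilde\lambda\,d\eps + \eps\,d\tilde\lambda$, which upon pullback along $i$ collapses to $d\eps \mapsto \lambda \cdot d\eps$, making the left square commute. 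This will exhibit the lower extension as the pushout of the upper along multiplication by $\lambda$, so their classes in $\Ext^1(\Omega^1_V, \cO_V)$ differ precisely by multiplication by $\lambda$, delivering the desired equivariance.

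The main subtlety I expect to handle is the independence of the construction from the chosen lift $\tilde\lambda$: any two lifts differ by a unit of the form $1 + \eps h$, so the induced identifications $\underline{V_\eps} \cong \underline{V'_\eps}$ differ by an automorphism reducing to the identity modulo $\eps$, which therefore induces the identity on $i^*\Omega^1_{-/\kk}$ and does not disturb the pushout class. The conceptual kernel of the argument is simply that rescaling the log marker $\tilde\one$ by $\lambda^{-1}$ rescales the induced first-order deformation direction of $\underline V$ by $\lambda$, which is exactly what the $\cO^\times_V$-action on $\Ext^1(\Omega^1_V, \cO_V)$ records.
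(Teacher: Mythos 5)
Your plan—twist both the marker and the $S_\eps$-structure map simultaneously, then read off the extension class—is a genuinely different route from the paper's. The paper never writes down the deformation for $(\shM,\lambda^{-1}\tilde\one)$ explicitly; instead it takes $(U_\lambda)_\eps$ as the \emph{abstract} log smooth deformation supplied by deformation theory (which applies because $U_\lambda\to S$ is log smooth, built into the definition of $\shL\shS_X$), and then manufactures a comparison isomorphism $\tilde\chi:(U_\lambda)_\eps\to(U_1)_\eps$ over $\underline S$ by invoking the infinitesimal lifting property for \emph{idealized} log schemes. That machinery is there precisely so that no explicit model of $(U_\lambda)_\eps$ is ever needed. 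Your version would be more elementary if it closed, but as written it has a gap.

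The gap: when you ``equip $V'_\eps$ with the same log structure and the marker $\tilde\lambda^{-1}\tilde\one_\eps$'' and declare it ``a log smooth lift attached to $(\shM,\lambda^{-1}\tilde\one)$,'' you are asserting without proof that the new morphism $V'_\eps\to S_\eps$ is log smooth. This is not automatic: $V'_\eps$ and $V_\eps$ are the same log scheme, but the two morphisms to $S_\eps$ are different (they are not isomorphic over $S_\eps$—that failure is exactly what $\eta$ measures), so log smoothness of one does not formally imply log smoothness of the other. You need it to hold in order to invoke uniqueness of log smooth deformations and conclude that your explicit $V'_\eps$ \emph{is} the object in the definition of $\eta(\shM,\lambda^{-1}\tilde\one)$. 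One can prove it (e.g.\ by choosing a neat chart at a point, using freeness of $P^\gp/\NN$ to extend $p_0\mapsto\tilde\lambda^{-1}$ to a homomorphism $P^\gp\to\shO^\times$ and twisting the chart map), but that is precisely the technical heart of the proposition and should not be asserted in passing; the paper's idealized-log-scheme argument is its substitute. Separately, there is a direction slip: with the marker $\tilde\lambda^{-1}\tilde\one_\eps$ the compatible structure map must send $\eps\mapsto\tilde\lambda^{-1}\eps$, not $\tilde\lambda\cdot\eps$ (as written, $\alpha(\tilde\lambda^{-1}\tilde\one_\eps)=\tilde\lambda^{-1}\eps$ would not equal the image of the new base parameter, so the data do not define a log morphism to $S_\eps$); this is easily repaired and does not change the shape of the pushout argument, which is otherwise fine.
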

\begin{proof} 
At a geometric point $\bar x\in X$ with $M=(\shM,\tilde\one)\in\shL\shS_{X,\bar x}$ for $\shM$ defined on some \'etale $U\ra X$ that contains $\bar x$, let $\mu_M:\shO_{X,\bar x}\ra \shT^1_{X,\bar x}$ denote the connecting
\\[1mm]
    \begin{minipage}{.56\linewidth}
 homomorphism at $\bar x$ in the long exact sequence obtained from applying $\cH om(-,\cO_X)$ to \eqref{log-smooth-extension-sequence}. 
By a general fact for extensions, we have $\mu_M(1)=\eta(M)$. For $\lambda\in\shO_{X,\bar x}^\times$, let $M_\lambda\in\shL\shS_{X,\bar x}$ denote the element $(\shM,\lambda^{-1}\tilde\one)$. 
The statement of the lemma comes down to the following claim.
    \end{minipage}
    \begin{minipage}{.44\linewidth}
      \centering
$$
 \xymatrix@R-1pc@C-.7pc{
 &U_1\ar@{.>}[ddl] \ar^{i_1}[r] \ar|\hole[ddr]
& (U_1)_\eps\ar[ddr]\\
 &U_\lambda
 \ar@{}|(.46){\phantom{xi}\chi}[u]^(.1){}="a"^(.95){}="b" \ar@{.>} "a";"b"
\ar@{.>}[dl] \ar^(.55){i_\lambda}[r]\ar[dr]& (U_\lambda)_\eps\ar[dr]\\
\underline S&&\ar[ll]S\ar[r]& S_\eps \ar@{.>}@/^{1pc}/[lll]
 }
$$
\end{minipage}
 \begin{claim} 
 $\mu_M(\lambda)=\eta(M_\lambda)$. 
 \end{claim}
To prove the claim, let $U_1,U_\lambda$ denote the log smooth schemes over $S$ respectively obtained from the log scheme $U$ and the map to $S$ given by $1\mapsto \one$ and $1\mapsto \lambda^{-1}\one$ respectively.
Let $(U_1)_\eps$ and $(U_\lambda)_\eps$ be the unique deformations of $U_1,U_\lambda$ over $S_\eps$ respectively. 
Let $\chi:U_\lambda\ra U_1$ be the canonical isomorphism over $\underline S=\Spec\big(0\ra \kk)$.

We are now going to use facts about idealized log schemes, see \cite[III.1.3 \& Variant 3.1.21]{LoAG18} for an introduction.
We give $S_\eps$ the ideal $\langle 2\rangle$ generated by $2\in\NN$ and $(U_1)_\eps$ and $(U_\lambda)_\eps$ the pullback ideals $K_1,K_\lambda$ respectively so that  $((U_1)_\eps,K_1)$ and $((U_\lambda)_\eps,K_\lambda)$ are ideally log smooth over $(S_\eps,\langle 2\rangle)$.
The map $(S_\eps,\langle 2\rangle)\ra (A_\NN,\emptyset)$ is an \'etale map of idealized log schemes and $A_\NN\ra\underline S$ is log smooth, hence the composition $\pi:((U_1)_\eps,K_1)\ra(S_\eps,\langle 2\rangle)\ra\underline S$ is log smooth. 
We apply the infinitesimal lifting property to the diagram
\[
 \xymatrix@R-1pc@C-.7pc{
(U,K)\ar^{i_1}[r]\ar^{i_\lambda}[d]& ((U_1)_\eps,K_1)\ar^{\pi}[d]\\
((U_\lambda)_\eps,K_\lambda) \ar[r]& \underline S
 }
\]
where $(U,K)$ is the idealized log scheme $U=U_1\stackrel{\chi}{=}U_\lambda$ with ideal given by $(\tilde\one)^2$ or equivalently $(\lambda^{-1}\tilde\one)^2$. 
The left vertical map $i_\lambda$ is strict for the log structure and ideal and given by a square-zero ideal.
We obtain a morphism $\tilde\chi: (U_\lambda)_\eps\ra (U_1)_\eps$ of log schemes that preserves the ideals and is an isomorphism on ghost sheaves.
Consequently, with $\rho_\lambda\in \shM_{(U_\lambda)_\eps,\bar x}$ and $\rho_1\in \shM_{(U_1)_\eps,\bar x}$ the images of the generator $1\in\shM_{S_\eps}$ respectively, we have
$\tilde\chi^*\rho_1=\tilde\lambda\cdot\rho_\lambda$ for some $\tilde\lambda\in\shO_{(U_\lambda)^\times_\eps,\bar x}$ that restricts to $\lambda\in\shO^\times_{U,\bar x}$.
This implies that $\tilde\chi$ becomes an isomorphism after shrinking $(U_\lambda)_\eps$, $(U_1)_\eps$ if needed. 
Using $i_1 \circ \chi = \tilde \chi \circ i_\lambda$, we obtain the commutative diagram
\[
 \begin{CD}
 \eta(M_\lambda): \qquad @. 0 @>>> \cO_U @>1 \mapsto i_\lambda^*d\alpha(\rho_\lambda)>> i_\lambda^*\Omega^1_{\underline{(U_\lambda)_\eps}} @>>> \Omega^1_{\underline U} @>>> 0 \\
 @. @. @VV\lambda^{-1} \cdot V @| @| @. \\
 \eta(M_1): \qquad @. 0 @>>> \cO_U @>1 \mapsto i_\lambda^*d(\tilde \lambda\alpha(\rho_\lambda))>> i_\lambda^*\Omega^1_{\underline{(U_\lambda)_\eps}} @>>> \Omega^1_{\underline U} @>>> 0, \\
 \end{CD}
\]
and we conclude $\eta(M_\lambda) = \mu_M(\lambda)$ via standard homological algebra.
\end{proof} 

\begin{lemma} \label{lemma-injective}
Let $\bar x\in X$ be a geometric point with $\kk[\shP_{\bar x}]$ smooth, then 
\begin{enumerate}
\item for $M\in\shL\shS_{X,\bar x}$, the map $\mu_{M,\bar x}:\shO_{X,\bar x}\ra\shT_{X,\bar x}$ is surjective,
\item $\shO^\times_{X,\bar x}$ acts transitively on $\shL\shS_{X,\bar x}$,
\item $\eta_{\bar x}:\shL\shS_{X,\bar x} \ra \shT_{X,\bar x}$ is injective.
\end{enumerate}
\end{lemma}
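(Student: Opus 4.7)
All three claims are stalk-local at $\bar x$, so one may pass to a local model. The hypothesis that $\kk[\shP_{\bar x}]$ is smooth forces $\shP_{\bar x} \cong \NN^k$ to be free with $\one = e_1 + \cdots + e_k$, and one reduces to $X = V(z_1\cdots z_k) \subset \AA^N$ \'etale locally, with the obvious ghost identification.

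The first step of the plan is to pin down both sides explicitly. Since $X$ is a hypersurface with equation $f = z_1\cdots z_k$, one has
$$\shT^1_{X,\bar x} \;\cong\; \shO_{X,\bar x}\big/\bigl(\hat z_1,\ldots,\hat z_k\bigr), \qquad \hat z_j := \prod_{i\neq j} z_i,$$
a cyclic $\shO_{X,\bar x}$-module with annihilator $(\hat z_1,\ldots,\hat z_k)$, generated by the image of $1$. On the other side, after choosing a chart $\tilde e_1,\ldots,\tilde e_k$ of $\shM$ lifting the ghost generators and normalized by $\alpha(\tilde e_i) = z_i$, any $M \in \shL\shS_{X,\bar x}$ is recorded by a single unit $c \in \shO_{X,\bar x}^\times$ via $\tilde\one = c\cdot\tilde e_1\cdots\tilde e_k$. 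The available automorphisms of $\shM$ act by $\tilde e_i \mapsto u_i\tilde e_i$ with $\alpha(u_i\tilde e_i) = \alpha(\tilde e_i)$, which forces $(u_i-1)z_i = 0$ in $\shO_X$; such an automorphism shifts $c$ by $\prod_i u_i$.

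With these descriptions in hand, claim (2) is immediate: acting by $\lambda = c/c'$ transports $M$ to $M'$. For claim (1), I would compute $\eta(M)$ directly in the local model. The unique log smooth lift $V_\eps \to S_\eps$ has underlying scheme $z_1\cdots z_k = c^{-1}\eps$; unwinding the extension \eqref{log-smooth-extension-sequence} via the Jacobian presentation of $\Omega^1_V$ shows that $\eta(M)$ equals $c^{-1}$ times the distinguished generator of $\shT^1_{X,\bar x}$. In particular $\eta(M)$ is a generator of the cyclic module $\shT^1_{X,\bar x}$, so $\mu_M$, which by $\shO_{X,\bar x}$-linearity is simply multiplication by $\eta(M)$, is surjective.

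For claim (3), suppose $\eta(M) = \eta(M')$. By (2) write $M' = M_\lambda$ for some $\lambda \in \shO_{X,\bar x}^\times$. Proposition~\ref{prop-eta-equiv} gives $\eta(M_\lambda) = \mu_M(\lambda) = \lambda\cdot\eta(M)$, whence $(\lambda-1)\eta(M) = 0$; since $\eta(M)$ generates $\shT^1_{X,\bar x}$, this forces $\lambda - 1 \in (\hat z_1,\ldots,\hat z_k)$, say $\lambda = 1 + \sum_i \hat z_i w_i$. The key algebraic fact is $\hat z_i\hat z_j = 0$ in $\shO_X$ for $i\neq j$, since $\hat z_i\hat z_j$ is divisible by $z_1\cdots z_k$; expanding the product then yields
$$\prod_{i=1}^k \bigl(1 + \hat z_i w_i\bigr) \;=\; 1 + \sum_i \hat z_i w_i \;=\; \lambda.$$
Setting $u_i := 1 + \hat z_i w_i$ produces units (residues $1$) satisfying $(u_i-1)z_i = \hat z_i z_i w_i = 0$; the corresponding automorphism of $\shM$ multiplies $\tilde\one$ by $\lambda$, exhibiting $M_\lambda = M$ in $\shL\shS_{X,\bar x}$, so $M = M'$. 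The main obstacle I anticipate is the extension-class computation in step (1), which requires careful bookkeeping of the underlying scheme of $V_\eps$ and of the image of $d\eps$ in $i^*\Omega^1_{\underline{V_\eps}}$; the remainder of the argument rests on the very clean identity $\hat z_i\hat z_j = 0$, which is precisely what aligns the subgroup of chart-change automorphisms of $\shM$ with $1 + \Ann\shT^1_{X,\bar x}$.
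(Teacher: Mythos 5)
Your proof is essentially correct and takes a more computational route through the local model than the paper's argument. The overall structure matches — pass to the normal crossing local model $X = V(z_1\cdots z_k)$, exploit the cyclicity of $\shT^1_{X,\bar x}$, and use the identity $\hat z_i\hat z_j = 0$ for $i\neq j$ to factor $\lambda = \prod_i(1+\hat z_iw_i)$ and build a chart automorphism — but the treatment of parts (1) and (2) differs in an interesting way.

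For part (1), the paper avoids computing $\eta(M)$ altogether. Writing $T := \Spec\kk[P]/(z^{2\one})$ for the lift of the local model and $h_\eps: U_\eps \to T$ for the strict (and underlying-smooth) chart of the unique log smooth lift, one observes that $i^*\Omega^1_{\underline{U_\eps}}$ is locally free: the relation $d\bigl((z_1\cdots z_k)^2\bigr) = 2(z_1\cdots z_k)\,d(z_1\cdots z_k)$ presenting $\Omega^1_{T/\kk}$ becomes trivial once restricted to $V(z_1\cdots z_k)$, so $i^*\Omega^1_{T/\kk}$ is free, and then $i^*\Omega^1_{\underline{U_\eps}}$ is an extension of locally frees. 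Consequently $\shE xt^1(i^*\Omega^1_{\underline{U_\eps}},\shO_X)_{\bar x} = 0$ and surjectivity of $\mu_{M,\bar x}$ drops out of the long exact sequence without any bookkeeping. Your approach instead pins down $\eta(M)$ as a unit multiple of the distinguished generator of $\shO_{X,\bar x}/(\hat z_1,\dots,\hat z_k)$ via the smoothing $z_1\cdots z_k = c^{-1}\eps$; this is correct in spirit (it is the standard Kodaira--Spencer class for a hypersurface deformation) but, as you yourself flag, the extension-class computation is asserted rather than carried out, and this is where the labor actually lies. The paper's vanishing argument sidesteps the need to know more than that $\mu_M$ is onto.

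For part (2), the paper goes through the Gross--Siebert $\shE xt^1$-classification: by \cite[Corollary~3.12]{GrossSiebertI} it suffices that the composition $\shL\shS_{X,\bar x}\hookrightarrow\shE xt^1(\shP^\gp_{\bar x}/\ZZ\one_{\bar x},\shO^\times_{X,\bar x})\to\shE xt^1(\shP^\gp_{\bar x},\shO^\times_{X,\bar x})$ be constant, which follows from freeness of $P$ and \cite[Proposition~3.14]{GrossSiebertI}. Your version is more elementary: after normalizing a chart so that $\alpha(\tilde e_i) = z_i$, the underlying log structure is canonically the one associated to the pre-log structure $\NN^k\to\shO$, $e_i\mapsto z_i$, so two elements of $\shL\shS_{X,\bar x}$ can only differ by the unit $c$ in $\tilde\one = c\cdot\tilde e_1\cdots\tilde e_k$, and the $\shO^\times$-action $\tilde\one\mapsto\lambda^{-1}\tilde\one$ is visibly transitive. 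This is cleaner, but the assertion that a normalized chart always exists and determines $\shM$ up to isomorphism compatible with the ghost identification deserves at least one sentence of justification; as stated, "is recorded by a single unit $c$" reads a bit too close to assuming the transitivity one is proving. The factorization argument in (3) is the same as the paper's (the paper writes $\lambda = 1 + \sum_i f_i$ with $f_i|_{X_j} = 0$ for $i\neq j$, which is exactly $f_i = \hat z_i w_i$, and Friedman's $\shT^1_{X,\bar x}\cong\shO_{X_\sing,\bar x}$ plays the role of your direct computation of the annihilator).
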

\begin{proof} 
Set $P:=\shP_{\bar x}$. For (1), let $U\ra X$ be an \'etale affine neighborhood of $\bar x$ where $M=(\shM_U,\one_U)$ is defined and $h:(U,\shM_U)\ra\Spec\big(P\ra\kk[P]/(z^{\one_{\bar x}})\big)$ the strict $S$-morphism whose underlying map is smooth.
Possibly after shrinking $U$, via $\eps\mapsto \one_{\bar x}$, we obtain a strict map of extensions over $S_\eps$,
$$h_\eps:(U_\eps,\shM_{U_\eps})\ra\Spec\big(P\ra\kk[P]/(z^{(\one_{\bar x}+\one_{\bar x})})\big)$$
whose underlying morphism is also smooth and hence $\Omega_{\underline{U_\eps}}$ is locally free. 
This implies that the corresponding term $\E xt^1(\Omega_{\underline{U_\eps}},\shO_X)_{\bar x}$ in the long exact sequence for \eqref{log-smooth-extension-sequence} vanishes and thus $\mu_{M,\bar x}$ is surjective.

To show (2), note that it suffices to show that any two elements in $\shL\shS_{X,\bar x}$ are isomorphic over $\underline{S}$. 
Equivalently by \cite[Definition~3.19 \& Corollary~3.12]{GrossSiebertI}, the composition 
$$\shL\shS_{X,\bar x}\subset \shE xt^1(\shP^\gp_{\bar x}/\ZZ\one_{\bar x},\shO^\times_{X,\bar x})\ra \shE xt^1(\shP^\gp_{\bar x},\shO^\times_{X,\bar x})$$ 
needs to be the constant map. 
By assumption, $P$ is free and then (2) follows from the description of $\shE xt^1(\shP^\gp_{\bar x},\shO^\times_{X,\bar x})$ in \cite[Proposition~3.14]{GrossSiebertI}.

For (3), if $\bar x\not\in X_\sing$ both stalks are trivial and there is noting to show, so assume $\bar x\in X_\sing$. 
By \cite[Proposition~1.10]{fri}, we have $\shT^1_{X,\bar x}\cong \shO_{X_\sing,\bar x}$, so the kernel of the action of $\shO^\times_{X,\bar x}$ on $\shT^1_{X,\bar x}$ is $K:=\ker\big(\shO^\times_{X,\bar x}\ra \shO^\times_{X_\sing,\bar x}\big)$. 
If we show that $K$ is contained in the kernel of the action of $\shO^\times_{X,\bar x}$ on $\shL\shS_{X,\bar x}$, then (3) follows from (2) and Proposition~\ref{prop-eta-equiv}. 
By assumption, $X$ is normal crossings at $\bar x$. Let $X_1,...,X_r$ be the local components of $X$ at $\bar x$, $r\ge 2$. 
Let $\lambda\in K$ be given and write $\lambda=1+\sum_{i=1}^r f_i$ where $f_i|_{X_j}=0$ for $i\neq j$.
We observe that $\lambda=\prod_i (1+f_i)$ because $f_if_j=0$ for $i\neq j$. 
If $\NN^r\ra\shO_{X,\bar x},\,e_i\mapsto h_i$ is a chart of $X$ at $\bar x$ representing an element of $\shL\shS_{X,\bar x}$ with $\one=\sum_i e_i$ and $V(h_i)=X_i$, then
$e_i\mapsto (1+f_i) e_i$ defines an automorphism of $\shM_{X,\bar x}$ compatible with the map to $\shO_{X,\bar x}$ because $(1+f_i)h_i=h_i$. 
It takes $\one$ to $\lambda\one$, so $\lambda$ acts trivially on $\shL\shS_{X,\bar x}$.
\end{proof}

\begin{remark} \label{remark-eta-may-be-zero}
For $\kappa\ge 2$, consider the monoid $P_\kappa=\langle e_1,e_2,\one| e_1+e_2=\kappa\one \rangle$ and the toroidal crossing space $X=\Spec\big(P_\kappa\ra\kk[P_\kappa]/(z^\one)\big)$. 
The map $\eta:\shL\shS_X\ra\shT^1_X$ is the zero map $\kk^\times\ra\kk$, so the smoothness assumption in Lemma~\ref{lemma-injective} is necessary.
\end{remark}

\begin{theorem} \label{thm-LS-T1}
Let $X$ be a toroidal crossing space with $P_{\bar x}\cong \NN^2$ whenever $\bar x$ is the generic point of a component of $X_\sing$ then $\eta:\shL\shS_X\ra\shT^1_X$ is injective.
On the open set $V\subset X$ of points $\bar x$ with $\shP_{\bar x}\cong \NN^r$ for some $r$, we have $\eta(\shL\shS_V)=(\shT_V^1)^\times$ where
$(\shT_X^1)^\times\subset \shT_X^1$ denotes the subsheaf of those elements that generate $\shT_X^1$ as an $\shO_X$-module.
\end{theorem}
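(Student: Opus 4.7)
The plan is to verify injectivity stalkwise by exploiting the embedding of $\shLS_X$ into a coherent sheaf recalled in the introduction, which reduces equality of germs to a check at generic points of components of $X_\sing$; at those points, the hypothesis $\shP_{\bar x} \cong \NN^2$ activates Lemma~\ref{lemma-injective}(3). The image statement on $V$ then reduces to combining the surjectivity of Lemma~\ref{lemma-injective}(1) with the $\shO^\times$-equivariance of Proposition~\ref{prop-eta-equiv} and the transitivity of Lemma~\ref{lemma-injective}(2).

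For injectivity, I would fix a geometric point $\bar x \in X$. If $\bar x \notin X_\sing$ both stalks are trivial. Otherwise, by the embedding $\shLS_X \hookrightarrow \bigoplus_C j_{C,*}\shN_{\tilde C}$, two elements $M_1,M_2 \in \shLS_{X,\bar x}$ coincide as soon as their images in this direct sum agree on every component. Since each $\shN_{\tilde C}$ is a line bundle on the integral normalization $\tilde C$, the stalk $(j_{C,*}\shN_{\tilde C})_{\bar x}$ injects into its stalk at the generic point $\eta_C$, so it is enough to check $M_{1,\eta_C} = M_{2,\eta_C}$ for every component $C$ of $X_\sing$ containing $\bar x$. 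At $\eta_C$ the hypothesis $\shP_{\eta_C} \cong \NN^2$ makes $\kk[\shP_{\eta_C}]$ smooth, so Lemma~\ref{lemma-injective}(3) applies and $\eta(M_1) = \eta(M_2)$ forces $M_{1,\eta_C} = M_{2,\eta_C}$, whence $M_1 = M_2$.

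For the image description on $V$, the inclusion $\eta(\shLS_V) \subseteq (\shT^1_V)^\times$ is immediate from Lemma~\ref{lemma-injective}(1): the $\shO_X$-linear connecting map $\mu_M$ satisfies $\mu_M(\lambda) = \lambda \cdot \eta(M)$, since $\mu_M(1) = \eta(M)$ as recorded in the proof of Proposition~\ref{prop-eta-equiv}; its surjectivity then forces $\eta(M)$ to generate $\shT^1_X$ locally. For the reverse inclusion, given $s \in (\shT^1_V)^\times_{\bar x}$, I would first exhibit a section $M_0 \in \shLS_{V,\bar x}$ using the toric chart $\shP_{\bar x} \cong \NN^r$ (the standard divisorial log structure on $\Spec \kk[\NN^r]/(z^\one)$), note that $\eta(M_0) \in (\shT^1_V)^\times_{\bar x}$ by the forward inclusion just established, write $s = \lambda \cdot \eta(M_0)$ for the unique $\lambda \in \shO^\times_{X,\bar x}$, and conclude $s = \eta(\lambda \cdot M_0)$ by $\shO^\times$-equivariance.

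The main obstacle is the reduction step in the injectivity argument: one must ensure that the cited embedding $\shLS_X \hookrightarrow \bigoplus_C j_{C,*}\shN_{\tilde C}$ really allows germs to be detected at generic points $\eta_C$, which rests on the torsion-freeness of each summand via integrality of $\tilde C$. Once this reduction is in place, the hypothesis $\shP_{\eta_C} \cong \NN^2$ plays exactly one role, namely activating Lemma~\ref{lemma-injective}(3) at the generic points of $X_\sing$.
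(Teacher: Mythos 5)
Your argument matches the paper's in all essentials: both reduce injectivity of $\eta$ to the generic points of $X_\sing$ using the Gross--Siebert embedding of $\shLS_X$ into the coherent sheaf $\bigoplus_\omega q_{\omega,*}\shN_\omega$ — the paper packages the reduction as Corollary~\ref{injection-into-stalks-at-components} applied via the injective restriction $\shLS_X(U)\to\shLS_X(U\cap V)$, while you unwind the same fact directly at the stalk level via torsion-freeness on the integral normalizations — and both then invoke Lemma~\ref{lemma-injective} (parts (3) for injectivity and (1),(2) together with Proposition~\ref{prop-eta-equiv} for the image statement).

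The one place to tighten is the step ``write $s = \lambda \cdot \eta(M_0)$ for the unique $\lambda \in \shO^\times_{X,\bar x}$'': such a $\lambda$ is neither automatically a unit of $\shO_{X,\bar x}$ nor unique, since $\shT^1_{X,\bar x}$ is far from being a free $\shO_{X,\bar x}$-module. What is true — and all that you need — is that $\shT^1_{X,\bar x}\cong\shO_{X_\sing,\bar x}$ by Friedman's computation (as already used in the proof of Lemma~\ref{lemma-injective}(3)); so two generators of the cyclic module $\shT^1_{X,\bar x}$ differ by a unit of $\shO_{X_\sing,\bar x}$, and this unit lifts to some (non-unique) $\lambda\in\shO^\times_{X,\bar x}$ because the ideal of $X_\sing$ at $\bar x$ lies in the maximal ideal. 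With that justification supplied, your reverse inclusion goes through exactly as you intend.
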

\begin{proof} 
The second statement is Lemma~\ref{lemma-injective}. 
For the first statement also follows from the Lemma combined with the fact that for every open $U\subset X$, the restriction map\\[1mm]
\begin{minipage}{0.56\textwidth}
$\shL\shS_X(U)\ra \shL\shS_X(U\cap V)$ is injective which is a consequence of Corollary~\ref{injection-into-stalks-at-components} below. 
Indeed, in view of the diagram on the right, that the composition of the left vertical and bottom horizontal arrow is injective implies the injectivity
\end{minipage}
\begin{minipage}{.44\linewidth}
$$
\xymatrix@R-0pc@C-.7pc{
\shL\shS_X(U)\ar[d]\ar^{\eta}[r]&  \shT^1_X(U)\ar[d]\\
\shL\shS_X(U\cap V)\ar^{\eta}[r]& \shT^1_X(U\cap V).
}$$
\ 
\end{minipage}\\[.7mm]
of the top horizontal arrow.
\end{proof}


\section{Toroidal Crossing Spaces as Log Toroidal Families} \label{sec-log-TC}
Let $X$ be a toroidal crossing space.
Let $\bar x$ be geometric point and $V_{\bar x}$ the \'etale neighborhood with a smooth map $V_{\bar x}\ra\Spec \kk[\shP_{\bar x}]/z^\one$ that exists by the definition of $X$.
Set $N=\shP^\gp_{\bar x}$ and $M_\RR=\Hom(N,\RR)$. 
We obtain a lattice polytope
$\sigma_{\bar x}=\{m\in M_\RR\,|\, m|_{\shP_{\bar x}}\ge 0,\one(m)=1\}$ (we use that $X$ is reduced here).
For a face $\tau\subset\sigma_{\bar x}$, we denote by $V_\tau$ the inverse image of the closed subset $\Spec \kk[\tau^\perp\cap \shP_{\bar x}]$ of $\Spec \kk[\shP_{\bar x}]/z^\one$ in $V_{\bar x}$.
Theorem 3.22 in \cite{GrossSiebertI} says the following.
\begin{theorem}[Gross--Siebert] \label{thm-grosie-LS}
$\shLS_X|_{V_{\bar x}}$ is isomorphic to a subsheaf of
$\oplus_{\omega}\cO^\times_{V_\omega}$ where the sum is over the edges of $\sigma_{\bar x}$.
The sections of the subsheaf on an open $V\subset V_{\bar x}$ are given as the tuples $(f_\omega)_\omega$ so that, for every two-face $\tau$ of $\sigma_{\bar x}$, we have
\begin{equation}\label{eq log}
\prod_{\omega\subset\tau}d_\omega \otimes f_\omega^{\epsilon_\tau(\omega)}|_{V_\tau}=1
\end{equation}
as an equality in $M\otimes_\ZZ\Gamma(V,\cO^\times_{V_\tau})$ where $d_\omega$ is a primitive generator of the tangent space to $\omega$ and 
$\epsilon_\tau(\omega)\in\{-1,+1\}$ is such that $(\epsilon_\tau(\omega)d_\omega)_{\omega\subset\tau}$ gives an oriented boundary of $\tau$.
\end{theorem}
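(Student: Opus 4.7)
The plan is to work étale-locally at the geometric point $\bar x$ using the smooth chart $V_{\bar x} \to \Spec\kk[\shP_{\bar x}]/(z^\one)$ provided by the toroidal crossing structure, and to identify $\shLS_X$ with an explicit subsheaf of a sum indexed by the edges of $\sigma_{\bar x}$, governed by relations one for each 2-face. Concretely, from \cite[Def.~3.19, Prop.~3.20, Cor.~3.12]{GrossSiebertI} and the discussion in \S\ref{sec-log-infini}, sections of $\shLS_X$ over an open $V \subset V_{\bar x}$ inject into $\shE xt^1(\shP^\gp_{\bar x}/\ZZ\one,\,\cO^\times_V)$, where the image consists of exactly those extensions inducing a log smooth structure of the prescribed local type. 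So the task splits naturally into: (i) compute this $\shE xt^1$ in terms of the polytope combinatorics of $\sigma_{\bar x}$, and (ii) check that the log-smoothness cut-out condition is automatic once the 2-face cocycle relations hold.

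For step (i), the lattice $\Lambda := \shP^\gp_{\bar x}/\ZZ\one$ admits a canonical presentation dictated by $\sigma_{\bar x}$: the primitive edge vectors $d_\omega \in M$ (tangent to $\omega \subset \sigma_{\bar x}$) dually correspond to generators of $\Lambda$, while the closed boundary loops $\sum_{\omega \subset \tau} \epsilon_\tau(\omega)\,d_\omega = 0$ around each 2-face $\tau$ furnish the complete set of relations (the higher faces produce only consequences of these, since $\sigma_{\bar x}$ is a lattice polytope and the 2-skeleton already computes $H_1$ of its 1-skeleton in degree one). Applying $\shHom(-, \cO^\times_V)$ to a free resolution of $\Lambda$ built from this presentation, and restricting each factor to the appropriate stratum $V_\omega$ (respectively $V_\tau$) that is the support of the corresponding ideal of $\shO_X$, identifies $\shE xt^1(\Lambda,\cO^\times_V)$ with the tuples $(f_\omega)_\omega \in \bigoplus_\omega \cO^\times_{V_\omega}$ satisfying, for each 2-face $\tau$,
\begin{equation*}
\prod_{\omega \subset \tau} d_\omega \otimes f_\omega^{\epsilon_\tau(\omega)}\big|_{V_\tau} \;=\; 1 \quad \text{in}\quad M \otimes_\ZZ \Gamma(V,\cO^\times_{V_\tau}).
\end{equation*}

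For step (ii) — which I expect to be the main obstacle — one must verify that every such tuple $(f_\omega)$ comes from a genuine log smooth lift rather than merely an abstract extension of $\Lambda$. The strategy is to construct the chart explicitly: on a neighborhood meeting $V_\omega$ only, pick a trivializing local section of the log structure realizing the two generators of $\shP_{\bar x}$ adjacent to $\omega$, and let $f_\omega$ record their multiplicative discrepancy against a reference lift. The 2-face compatibility ensures that these local monomial lifts glue consistently along $V_\tau$, and Kato's criterion \cite[Theorem~3.5]{kkatoFI}, applied to the standard toric chart $\Spec\kk[\shP_{\bar x}]/(z^\one)$, then guarantees log smoothness over the standard log point $S$. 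Conversely, starting from $(\shM,\tilde\one) \in \shLS_X(V)$, the same procedure extracts the tuple $(f_\omega)$, well-defined because all other choices differ by a unit that acts trivially on the extension class; this establishes the bijection and realizes $\shLS_X|_{V_{\bar x}}$ as the sub-sheaf of $\bigoplus_\omega \cO^\times_{V_\omega}$ cut out by the equations \eqref{eq log}.
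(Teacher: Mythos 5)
The paper does not prove this statement: Theorem~\ref{thm-grosie-LS} is stated as a verbatim quotation of Theorem~3.22 of \cite{GrossSiebertI}, so there is no in-paper argument for your sketch to match; any comparison has to be against Gross--Siebert's original proof.

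Against that, your sketch has concrete gaps. In step~(i) you propose to compute $\shE xt^1(\Lambda,\cO^\times_V)$ from a ``free resolution of $\Lambda$ built from this presentation'' and then ``restrict each factor to the appropriate stratum.'' This is not a well-founded construction: $\Lambda=\shP^\gp_{\bar x}/\ZZ\one$ is a \emph{free} abelian group (the section $\one$ is primitive since the local model is Gorenstein), so for the constant sheaf with this stalk one has $\shE xt^1(\underline{\Lambda},\cO^\times_V)=0$ identically. The nontrivial $\shE xt^1$ in \cite[Cor.~3.12, Prop.~3.14]{GrossSiebertI} exists precisely because the ghost sheaf $\shP^\gp$ is a \emph{non-constant} constructible sheaf whose stalks jump across strata, and the resolution one uses there is a resolution of this constructible sheaf by pushforwards from strata --- not a free resolution of the fixed group $\Lambda$ with factors afterwards ``restricted'' to strata. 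Your heuristic that the 2-faces give all relations because ``the 2-skeleton already computes $H_1$'' also does not supply what is needed: the $d_\omega$ are primitive tangent vectors, not vertex differences, so the sequence $\ZZ^{\{2\text{-faces}\}}\to\ZZ^{\{\text{edges}\}}\to\one^\perp\to 0$ is not a truncation of a simplicial chain complex, and its integral exactness for a general lattice polytope is something one has to prove, not assume. Finally, step~(ii) --- showing that a tuple satisfying \eqref{eq log} actually produces (and uniquely determines) a log smooth structure of the given type, including consistency along codimension-two strata where several branches of $X_\sing$ meet --- is where the genuine content of Gross--Siebert's proof lies, and the paragraph you give (pick a local chart, invoke Kato's criterion) does not engage with it.
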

\begin{corollary} \label{injection-into-stalks-at-components}
Given an \'etale open $U\ra X$, the natural map $\shL\shS_X(U)\ra\prod_{\bar x}\shL\shS_{X,\bar x}$, for the product running over the generic points $\bar x$ of the components of $U_\sing$, is injective.
\end{corollary}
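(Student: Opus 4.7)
The plan is to use the local description of $\shL\shS_X$ provided by Theorem~\ref{thm-grosie-LS} together with the reducedness of the strata $V_\omega$. I would take two sections $s_1, s_2 \in \shL\shS_X(U)$ with the same image in $\shL\shS_{X,\bar x}$ for every generic point $\bar x$ of a component of $U_\sing$ and aim to show $s_1 = s_2$. Since $\shL\shS_X$ is an \'etale sheaf and the equality to be proven is \'etale-local, it suffices to verify it after pulling back along each of the \'etale charts $V_{\bar y}\to U$ introduced at the beginning of the section.

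On each such $V_{\bar y}$, Theorem~\ref{thm-grosie-LS} supplies an embedding
$$\shL\shS_X|_{V_{\bar y}} \hra \bigoplus_{\omega} \cO^\times_{V_\omega},$$
where $\omega$ ranges over the edges of the polytope $\sigma_{\bar y}$, so that the restrictions $s_i|_{V_{\bar y}}$ are encoded by tuples $(f^{(i)}_\omega)_\omega$. It will then be enough to establish $f^{(1)}_\omega = f^{(2)}_\omega$ as sections of $\cO^\times_{V_\omega}$ on $V_{\bar y}\cap V_\omega$ for each edge $\omega$. The key observation I plan to use is that $V_\omega$ is the preimage under the smooth structure morphism $V_{\bar y}\to \Spec\kk[\shP_{\bar y}]/(z^\one)$ of a reduced toric stratum, and smoothness preserves reducedness, so $V_\omega$ is itself reduced.

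To finish, I would note that each generic point of a component of $V_\omega$ maps via $V_{\bar y}\to U$ to the generic point of some component of $U_\sing$. The hypothesis then translates into the agreement of the germs of $f^{(1)}_\omega$ and $f^{(2)}_\omega$ at every such generic point. Since $V_\omega$ is reduced Noetherian, the natural map $\cO_{V_\omega}\to \prod_\eta \cO_{V_\omega,\eta}$ over the generic points $\eta$ is injective, so that $f^{(1)}_\omega/f^{(2)}_\omega - 1$ vanishes on $V_{\bar y}\cap V_\omega$, forcing $f^{(1)}_\omega = f^{(2)}_\omega$. This yields $s_1 = s_2$ on every chart $V_{\bar y}$ and hence globally on $U$ by the sheaf property.

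The main subtlety, which is really only bookkeeping, is to match the edges $\omega$ of $\sigma_{\bar y}$ across varying charts with irreducible components of $U_\sing$, and to verify that every component of $U_\sing$ whose generic point occurs in the hypothesis is seen by some $V_\omega$ on some chart. This matching is built into the formalism of Theorem~\ref{thm-grosie-LS}: each irreducible component of $V_\omega$ projects under $V_{\bar y}\to U$ onto (an open part of) a codimension-one toroidal stratum of $X_\sing$, i.e., onto a component of $U_\sing$, so every component of $U_\sing$ meeting the chart is indeed controlled by exactly the $f_\omega$ corresponding to its associated edge.
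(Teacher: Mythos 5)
Your argument is correct and is the natural one—the paper itself states Corollary~\ref{injection-into-stalks-at-components} without proof as a direct consequence of Theorem~\ref{thm-grosie-LS}. You correctly reduce to the étale charts $V_{\bar y}$, use the embedding $\shL\shS_X|_{V_{\bar y}}\hookrightarrow\bigoplus_\omega\cO^\times_{V_\omega}$, note that each $V_\omega$ is reduced (smooth pullback of a reduced toric stratum), that generic points of $V_\omega$ map to generic points of $U_\sing$ under the étale chart, and conclude via injectivity of $\cO^\times_{V_\omega}\to\prod_\eta\cO^\times_{V_\omega,\eta}$ on a reduced Noetherian scheme.
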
 
The isomorphism in the theorem naturally depends on the morphism $V_{\bar x}\ra\Spec \kk[\shP_{\bar x}]/z^\one$ in a way that enables the following result.
\begin{corollary} 
For each irreducible component $X_{\omega}$ of $X_\sing$ there is an $\shO_{\tilde X_{\omega}}^\times$-torsor $\shN_{{\omega}}^\times$ on its normalization $\tilde X_{\omega}$ so that 
$$\shLS_X\subset \bigoplus_{X_{\omega}} q_{\omega,*}\shN_{{\omega}}^\times$$
for $q_{\omega}:\tilde X_{\omega}\ra X_{\omega}$ the normalization, and the subsheaf is locally characterized by
Theorem~\ref{thm-grosie-LS} when using suitable local trivializations of the torsors.
\end{corollary}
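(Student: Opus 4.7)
The plan is to globalize Theorem~\ref{thm-grosie-LS} by assembling the local pieces $\shO^\times_{V_\omega}$ into $\shO^\times$-torsors on the normalizations $\tilde X_\omega$ and gluing the Gross--Siebert injections accordingly.

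First I would organize indexing. At any geometric point $\bar x \in X_\sing$, the edges $\omega'$ of $\sigma_{\bar x}$ correspond bijectively to the local branches of $X_\sing$ through $\bar x$ via the strata $V_{\omega'}\subset V_{\bar x}$. As $\bar x$ varies in a fixed irreducible component $X_\omega \subset X_\sing$, the strata $V_{\omega'}$ that belong to $X_\omega$ provide an \'etale cover of the normalization $\tilde X_\omega$ rather than of $X_\omega$ itself, because distinct local branches that fuse in the (possibly non-normal) $X_\omega$ correspond to distinct edges and are separated on $\tilde X_\omega$.

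Next I would construct $\shN_\omega^\times$. On each piece $V_{\omega'}$ of the cover, Theorem~\ref{thm-grosie-LS} supplies a trivialization $\shN_\omega^\times \cong \shO^\times_{V_{\omega'}}$. Unwinding the construction in \cite[Section~3]{GrossSiebertI}, where $\shLS_X$ is realized as a subsheaf of a direct sum of extensions of $\shP^\gp_{\bar x}/\ZZ\one_{\bar x}$ by $\shO^\times_X$, the identification depends on the smooth chart $V_{\bar x} \ra \Spec\kk[\shP_{\bar x}]/(z^\one)$ and on a choice of splitting only up to multiplication by an element of $\shO^\times$, since changing chart rescales the primitive edge generator $d_{\omega'}$ and the splitting by units. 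The resulting $\shO^\times$-valued transition functions satisfy the cocycle condition on triple overlaps of \'etale charts because they originate as cocycles in $\shO^\times$, yielding a well-defined \'etale $\shO^\times_{\tilde X_\omega}$-torsor $\shN_\omega^\times$.

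Finally, the local embeddings from Theorem~\ref{thm-grosie-LS} transform equivariantly under these transition functions (as the construction is canonical), so they assemble to a global injection
\[
  \shLS_X \hra \bigoplus_\omega q_{\omega,*}\shN_\omega^\times,
\]
with global injectivity guaranteed by Corollary~\ref{injection-into-stalks-at-components}. The relations \eqref{eq log} involve only the intrinsic data $d_{\omega'}$ and the orientation signs $\epsilon_\tau(\omega')$, both invariant under the transition functions, so the characterization of the image as a subsheaf persists in any local trivialization. The main obstacle is the bookkeeping in the previous paragraph: one must carefully unwind the dependence of the Gross--Siebert trivialization on the chart and check that two charts at the same $\bar x$ produce trivializations differing by a single element of $\shO^\times$, with these units satisfying the cocycle condition on triple overlaps. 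Once this is established, the rest reduces to formal sheaf gluing.
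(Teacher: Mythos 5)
The paper states this corollary without a detailed proof — it is preceded only by the single sentence ``The isomorphism in the theorem naturally depends on the morphism $V_{\bar x}\ra\Spec \kk[\shP_{\bar x}]/z^\one$ in a way that enables the following result,'' and the reader is left to supply exactly the gluing argument you spell out. Your proposal does precisely that: the edges of $\sigma_{\bar x}$ inside a fixed component $X_\omega$ index the branches through $\bar x$ and hence cover $\tilde X_\omega$ (not $X_\omega$), the chart-dependence of the Gross--Siebert trivialization is a rescaling by units which furnishes $\shO^\times_{\tilde X_\omega}$-valued transition cocycles, and the relations \eqref{eq log} are expressed in terms of the intrinsic data $d_\omega$, $\epsilon_\tau(\omega)$ and therefore persist under any choice of local trivialization. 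This is the correct and, as far as the paper reveals, the intended route. One small remark: invoking Corollary~\ref{injection-into-stalks-at-components} for global injectivity is superfluous — injectivity of a map of sheaves of sets can be checked locally, and Theorem~\ref{thm-grosie-LS} already supplies local injectivity on each $V_{\bar x}$ — but it does no harm.
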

Let $\shN_{{\omega}}$ denote the associated line bundle so that $\shN_{{\omega}}^\times$ is its $\shO_{\tilde X_{\omega}}^\times$-torsor of generating sections. 
We therefore obtain an injection of $\shLS_X$ in the coherent sheaf $\bigoplus_{X_{\omega}} q_{\omega,*}\shN_{{\omega}}$.
\begin{lemma} \label{lemma-LS-equivariant}
Under the hypothesis of Theorem~\ref{thm-LS-T1}, the injection $\shL\shS_X\hra\bigoplus_{X_{\omega}} q_{\omega,*}\shN_{{\omega}}$ is $\shO^\times_X$-equivariant.
\end{lemma}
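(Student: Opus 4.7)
The plan is to reduce to verifying equivariance at the generic points of components of $X_\sing$ and then to argue using the equivariance of $\eta: \shL\shS_X \to \shT^1_X$ established in Proposition~\ref{prop-eta-equiv}.

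First I would note that Corollary~\ref{injection-into-stalks-at-components} supplies, under the hypothesis of Theorem~\ref{thm-LS-T1}, an injection $\shL\shS_X \hookrightarrow \prod_{\bar x}\shL\shS_{X,\bar x}$ with the product ranging over generic points $\bar x$ of components of $X_\sing$. On the other hand, each $\shN_\omega$ is a line bundle on the reduced irreducible scheme $\tilde X_\omega$, so $q_{\omega,*}\shN_\omega$ injects into its stalk at the generic point of $\tilde X_\omega$. Both $\shO^\times_X$-actions commute with restriction to stalks, so it suffices to verify equivariance at the generic point $\bar x$ of an arbitrary component $X_\omega \subset X_\sing$.

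At such a $\bar x$, the hypothesis $\shP_{\bar x} \cong \NN^2$ places us in the two-component normal crossings situation \'etale locally modelled by $\Spec\kk[z_1,z_2]/(z_1z_2)$, with a unique local component $\tilde X_\omega = V(z_1,z_2)$ and a unique edge $\omega$ of $\sigma_{\bar x}$. By \cite[Proposition~1.10]{fri}, the stalk $\shT^1_{X,\bar x}$ is free of rank one and is canonically identified with $\shN_{\omega,\bar x}$ via the tangent data of $\omega$; this identification is tautologically $\shO^\times_X$-equivariant. A direct computation on a local chart $\NN^2 \to \shM_X$, $\tilde e_i \mapsto a_iz_i$ with $\tilde\one = \tilde e_1 + \tilde e_2$, shows that the composition $\shL\shS_{X,\bar x} \hookrightarrow \shN_{\omega,\bar x}$ from the Gross--Siebert description of Theorem~\ref{thm-grosie-LS} agrees with $\eta_{\bar x}$ followed by this identification: both constructions yield the class of $a_1a_2|_{X_\omega}$ (up to the conventional inverse coming from $\epsilon_\tau(\omega)$ in \eqref{eq log}). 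Under the action $\tilde\one \mapsto \lambda^{-1}\tilde\one$, the chart must be rewritten as $(\tilde e_1', \tilde e_2') = (u_1\tilde e_1, u_2\tilde e_2)$ with $u_1u_2 = \lambda^{-1}$; substituting yields $a_1'a_2'|_{X_\omega} = \lambda^{-1}|_{X_\omega}\cdot a_1a_2|_{X_\omega}$, matching (after the conventional inverse) the natural coherent action by $\lambda|_{X_\omega}$ on $\shN_{\omega,\bar x}$.

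Equivariance then follows: $\eta$ is $\shO^\times_X$-equivariant by Proposition~\ref{prop-eta-equiv} and the identification $\shT^1_{X,\bar x} \cong \shN_{\omega,\bar x}$ is an isomorphism of $\shO^\times_X$-modules, so the composition is equivariant, and by the reduction to stalks the global injection is too. The main obstacle is just bookkeeping of conventions at the last step: one must confirm that the normalization of the Gross--Siebert embedding, involving the signs $\epsilon_\tau(\omega)$ and the choice of primitive generator $d_\omega$, is consistent with Friedman's identification, so that no additional twist is introduced and the scalar really comes out as $\lambda|_{X_\omega}$ rather than its inverse.
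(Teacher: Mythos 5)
Your proof is correct in outline but takes a genuinely different route from the paper. The paper's argument is a direct, self-contained computation: one reads off from the proof of \cite[Theorem~3.22]{GrossSiebertI} that the local model is $xy = f_\omega(z^{\one})^{\kappa_\omega}$, so replacing $z^{\one}$ by $\lambda^{-1}z^{\one}$ forces $f_\omega \mapsto \lambda^{\kappa_\omega}f_\omega$, and the hypothesis $\shP_{\bar x}\cong\NN^2$ exactly forces $\kappa_\omega=1$, matching the natural $\shO^\times_X$-action on $q_{\omega,*}\shN_\omega$. You instead reduce to stalks at generic points of $X_\sing$ (correctly using Corollary~\ref{injection-into-stalks-at-components} on one side and torsion-freeness of $\shN_\omega$ on the other), then route the stalk map through $\eta$ and Friedman's identification $\shT^1_{X,\bar x}\cong\shO_{X_\sing,\bar x}$, and finally invoke the equivariance of $\eta$ from Proposition~\ref{prop-eta-equiv}. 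This is a valid and more conceptual packaging---it makes the relation to $\shT^1_X$ explicit, which is what Theorem~\ref{thm-inject-T1} ultimately wants---but it does not actually save work: the compatibility ``$\iota_{\bar x} = \theta\circ\eta_{\bar x}$'' cannot be deduced abstractly (a naive attempt to define $\theta$ by $\theta(\eta(s)):=\iota(s)$ and extend linearly would already presuppose equivariance of $\iota$), so it requires exactly the same local chart computation the paper performs directly; moreover, you invoke Proposition~\ref{prop-eta-equiv}, whose proof via idealized log schemes is substantially heavier than the paper's one-line Lemma~\ref{lemma-LS-equivariant} argument. Finally, you should resolve the inverse: with a chart $\tilde e_i\mapsto a_i z_i$, the local smoothing is $z_1z_2=(a_1a_2)^{-1}t$, so both $\eta(M)$ and the Gross--Siebert component are $(a_1a_2)^{-1}$, not $a_1a_2$; with this sign pinned down the factor $u_1u_2=\lambda^{-1}$ indeed produces the action of $\lambda$ (not $\lambda^{-1}$) and the argument closes without appeal to a ``conventional inverse''.
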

\begin{proof} 
We borrow the notation $P_{\kappa}$ from Remark~\ref{remark-eta-may-be-zero}.
From a careful analysis of the proof of \cite[Theorem~3.22]{GrossSiebertI} one finds that the action $\one\mapsto \lambda^{-1}\one$ becomes $f_\omega\mapsto \lambda^{\kappa_\omega}f_\omega$ where $\kappa_\omega$ is such that $\shP_{\bar x}\cong P_{\kappa_\omega}$ at the generic point $\bar x$ of $X_\omega$.
Indeed, if a local model at $\bar x$ is given by $xy=f_\omega (z^\one)^{\kappa_\omega}$, this is equivalent to $xy=\lambda^{\kappa_\omega} f_\omega (\lambda^{-1}z^\one)^{\kappa_\omega}$ which explains the action.
The hypothesis of Theorem~\ref{thm-LS-T1} says that $\kappa_\omega=1$ for all $\omega$, so indeed the action of $\shO^\times_X$ on $\shL\shS_X$ is compatible with the ordinary action on the coherent sheaf $\bigoplus_{X_{\omega}} q_{\omega,*}\shN_{{\omega}}$.
\end{proof}
\begin{theorem} \label{thm-inject-T1}
If $X$ is a normal crossing space, then the injection in Lemma~\ref{lemma-LS-equivariant} factors as the composition of $\eta:\shL\shS_X\ra\shT^1_X$ and a uniquely determined injection of coherent sheaves
$\shT^1_X\hra \bigoplus_{X_{\omega}} q_{\omega,*}\shN_{{\omega}}$.
\end{theorem}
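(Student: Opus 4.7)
Since $X$ is a normal crossing space, $\shP_{\bar x} \cong \NN^k$ at every geometric point $\bar x$, so the hypotheses of Theorem~\ref{thm-LS-T1} are satisfied: the map $\eta: \shL\shS_X \to \shT^1_X$ is injective with image exactly $(\shT^1_X)^\times$ on all of $X$, hence $\eta$ is a bijection of sheaves of sets onto $(\shT^1_X)^\times$. By Proposition~\ref{prop-eta-equiv} and Lemma~\ref{lemma-LS-equivariant}, both $\eta$ and the injection $\iota: \shL\shS_X \hookrightarrow \bigoplus_\omega q_{\omega,*}\shN_\omega$ are $\shO^\times_X$-equivariant.

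I first define a map of sheaves of sets
$$\psi := \iota \circ \eta^{-1}: (\shT^1_X)^\times \longrightarrow \bigoplus_\omega q_{\omega,*}\shN_\omega,$$
which is automatically $\shO^\times_X$-equivariant. To extend $\psi$ to an $\shO_X$-linear map $\bar\iota: \shT^1_X \to \bigoplus_\omega q_{\omega,*}\shN_\omega$, I invoke the identification $\shT^1_{X,\bar x} \cong \shO_{X_\sing, \bar x}$ from \cite[Proposition~1.10]{fri}: this shows that $\shT^1_X$ is locally a cyclic $\shO_X$-module, generated by any local section of $(\shT^1_X)^\times$, with annihilator equal to the ideal sheaf $I_{X_\sing}$ of the singular locus. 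On an open $U$ on which a section $t_0 \in (\shT^1_X)^\times(U)$ exists, I define $\bar\iota(\lambda t_0) := \lambda \, \psi(t_0)$ for $\lambda \in \shO_X$.

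The crucial verification is well-definedness, which amounts to the annihilator inclusion $\operatorname{ann}(\psi(t_0)) \supseteq I_{X_\sing}$. Writing $\psi(t_0) = (f_\omega)_\omega$, each $f_\omega$ is a local generator of the torsor $\shN^\times_\omega$, so its annihilator in $(q_{\omega,*}\shN_\omega)_{\bar x}$ is the kernel of $\shO_{X,\bar x} \to (q_{\omega,*}\shO_{\tilde X_\omega})_{\bar x}$, which equals $I_{X_\omega,\bar x}$ because the normalization $\tilde X_\omega \to X_\omega$ induces an injection on structure sheaves. Hence $\operatorname{ann}(\psi(t_0)) = \bigcap_\omega I_{X_\omega} = I_{X_\sing} = \operatorname{ann}(t_0)$, which simultaneously yields well-definedness and injectivity of $\bar\iota$. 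Independence of the chosen generator follows from the $\shO^\times$-equivariance of $\psi$: if $t_0' = \mu t_0$ with $\mu \in \shO^\times_X$, then $\psi(t_0') = \mu \, \psi(t_0)$, so the two local definitions agree; the local maps glue to a global sheaf map.

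The factorization $\iota = \bar\iota \circ \eta$ is immediate: for any section $M$ of $\shL\shS_X$, setting $t_0 := \eta(M)$ gives $\bar\iota(\eta(M)) = 1 \cdot \psi(t_0) = \iota(M)$. Uniqueness follows since $(\shT^1_X)^\times$ locally generates $\shT^1_X$ as an $\shO_X$-module and the factorization condition forces the values of $\bar\iota$ on these generators. The main obstacle is the annihilator computation in the well-definedness step; it relies essentially on the normal crossing hypothesis via Friedman's identification $\shT^1_X \cong \shO_{X_\sing}$ (together with the explicit description of the image of $\iota$ from Theorem~\ref{thm-grosie-LS}), which is why the theorem is stated for normal crossing spaces rather than general toroidal crossing spaces.
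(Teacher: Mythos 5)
Your proof is correct and matches the paper's intended route: the paper reduces the statement to ``an elementary lemma about a cyclic module whose proof we omit,'' citing exactly the ingredients you invoke (Theorem~\ref{thm-LS-T1} with $V=X$, the $\shO_X^\times$-equivariance from Lemma~\ref{lemma-LS-equivariant} and Proposition~\ref{prop-eta-equiv}, and the annihilator containment $\operatorname{Ann}(\shT^1_X)\subseteq\operatorname{Ann}(\bigoplus_\omega q_{\omega,*}\shN_\omega)$). Your argument is precisely that omitted lemma carried out in detail — extending $\iota\circ\eta^{-1}$ from $(\shT^1_X)^\times$ to the cyclic sheaf $\shT^1_X\cong\shO_{X_\sing}$ by $\shO_X$-linearity, with well-definedness, injectivity, and uniqueness all following from the annihilator equality $\operatorname{ann}(\psi(t_0))=I_{X_\sing}=\operatorname{ann}(t_0)$ and the equivariance.
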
 
\begin{proof} 
Given Lemma~\ref{lemma-LS-equivariant} and Theorem~\ref{thm-LS-T1} and noting that $V=X$ for a normal crossing space and that the annihilator of $\shT^1_X$ is contained in the annihilator of $\bigoplus_{X_{\omega}} q_{\omega,*}\shN_{{\omega}}$, the statement becomes an elementary lemma about a cyclic module whose proof we omit.
\end{proof}
\begin{definition} 
For a point $\bar x\in X$, let $X^\circ_{\bar x}\subset X$ denote the Zariski locally closed subset where $\shP$ is locally constant with stalk $\shP_{\bar x}$, so that $X$ is the disjoint union of $X^\circ_{\bar y}$ for suitable points ${\bar y}$. We call the closure $X_{\bar x}$ of $X^\circ_{\bar x}$ the \emph{stratum} of ${\bar x}$ which again decomposes into $X^\circ_{\bar y}$. We infer the notion of strata to the normalization of $X$.

A section of $s\in\Gamma(U,\shL\shS_X)$ for a Zariski open $U\subset X$ is called \emph{sch\"on} if it extends to a section $(s_\omega)_\omega\in \Gamma(X,\bigoplus_{X_{\omega}} q_{\omega,*}\shN_{{\omega}})$ so that, for each $\omega$, the vanishing locus $\tilde Z_\omega$ of $s_\omega$ in $\tilde X_\omega$ is reduced, does not contain any strata and has regular intersection with each stratum inside $\tilde X_\omega$ (in particular $\tilde Z_\omega\cap X_{\omega}^\circ$ is smooth).
We also assume that $Z=\bigcup_\omega q_{\omega}(\tilde Z_\omega)$ is the complement of $U$ (otherwise $U$ can be enlarged).
\end{definition} 

\begin{definition} \label{def-simple}
A sch\"on section is called \emph{simple} if for every closed point $\bar x\in X$ with $V_{\bar x}\ra\Spec \kk[\shP_{\bar x}]/z^\one$ the smooth map from a neighborhood, we have the following situation. 
Let $Z\cap V_{\bar x}=\bigcup_{\omega\in \Omega}  Z_\omega$ be the local decomposition of $Z$ into irreducible components where we may assume each $Z_\omega$ contains $\bar x$.
\begin{enumerate}
\item There is a disjoint union $\Omega=\Omega_1\sqcup...\sqcup \Omega_q$ with $q<\rk \shP_{\bar x}$ such that $Z_i:=Z_\omega\cap X_{\bar x}=Z_{\omega'}\cap X_{\bar x}$ whenever $\omega,\omega'$ are in the same $\Omega_i$.
\item $Z_1,...,Z_q$ form a collection of normal crossing divisors in $X_{\bar x}$ at ${\bar x}$.
\item for each $i$, the primitive vectors $d_\omega$ for $\omega\in \Omega_i$ are the set of edge vectors of an elementary simplex $\Delta_i\subset N_\RR$. (A lattice simplex is \emph{elementary} if its vertices are the only lattice points contained in it.)
\end{enumerate}
\end{definition}
We remark that if $q_\omega:\tilde X_\omega\ra X_\omega$ is not an embedding, the zero set $\tilde Z_\omega$ of $s_\omega$ may locally contribute two or more components of $Z$ at a point $\bar x$ which may or may not lie in different $\Omega_i$.
\begin{theorem}[Gross--Siebert] 
A toroidal crossing space $X$ over an algebraically closed field $\kk$ together with simple~section $s\in\Gamma(U,\shL\shS_X)$ gives $X$ the structure of a log toroidal family over $S=\Spec\big(\NN\ra\kk\big)$ with $U$ the locus of log smoothness.
\end{theorem}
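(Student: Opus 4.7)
The plan is to produce, at every geometric point $\bar{x} \in X$, a local model diagram \eqref{LM} over $S = \Spec(\NN \to \kk)$ with $Q = \NN$. The argument naturally splits into the two cases $\bar{x} \in U$ and $\bar{x} \in Z := X \setminus U$, and the substance lies in the latter.

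At $\bar{x} \in U$, the section $s$ provides, by the very definition of $\shL\shS_X$, a fine log structure $\shM_U$ on a neighborhood making $U \to S$ log smooth; the saturation condition follows because $\shP_{\bar{x}}$ is sharp toric and the map $\NN \to \shP_{\bar{x}}$, $1 \mapsto \one$, is saturated. A neat chart of this log smooth morphism, multiplied by an $\NN^r$-factor as in Example~\ref{ex etd} and equipped with $\F = \F_{\max}$, defines an ETD $(\NN \hookrightarrow P, \F)$ for which the required strict étale neighborhood $V \to A_{P,\F}$ exists by the standard log smoothness structure theorem. Since $Z \cap V = \emptyset$ here, \eqref{CC} is vacuous and the local model is immediate.

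At $\bar{x} \in Z$, the simpleness data of Definition~\ref{def-simple} is precisely the input assembled in \cite[Theorem~2.6]{GrossSiebertII} and its generalization \cite[Proposition~2.8]{rud} to produce a sharp toric monoid $P$ together with an embedding $\NN \hookrightarrow P$ sending $1$ to an element lifting $\one \in \shP_{\bar{x}}$. Concretely, $P$ is built from the cones over the elementary simplices $\Delta_i$ (one factor for each $i$, encoding the conifold-type relations among the monomials $z^{d_\omega}$) together with an $\NN^r$-factor accounting for the smooth transverse directions to the $Z_i$ inside $X_{\bar{x}}$. Elementarity of each $\Delta_i$ is exactly the hypothesis of Lemma~\ref{lemma-eq-to-saturated} that makes $\NN \hookrightarrow P$ saturated with free $\NN$-action generated by a union of essential faces, so $(\NN \hookrightarrow P, \F_{\min})$ is an ETD.

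The final step is verifying that, after further étale localization, the scheme $V_{\bar{x}}$ admits an étale map $h : V \to A_{P,\F_{\min}}$ of underlying schemes such that $h^*\shM_{A_{P,\F_{\min}}}$ agrees with the log structure induced by $s$ on $\tilde{U} := h^{-1}(U_P) \cap g^{-1}(U)$. The regular functions cutting out the $Z_i$, provided via the trivializations $s_\omega$ coming from the inclusion $\shL\shS_X \hookrightarrow \bigoplus_\omega q_{\omega,*}\shN_\omega$, realize the required toric monomials up to units, and the consistency constraint \eqref{eq log} of Theorem~\ref{thm-grosie-LS} translates into the defining monoid relations of $\kk[P]$. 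The main obstacle is the combinatorial bookkeeping needed to identify the bad-face locus $A_{P,\F_{\min}} \setminus U_P$ with $Z \cap V$ through $h$ and to match the log structures on $\tilde{U}$; this is the content of the cited Gross--Siebert/Ruddat constructions, and it is exactly where the partition into \emph{elementary} simplices (rather than a single larger cone) becomes indispensable, since it is what guarantees that the nearby-fiber singularities of $A_{P,\F_{\min}} \to A_\NN$ are terminal and the ETD structure is available on each piece.
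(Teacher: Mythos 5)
Your outline follows exactly the same route as the paper, which simply defers to \cite[Theorem~2.6]{GrossSiebertII} (and its generalization in \cite{rud}) with the observation that the simpleness data of Definition~\ref{def-simple} is precisely the hypothesis that theorem requires. The split between $\bar{x}\in U$ and $\bar{x}\in Z$, the invocation of Example~\ref{ex etd} with $\F=\F_{\max}$ on the smooth locus, and the reliance on the Gross--Siebert/Ruddat construction of the toric local model $\kk[P]$ near $Z$ all match what the citation supplies, so there is no genuine gap.

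One point in your write-up is stated more strongly than is warranted. You claim that elementarity of the $\Delta_i$ \emph{is exactly} the hypothesis of Lemma~\ref{lemma-eq-to-saturated} that makes $\NN\hra P$ saturated, and that it is what makes the ETD structure ``available on each piece.'' This conflates two different roles. Lemma~\ref{lemma-eq-to-saturated} is a general equivalence between the free-$Q$-set condition of Definition~\ref{def-ETD} and saturation of $Q\hra P$; it says nothing about elementary simplices, and the saturation of $\NN\hra P$ in the Gross--Siebert local model is a consequence of the construction of $P$ from the simpleness data that does not, as far as the cited sources indicate, hinge specifically on elementarity of the $\Delta_i$. What elementarity actually controls is the singularity type of the nearby fibers of $A_{P,\F_{\min}}\to A_{\NN}$ (as the paper itself remarks just after the theorem: ``the $\Delta_i$ give the local structure of the singularities in the nearby fiber''), and hence the terminality claim in Theorem~\ref{maintheorem-tc}. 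That terminality assertion is downstream of the present theorem and not needed for the conclusion that $X\to S$ is log toroidal. So the substance of your argument is fine, but you should detach the role of elementarity from the saturation/ETD verification, where the relevant input is the free-$\NN$-set structure on the cone over the $\Delta_i$ rather than the absence of interior lattice points.
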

\begin{proof}
Using assumptions in Definition~\ref{def-simple}, the proof is the same as the one of \cite[Theorem~2.6]{GrossSiebertII}. See also Example \ref{example-GS}.
\end{proof}
We remark that the $\Delta_i$ give the local structure of the singularities in the nearby fiber, cf.~\cite[Proposition~2.2]{GrossSiebertII}.
We also remark that all ETDs have $\shF=\shF_{\min}$, i.e.,~there is no horizontal divisor. Proposition~\ref{prop-top-trivial} implies $W^{\dim X}_{X/S}=\omega_{X/S}$.

\begin{proposition} \label{nc-to-log-toroidal}
A normal crossing space $X$ with $X_\sing$ projective and $\shT_X^1$ generated by global sections permits a dense open $U$ and a simple section $s\in\Gamma(U,\shL\shS_X)$.
In view of \ref{def-simple}, we have $q=1$ at every point in $Z$ and $\Delta_1$ in each ETD is a standard simplex which means all ETDs have smooth nearby fibers.
\end{proposition}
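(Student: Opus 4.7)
\emph{Plan.}
For a normal crossing space every stalk of $\shP$ is $\NN^r$, so by Theorem~\ref{thm-LS-T1} the map $\eta$ gives a bijection $\shL\shS_X(U) \stackrel{\sim}{\to} (\shT^1_U)^\times$ for every open $U \subset X$. Moreover $\shT^1_X$ is a line bundle on $X_\sing$ (as noted in the proof of Lemma~\ref{lemma-injective} via \cite[Proposition~1.10]{fri}), each component $X_\omega$ of $X_\sing$ is smooth (in the local model $X=\{z_1\cdots z_k=0\}$ we have $X_\omega=\{z_i=z_j=0\}$), so $\tilde X_\omega = X_\omega$, and by Theorem~\ref{thm-inject-T1}, $\shT^1_X$ embeds into the coherent sheaf $\bigoplus_\omega q_{\omega,*}\shN_\omega$.

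Since $\shT^1_X$ is globally generated and $X_\sing$ is projective, a classical Bertini argument applied to the line bundle $\shT^1_X$ on $X_\sing$ produces a global section $s\in\Gamma(X,\shT^1_X)$ whose vanishing divisor $Z:=V(s)$ is reduced, contains no stratum of $X_\sing$, and meets every stratum transversely. The finitely many strata of $X_\sing$ (and of each $X_\omega$) furnish only finitely many open and non-empty genericity conditions in the linear system of $\shT^1_X$, so they can all be imposed simultaneously. Under the embedding of Theorem~\ref{thm-inject-T1}, $s$ then corresponds to a tuple $(s_\omega)$ of global sections of the line bundles $\shN_\omega$ on the smooth projective $X_\omega$ whose vanishing loci $\tilde Z_\omega$ are reduced, contain no stratum, and meet every stratum transversely---that is, $s$ satisfies the sch\"on condition. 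Setting $U:=X\setminus Z$ and lifting $s|_U\in(\shT^1_U)^\times$ uniquely along $\eta$ produces the desired $\tilde s\in\Gamma(U,\shL\shS_X)$.

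It remains to verify simpleness at a closed point $\bar x\in Z$ and to exhibit $\Delta_1$ as the standard simplex. In an \'etale chart with $X=\{z_1\cdots z_k=0\}$ we have $\shP_{\bar x}\cong\NN^k$ and $\sigma_{\bar x}=\conv(e_1^*,\ldots,e_k^*)$ is the standard $(k-1)$-simplex. The local components of $X_\sing$ at $\bar x$ are indexed by edges $\omega=\{e_i^*,e_j^*\}$ with $X_\omega=\{z_i=z_j=0\}$, each containing the deepest stratum $X_{\bar x}=\{z_1=\cdots=z_k=0\}$. Because $\shT^1_X$ is a line bundle on $X_\sing$, the local equations $f_\omega$ of $Z_\omega\subset X_\omega$ are local trivializations of the same invertible $\cO_{X_\sing}$-module; in particular their restrictions to $X_{\bar x}$ all define one and the same divisor, namely $V(s|_{X_{\bar x}})$. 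Therefore $q=1$ with $\Omega_1=\Omega$, and the primitive edge vectors $d_\omega=e_j^*-e_i^*$ are precisely the edges of the simplex $\Delta_1=\conv(e_1^*,\ldots,e_k^*)$, which is elementary and whose vertices form an affine basis of a unimodular sublattice. The associated local smoothing model is therefore $\{x_1\cdots x_k=c\}$ with $c\neq 0$, a smooth hypersurface, so the nearby fibers are smooth.

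\emph{Main obstacle.} The principal technical ingredient is the simultaneous Bertini argument: one must obtain a section of $\shT^1_X$ that is simultaneously Bertini-generic on $X_\sing$ and on each stratum of each $X_\omega$. This follows formally from global generation of $\shT^1_X$, projectivity of $X_\sing$, and finiteness of the stratification, but it is the place where the hypotheses of the proposition genuinely enter. The remaining simpleness check is combinatorial and uses only the fact that $\shT^1_X$ is a single line bundle on $X_\sing$, which forces all local equations $f_\omega$ to agree when restricted to the deepest stratum.
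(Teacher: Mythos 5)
Your proof is correct and follows the same one-line route as the paper (apply Bertini to the line bundle $\shT^1_X$ on the projective $X_\sing$, then transport the resulting section through the embedding of Theorem~\ref{thm-inject-T1}), and you usefully supply the verification---which the paper only asserts---that $q=1$ (since all $f_\omega$ come from the single line bundle $\shT^1_X$ their restrictions to $X_{\bar x}$ agree) and that $\Delta_1$ is the standard, hence unimodular, simplex. One small inaccuracy: for a general normal crossing space (no SNC assumption), monodromy can swap the local branches $\{z_i=z_j=0\}$, so an irreducible component $X_\omega$ of $X_\sing$ may have self-crossings and $\tilde X_\omega = X_\omega$ can fail; this does not affect your argument, which only uses smoothness of $\tilde X_\omega$ and the fact that $\shT^1_X$ is a line bundle on $X_\sing$.
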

\begin{proof} 
Applying Bertini's theorem to the line bundle $\shT_X^1$ on $X_\sing$, we obtain a section $\hat s\in\Gamma(X_\sing,\shT_X^1)$ that gives a simple section $s\in\Gamma(X\setminus V(\hat s),\shL\shS_X)$ by Theorem~\ref{thm-inject-T1}. 
\end{proof}

\begin{proposition} \label{prop-tc-implies-nc}
Theorem~\ref{maintheorem-nc} follows from Theorem~\ref{maintheorem-tc}.
\end{proposition}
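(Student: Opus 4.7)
The plan is to verify the hypotheses of Theorem~\ref{maintheorem-tc} for $X$, and then observe that in the normal crossing case the resulting orbifold smoothing is automatically a genuine smoothing.

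First I would view $X$ as a toroidal crossing space via the canonical construction, so that the toroidal machinery becomes available. The hypothesis of effective anti-canonical class already supplies a reduced divisor $E$ representing $\omega_X^{-1}$ and meeting every stratum of $X$ transversely. Using projectivity of $X_\sing$ together with global generation of $\shT^1_X$, I would then apply Bertini's theorem component by component on $X_\sing$, producing a section $\hat s\in\Gamma(X_\sing,\shT^1_X)$ whose vanishing locus $Z$ is sch\"on. Crucially, Bertini allows one to impose transversality against any fixed closed subscheme, so by treating $E\cap X_\sing$ as a fixed obstruction one can additionally arrange $Z$ to be transverse to $E$. This step is the one point of the argument that requires care, as all three transversality conditions (of $Z$ with the strata, of $E$ with the strata, and of $E$ with $Z$) must hold simultaneously.

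Next, using the injection $\shT^1_X\hookrightarrow\bigoplus_{\omega}q_{\omega,*}\shN_\omega$ of Theorem~\ref{thm-inject-T1}, the section $\hat s$ lifts uniquely to a section $s$ of $\shLS_X$ on $U:=X\setminus Z$. Proposition~\ref{nc-to-log-toroidal} then guarantees that $s$ is a simple section in the sense of Definition~\ref{def-simple}, and furthermore that at every point of $Z$ one has $q=1$ with the associated elementary simplex $\Delta_1$ being a standard simplex. All hypotheses of Theorem~\ref{maintheorem-tc} are now in place, and applying it yields a smoothing $\shX\to\DD$ of $X$ to an orbifold with terminal singularities.

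Finally I would argue that this orbifold smoothing is in fact a smoothing in the sense of Theorem~\ref{maintheorem-nc}, i.e., that the general fiber carries no orbifold singularities. The local structure of the singularities in the nearby fiber is encoded precisely by the simplices $\Delta_i$ of Definition~\ref{def-simple} (cf.\ \cite[Proposition~2.2]{GrossSiebertII}); a standard simplex corresponds to the smooth model $x_0\cdots x_k=t$, whose nearby fiber is smooth. Since Proposition~\ref{nc-to-log-toroidal} produces only standard simplices in the normal crossing setting, the general fiber of $\shX\to\DD$ has no singular points, which is what Theorem~\ref{maintheorem-nc} demands.
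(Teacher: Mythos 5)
Your proposal is correct and follows the same route as the paper: apply a Bertini argument (the ``slight variant'' of Proposition~\ref{nc-to-log-toroidal}) to produce a sch\"on section whose zero locus $Z$ is also transverse to $E$, feed the resulting simple section into Theorem~\ref{maintheorem-tc}, and observe that the simplices $\Delta_1$ being standard forces the orbifold smoothing to have smooth general fiber. The paper's own proof is just a terser version of exactly these steps.
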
 
\begin{proof} 
We are given $E$ that is transverse to the strata of $X$. 
We apply a slight variant of Proposition~\ref{nc-to-log-toroidal} by making sure the zero locus $Z$ of the section $\hat s$ generated by Bertini is transverse also to $E$.
Theorem~\ref{maintheorem-tc} gives an orbifold smoothing but we know it is an actual smoothing from the fact that each $\Delta_1$ is standard.
\end{proof}

The next two lemmata reduce Theorem~\ref{maintheorem-tc} to the log Calabi--Yau case, i.e., to the case $W^d_{X/S} \cong \cO_X$. We achieve this by modifying the log structure so that the new family is log Calabi--Yau.

\begin{lemma} \label{lemma-add-E}
Let $f:X\ra S$ be a log toroidal family with empty horizontal divisor. 
Let $E\subset \underline X$ be a Cartier divisor that meets all strata and $Z$ transversely, 
i.e., locally along $E$ the triple $(X,Z,E)$ is \'etale equivalent to $(E\times \AA^1,(E\cap Z)\times\AA^1,E\times\{0\})$.
There is a new log toroidal family $X(\log E)\ra S$ that has $E$ as its horizontal divisor and factors through $f$ (by forgetting $E$), so in particular $W^{\dim X}_{X(\log E)/S}(-E)=\omega_{X/S}$.
\end{lemma}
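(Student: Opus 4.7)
The plan is to define the log structure of $X(\log E)$ by amalgamating $\M_U$ with the divisorial log structure of $E$, then exhibit local models \eqref{LM} for the new family by enlarging each old ETD by a single horizontal facet, and finally deduce the formula for $\omega$ from Proposition~\ref{prop-top-trivial}.

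First I would let $j:U\hookrightarrow X$ be the log smooth locus of $f$, enlarge $U$ if necessary so that $E\cap U$ is still a Cartier divisor and $U$ still satisfies \eqref{CC} (possible since $E\cap Z$ has relative codimension at least three by transversality), and then define the log structure of $X(\log E)$ on $U$ as the coproduct of log structures
$$\M_{X(\log E)}\big|_U \ :=\ \M_U\ \oplus_{\cO_U^\times}\ \M^{\mathrm{div}}_{E\cap U}.$$
Precomposition with $\M_S\to\M_U$ gives a new generically log smooth family $X(\log E)\to S$ together with a canonical forgetful morphism $X(\log E)\to X$ of log schemes over $S$ that is the identity on underlying schemes.

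To check log toroidality, fix a geometric point $\bar x\to X$. For $\bar x\notin E$ the existing local model for $f$ is unchanged. For $\bar x\in E$, the transversality hypothesis provides an \'etale-local decomposition $(X,Z,E)\cong(X_0\times\AA^1,Z_0\times\AA^1,X_0\times\{0\})$ with $X_0=E$ near $\bar x$, and $X_0$ inherits a generically log smooth structure from $X$. I would pick a local ETD $(Q\subset P_0,\F_{0,\min})$ for $X_0$ at $\bar x$; by Example~\ref{ex etd} with $r=1$ the ETD $(Q\subset P_0\oplus\NN,\F_{\min}':=\{F\oplus\NN:F\in\F_{0,\min}\})$ produces a local model for $X$. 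The divisor $E$ then corresponds to the facet $F_E:=P_0\oplus\{0\}$, which contains $Q$ and is therefore horizontal, so adjoining it yields a valid ETD $(Q\subset P_0\oplus\NN,\F_{\min}'\cup\{F_E\})$ whose associated log structure on the toric model agrees with the coproduct constructed above. Pulling back along the \'etale map from the local model gives the required diagram \eqref{LM} for $X(\log E)$. Since the underlying morphism of schemes is unchanged, $\omega_{X(\log E)/S}=\omega_{X/S}$, and Proposition~\ref{prop-top-trivial} applied to $X(\log E)\to S$ with horizontal divisor $E$ yields $W^{\dim X}_{X(\log E)/S}(-E)=\omega_{X/S}$.

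The main obstacle I expect is showing that the scheme-theoretic product decomposition supplied by transversality can be upgraded to an ETD for $X$ at $\bar x$ in the product form $(Q\subset P_0\oplus\NN,\ldots)$ with $E$ cut out by the new $\NN$-coordinate. This should follow from the fact that a smooth Cartier divisor transverse to all toric strata in a toric chart can be \'etale-locally straightened to a coordinate hyperplane, combined with the uniqueness of log toroidal local models up to strict \'etale equivalence.
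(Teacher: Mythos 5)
Your plan is structurally the same as the paper's: define the new log structure on $U$ by adjoining the divisorial log structure of $E$, upgrade each ETD along $Z$ by adding a free $\NN$-summand to $P$ and placing the unique new facet $P_0\oplus\{0\}$ (which contains $Q$, hence is horizontal) into $\F$, and then read off the $\omega$-formula from Proposition~\ref{prop-top-trivial}. The factorization through $f$ and the identification of the horizontal divisor are correct.

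The gap is exactly where you say it is, and your proposed patch does not close it. You need an ETD $(Q\subset P_0,\F_{0,\min})$ for $E$ at $\bar x$ (which presupposes that $E$, with some induced log structure, is itself log toroidal) together with an \'etale chart $V\to L_0\times\AA^1$ compatible simultaneously with the product decomposition $(X,Z,E)\cong(E\times\AA^1,(E\cap Z)\times\AA^1,E\times\{0\})$ and with the log-structure identification $g^*\M_X\cong h^*\M_L$ on some $\tilde U$ satisfying \eqref{CC}. The two tools you invoke to get this do not apply: \'etale straightening works for a smooth divisor transverse to toric strata in a toric chart, but here the log structure is incoherent along $Z$, so there is no log chart near $\bar x$, only an \'etale map of underlying schemes as in \eqref{LM}; and "uniqueness of log toroidal local models up to strict \'etale equivalence" is not a statement proved or used in the paper, nor is it obviously true. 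The paper avoids both issues by not trying to derive the product ETD from an existing one: it reruns the local-model construction of \cite[Theorem~2.6]{GrossSiebertII} from scratch, treating the local equation of $E$ as one of the $f_i$ in that construction. That machinery builds the \'etale chart and the ETD (automatically of product form $P_0\oplus\NN$, with the extra facet in $\F$) simultaneously, which is what your argument would have to establish independently. Your proof as written is therefore an outline with a correct target but without the key technical input.
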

\begin{proof} 
On $U$ the result is straightforward and along $Z$ we use the product description to make $E$ the horizontal divisor in the ETDs by adding a summand $\NN$ to $P$ and the unique new facet gets included in $\shF$. That these give local models follows the same proof as \cite[Theorem~2.6]{GrossSiebertII} noting that we may treat the local equation for $E$ as one of the $f_i$ in the notation of loc.cit..
\end{proof}
\begin{lemma} \label{W-dim-trivial}
Let $f:X\ra S$ be a projective log toroidal family with empty horizontal divisor and assume that $\omega^{-1}_{X/S}$ is generated by global sections, then $\omega^{-1}_{X/S}\cong \shO_X(E)$ for a divisor $E$ that satisfies the assumption of Lemma~\ref{lemma-add-E}. 
In particular, $W^{\dim X}_{X(\log E)/S}\cong \shO_X$.
\end{lemma}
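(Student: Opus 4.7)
The plan is a standard Bertini argument, with the care that $E$ must be transverse not just to the smooth stratum of $X$ but to every toroidal stratum and to every stratum of (the normalization of) $Z$; this transversality will then automatically give the local product structure demanded by Lemma~\ref{lemma-add-E}.

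First, since $f:X\to S$ is projective and $\omega_{X/S}^{-1}$ is globally generated, the linear system $|\omega_{X/S}^{-1}|$ defines a morphism $\varphi:X\to \PP^N_S$. Consider the (finite) stratification of $X$ whose strata are the locally closed subsets $X^\circ_{\bar x}$ of Definition of stratum (\S\ref{sec-log-TC}), together with the images under the normalizations $q_\omega:\tilde X_\omega\to X$ of the analogous strata of each $\tilde X_\omega$ and of $\tilde Z_\omega\subset \tilde X_\omega$. Each of these is smooth. Applying Bertini stratum by stratum (pulling back the linear system to each $\tilde X_\omega$ where necessary, and using that the image under $\varphi$ of each stratum is positive-dimensional or handled trivially), a sufficiently general section $\sigma\in \Gamma(X,\omega_{X/S}^{-1})$ has vanishing divisor $E:=V(\sigma)$ whose preimage in every stratum of every $\tilde X_\omega$ (and of $X$ itself) is a smooth divisor meeting the closures of all deeper strata transversely. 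In particular $E$ avoids the generic points of every stratum, i.e.\ $E$ contains no stratum.

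Next, I verify that this transversality implies the local \'etale product structure required by Lemma~\ref{lemma-add-E}. At a geometric point $\bar x\in E$, the toroidal crossing space $X$ is \'etale equivalent to $W\times \Spec\kk[\shP_{\bar x}]/(z^\one)$ for some smooth $W$, and $Z$ is transverse to the strata by construction (Definition~\ref{def-simple}), so locally $Z$ is \'etale equivalent to $Z'\times \Spec\kk[\shP_{\bar x}]/(z^\one)$ for some $Z'\subset W$. Transversality of $E$ to every stratum through $\bar x$ means, working on $W$, that $E\cap W$ is cut out by a single regular function $t$ whose differential is linearly independent from the differentials cutting out each closed stratum and each component of $Z'$. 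Hence $t$ is a smooth coordinate on $W$ transverse to the closed sub-stratification determined by $(Z',\text{strata})$, giving an \'etale splitting $W\cong (E\cap W)\times \AA^1_t$ compatible with $Z'$. Multiplying with the toric factor yields the desired \'etale equivalence of $(X,Z,E)$ with $(E\times\AA^1,(E\cap Z)\times\AA^1,E\times\{0\})$ in a neighborhood of $\bar x$.

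Finally, by construction $\shO_X(E)\cong \omega_{X/S}^{-1}$. Feeding $E$ into Lemma~\ref{lemma-add-E} produces the log toroidal family $X(\log E)\to S$ with $E$ as its horizontal divisor and $W^{\dim X}_{X(\log E)/S}(-E)\cong\omega_{X/S}$; tensoring with $\shO_X(E)$ gives
\[
W^{\dim X}_{X(\log E)/S} \;\cong\; \omega_{X/S}\otimes\shO_X(E) \;\cong\; \omega_{X/S}\otimes \omega_{X/S}^{-1} \;\cong\; \shO_X,
\]
as claimed. The main obstacle I anticipate is bookkeeping the Bertini step: one must verify that a single generic section is simultaneously transverse to the (finitely many) strata of $X$ \emph{and} to the strata of each $\tilde X_\omega$ and each $\tilde Z_\omega$, rather than just to the smooth locus, and that this conjoint transversality really does yield the rigid local product structure of $(X,Z,E)$ required by Lemma~\ref{lemma-add-E} on the toroidal (rather than merely normal crossings) side.
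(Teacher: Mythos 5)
Your proof is correct and takes exactly the approach the paper has in mind; the paper's own proof is literally the one-liner ``This follows via an application of Bertini's theorem,'' and your write-up is a careful unpacking of that: apply Bertini stratum by stratum to the toroidal stratification (including the normalizations $\tilde X_\omega$ and the $\tilde Z_\omega$), check that simultaneous transversality gives the local \'etale product structure $(E\times\AA^1,(E\cap Z)\times\AA^1,E\times\{0\})$ required by Lemma~\ref{lemma-add-E}, and then twist $W^{\dim X}_{X(\log E)/S}(-E)\cong\omega_{X/S}$ by $\shO_X(E)\cong\omega_{X/S}^{-1}$. One small point worth being explicit about in the local verification: transversality to the deepest stratum through $\bar x$ lets you perform an \'etale change of coordinates \emph{only on the smooth factor} $W$ of the local model $W\times D_{\bar x}$, replacing $w_1$ by the local equation $t$ of $E$; since this leaves the $D_{\bar x}$-factor untouched, the ghost sheaf $\shP$ is preserved and $E$ becomes genuinely pulled back from the new $W$-factor, which is what makes the triple a product.
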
 
\begin{proof} 
This follows via an application of Bertini's theorem.
\end{proof}

In general we do not know if deformations of log toroidal families are locally unique. The following theorem shows local uniqueness for the families obtained from toroidal crossing spaces whenever a simple section gives the log structure.

\begin{theorem}[Gross--Siebert, Theorem~2.11 in \cite{GrossSiebertII}]\label{locally-unique-defos} 
Let $Y:=X(\log E)\ra S$ be a log toroidal family obtained from a toroidal crossing space $\underline X$ via a simple section $s\in\Gamma(U,\shL\shS_{\underline X})$ and a divisor $E$ as in Lemma~\ref{lemma-add-E}. Let $Y_k$ be a log toroidal deformation over $S_k = \Spec (\NN \to \kk[t]/(t^{k + 1}))$. Then the automorphisms of, isomorphisms of, and obstructions to the existence of a lifting $Y_{k + 1}$ to $S_{k + 1}$ are controlled by $H^0(Y,\Theta^1_{Y/S} \otimes_\kk I)$, $H^1(Y,\Theta^1_{Y/S} \otimes_\kk I)$, and $H^2(Y,\Theta^1_{Y/S} \otimes_\kk I)$ respectively where $I = (t^{k + 1}) \subset \kk[t]/(t^{k + 2})$. In particular, if $V\subset Y$ is affine open, then any two infinitesimal deformations of $V/S$ are isomorphic.
\end{theorem}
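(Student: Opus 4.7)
The approach is to combine Kato's standard infinitesimal deformation theory for log smooth morphisms on the log smooth locus $U \subset Y$ with a direct analysis of the explicit elementary log toroidal local models guaranteed by the simple section.

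First, I would pass to $U$. Since $s$ is simple, $U \to S_k$ is saturated log smooth (Example~\ref{logsmoothsatLisse}), so Kato's theory controls infinitesimal deformations by $\Theta_{U/S}$: for a fixed lift $U_k \to S_k$, automorphisms of a lifting $U_{k+1} \to S_{k+1}$, isomorphism classes of such liftings, and obstructions to existence sit in $H^0$, $H^1$, and $H^2$ of $\Theta_{U_k/S_k} \otimes_\kk I$ respectively. By definition, $\Theta^1_{Y/S} = j_* \shD er_{U/S}(\shO_U)$ is coherent, reflexive, and $Z$-closed (Lemma~\ref{lem-W-reflexive}), and $Z$ has codimension $\geq 2$ in each fiber. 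A local-to-global argument using these properties identifies $H^q(Y, \Theta^1_{Y/S} \otimes I)$ with $H^q(U, \Theta^1_{U/S} \otimes I)$ in the relevant degrees, so the same cohomology groups govern the problem on either space.

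Second, and most delicately, I would verify that every log smooth lift of $U_k$ over $S_{k+1}$ extends uniquely up to isomorphism to a log toroidal deformation $Y_{k+1}$ of $Y_k$ respecting the prescribed local model. This is where the simple section enters in a crucial way: Definition~\ref{def-simple} furnishes at each closed point an elementary log toroidal local model whose ETD data is fixed by the elementary simplices $\Delta_i$, with $\F = \F_{\min}$. The saturatedness of the ETD and freeness of $P$ as a $Q$-set let one produce and compare local log toroidal lifts across $Z$ compatibly with the local-model constraint, so that the automorphisms, isomorphism classes, and obstructions on $Y$ are precisely those computed on $U$. For the \emph{In particular} assertion, if $V \subset Y$ is affine, coherence of $\Theta^1_{Y/S} \otimes I$ gives the vanishing $H^q(V, \Theta^1_{Y/S} \otimes I) = 0$ for $q \geq 1$, so obstructions vanish and the torsor of isomorphism classes is trivial, whence any two infinitesimal log toroidal deformations of $V/S$ are isomorphic.

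The main obstacle is the second step. Unlike the purely log smooth situation, the log structure on $Y$ fails to be coherent along $Z$ (Remark~\ref{remark-log-structure-on-X}), so Kato's theory does not apply directly on $Y$. The rigidity at the level of the elementary log toroidal local models --- that a deformation of such a local model extending a given one is essentially unique once the ETD is prescribed --- is the combinatorial heart of the proof, and it is precisely why one imposes the simple hypothesis of Definition~\ref{def-simple}, in particular the elementarity of each $\Delta_i$.
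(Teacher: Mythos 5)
Your proposal takes a genuinely different route from the paper, and it has gaps at the two points where the real content lies.

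The first issue is the claim that "a local-to-global argument \dots identifies $H^q(Y, \Theta^1_{Y/S} \otimes I)$ with $H^q(U, \Theta^1_{U/S} \otimes I)$ in the relevant degrees." For a reflexive $Z$-closed sheaf $\shF$ on a Cohen--Macaulay scheme with $\operatorname{codim}(Z) \geq 2$, the local cohomology sequence
$\cdots \to H^q_Z(\shF) \to H^q(Y,\shF) \to H^q(U,\shF) \to H^{q+1}_Z(\shF) \to \cdots$
together with the depth bound $H^0_Z = H^1_Z = 0$ gives only $H^0(Y,\shF) = H^0(U,\shF)$ and injectivity $H^1(Y,\shF) \hookrightarrow H^1(U,\shF)$. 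To get isomorphisms in degrees $1$ and $2$ you would need $H^2_Z = H^3_Z = 0$, i.e.\ depth at least $4$ along $Z$, which is not a consequence of reflexivity and is false in general. In particular the group $H^2(Y,\Theta^1_{Y/S}\otimes I)$ controlling obstructions cannot simply be replaced by the corresponding group on $U$.

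More fundamentally, the theorem concerns deformations of the log toroidal family $Y/S$, i.e.\ deformations that respect the prescribed elementary local models along $Z$ where the log structure is incoherent; it is not a statement about log smooth deformations of $U/S$. You correctly flag as the "combinatorial heart" the need to show that such deformations exist locally and are locally unique across $Z$, but you only assert it. That local existence and uniqueness is precisely what [GS2, Lemma~2.15] establishes, and it is the substance of the theorem, not a detail to be deferred. The paper's proof does not reduce to $U$ at all: it invokes the proof of [GS2, Theorem~2.11] with a single modification, namely that the exact sequence of [GS2, Lemma~2.14(2)] becomes $0 \to \Theta_{Y/S} \to \Theta_{\underline X/\kk}(\log E) \to \shB \to 0$, where $\Theta_{\underline X/\kk}(\log E)$ are ordinary derivations preserving the ideal of $E$ (so the ambient ordinary deformations are those of the pair $(\underline X, E)$). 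The argument there proceeds via the auxiliary sheaf $\shB$, explicit local analysis of the toric local models, and a \v{C}ech local-to-global argument --- none of which is replaced by passing to $U$. Without supplying the local rigidity step and correcting the cohomological identification, the proposal does not constitute a proof.
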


\begin{proof}
The proof works precisely as in loc.cit. 
We remark that in Lemma~2.14, the exact sequence in (2) becomes $0\ra \Theta_{Y/S}\ra \Theta_{\underline X/\kk}(\log E)\ra\shB\ra 0$ where $\Theta_{\underline X/\kk}(\log E)$ denotes ordinary derivations that preserve the ideal of $E$. In other words, for the ordinary deformations, we consider the ones of the pair $(\underline X, E)$ rather than just $\underline X$. 
\end{proof}

\section{Differentials for Elementary Log Toroidal Families}
We fix a principal ideal domain $R$ as base ring. The constructions in \S\ref{ElemPretoroid} carry through when replacing $\ZZ$ by $R$. 
We will use the following elementary lemma.
\begin{lemma} \label{lem-intersection-and-wedge}
Let $n,m\ge 0$ and $G_1,...,G_r\subset R^n$ be submodules each of which is a direct summand, then the natural map
$\bigwedge^m_R(\bigcap_i G_i)\ra \bigcap_i\bigwedge^m_R G_i$ is an isomorphism. 
\end{lemma}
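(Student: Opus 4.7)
The plan is to prove this by induction on $r$, with the base case $r = 2$ carrying the real content. The preliminary observation is that over a PID, intersections of direct summands are direct summands: if $G_1, G_2 \subseteq R^n$ are direct summands, then $R^n/(G_1 \cap G_2)$ embeds diagonally into $R^n/G_1 \oplus R^n/G_2$, which is torsion-free, so $R^n/(G_1 \cap G_2)$ is torsion-free and hence free. This ensures the natural map $\bigwedge^m(\bigcap G_i) \to \bigwedge^m R^n$ is injective (it is even a split inclusion) and enables the inductive step to reduce an $r$-fold intersection to a nested pair.

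For the base case $r = 2$, I would pass to $F := G_1 + G_2 \subseteq R^n$, which is free as a submodule of a finitely generated free module over a PID. Each $G_i$ is a direct summand of $F$ because $F/G_i \hookrightarrow R^n/G_i$ is torsion-free, hence free. Since $F$ is flat, the map $\bigwedge^m F \to \bigwedge^m R^n$ is injective (inspect after tensoring with the fraction field), so the identity may be verified inside $\bigwedge^m F$. Setting $K := G_1 \cap G_2$, choose $R$-module complements $K_i$ with $G_i = K \oplus K_i$, which exist because $G_i/K$ is finitely generated and torsion-free (it injects into $F/K$, which is torsion-free by the preliminary observation applied to $F$), hence free. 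A direct verification then gives
$$F = K \oplus K_1 \oplus K_2,$$
the key point being $K_1 \cap K_2 \subseteq G_1 \cap G_2 = K$, together with $K \cap K_i = 0$. From this decomposition,
$$\bigwedge\nolimits^m F \;=\; \bigoplus_{a+b+c=m} \bigwedge\nolimits^a K \otimes \bigwedge\nolimits^b K_1 \otimes \bigwedge\nolimits^c K_2,$$
and $\bigwedge^m G_1$, $\bigwedge^m G_2$ correspond to the summands with $c = 0$ and $b = 0$ respectively. Their intersection is the summand with $b = c = 0$, namely $\bigwedge^m K$, as required.

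For the inductive step with $r \geq 3$, assume the lemma for $r - 1$ and set $H := \bigcap_{i=1}^{r-1} G_i$, a direct summand by iterated application of the preliminary observation. The induction hypothesis yields $\bigcap_{i=1}^{r-1}\bigwedge^m G_i = \bigwedge^m H$ inside $\bigwedge^m R^n$. Applying the $r = 2$ case to $H$ and $G_r$ then gives
$$\bigcap_{i=1}^{r}\bigwedge\nolimits^m G_i \;=\; \bigwedge\nolimits^m H \cap \bigwedge\nolimits^m G_r \;=\; \bigwedge\nolimits^m(H \cap G_r) \;=\; \bigwedge\nolimits^m \bigcap_{i=1}^{r} G_i,$$
completing the induction.

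The main obstacle is the careful bookkeeping in the $r = 2$ case: producing the complements $K_i$ with $G_i = K \oplus K_i$ (which relies on the fact that $G_i/K$ is free over the PID) and assembling them into the triple direct sum $F = K \oplus K_1 \oplus K_2$. Once the decomposition is in hand, the identification of the intersection in $\bigwedge^m F$ is formal.
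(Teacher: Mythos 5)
The paper states this lemma without proof, so there is no argument to compare against. Your proof is correct: the base case $r=2$ via the decomposition $F = K \oplus K_1 \oplus K_2$ and the resulting triple-graded expansion of $\bigwedge^m F$ does the real work, and the induction is routine once the preliminary observation (that $\bigcap_i G_i$ is again a direct summand) is in place. One small inaccuracy in phrasing: the injectivity of $\bigwedge^m F \to \bigwedge^m R^n$ is not a consequence of flatness of $F$ per se, but of the fact that $\bigwedge^m F$ is a \emph{free} (hence torsion-free) module while the map becomes an injection of vector spaces after inverting nonzero scalars; since you do spell out the fraction-field step, the reasoning is sound even if the word ``flat'' is the wrong thing to invoke.
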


First consider the absolute case, i.e.,~an ETD $(Q\subset P,\F)$ with $Q=0$ and let $f:A_{P,\shF}\ra \Spec R$ be the associated log morphism.
One checks that $U$ from \eqref{eq-def-U} is simply the complement of codimension two strata.
Recall from Example~\ref{ex-Danilov} that $W^m:=W^m_{A_{P,\shF}/\Spec R}$ are just the Danilov differentials with log poles in the divisor given by the facets in $\shF$. 
Danilov already computed these in \cite[Proposition~15.5]{Danilov1978} over a field and because of Lemma~\ref{lem-intersection-and-wedge} the same calculation works over $R$ and we obtain the following. 
\begin{proposition}[absolute case] \label{prop-W-abs-case} 
We have a grading
$\Gamma(A_P,W^m)=\bigoplus_{p\in P} (W^m)_p$ with
$$(W^m)_p= \bigwedge^m_R\left(\bigcap_{{F \in \F_{max} \setminus \F}\atop{p \in F}} F^{gp} \otimes_\ZZ R\right)$$
where the intersection is $P^{gp}\otimes_\ZZ R$ if the index set is empty.
\end{proposition}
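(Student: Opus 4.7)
The plan is to work on the open cover $U_P = \bigcup_F U_F$ by affine opens $U_F = \Spec R[P_F]$ indexed by the facets $F$ of $P$ (Lemma~\ref{lem-cover-U}, noting that for $Q = 0$ we have $d = \rk P^\gp$ and every face is essential, so essential faces of rank $d-1$ are precisely the facets), compute $\Omega^m_{U_F/R}$ grade by grade under the $P^\gp$-grading induced by the torus action, and then intersect. Since $W^m = j_*\Omega^m_{U_P/R}$ and the $U_F$ cover $U_P$, we have
\[
\Gamma(A_P, W^m) \,=\, \Gamma(U_P, \Omega^m_{U_P/R}) \,=\, \bigcap_F \Gamma(U_F, \Omega^m_{U_F/R})
\]
as submodules of $\Gamma(T, \Omega^m_{T/R}) = R[P^\gp] \otimes_\ZZ \bigwedge^m P^\gp$, where $T$ is the dense torus. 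The grading is respected by restriction, so it suffices to compute everything grade by grade.

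Fix a facet $F$; by saturation, $P_F^\gp = F^\gp \oplus \ZZ v$ and $P_F = F^\gp \oplus \NN v$ for some $v$. If $F \in \F$, the divisorial log structure is nontrivial along $V(F)$, the other facets of $P$ do not meet $U_F$, and one recovers the full log K\"ahler module: $(\Omega^m_{U_F/R})_p = \bigwedge^m P^\gp \otimes_\ZZ R$ for every $p \in P_F$. If $F \in \F_{\max} \setminus \F$, the log structure on $U_F$ is trivial, and the standard K\"ahler decomposition $\Omega^1_{U_F/R} = R[P_F] \otimes_\ZZ F^\gp \,\oplus\, R[P_F]\cdot dz^v$, combined with $\bigwedge^m(F^\gp \oplus \ZZ v) = \bigwedge^m F^\gp \oplus (\bigwedge^{m-1} F^\gp) \wedge v$, gives
\[
(\Omega^m_{U_F/R})_p = \bigwedge\nolimits^m L_{p,F} \otimes_\ZZ R, \qquad L_{p,F} = \begin{cases} F^\gp & \text{if } p \in F, \\ P^\gp & \text{if } p \in P_F \setminus F,\end{cases}
\]
since $dz^v$ contributes to grade $p$ exactly when $p - v \in P_F$, i.e.\ when $p \notin F$.

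Now intersect over all facets inside $\bigwedge^m P^\gp \otimes_\ZZ R$: the support of the grade-$p$ pieces lies in $\bigcap_F P_F = P$ (using that $P$ equals the intersection of its facet-localizations), and for $p \in P$ the only non-vacuous constraints on the grade-$p$ module come from those $F \in \F_{\max} \setminus \F$ containing $p$. Hence
\[
(W^m)_p = \bigcap_{F \in \F_{\max} \setminus \F,\, p \in F} \bigwedge\nolimits^m F^\gp \otimes_\ZZ R \,=\, \bigwedge\nolimits^m\!\Big(\bigcap_{F \in \F_{\max} \setminus \F,\, p \in F} F^\gp \otimes_\ZZ R\Big),
\]
the second equality by Lemma~\ref{lem-intersection-and-wedge}, whose hypothesis is met because saturation of $F^\gp$ in $P^\gp$ makes each $F^\gp \otimes_\ZZ R$ a direct summand of $P^\gp \otimes_\ZZ R$. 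The hardest step is the local K\"ahler computation on $U_F$ for $F \notin \F$, namely correctly tracking when $dz^v$ appears in the grade-$p$ piece; once that is in hand, exchanging $\bigwedge^m$ with $\bigcap$ via Lemma~\ref{lem-intersection-and-wedge} closes the argument. The calculation follows Danilov's field-theoretic proof essentially verbatim, with the direct-summand property replacing freeness.
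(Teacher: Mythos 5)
Your proof is correct and is essentially the argument the paper appeals to: Danilov's graded computation of reflexive differentials facet by facet, adapted to a PID base by replacing the freeness trick with Lemma~\ref{lem-intersection-and-wedge} (the paper simply cites \cite[Prop.~15.5]{Danilov1978} plus that lemma, while you spell the computation out). One small slip: in the case $F\in\F_{\max}\setminus\F$ the degree-$p$ piece of $\Omega^m_{U_F/R}$ is $\bigwedge^m F^{\gp}\otimes R$ exactly when $p\in F^{\gp}\cap P_F$ (not just $p\in F$), and $\bigwedge^m P^{\gp}\otimes R$ otherwise; this is immaterial for the conclusion since you intersect only over $p\in P$, where $p\in F^{\gp}\iff p\in F$.
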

Let us next assume we have a general ETD $(Q\subset P,\F)$ and let again $f$ denote the associated log toroidal family and $W^m_f:=W^m_{A_{P,\shF}/\Spec A_Q}$ the differentials. 
Note that since $\shF$ contains all vertical facets, every facet in $\F_{\max} \setminus \F$ contains $Q$.
We obtain the following generalization.
\begin{proposition}[general case] 
\label{prop-W-gen-case} 
We have a grading
$\Gamma(A_P,W^m_f)=\bigoplus_{p\in P} (W^m_f)_p$ with
$$(W^m_f)_p= \bigwedge^m_R\left(\left.\left(\bigcap_{{F \in \F_{\max} \setminus \F}\atop{p \in F}} F^{gp} \otimes_\ZZ R\right)\right/ (Q^{gp} \otimes_\ZZ R)\right)$$
where the intersection is $P^{gp}\otimes_\ZZ R$ if the index set is empty. Since $Q^\gp\subset P^\gp$ splits, we can equivalently take the quotient before the intersection.
\end{proposition}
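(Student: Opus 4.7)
The plan is to deduce Proposition~\ref{prop-W-gen-case} from the absolute case (Proposition~\ref{prop-W-abs-case}) via the cotangent exact sequence on the log smooth locus $U_P$, followed by an algebraic comparison of exterior powers of a free module with those of a quotient by a direct summand.

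First, I would restrict to $U_P$, where $f$ is log smooth and saturated. There the standard cotangent sequence
\[
0 \to f^*\Omega^1_{A_Q/R} \to \Omega^1_{U_P/R} \to \Omega^1_{U_P/A_Q} \to 0
\]
is a locally split short exact sequence of locally free sheaves, with $f^*\Omega^1_{A_Q/R} \cong Q^{gp} \otimes_\ZZ \cO_{U_P}$. Taking $m$-th exterior powers gives another locally split short exact sequence whose quotient is $\Omega^m_{U_P/A_Q}$ and whose kernel $K^m$ is the wedge-product image of $(Q^{gp} \otimes_\ZZ \cO_{U_P}) \otimes \Omega^{m-1}_{U_P/R}$ in $\Omega^m_{U_P/R}$. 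Because the sequence of locally free sheaves is locally split, $j_*$ preserves its exactness, yielding
\[
0 \to j_* K^m \to W^m \to W^m_f \to 0.
\]

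Next I would track the grading. The torus $\Spec R[P^{gp}]$ acts on $A_{P,\F}$ compatibly with the structure morphism to $A_Q$, so every sheaf above is $P^{gp}$-graded in a compatible way; restricting to the submonoid $P$ recovers the grading of global sections of $W^m$ from Proposition~\ref{prop-W-abs-case}. At grading $p$ the above sequence reads
\[
0 \to (Q^{gp} \otimes_\ZZ R) \wedge \bigwedge\nolimits^{m-1} M_p \to \bigwedge\nolimits^m M_p \to (W^m_f)_p \to 0,
\]
where $M_p := \bigcap_{F \in \F_{\max} \setminus \F,\ p \in F} F^{gp} \otimes_\ZZ R$ is the module from the absolute formula.

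Finally, I would conclude by a direct algebraic comparison. Every $F \in \F_{\max} \setminus \F$ contains $Q$ (vertical facets all lie in $\F$), so $Q^{gp} \otimes R \subset M_p$; the splitting $P^{gp} \cong \bar P^{gp} \oplus Q^{gp}$ induced by \eqref{eq-rep-varphi} restricts to a splitting of $M_p$, so that $Q^{gp} \otimes R$ is a direct summand. The identity $\bigwedge^m V / (N \wedge \bigwedge^{m-1} V) \cong \bigwedge^m(V/N)$ for a direct-summand inclusion $N \subset V$ (proved by splitting the exterior algebra) then identifies $(W^m_f)_p$ with $\bigwedge^m\bigl(M_p/(Q^{gp} \otimes R)\bigr)$, matching the stated formula; the ``quotient before intersection'' reformulation comes from the elementary fact that intersection commutes with quotient for submodules containing a common submodule. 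The main obstacle in this plan is the preservation of exactness by $j_*$ at the $m$-th wedge level, which is precisely why the local splittability of the sequence of locally free sheaves is emphasized.
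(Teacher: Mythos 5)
Your overall plan follows essentially the same route as the paper's own proof: use the cotangent sequence relating absolute and relative log differentials on the log smooth locus, push forward by $j_*$, and finish with a linear-algebra identification on graded pieces. However, there is a genuine gap at the crucial step ``Because the sequence of locally free sheaves is locally split, $j_*$ preserves its exactness.'' This is false as a general principle: any short exact sequence of locally free sheaves is locally split (locally free modules are projective), yet $j_*$ is only left exact, and right exactness fails exactly when the extension class pairs nontrivially with $R^1j_*$. Concretely, for $U=\AA^2\setminus\{0\}\subset X=\AA^2$ and a nontrivial extension $0\to\cO_U\to E\to\cO_U\to 0$ representing a nonzero class in $\operatorname{Ext}^1_U(\cO_U,\cO_U)=H^1(U,\cO_U)$, the connecting map $j_*\cO_U=\cO_X\to R^1j_*\cO_U$ sends the global section $1$ to that nonzero class, so $j_*E\to j_*\cO_U$ is not surjective. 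Local splittability is therefore not the reason $j_*$-exactness holds here.

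What is actually true, and what the paper proves, is that the cotangent sequence is \emph{globally} split, which is a much stronger statement and does make $j_*$ exact tautologically. The paper obtains the global retraction by comparing with the $\shF_{\max}$ log structure: on $A_P$ the sequence is $0\to Q^{\gp}\otimes\cO\to P^{\gp}\otimes\cO\to(P^{\gp}/Q^{\gp})\otimes\cO\to 0$, which splits because $Q^{\gp}\hookrightarrow P^{\gp}$ is a split inclusion of lattices; restricting the resulting retraction $W^1_{A_P/\Spec R}\to f^*\Omega_{A_Q/\Spec R}$ along the inclusion $W^1_{A_{P,\shF}/\Spec R}\hookrightarrow W^1_{A_P/\Spec R}$ produces a retraction of $\iota$, so $\coker(\iota)$ is a direct summand, $Z^{\op{abs}}$-closed, and therefore equal to $W^1_f$. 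You already invoke the splitting $P^{\gp}\cong\bar P^{\gp}\oplus Q^{\gp}$ in your final step; if you use it one step earlier to produce this global retraction on the level of sheaves, the argument is repaired and matches the paper's proof.
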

\begin{proof} 
We can compose $f$ with the projection to $\Spec R$ to relate the current situation to that of Proposition~\ref{prop-W-abs-case}. 
The open set $U^{\op{abs}}$ in the absolute case is the complement of $Z^{\op{abs}}$, the union of all codimension two strata.
Hence, $U^{\op{abs}}$ is covered by $U_F$ where $F$ runs over the facets of $P$. 
On the other hand, the open set $U$ for $f$ as given in \eqref{eq-def-U} has a cover $U_F$ where $F$ runs over the essential faces of rank $d-1$ by Lemma~\ref{lem-cover-U}. Obviously, $U^{\op{abs}}\subset U$. 
Note that since $W^m_f$ is locally free on $U$ and $\shO_U$ is $Z^{\op{abs}}$-closed, we find that $W^m_f$ is not only $Z$-closed but also $Z^{\op{abs}}$-closed. Consider the commutative diagram of solid arrows
\begin{equation}
\label{eq-split-exact-abs-to-rel} 
\begin{aligned}
 \xymatrix{
0\ar[r] & f^*\Omega_{A_Q/\Spec R}  \ar^\iota[r]\ar@{=}[d] & W^1_{A_{P,\shF}/\Spec R}\ar[r]\ar[d] & W^1_{f} \ar[d] \ar[r] & 0 \\
0\ar[r] & f^*\Omega_{A_Q/\Spec R} \ar[r]  & W^1_{A_{P}/\Spec R}\ar[r]\ar@{.>}[ul] & W^1_{A_{P}/A_Q} \ar[r]  & 0
}
\end{aligned}
\end{equation}
where the top row is obtained by pushing it forward from $U^{\op{abs}}$.
The bottom sequence is obtained from tensoring the sequence $0\ra Q^\gp\ra P^\gp\ra P^\gp/Q^\gp\ra 0$ with $\shO_{A_P}$, in particular, it is exact and splits. Hence the dotted diagonal arrow exists and commutes with the other maps.
Therefore, $\coker(\iota)$ is a direct summand of $W^1_{A_{P,\shF}/\Spec R}$, in particular $Z^{\op{abs}}$-closed. 
Moreover, $\coker(\iota)\ra W^1_{f}$ is an isomorphism on $U^{\op{abs}}$ and since both sheaves are $Z^{\op{abs}}$-closed, we have $\coker(\iota)=W^1_{f}$ and thus the top row is exact and splits.

Let $\langle f^*\Omega_{A_Q/\Spec R}\rangle$ denote the homogeneous ideal in the sheaf of exterior algebras $W^{\bullet}_{A_{P,\shF}/\Spec R}$ generated by $f^*\Omega_{A_Q/\Spec R}$. The split exactness above gives the split exactness of the following sequence
$$0\ra \langle f^*\Omega_{A_Q/\Spec R}\rangle_m  \ra W^m_{A_{P,\shF}/\Spec R} \ra W^m_{f}\ra 0.$$
Since $A_P$ is affine and $\langle f^*\Omega_{A_Q/\Spec R}\rangle$ coherent, applying $\Gamma(A_P,\cdot)$ to this sequence yields another exact sequence which already gives that $\Gamma(A_P,W^m_f)$ is $P$-graded.
We have 
$\Gamma(A_P,f^*\Omega_{A_Q/\Spec R})= Q^\gp\otimes_\ZZ R[P]$.
Set ${\bf F}_p:=\left(\bigcap_{{F \in \F_{max} \setminus \F}\atop{p \in F}} F^{gp} \otimes_\ZZ R\right)$ and 
let $\langle Q^\gp\otimes R\rangle\subset \bigwedge^\bullet_R {\bf F}_p $ be the homogeneous ideal generated by $Q^\gp\otimes R$.
One computes $\Gamma(A_P,\langle f^*\Omega_{A_Q/\Spec R}\rangle_m)_p=\langle Q^\gp\otimes R\rangle_m$.
Using Proposition~\ref{prop-W-abs-case}, in degree $p\in P$, we obtain the exact sequence
$$0
\ra \langle Q^\gp\otimes R\rangle_m
\ra \bigwedge^m_R {\bf F}_p
\ra  (W^m_f)_p
\ra 0.$$ 
Using a splitting of the injection $(Q^\gp\otimes R)\subset {\bf F}_p$ and comparing leads to the assertion.
\end{proof}

\begin{corollary}\label{cor-genRelFlatW-local}
For all $m$, $W^m_{f}$ is flat over $A_Q$.
\end{corollary}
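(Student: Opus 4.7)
The plan is to read off flatness directly from the explicit $P$-grading provided by Proposition~\ref{prop-W-gen-case}. Since the underlying scheme of $A_{P,\F}$ is affine, it suffices to show that $M := \Gamma(A_{P,\F}, W^m_f)$ is a flat $R[Q]$-module, where $R[Q] \subset R[P]$ acts by shifting degrees in the $P$-grading.

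First I would use the decomposition $P = E + Q$ (bijective, by \eqref{eq-decomp}) to rewrite
\[
M = \bigoplus_{p \in P}(W^m_f)_p = \bigoplus_{e \in E}\bigoplus_{q \in Q}(W^m_f)_{e+q}.
\]
The key combinatorial observation is that every facet $F \in \F_{\max} \setminus \F$ satisfies $Q \subset F$ (because $\F \supset \F_{\min}$ contains all vertical facets). Consequently, for $p = e + q$ with $e \in E$ and $q \in Q$, the face property of $F$ yields $p \in F \iff e \in F$. This means the index set of the intersection in the formula for $(W^m_f)_p$ depends only on $e$, so that $(W^m_f)_{e+q} = (W^m_f)_e$ as $R$-modules for every $q \in Q$, and under this identification multiplication by $z^{q_0}$ for $q_0 \in Q$ is the identity shift. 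Hence
\[
\bigoplus_{q \in Q}(W^m_f)_{e+q} \;\cong\; (W^m_f)_e \otimes_R R[Q]
\]
as $R[Q]$-modules.

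Finally, since $Q \hookrightarrow P$ is saturated (Lemma~\ref{lemma-eq-to-saturated}), the quotient $P^{gp}/Q^{gp}$ is torsion-free, hence free over $\ZZ$, so $P^{gp}/Q^{gp} \otimes_\ZZ R$ is a free $R$-module. Using that $R$ is a principal ideal domain, any submodule of this free module is free, so the intersection appearing in Proposition~\ref{prop-W-gen-case} is free over $R$, and so is its wedge power $(W^m_f)_e$. Therefore each $(W^m_f)_e \otimes_R R[Q]$ is a free $R[Q]$-module, $M$ is a direct sum of free $R[Q]$-modules and hence flat, proving the corollary.

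The argument has no real obstacle: the only step requiring care is the combinatorial claim that $p \in F \iff e \in F$ for horizontal facets $F$, which follows immediately from the containment $\F \supset \F_{\min}$ that is built into the definition of an ETD.
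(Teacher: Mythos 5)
Your proof is correct and it is essentially the paper's own argument in expanded form: the paper's proof is the one-liner "inspecting the result in Proposition~\ref{prop-W-gen-case}, we find $\Gamma(A_P,W^m_f)$ is a free $R[Q]$-module," and your steps (the decomposition $P=E\times Q$, the observation that facets in $\F_{\max}\setminus\F$ contain $Q$ so that $(W^m_f)_{e+q}=(W^m_f)_e$, hence $M\cong\bigoplus_{e\in E}(W^m_f)_e\otimes_R R[Q]$ with each $(W^m_f)_e$ $R$-free) are exactly what that "inspection" amounts to.
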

\begin{proof}
Inspecting the result in Proposition \ref{prop-W-gen-case}, we find $\Gamma(A_P,W^m_f)$ is a free $R[Q]$-module.
\end{proof}

\subsection{Change of Base}\label{ChangeBaseSec}
Let $(Q\subset P,\F)$ be an ETD, $\cT$ be a Noetherian ring and $T = \Spec \cT \to \Spec R[Q]$ be any morphism. Denote by $\sigma$ the composition $Q\ra R[Q]\ra \cT$ which turns $T$ into a coherent log scheme. 
Define $Y$ by the fiber diagram
\begin{equation} 
\label{diagram-define-Y}
\begin{aligned}
 \xymatrix{
Y\ar^c[r]\ar[d] & A_{P,\shF}\ar^f[d]\\
T\ar[r] & A_{Q}
}
\end{aligned}
\end{equation}
of log toroidal families. We want to study when the natural map  $c^*W^m_f\ra W^m_{Y/T}$ is an isomorphism. 
This holds if $f$ is log smooth since then $W^m_f=\Omega^m_f$ are the ordinary log differentials which satisfy this isomorphism property by their universal property. 
In particular, $c^*W^m_f\ra W^m_{Y/T}$ is always an isomorphism on the open set $V:=c^{-1}(U)$. 
The following example shows that it is not an isomorphism in general.
For a subset $I\subset P$, let $\langle I\rangle$ be the smallest face of $P$ containing $I$.

\begin{example}\label{baChaViolation}
Let $P$ be the submonoid of $\ZZ^2$ generated by $(1,0),(1,1),(1,2)$ and let $Q = 0$.
The monoid $P$ has two facets $H_1 = \langle(1,0)\rangle$ and 
$H_2 = \langle(1,2)\rangle$ and setting $\F = \emptyset$ yields an ETD. Let $f:A_{P,\shF}\ra A_Q=\Spec\ZZ$ be the corresponding map.
Now set $\cT=\ZZ/2\ZZ$ inducing the natural map $T=\Spec\cT\ra\Spec\ZZ$ and a fiber diagram as above.
One checks that $c^*W^1_f\ra W^1_{Y/T}$ is not an isomorphism by computing both terms via Proposition~\ref{prop-W-abs-case}.
It suffices to check the degree $p=0$, indeed, $(W^1_f)_0=H_1^\gp\cap H_2^\gp=0$ but
$$(W^1_{Y/T})_0=(H_1^\gp\otimes \ZZ/2\ZZ)\cap (H_2^\gp\otimes \ZZ/2\ZZ)=\ZZ/2\ZZ\cdot (1,0)\subset (\ZZ/2\ZZ)^2.$$
Hence, $((W^1_f)\otimes_\ZZ \ZZ/2\ZZ)_0=0$ but $(W^1_{Y/T})_0\neq 0$.
\end{example}
The example teaches that base change is related to the (non-)commuting of intersection and tensor product. 
The following lemma (that is an elementary exercise in $\Tor$ groups) will help us. 
We say a ring $\cT$ is of \emph{characteristic} $\geq p_0$ if for the residue field $\kappa_{\mathfrak{p}}$ of every point $\mathfrak{p}$ holds 
$\op{char} \kappa_{\mathfrak{p}}\ge p_0$ or $\op{char} \kappa_{\mathfrak{p}}=0$.
\begin{lemma}\label{intBaChaCom}
 Let $G$ be a finitely generated $\ZZ$-module and $H, H' \subset G$ be two submodules. 
 Then there is $p_0$ such that for every ring $\cT$ of characteristic $\geq p_0$ we have 
 $$(H \cap H') \otimes \cT = (H \otimes \cT) \cap (H' \otimes \cT)$$
and each term here is a submodule of $G \otimes \cT$.
\end{lemma}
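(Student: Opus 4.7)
The plan is to choose $p_0$ large enough that tensoring with $\cT$ becomes exact on the relevant diagram. Concretely, I would let $P_0$ be the (finite) set of primes appearing in the torsion subgroups of the five finitely generated abelian groups $G,\ G/H,\ G/H',\ G/(H\cap H'),\ G/(H+H')$, and take any $p_0>\max(P_0)$. The key point is that if $\cT$ is of characteristic $\ge p_0$ in the sense of the paper, then for every prime $p\in P_0$ the residue-field hypothesis forces $p$ to lie in no prime ideal of $\cT$, hence to be a unit; consequently $\Tor_1^{\ZZ}(N,\cT)=0$ for every finitely generated $\ZZ$-module $N$ whose torsion primes all lie in $P_0$.

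Applying this vanishing to the short exact sequences $0\to K\to G\to G/K\to 0$ for $K\in\{H,\,H',\,H+H',\,H\cap H'\}$ shows that each of $H\otimes\cT$, $H'\otimes\cT$, $(H+H')\otimes\cT$, $(H\cap H')\otimes\cT$ injects into $G\otimes\cT$; this is what makes the intersection $(H\otimes\cT)\cap(H'\otimes\cT)$ a submodule of $G\otimes\cT$ in the first place. I would then apply the same vanishing (now to $H+H'$, whose torsion is contained in $G_{\op{tors}}$) to the Mayer--Vietoris-type short exact sequence
\begin{equation*}
0\longrightarrow H\cap H'\xrightarrow{\,x\mapsto(x,x)\,} H\oplus H'\xrightarrow{\,(a,b)\mapsto a-b\,} H+H'\longrightarrow 0
\end{equation*}
to obtain an induced short exact sequence
\begin{equation*}
0\longrightarrow (H\cap H')\otimes\cT\longrightarrow (H\otimes\cT)\oplus (H'\otimes\cT)\xrightarrow{(a,b)\mapsto a-b}(H+H')\otimes\cT\longrightarrow 0.
\end{equation*}

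Finally, postcomposing the last arrow with the injection $(H+H')\otimes\cT\hookrightarrow G\otimes\cT$ and using that this injection is, well, injective, identifies the kernel: a pair $(a,b)\in (H\otimes\cT)\oplus(H'\otimes\cT)$ lies in the kernel precisely when $a=b$ inside $G\otimes\cT$, and the common value then lies in $(H\otimes\cT)\cap(H'\otimes\cT)$. Thus either projection carries $(H\cap H')\otimes\cT$ isomorphically onto $(H\otimes\cT)\cap(H'\otimes\cT)$, proving the claim.

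I do not foresee a serious obstacle: the whole argument reduces to an elementary $\Tor$-vanishing once the correct finite set $P_0$ of bad primes is isolated. The one subtlety worth writing out carefully is unwrapping the paper's notion ``$\cT$ is of characteristic $\ge p_0$'' into the usable statement that every prime $p<p_0$ is a unit in $\cT$; after that each step is formal.
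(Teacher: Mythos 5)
The paper omits a proof of this lemma, remarking only that it is "an elementary exercise in $\Tor$ groups," which is exactly the strategy you carry out. Your argument is correct: after unwrapping "characteristic $\geq p_0$" to mean that the finitely many relevant torsion primes become units in $\cT$, the $\Tor_1$ vanishing applied to the four inclusions into $G$ and to the Mayer--Vietoris sequence $0 \to H\cap H' \to H\oplus H' \to H+H' \to 0$ gives both the injectivity of all four modules into $G\otimes\cT$ and the identification of $(H\cap H')\otimes\cT$ with the intersection.
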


In the general situation, observe we have 
$\Gamma(Y, \cO_Y) = \bigoplus_{e \in E} z^e \cdot \cT$ with multiplication
$$z^{e_1} \cdot z^{e_2} = z^{e} \cdot \sigma( q) \quad \mathrm{whenever} \quad e_1 + e_2 = e + q$$
with $e\in E,q\in Q$ under the canonical decomposition from \eqref{eq-decomp}.
Similarly, Proposition~\ref{prop-W-gen-case} gives
\begin{equation} \label{eq-describe-cW}
\Gamma(Y, c^*W^m_f) = \bigoplus_{e \in E} z^e \cdot ((W^m_f)_e \otimes_R \cT).
\end{equation}
\begin{lemma} 
Recall $V=c^{-1}(U)$. Equivalent are
\begin{enumerate}
\item the map $c^*W^m_f\ra W^m_{Y/T}$ is an isomorphism,
\item $c^*W^m_f$ is reflexive,
\item the restriction map $\rho: \Gamma(Y, c^*W^m_f) \to \Gamma(V, c^*W^m_f)$ is surjective.
\end{enumerate}
\end{lemma}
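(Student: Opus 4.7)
The central observation is that the map $\phi: c^*W^m_f \to W^m_{Y/T}$ in (1) coincides with the unit of the adjunction $j^* \dashv j_*$ for $j: V \hookrightarrow Y$. Since $f|_U$ is log smooth and saturated, formation of its log K\"ahler differentials commutes with arbitrary base change, giving $j^*(c^*W^m_f) = \Omega^m_{V/T}$ and therefore $W^m_{Y/T} = j_* \Omega^m_{V/T} = j_* j^*(c^*W^m_f)$. Hence (1) holds iff $c^*W^m_f$ is $Z$-closed, where $Z = Y \setminus V$. Note that $Y \to T$ is itself a generically log smooth family (the codimension condition \eqref{CC} is preserved under base change), and $Y$ is Cohen--Macaulay over $T$ by combining Lemma~\ref{lemma-flat-CM} with flat base change; Lemma~\ref{relaNormal} therefore provides $j_* \cO_V = \cO_Y$.

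For $(1) \Leftrightarrow (2)$: applying Lemma~\ref{lem-W-reflexive} to $Y \to T$ shows that $W^m_{Y/T}$ is always reflexive, whence $(1) \Rightarrow (2)$. Conversely, the identity $j_* \cO_V = \cO_Y$ implies, exactly as in the proof of Lemma~\ref{lem-W-reflexive}, that the dual $\shHom(\G, \cO_Y)$ of any coherent sheaf on $Y$ is $Z$-closed, so every reflexive sheaf on $Y$ is $Z$-closed; this yields $(2) \Rightarrow (1)$.

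For $(1) \Leftrightarrow (3)$: since $Y$ is affine and both sheaves are coherent, $\phi$ is an isomorphism iff its map on global sections is, and under the identification $\Gamma(Y, W^m_{Y/T}) = \Gamma(V, c^*W^m_f|_V)$ (from $W^m_{Y/T}$ being $Z$-closed) this global-section map is precisely $\rho$. This gives $(1) \Rightarrow (3)$. For the converse, surjectivity of $\rho$ makes $\phi$ surjective as a map of sheaves on the affine $Y$, so $\coker \phi = 0$, while the kernel $\ker \phi = H^0_Z(c^*W^m_f)$ is shown to vanish by combining the explicit graded description \eqref{eq-describe-cW} of $\Gamma(Y, c^*W^m_f)$ with the cover $V = \bigcup_F U_F$ from Lemma~\ref{lem-cover-U}: each restriction to $V_F = c^{-1}(U_F)$ acts by inverting the monomials $z^f$ for $f \in F$, and this is faithful on every summand $z^e \cdot ((W^m_f)_e \otimes_R \cT)$ because $(W^m_f)_e$ is free over the PID $R$ by Proposition~\ref{prop-W-gen-case}.

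The main obstacle is the injectivity of $\rho$ in the direction $(3) \Rightarrow (1)$: one must control how the $P$-graded structure of $\Gamma(Y, c^*W^m_f)$ interacts with the monomial localizations defining the cover of $V$, which is where the $R$-freeness of each graded piece, rather than just $\cT$-flatness after base change, becomes essential.
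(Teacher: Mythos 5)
Your overall architecture agrees with the paper: establish $(1)\Leftrightarrow(2)$ via reflexivity and $Z$-closure, and reduce $(3)\Rightarrow(1)$ to the injectivity of $\rho$, which you correctly flag as the non-obvious step. The gap is in your argument for that injectivity. You assert that inverting $z^{e_F}$ on $V_F$ is ``faithful on every summand $z^e\cdot((W^m_f)_e\otimes_R\cT)$ because $(W^m_f)_e$ is free over the PID $R$.'' This is not correct as stated. First, for a general essential face $F$ and a given $e$, multiplication by powers of $z^{e_F}$ is \emph{not} faithful on that summand: writing $ae_F + e = \tilde e + q$ with $\tilde e\in E$, $q\in Q$, the ray can leave $E$ (i.e.\ $q\ne 0$), and then the factor $\sigma(q)$ may be a zero-divisor in $\cT$ -- this is precisely why the summands over $e\notin E_K$ die after base change, and it is the source of Example~\ref{baChaViolation}. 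One would instead have to show that for each $e\in E$ some \emph{specific} $F$ keeps $e+\RR_{\ge0}e_F$ inside an essential face, a combinatorial fact your argument never uses. Second, even when $q=0$, multiplication by $z^{ae_F}$ sends $(W^m_f)_e\otimes\cT$ into $(W^m_f)_{\tilde e}\otimes\cT$ via an \emph{inclusion of different graded pieces}, and freeness of $(W^m_f)_e$ over $R$ alone does not make this inclusion injective after $\otimes_R\cT$ (compare $2\ZZ\hookrightarrow\ZZ$, both free, which dies mod $2$): one needs the cokernel to be $R$-free, which comes from the saturatedness/direct-summand structure of the lattices involved.

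The paper's proof avoids both issues at once by the diagram~\eqref{diagram-rho2}: it embeds $c^*W^m_f$ into the \emph{free} $\cO_Y$-module $c^*W^m_{A_P/A_Q}$, for which the restriction $\Gamma(Y,-)\to\Gamma(V,-)$ is an isomorphism because $\cO_Y$ is $Z$-closed, and establishes that the top horizontal map of \eqref{diagram-rho2} is injective degreewise because the cokernel of $(W^m_f)_e\to(W^m_{A_P/A_Q})_e$ is a free $R$-module (the direct-summand property of the saturated sublattice $\bigcap_H H^{\gp}/Q^{\gp}\subset P^{\gp}/Q^{\gp}$). That single freeness-of-cokernel observation is the algebraic content that is missing from your proposal; without it, or a worked-out combinatorial substitute, the localization argument does not go through.
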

\begin{proof} 
\emph{(1)}$\Rightarrow$\emph{(2)}: $W^m_{Y/T}$ is reflexive; 
\emph{(2)}$\Rightarrow$\emph{(3)}: $c^*W^m_f$ is $(Y\setminus V)$-closed;
\emph{(3)}$\Rightarrow$\emph{(1)}: Consider the commutative square
\[
 \xymatrix{
 \Gamma(Y, c^*W^m_f) \ar^\rho[d]\ar[r] & \Gamma(Y, W^m_{Y/T}) \ar[d]\\
 \Gamma(V, c^*W^m_f)\ar[r] & \Gamma(V, W^m_{Y/T})\\
}
\]
where the right vertical map is an isomorphism since $W^m_{Y/T}$ is reflexive by Lemma~\ref{lem-W-reflexive}. 
The bottom horizontal map is an isomorphism by what we said just before Example~\ref{baChaViolation}.
Now \emph{(1)} holds if the top horizontal map is an isomorphism which follows from \emph{(3)} if $\rho$ is additionally injective. This injectivity is a general fact that we prove next. 
Recall that $A_{P,\shF_\max}=A_P$ and we have a map $A_P\ra A_{P,\shF}$ that gives us another commutative square
\begin{equation}
\label{diagram-rho2}
\begin{aligned}
 \xymatrix{
 \Gamma(Y, c^*W^m_f) \ar^\rho[d]\ar[r] & \Gamma(Y, c^*W^m_{A_P/A_Q}) \ar[d]\\
 \Gamma(V, c^*W^m_f)\ar[r] & \Gamma(V, c^*W^m_{A_P/A_Q}).\\
}
\end{aligned}
\end{equation}
Since $A_P\ra A_Q$ is log smooth and $W^m_{A_P/A_Q}=\Omega^m_{A_P/A_Q}$ a free sheaf, the right vertical map is an isomorphism.
We get that $\rho$ is injective if the top horizontal map is injective. 
The latter can be computed from Proposition~\ref{prop-W-gen-case}. 
Indeed, this follows from \eqref{eq-describe-cW} since for every $e\in E$, the cokernel of
$(W^m_f)_e\ra (W^m_{A_P/A_Q})_e$ is a free $R$-module.
\end{proof} 

We next provide a useful criterion for the surjectivity of $\rho$. 
Let $\shE$ be the set of essential faces of $P$ of rank $d-1$. 
By Lemma~\ref{lem-cover-U}, $U$ is covered by $\{U_F|F\in \shE\}$. Set $V_F=c^{-1}(U_F)$ so these cover $V$.
For each $F\in \shE$, choose $e_F\in F$ in the relative interior, i.e., $\langle e_F\rangle =F$.

\begin{theorem}\label{isoCondi}
 Write $M_p := (W^m_f)_p$ for short, and assume that for every subset $\shE'\subset \shE$ and every $e \in E$ the natural map
 $$\left(\bigcap_{F \in \shE'} M_{e + e_F}\right) \otimes_R \cT \to \bigcap_{F \in \shE'} (M_{e + e_F} \otimes_R \cT)$$ 
 is an isomorphism. Then $\rho$ is surjective.
\end{theorem}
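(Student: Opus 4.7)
The plan is to compute $\rho$ graded piece by graded piece with respect to the natural $\bar P^\gp = P^\gp/Q^\gp$ grading. By Proposition~\ref{prop-W-gen-case} and \eqref{eq-describe-cW}, and using that every horizontal facet $F' \in \F_{\max} \setminus \F$ contains $Q$ (so $M_p$ depends only on the essential part $e$ of $p$), absorbing the $Q$-action into $\cT$ yields
\[
 \Gamma(Y, c^*W^m_f) = \bigoplus_{e \in E} z^e \, (M_e \otimes_R \cT),
 \qquad
 \Gamma(V_F, c^*W^m_f) = \bigoplus_{\bar p \in E_F} z^{\bar p} \, (M_{\bar p}^{U_F} \otimes_R \cT),
\]
where $E_F := E + F^\gp$ and $M_{\bar p}^{U_F}$ is the formula of Proposition~\ref{prop-W-gen-case} restricted to horizontal facets $F' \supset F$ (the others becoming invisible after inverting $F$). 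A section on $V$ is a tuple $(s_F)_{F \in \shE}$ with $s_F \in \Gamma(V_F, c^*W^m_f)$ agreeing on all overlaps $V_F \cap V_{F'}$.

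The two key identifications are $M_e^{U_F} = M_{e + e_F}$ for $e \in E$, and $M_e = \bigcap_{F \in \shE} M_{e + e_F}$. The first holds because $\langle e_F \rangle = F$ forces a horizontal facet $F'$ to contain $e_F$ iff $F' \supset F$, so the facet intersections defining the two modules coincide. The second combines Lemma~\ref{lem-intersection-and-wedge} with the bijection between horizontal facets $F' \in \F_{\max} \setminus \F$ and essential faces of rank $d-1$ via $F' \leftrightarrow F' \cap E$: every horizontal facet has the form $F + Q$ for a unique $F \in \shE$, so the double intersection over $F \in \shE$ and $F' \supset F$ collapses onto a single intersection over $F' \in \F_{\max} \setminus \F$.

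Each $M_{\bar p}^{U_F}$ is a direct summand of the free module $\bigwedge^m \bar P^\gp$ (it is the $m$-th wedge of an intersection of saturated subgroups), so $M_{\bar p}^{U_F} \otimes_R \cT \hookrightarrow \bigwedge^m \bar P^\gp \otimes_R \cT$ stays injective, and every $s_F$ may be viewed inside a common ambient module where compatibility becomes literal equality. Components in degrees $\bar p \notin \bigcap_F E_F$ are then forced to vanish by comparison with those $F^* \in \shE$ for which $\bar p \notin E_{F^*}$; the identity $\bigcap_F E_F = E$ is the toric Hartogs statement $\bigcap_F R[E_F] = R[E]$ on the Cohen--Macaulay toric variety $\Spec R[\bar P]$, so only degrees $e \in E$ contribute. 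In each such degree, $\rho$ becomes exactly the natural map $(\bigcap_{F \in \shE} M_{e + e_F}) \otimes_R \cT \to \bigcap_{F \in \shE} (M_{e + e_F} \otimes_R \cT)$ of the hypothesis (with $\shE' = \shE$), hence an isomorphism; summing over $e \in E$ yields the surjectivity of $\rho$.

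The main technical hurdle is not the graded piece in degrees $e \in E$ (where the hypothesis applies directly) but ruling out ghost contributions in intermediate degrees $\bar p \in E_F \setminus E$, which is where the direct-summand property of $M_{\bar p}^{U_F}$ (to preserve injectivity after $\otimes_R \cT$) and the toric Hartogs identity $\bigcap_F E_F = E$ become indispensable.
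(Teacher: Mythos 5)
The proposal tries a similar overall route to the paper — embed everything in the globally free module $\Gamma(Y, c^*W^m_{A_P/A_Q})$ and argue degree by degree — but several of the key intermediate claims are incorrect or unjustified.

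\textbf{The bijection claim is false.} You assert a bijection between horizontal facets $F' \in \F_{\max} \setminus \F$ and essential faces of rank $d-1$ via $F' \leftrightarrow F' \cap E$, writing ``every horizontal facet has the form $F + Q$ for a unique $F \in \shE$.'' By Lemma~\ref{face-form}, a horizontal facet $H$ satisfies $H = (H \cap E) + Q$, but $H \cap E$ is only a \emph{union} of essential faces of rank $d-1$, not a single one. For instance, with $P = \NN^3$ and $Q = \NN(e_1 + e_2)$ (so $d=2$, $\shF = \shF_{\min}$), the unique horizontal facet $\NN e_1 + \NN e_2$ intersects $E$ in $\NN e_1 \cup \NN e_2$, two essential faces of rank $1$, while $\shE = \{\NN e_1, \NN e_2, \NN e_3\}$. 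The paper's proof explicitly relies only on the weaker fact that $H \cap E$ is a union of faces in $\shE$.

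\textbf{The degree-$\bar e$ identification of $\rho$ is not correct.} Your final step declares that in each degree $e \in E$ the map $\rho$ ``becomes exactly the natural map $(\bigcap_{F \in \shE} M_{e+e_F}) \otimes_R \cT \to \bigcap_{F \in \shE}(M_{e+e_F} \otimes_R \cT)$.'' This glosses over the twisted monoid algebra structure on $\cT[E]$: in the localization $\Gamma(V_F, c^*W^m_f) = \Gamma(Y, c^*W^m_f)[z^{-e_F}]$, the degree-$\bar e$ component of the restriction map picks up multiplication by $\sigma(q) \in \cT$ whenever $e + n e_F \notin E$ for some $n$. When $e + e_F \notin E$ (i.e., $F \notin \shE_e$), this factor need not be $1$ (it can be nilpotent or even zero), so the constraint imposed by $V_F$ on $n_e$ is genuinely weaker than membership in $M_{e+e_F}\otimes_R\cT$. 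The paper's proof is careful precisely here: it only draws the conclusion $n_e \in M_{e+e_F}\otimes_R\cT$ when $e + e_F \in E$ (equivalently $\sigma(q)=1$), and accordingly applies the hypothesis with $\shE' = \shE_e := \{F \in \shE : e + e_F \in E\}$, a set depending on $e$, not with $\shE' = \shE$. Your use of $\shE' = \shE$ is not justified, and your claim that $\bigcap_F E_F = E$ (invoking a ``toric Hartogs'' property) is stated without proof and in any case does not address the $\sigma(q)$-twisting.

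\textbf{What the paper does instead.} The paper avoids the need to understand the localizations $\Gamma(V_F, \cdot)$ directly by first lifting $\mu$ to an element $\nu = \sum_e z^e n_e$ of the free module $\Gamma(Y, c^*W^m_{A_P/A_Q}) \cong \Gamma(V, c^*W^m_{A_P/A_Q})$ via diagram~\eqref{diagram-rho2}, so that it is already indexed by $e \in E$. It then compares $z^{ae_F}\nu$ with $\mu|_{V_F}$ degree by degree, reads off $n_e\sigma(q) \in M_{\tilde e}\otimes\cT$, and isolates the cases where $\sigma(q)=1$ to land in $\bigcap_{F\in\shE_e}(M_{e+e_F}\otimes\cT)$, finishing with the hypothesis for $\shE'=\shE_e$ and a facet-counting argument for $\bigcap_{F\in\shE_e} M_{e+e_F} = M_e$. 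Your proposal would need to be substantially reworked along these lines to close the gaps.
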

\begin{proof} 
We write $M = \Gamma(A_{P}, W^m_f)$, $N = \Gamma(A_{P}, W^m_{A_{P}/A_Q})$ and $N_p$ for the degree $p$ part of $N$. 
By proposition~\ref{prop-W-gen-case}, $M_p$ and $N_p$ only depend on $\langle p \rangle$. We are going to use that for $p_1,p_2\in P$ holds
\begin{equation} \label{eq-face-of-sum}
 \langle p_1+p_2 \rangle=\langle \langle p_1\rangle \cup \langle p_2\rangle \rangle.
\end{equation}
We have a natural injection $M\subset N$ by Proposition~\ref{prop-W-gen-case}. 
Given $\mu \in \Gamma(V, c^*W^m_f)$, we want to show it has a preimage under $\rho$.
We do have a unique preimage $\nu$ under the right vertical map of \eqref{diagram-rho2}, so in $N\otimes_{R[Q]} \cT$ and we are going to show that this preimage lies in $M\otimes_{R[Q]} \cT$. 
Say $\nu = \sum_e z^{e} \cdot n_e $ with $n_e \in N_e \otimes \cT$ is such that $\nu|_V=\mu$.
In particular $\nu|_{V_F}=\mu|_{V_F}$ for all $F\in\shE$.
There is some large $a\ge 1$ so that for each $F\in\shE$
there are $m_{F,e}\in M_e\otimes\cT$ such that
$$\mu|_{V_F} = z^{-a e_F} \sum_e z^e \cdot m_{F,e}$$
and therefore $\nu|_{V_F}=\mu|_{V_F}$ implies
$$z^{a e_F} \sum_e z^e \cdot n_e\ \in\ \bigoplus_{e \in E} z^e \cdot (M_e \otimes_R \cT) \ \subset\ \bigoplus_{e \in E} z^e \cdot (N_e \otimes_R \cT).$$
If $e + ae_F = \tilde e + q$ is the decomposition $P=E\times Q$, then
$n_e \cdot \sigma(q) \in M_{\tilde e} \otimes_R \cT$. 
By \eqref{eq-face-of-sum},
$$e + a e_F \in E \iff \langle e + e_F\rangle\subset E \iff e + e_F \in E,$$
and if this holds, then $\sigma(q) = 1$, so setting 
 $$\shE_e := \{F \in \shE \ | \ e + e_F \in E\}, $$
 we obtain $n_e \in \bigcap_{F \in \shE_e} (M_{e + ae_F} \otimes_R \cT)$ and $M_{e + ae_F}=M_{e + e_F}$. 
Note that $\shE_e$ does not depend on the chosen $e_F$. 
Using the assumption, we get 
 $$n_e \in \bigcap_{F \in \shE_e} (M_{e+e_F} \otimes_R \cT) = \left(\bigcap_{F \in \shE_e} M_{e + e_F}\right) \otimes_R \cT.$$
 For the next step, define
 $\F_e = \{H \in \F_{max} \setminus \F \ | \ \exists F \in \shE_e : e + e_F \in H\}$. 
We use Lemma~\ref{lem-intersection-and-wedge} to compute 

 $$
  \bigcap_{F \in \shE_e} M_{e + e_F} =
 \bigwedge^m_R\left( \bigcap_{H \in \F_e} \frac{H^{gp} \otimes_\ZZ R}{Q^{gp} \otimes_\ZZ R} \right). 
 $$
We finally claim that $\F_e = \{H \in \F_{max} \setminus \F \ | \ e \in H\}$, indeed given an $H$ in the latter, we just need to exhibit an $F\in\shE$ that is also contained in $H$ with $\langle e,F\rangle \subset E$ which can be done since $H\cap E$ is a union of faces in $\shE$.
Thus, $n_e \in M_e \otimes_R \cT$, so indeed $\nu \in M \otimes_{R[Q]} \cT$ and we are done.
\end{proof}

\begin{corollary}\label{locBaChaField}
Let $(Q\subset P,\F)$ be an ETD, $\cT$ a Noetherian ring and $T = \Spec \cT \to A_Q$ a strict morphism of log schemes.
Then $c^*W^m_f$ is reflexive and $c^*W^m_f \to W^m_{Y/T}$ is an isomorphism provided that the composition
$$R\ra R[Q]\ra \cT$$ 
is flat, e.g. when $R$ is a field.
\end{corollary}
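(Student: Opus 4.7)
The plan is to reduce the statement to the verification of the hypothesis of Theorem~\ref{isoCondi} and then exploit flatness to commute tensor products with finite intersections. By the lemma preceding Theorem~\ref{isoCondi}, the reflexivity of $c^*W^m_f$ and the property that $c^*W^m_f \to W^m_{Y/T}$ is an isomorphism are both equivalent to the surjectivity of the restriction map $\rho: \Gamma(Y, c^*W^m_f) \to \Gamma(V, c^*W^m_f)$. Theorem~\ref{isoCondi} supplies a combinatorial sufficient condition for this surjectivity, so the entire claim reduces to checking that for every $e \in E$ and every $\shE' \subset \shE$, the natural map
\[
\left(\bigcap_{F \in \shE'} M_{e + e_F}\right) \otimes_R \cT \;\longrightarrow\; \bigcap_{F \in \shE'} \bigl(M_{e + e_F} \otimes_R \cT\bigr)
\]
is an isomorphism, where $M_p = (W^m_f)_p$ and all tensor products are over $R$.

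Next I would exploit the flatness hypothesis to deduce the flatness of $\cT$ over $R$. Indeed, $R \to R[Q]$ is free (since $Q$ is a monoid whose monoid algebra is a free $R$-module), so if $R \to R[Q] \to \cT$ is flat in the sense that $\cT$ is flat over $R[Q]$, then by transitivity of flatness $\cT$ is also flat over $R$. When $R$ is a field this is automatic, giving the ``e.g.'' clause.

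The final step is purely homological. By Proposition~\ref{prop-W-gen-case}, each $M_p$ is an $R$-submodule of the finitely generated $R$-module $G := \bigwedge^m_R(P^\gp/Q^\gp \otimes_\ZZ R)$. Since $\shE$ is finite, $\shE'$ is finite too. For any finite collection of submodules $H_1, \ldots, H_n \subset G$, the sequence
\[
0 \to \bigcap_{i=1}^n H_i \to G \to \bigoplus_{i=1}^n G/H_i
\]
is exact, and tensoring with a flat $R$-module $\cT$ preserves this exactness. Combined with the injectivity of $H_i \otimes_R \cT \to G \otimes_R \cT$ (again by flatness), this identifies $\bigl(\bigcap_i H_i\bigr) \otimes_R \cT$ with the kernel of $G \otimes_R \cT \to \bigoplus_i (G/H_i) \otimes_R \cT$, which is precisely $\bigcap_i (H_i \otimes_R \cT)$. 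Applying this with $H_i = M_{e + e_{F_i}}$ yields the required isomorphism, so Theorem~\ref{isoCondi} applies, and the corollary follows. There is no serious obstacle here beyond the standard flat-base-change-preserves-intersections lemma; the only point requiring care is tracing that the flatness condition stated for the composition $R \to R[Q] \to \cT$ really delivers flatness over $R$, which is where the freeness of $R[Q]$ over $R$ is used.
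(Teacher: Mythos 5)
Your proof is correct and takes the approach the paper intends: the corollary is meant to follow directly from Theorem~\ref{isoCondi} together with the lemma before it, by verifying that flatness of $\cT$ over $R$ lets intersections of finitely many direct summands of a finitely generated $R$-module commute with $\otimes_R \cT$. Your homological argument via the exact sequence $0 \to \bigcap_i H_i \to G \to \bigoplus_i G/H_i$ is exactly the standard way to see this. One small point of interpretation: the phrase ``the composition $R\to R[Q]\to \cT$ is flat'' should be read as saying the composite ring map $R\to\cT$ is flat (i.e.\ $\cT$ is flat as an $R$-module) --- not that $\cT$ is flat over $R[Q]$, which is a strictly stronger condition; the ``e.g.\ when $R$ is a field'' clause confirms this reading, since fields make every module flat but do not make $\cT$ flat over $R[Q]$. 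Your detour through transitivity of flatness is therefore unnecessary, but since it still lands on the condition actually used ($\cT$ flat over $R$), the argument goes through unchanged.
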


As Example~\ref{baChaViolation} shows, the conditions of Theorem~\ref{isoCondi} are not always satisfied in case $R = \ZZ$. 
However, we do get close:

\begin{corollary}\label{locBaChaInteger}
 Let $(Q\subset P,\F)$ be an ETD, and assume $f:A_{P,\F} \to A_Q$ to be defined over $R = \ZZ$. 
Then there is a $p_0 = p_0(Q\subset P,\F)$ such that for every $m$ and every 
$T = \Spec \cT \to A_Q$ with a Noetherian ring $\cT$ of characteristic $\geq p_0$, the sheaf $c^*W^m_f$ is reflexive, and 
 $c^*W^m_f \to W^m_{Y/T}$ is an isomorphism.
\end{corollary}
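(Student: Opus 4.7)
By the lemma preceding Theorem~\ref{isoCondi}, it suffices to show that the restriction map $\rho$ is surjective, for then both reflexivity of $c^*W^m_f$ and the isomorphism $c^*W^m_f \to W^m_{Y/T}$ follow. In turn, by Theorem~\ref{isoCondi}, the surjectivity of $\rho$ follows once we verify, for every subset $\shE' \subset \shE$ and every $e \in E$, that the natural map
\[
 \Big(\bigcap_{F \in \shE'} M_{e + e_F}\Big) \otimes_\ZZ \cT \ \longrightarrow\ \bigcap_{F \in \shE'} \big(M_{e + e_F} \otimes_\ZZ \cT\big)
\]
is an isomorphism, where $M_p = (W^m_f)_p$ and the intersections are taken inside the common finitely generated $\ZZ$-module $\bigwedge^m_\ZZ (P^\gp/Q^\gp)$ (via Proposition~\ref{prop-W-gen-case}).

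The key observation is a finiteness statement. By Proposition~\ref{prop-W-gen-case}, $M_p$ depends only on the face $\langle p\rangle$ of $P$, and there are only finitely many faces of $P$, finitely many subsets $\shE' \subset \shE$, and only finitely many relevant integers $m \in \{0, \dots, \rk P^\gp\}$. Consequently, the collection of pairs $(\shE', \langle e\rangle,m)$ that can arise is finite, and so the verification above reduces to a finite list of equalities of the form
\[
 \Big(\bigcap_{i=1}^k H_i\Big) \otimes_\ZZ \cT \ =\ \bigcap_{i=1}^k \big(H_i \otimes_\ZZ \cT\big)
\]
inside $G \otimes_\ZZ \cT$ for finitely generated $\ZZ$-modules $H_i \subset G$.

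Each such equality can be established by induction on $k$ from Lemma~\ref{intBaChaCom}. Indeed, writing $H_1 \cap \dots \cap H_k = (H_1 \cap \dots \cap H_{k-1}) \cap H_k$ and applying Lemma~\ref{intBaChaCom} both to the inductive step and to the pair $(H_1\cap\cdots\cap H_{k-1}, H_k)$, each finite intersection produces some threshold $p_0$ depending only on the $H_i$ such that the equality holds as soon as $\op{char} \cT \geq p_0$. Taking the maximum of these thresholds over the finitely many pairs $(\shE', \langle e\rangle, m)$ yields a single $p_0 = p_0(Q \subset P, \F)$ that depends only on the ETD and works uniformly for all $m$ and all base changes $T \to A_Q$ by Noetherian rings $\cT$ of characteristic $\geq p_0$.

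The main (and only) potential obstacle is the dependence of $p_0$ on $m$; this is handled because $m$ ranges over the finite set $\{0, \dots, \rk P^\gp\}$, so a single maximum over $m$ suffices and $p_0$ depends only on $(Q \subset P, \F)$. The argument is otherwise a direct assembly of Theorem~\ref{isoCondi}, Proposition~\ref{prop-W-gen-case} and Lemma~\ref{intBaChaCom}.
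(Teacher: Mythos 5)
Your proof is correct and follows essentially the same route as the paper's: reduce to the surjectivity criterion of Theorem~\ref{isoCondi}, then apply Lemma~\ref{intBaChaCom} repeatedly together with the finiteness of the possible data $(\shE', \langle e\rangle, m)$. Your write-up is a bit more explicit than the paper's (the induction on the number of submodules being intersected, and the observation that $m$ ranges over a finite set so a single maximum works), but it is the same argument.
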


\begin{proof}
 Applying Lemma~\ref{intBaChaCom} repeatedly, we find for every triple $(m,e,\E')$ as in Theorem~\ref{isoCondi} a $p_0(m,e,\E')$ such that the isomorphism in the theorem holds if $\cT$ is of characteristic $\geq p_0(m,e,\E')$. Since there are only finitely many different sets of modules $\{M_{e + e_F} \ | \ F \in \E'\}$, we obtain one $p_0(Q \subset P,\F)$ that works for all triples.
\end{proof}

For a field $\kk$, consider a monoid ideal $K\subset Q$, let $(K)\subset\kk[Q]$ denote the corresponding monomial ideal of $\kk[Q]$ and set $\cT=\kk[Q]/(K)$.
The map $T=\Spec\cT\ra\kk[Q]$ is the natural one and $Y\ra T$ is defined by \eqref{diagram-define-Y} as before. 
We set $E_K := P \setminus (P + K)$ and note this generalizes the union of essential faces $E$ from \S\ref{ElemPretoroid}, indeed $E=E_{Q\setminus\{0\}}$.
Combining Proposition~\ref{prop-W-gen-case} with Corollary~\ref{locBaChaField} (for $R=\kk$) gives the following which also generalizes \cite[Corollary~1.13]{GrossSiebertII}.
\begin{corollary}\label{relative-on-Y}
$\Gamma(Y, W^m_{Y/T}) \cong \bigoplus_{e \in E_K} z^e \cdot \bigwedge^m \left( \bigcap_{H \in \F_{\max} \setminus \F : e \in H} (H^{gp} \otimes \kk)/(Q^{gp} \otimes \kk) \right)$
with differential $d(z^e \cdot n) = z^e \cdot [e] \wedge n$. 
\end{corollary}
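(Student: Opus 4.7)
The plan is to derive the formula by combining the two promised ingredients. Since $R=\kk$ is a field, the composition $\kk \to \kk[Q]\to \cT$ is flat, so Corollary~\ref{locBaChaField} yields $c^*W^m_f \cong W^m_{Y/T}$. Both $A_{P,\F}$ and $Y$ being affine, taking global sections reduces the problem to computing
\[
\Gamma(A_{P,\F},W^m_f) \otimes_{\kk[Q]} \cT.
\]

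The key observation for the next step is that every $H \in \F_{\max}\setminus \F$ contains $Q$: by Definition~\ref{def-ETD} the vertical facets (those not containing $Q$) are all required to lie in $\F$, so the complement can only consist of facets with $Q \subset H$. Combined with the face property of $H$, this gives $p+q \in H \iff p\in H$ for all $p \in P$ and $q \in Q$, so the indexing set appearing in Proposition~\ref{prop-W-gen-case} is invariant under translation by $Q$. Hence $(W^m_f)_p = (W^m_f)_{p+q}$ as subspaces of $\bigwedge^m(P^{\gp}/Q^{\gp}\otimes \kk)$, and under this identification multiplication by $z^q$ acts by the identity. Using the canonical decomposition $P=E\times Q$ of \eqref{eq-decomp}, I then obtain a $\kk[Q]$-module isomorphism
\[
\Gamma(A_{P,\F},W^m_f) \cong \bigoplus_{e \in E} (W^m_f)_e \otimes_\kk \kk[Q],
\]
which in particular reproves the freeness in Corollary~\ref{cor-genRelFlatW-local}.

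The final step is to tensor with $\cT=\kk[Q]/(K)$: the right-hand side becomes $\bigoplus_{e\in E} (W^m_f)_e \otimes_\kk \kk[Q]/(K)$, with $\kk$-basis indexed by pairs $(e,q) \in E \times (Q\setminus K)$. A short check using the uniqueness of the decomposition $P=E\times Q$ and the fact that $K$ is a monoid ideal of $Q$ shows $E_K = \{e+q : e\in E,\, q \in Q\setminus K\}$: indeed, if $p=e+q$ is the canonical decomposition then $p \in P+K$ forces $q \in K$. Reindexing by $e':=e+q$ and substituting the formula from Proposition~\ref{prop-W-gen-case} for $(W^m_f)_{e'}$ produces the stated direct sum. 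The differential is the standard relative log de Rham differential $d(z^p\omega) = z^p\cdot [p]\wedge \omega$ with $[p]$ the class of $p$ in $P^{\gp}/Q^{\gp}$ (since $\dlog z^q$ vanishes relative to $A_Q$ for $q \in Q$), specialized to $p=e' \in E_K$.

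The step requiring the most care is verifying that the $\kk[Q]$-module structure on $\Gamma(A_{P,\F},W^m_f)$ really acts as the identity under the identification $(W^m_f)_p = (W^m_f)_{p+q}$, rather than a nontrivial automorphism possibly introduced via the pushforward $j_*$ defining $W^m_f$. This must be tracked through the split exact sequence \eqref{eq-split-exact-abs-to-rel} in the proof of Proposition~\ref{prop-W-gen-case}: the intersections there sit canonically in the fixed ambient space $\bigwedge^m(P^{\gp}/Q^{\gp}\otimes\kk)$ independently of $p$, so that the embedding $(W^m_f)_p \hookrightarrow \bigwedge^m(P^{\gp}/Q^{\gp}\otimes\kk)$ is compatible with multiplication by $z^q$. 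Once this canonicity is in hand, the rest of the computation is straightforward bookkeeping.
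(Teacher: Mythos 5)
Your proposal matches the paper's intended argument — the paper's ``proof'' is precisely the one-line remark ``Combining Proposition~\ref{prop-W-gen-case} with Corollary~\ref{locBaChaField} (for $R=\kk$) gives the following'' — and you have filled in the details correctly: the flatness of $\kk\to\cT$ yields $c^*W^m_f\cong W^m_{Y/T}$, translation-invariance of the index set under $Q$ follows from $\F_{\min}\subset\F$ together with the face property, and the reindexing via the bijection $E_K\leftrightarrow E\times(Q\setminus K)$ is exactly right. Your care about the canonicity of the $\kk[Q]$-module structure (the containments $(W^m_f)_p\subset(W^m_f)_{p+p'}$ inside the fixed ambient $\bigwedge^m(P^{\gp}/Q^{\gp}\otimes\kk)$ being equalities for $p'\in Q$) is a legitimate and correctly resolved point that the paper leaves implicit.
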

With $c: Y \to A_{P,\F}$ the notation from before, we apply $c^*$ to the split exact sequence given by the top row of \eqref{eq-split-exact-abs-to-rel} and obtain another split exact sequence. 
The left term is free and $c^*W^m_f$ is reflexive by Corollary~\ref{locBaChaField}. Hence, $c^*W^m_{A_{P,\F}/\kk}$ is also reflexive. With $V=c^{-1}(U)$, we find the natural surjection $c^*\Omega^\bullet_{U/\kk}\sra\Omega^\bullet_{V/\kk}$ to be an isomorphism (e.g. by local freeness of both).
For $j:V\hra Y$ the inclusion and $W^\bullet_Y := j_*\Omega^\bullet_{V/\kk}$ we thus have $c^*W^m_{A_{P,\F}/\kk} \cong W^m_Y$. Plugging this into Proposition~\ref{prop-W-abs-case} yields the following.
\begin{corollary}\label{absolute-on-Y}
$\Gamma(Y, W^m_{Y}) \cong \bigoplus_{e \in E_K} z^e \cdot \bigwedge^m \left( \bigcap_{H \in \F_{\max} \setminus \F : e \in H} H^{gp} \otimes \kk \right)$
with differential $d(z^e \cdot n) = z^e \cdot e \wedge n$.
\end{corollary}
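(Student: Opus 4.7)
The plan is to reduce the statement to Proposition~\ref{prop-W-abs-case} via the identification $W^m_Y \cong c^*W^m_{A_{P,\F}/\kk}$ that the paragraph preceding the corollary already furnishes, and then compute the base change along $c\colon Y \to A_{P,\F}$ explicitly using the $P$-grading.

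First, I would take global sections on the affine scheme $Y = A_{P,\F} \times_{A_Q} T$ to rewrite $\Gamma(Y, c^*W^m_{A_{P,\F}/\kk}) = \Gamma(A_{P,\F}, W^m_{A_{P,\F}/\kk}) \otimes_{\kk[Q]} \cT$. Proposition~\ref{prop-W-abs-case} (with $R = \kk$) furnishes a $P$-graded presentation
$$\Gamma(A_{P,\F}, W^m_{A_{P,\F}/\kk}) = \bigoplus_{p \in P} z^p \cdot (W^m)_p, \qquad (W^m)_p = \bigwedge^m_\kk\Bigl( \bigcap_{H \in \F_{\max} \setminus \F,\, p \in H} H^\gp \otimes \kk \Bigr),$$
whose $\kk[Q]$-module structure is the restriction of the $\kk[P]$-module structure along $Q \hookrightarrow P$.

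The key combinatorial input is that every $H \in \F_{\max} \setminus \F$ contains $Q$, because vertical facets all lie in $\F_{\min} \subset \F$ by the definition of an ETD. Consequently, for $p \in P$ and $q \in Q$, one has $p + q \in H \iff p \in H$, so $(W^m)_{p+q} = (W^m)_p$; the $\kk[P]$-module structure on the graded pieces is then literally the index shift $z^{p_1} \cdot (z^{p_2} n) = z^{p_1 + p_2} n$. Since $\cT$ is a quotient of $\kk[P]$ by $(K)$ with $\kk$-basis $\{z^e : e \in E_K\}$ by the definition $E_K = P \setminus (P + K)$, the tensor product kills precisely the summands with $p \in P + K$, leaving $\bigoplus_{e \in E_K} z^e \cdot (W^m)_e$, as asserted.

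For the differential, on $A_{P,\F}$ one has $d(z^p \cdot n) = z^p \cdot (p \wedge n)$ where $p$ is identified with $\dlog z^p \in P^\gp \otimes \kk$; the wedge $p \wedge n$ lies in $(W^{m+1})_p$ because $p \in H^\gp \otimes \kk$ for every $H$ appearing in the intersection that defines $(W^m)_p$. This formula descends under the quotient by $(K)$, producing $d(z^e \cdot n) = z^e \cdot (e \wedge n)$. The one routine point I would need to verify is that the abstract de Rham differential on $\Omega^\bullet_{V/\kk}$ matches this explicit formula on the $P$-graded pieces, which is classical for Danilov's reflexive differentials and already implicit in Proposition~\ref{prop-W-abs-case}. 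The potential obstacle, had the setup not been so carefully prepared upstream, would have been the interaction of $j_*$ with the pullback $c^*$, but this was disposed of by the reflexivity of $c^*W^m_{A_{P,\F}/\kk}$ established in Corollary~\ref{locBaChaField}.
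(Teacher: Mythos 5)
Your proposal matches the paper's implicit argument: the paragraph preceding the corollary establishes $W^m_Y \cong c^*W^m_{A_{P,\F}/\kk}$, and the corollary is then exactly the global-sections tensor-product computation you carried out, using that every $H \in \F_{\max}\setminus\F$ contains $Q$ so that $(W^m)_p$ depends only on $p \bmod Q$. One cosmetic slip to fix: it is $\Gamma(Y,\cO_Y) = \kk[P]\otimes_{\kk[Q]}\cT \cong \kk[P]/(K\kk[P])$ — not $\cT = \kk[Q]/(K)$ itself — that has $\kk$-basis $\{z^e : e \in E_K\}$, though this does not affect your conclusion.
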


\subsection{Local Analytic Theory} \label{subsec-local-analytic}
We keep the setup and notation from before (with $\kk = \CC$), so $(Q\subset P,\F)$ is an ETD and $K\subset Q$ a monoid ideal. We additionally assume that $Q \setminus K$ is finite, so $\cT=\CC[Q]/(K)$ is an Artinian local ring.
For $P^+=P\setminus\{0\}$, let $\CC\llbracket P\rrbracket$ be the completion of $\CC[P]$ in $(P^+)$.
\begin{lemma}(\cite[Proposition\,V.1.1.3]{LoAG18}) \label{lem-ogus-analytic}
For every local homomorphism $h: P \to \NN$, i.e., $h^{-1}(0)=\{0\}$ and we may view $h$ as a grading, it holds
$$\cO_{A_P^{an},0} = \left\{\left.\sum_{p \in P} \alpha_p z^p \ \right|\ \alpha_p\in\CC,\,\mathrm{sup}_{p \in P^+} \left\{ \frac{\mathrm{log}\ |\alpha_p|}{h(p)} \right\} < \infty \right\} \subset \CC\llbracket P\rrbracket.$$
\end{lemma}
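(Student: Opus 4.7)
The plan is to prove both inclusions between $\cO_{A_P^{an},0}$ and the formal series ring with the prescribed growth bound, starting from the canonical injection $\cO_{A_P^{an},0} \hookrightarrow \CC\llbracket P\rrbracket$ given by Taylor expansion at the origin. Injectivity follows from Krull's intersection theorem applied to the Noetherian local ring $\cO_{A_P^{an},0}$, whose completion at the maximal ideal agrees with $\widehat{\cO_{A_P,0}} = \CC\llbracket P\rrbracket$; here one uses that sharpness of $P$ makes $0 \in A_P$ a closed point and that analytification preserves the completion at a closed point.

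For the inclusion $\subseteq$, I would fix generators $p_1, \dots, p_n \in P^+$, inducing a closed embedding $A_P \hookrightarrow \CC^n$ via $z^{p_j} \mapsto t_j$. A germ $f \in \cO_{A_P^{an},0}$ lifts to a germ $F = \sum_I \beta_I t^I \in \cO_{\CC^n,0}$ converging on some polydisk $\{|t_j| < \rho\}$, with Cauchy estimate $|\beta_I| \le M \rho^{-|I|}$. Pushing forward to $\CC\llbracket P\rrbracket$ expresses $\alpha_p = \sum_{I \in \Phi(p)} \beta_I$, where $\Phi(p) := \{I \in \NN^n : \sum_j I_j p_j = p\}$ is finite because sharpness of $P$ forces the kernel of $\NN^n \to P$ to intersect $\NN^n$ trivially. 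Since $h$ is local, $h(p_j) \ge 1$, so every $I \in \Phi(p)$ satisfies $|I| \le h(p)/\min_j h(p_j)$ and $|\Phi(p)|$ is polynomial in $h(p)$. Assuming $\rho \le 1$, the Cauchy estimate then yields $|\alpha_p| \le K^{h(p)}$ for some $K$, bounding $\log|\alpha_p|/h(p)$ on $P^+$.

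For the inclusion $\supseteq$, suppose $|\alpha_p| \le e^{Ch(p)}$ on $P^+$. Consider the open neighborhood $V_\epsilon := \{x \in A_P^{an} : |z^{p_j}(x)| < \epsilon \text{ for all } j\}$ of the origin. For $p \in P^+$ with any nonnegative representation $p = \sum c_j p_j$, we have $|z^p(x)| \le \epsilon^{\sum c_j} \le \epsilon^{h(p)/\max_j h(p_j)}$ on $V_\epsilon$, so
$$|\alpha_p z^p(x)| \le r^{h(p)} \quad \text{with } r := e^{C} \epsilon^{1/\max_j h(p_j)}.$$
Choosing $\epsilon$ small enough that $r < 1$ gives absolute convergence of $\sum_p \alpha_p z^p$ on $V_\epsilon$ via the elementary estimate $\sum_{p \in P} r^{h(p)} \le \prod_j (1 - r^{h(p_j)})^{-1} < \infty$ obtained from the surjection $\NN^n \twoheadrightarrow P$. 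Uniform absolute convergence on $V_\epsilon$ then produces a holomorphic germ whose formal expansion recovers the given series.

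The main obstacle is the clean identification of the analytic formal completion at $0 \in A_P^{an}$ with $\CC\llbracket P\rrbracket$, together with showing that the fibers $\Phi(p)$ are finite with polynomial cardinality, both of which rest on sharpness and finite generation of $P$. Once that bookkeeping is in place, the two estimates are Cauchy-type computations using only that $h$ is local and that the positive generators $p_j$ satisfy $h(p_j) \ge 1$.
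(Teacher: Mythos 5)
The paper does not give its own argument for this lemma: it is stated with a citation to Ogus's \emph{Lectures on Logarithmic Algebraic Geometry}, Proposition~V.1.1.3, and left there. Your proposal supplies a complete, correct self-contained proof, almost certainly close in spirit to Ogus's original argument (embed via a finite generating set, Cauchy estimates in one direction, direct convergence estimate in the other). So this is a case where you fill in a proof the paper only references.

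A few small refinements worth recording. For the finiteness and size of $\Phi(p)$, the cleanest justification is the one you give second: from $\sum_j I_j h(p_j) = h(p)$ and $h(p_j)\ge 1$ you get $|I|\le h(p)/\min_j h(p_j)\le h(p)$, so $\Phi(p)\subset\{I:|I|\le h(p)\}$ and $|\Phi(p)|$ is polynomial in $h(p)$. The preceding remark that the kernel of $\NN^n\to P$ is trivial (which is just sharpness plus $p_j\ne 0$) does not by itself bound the fibers of $\NN^n\to P$; one would still need that $K_\RR\cap\RR^n_{\ge 0}=\{0\}$ for the group kernel $K\subset\ZZ^n$. The $h$-bound sidesteps this entirely, so no gap, but the wording suggests the first remark suffices on its own when it doesn't. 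In the $\supseteq$ direction, rather than appealing to uniform limits of holomorphic functions on the (in general singular) space $A_P^{an}$, it is slightly cleaner to lift: choose one $I(p)\in\Phi(p)$ for each $p$ (the assignment $p\mapsto I(p)$ is then injective), and show that $\tilde F := \sum_p \alpha_p\, t^{I(p)}$ converges on a polydisk in $\CC^n$ using the same estimate $|I(p)|\ge h(p)/\max_j h(p_j)$; restricting $\tilde F$ to $A_P^{an}$ gives the desired germ with the required formal expansion. Your uniform-convergence route also works because $A_P^{an}$ is normal and hence reduced, but the lift makes the logic unconditional. Finally, a cosmetic point: the closed embedding $A_P\hookrightarrow\CC^n$ corresponds to the surjection $\CC[t_1,\dots,t_n]\to\CC[P]$, $t_j\mapsto z^{p_j}$, not the other direction.
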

We have $\Gamma(Y,\cO_Y) \cong \CC[E_K] := \bigoplus_{e \in E_K}\CC \cdot z^e$ with $z^{e} \cdot z^{e'} = z^{e + e'}$ if $e + e' \in E_K$ and $z^e \cdot z^{e'} = 0$ otherwise. 
By \cite[Corollary~3.2]{SC_fiberproduct_61} and Lemma~\ref{lem-ogus-analytic}, the complete local ring at the origin in $Y^{an}$ is
$$
\hat \cO_{Y,0} \cong (\CC[Q]/(K)) \otimes_{\CC\llbracket Q\rrbracket} \CC\llbracket P\rrbracket \cong \left \{\sum_{e \in E_K} \alpha_e z^e\right\}=:\CC\llbracket E_K\rrbracket.
$$
Lemma~\ref{lem-ogus-analytic} together with the surjectivity of $\cO_{A_P^{an},0} \to \cO_{Y^{an},0}$ and Krull's intersection theorem yields
\begin{equation}\label{eq-stalk-Y-analytic}
\cO_{Y^{an},0} = \left\{\left.\sum_{e \in E_K} \alpha_e z^e \in \CC\lfor E_K\rfor \ \right|\ \mathrm{sup}_{e \in E_K \setminus 0} \left\{ \frac{\mathrm{log} |\alpha_e|}{h(e)} \right\} < \infty \right\}.
\end{equation}

\begin{lemma}\label{ana-stalk}
 Let $(V,\langle \cdot , \cdot \rangle)$ be a finite-dimensional $\CC$-vector space with a Hermitian inner product. 
 For every $e \in E_K$, let $V_e \subset V$ be vector subspaces so that $$\tilde V := \bigoplus_{e \in E_K} z^e \cdot V_e \subset V[E_K]$$
 is a $\CC[E_K]$-module. Assume moreover that $V_e \subset V$ depends only on the set $F(e) := \{H \subset P \ \mathrm{a \ facet} \ | \ Q \subset H, e \in H\}$. 
 Set $V\lfor E_K \rfor := \prod_{e \in E_K} z^e \cdot V_e$ and $\shV^{an}:=\tilde V\otimes_{\CC[E_K]} \cO_{Y^{an}}$.
 We find its stalk at the origin to be
 $$\shV^{an}_0 \cong \left\{\left. \sum_{e \in E_K}  z^e \cdot v_e \in V\lfor E_K \rfor  \ \right|\ \mathrm{sup}_{e \in E_K \setminus 0} \left\{ \frac{\mathrm{log}\ \| v_e\| }{h(e)} \right\} < \infty \right\}.$$
\end{lemma}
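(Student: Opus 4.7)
My plan is to express the stalk $\shV^{an}_0$ as the kernel of a natural surjection from a free analytic module onto a finitely generated quotient, and then read off the asserted formula using~\eqref{eq-stalk-Y-analytic} combined with Krull's intersection theorem.

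Set $N := V \otimes_\CC \CC[E_K]$, a free $\CC[E_K]$-module of rank $\dim V$ containing $\tilde V$ as a submodule. Because $\tilde V$ is a $\CC[E_K]$-module, multiplication by $z^{e'}$ sends $z^e V_e$ into $z^{e+e'} V_{e+e'}$ whenever $e + e' \in E_K$, forcing $V_e \subset V_{e+e'}$. Consequently $Q := N/\tilde V \cong \bigoplus_{e \in E_K} z^e \cdot (V/V_e)$ is a well-defined $\CC[E_K]$-module. Since $\CC[E_K] \cong \CC[P]/(z^k : k \in K)\CC[P]$ is Noetherian and $N$ has finite rank, both $\tilde V$ and $Q$ are finitely generated over $\CC[E_K]$.

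Next I apply $- \otimes_{\CC[E_K]} \cO_{Y^{an},0}$ to the short exact sequence $0 \to \tilde V \to N \to Q \to 0$. The ring map $\CC[E_K] \to \cO_{Y^{an},0}$ factors as a localization at the origin followed by analytic completion, hence is flat, and the sequence
$$0 \to \shV^{an}_0 \to V \otimes_\CC \cO_{Y^{an},0} \to Q \otimes_{\CC[E_K]} \cO_{Y^{an},0} \to 0$$
remains exact. Choosing a basis of $V$ and applying~\eqref{eq-stalk-Y-analytic} coefficient-wise, together with equivalence of norms on $V$, identifies the middle term with the set of formal series $\sum_{e \in E_K} z^e u_e$, $u_e \in V$, satisfying $\sup_{e \neq 0} \log\|u_e\|/h(e) < \infty$. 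The surjection sends such a series to $\sum_e z^e (u_e + V_e)$, so everything reduces to proving that this image vanishes in $Q \otimes_{\CC[E_K]} \cO_{Y^{an},0}$ precisely when $u_e \in V_e$ for every $e$.

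For this final step I plan to pass to the $\mathfrak{m}$-adic completion. Since $Q \otimes_{\CC[E_K]} \cO_{Y^{an},0}$ is finitely generated over the Noetherian local ring $\cO_{Y^{an},0}$, Krull's intersection theorem injects it into its completion $Q \otimes_{\CC[E_K]} \CC\lfor E_K\rfor$, which is the cokernel of $\tilde V \otimes_{\CC[E_K]} \CC\lfor E_K\rfor \hra V \otimes_\CC \CC\lfor E_K\rfor = \prod_e z^e V$. A density argument in the $\mathfrak{m}$-adic topology identifies $\tilde V \otimes \CC\lfor E_K\rfor$ with $\prod_e z^e V_e$ inside $\prod_e z^e V$, so the completed quotient equals $\prod_e z^e (V/V_e)$ on which the vanishing condition is visibly coefficient-wise. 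The main technical point I expect is precisely this last identification: closedness of each finite-dimensional $V_e \subset V$ must be combined with the fact that partial sums of any element of $\prod_e z^e V_e$ lie in $\tilde V$ and converge $\mathfrak{m}$-adically, so that the completion of the direct sum $\tilde V$ is the full product rather than something smaller.
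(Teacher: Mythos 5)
Your proposal is correct but takes a genuinely different route from the paper's. The paper's argument is a two-line sketch: since $V_e$ depends only on $F(e)$ and there are only finitely many possible $F(e)$, one chooses orthonormal bases of the finitely many distinct $V_e$ and reads off the formula from \eqref{eq-stalk-Y-analytic}, with "technical details left to the reader." You instead set up the short exact sequence $0 \to \tilde V \to V\otimes_\CC\CC[E_K] \to \bigoplus_e z^e(V/V_e) \to 0$, tensor with the flat $\CC[E_K]$-algebra $\cO_{Y^{an},0}$, and then identify the kernel of the resulting surjection onto $\big(\bigoplus_e z^e(V/V_e)\big)\otimes\cO_{Y^{an},0}$ by embedding that module (via Krull's intersection theorem, or equivalently faithful flatness of the $\mathfrak{m}$-adic completion) into the completed module $\prod_e z^e(V/V_e)$, where the vanishing condition becomes coordinate-by-coordinate. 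This is more systematic and carries the actual content the paper waves away; the price is that it relies on finite generation of $\tilde V$ and of its cokernel over the Noetherian ring $\CC[E_K]$, and on the identification of the completion of $\tilde V$ with $\prod_e z^e V_e$ — which besides closedness of each $V_e$ requires either the Artin--Rees lemma or flatness of $\CC\lfor E_K\rfor$ over $\CC[E_K]$ to match the intrinsic $\mathfrak{m}$-adic topology on $\tilde V$ with the subspace topology coming from the free module. Two minor flags: the passage from $\CC[E_K]_{\mathfrak{m}_0}$ to $\cO_{Y^{an},0}$ is not a "completion" (the analytic local ring sits strictly between the algebraic local ring and its $\mathfrak{m}$-adic completion), though it is indeed faithfully flat, which is all you use; and you reuse the symbol $Q$, which already denotes the monoid in the ETD, so a different letter should be chosen for the quotient module.
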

\begin{proof} The set of possible $F(e)$ is finite, so there is only a finite set of different $V_e$. Choosing orthonormal bases for all $V_e$ allows to reduce the assertion to \eqref{eq-stalk-Y-analytic}. We leave the technical details to the reader.
\end{proof}

\begin{remark} \label{rem-compute-anal-stalk}
We can use Lemma~\ref{ana-stalk} to compute the stalk at $0$ of the analytification of $W^m_{Y/T}$ and $W^m_{Y}$ by using Corollary~\ref{relative-on-Y} and Corollary~\ref{absolute-on-Y} respectively.
\end{remark}

\section{Base Change of Differentials for Log Toroidal Families}\label{TorBaCha}

\begin{definition}[BC]
We say that a generically log smooth morphism $f:X\ra S$ satisfies the \emph{basechange property} if for every strict
morphism $T\ra S$ of Noetherian fs log schemes, $m\in\ZZ$ and $c$ the map given by the Cartesian diagram
\[
 \begin{CD} 
  Y @>c>> X \\
  @VgVV @VfVV \\
  T @>b>> S, \\
 \end{CD}
\tag{BC}\label{BC}
\]
the sheaf $c^*W^m_{X/S}$ is reflexive or equivalently, the natural map $c^*W^m_{X/S} \to W^m_{Y/T}$ is an isomorphism.
\end{definition}

\begin{theorem}[Base Change over Fields] \label{baseChangeField}
Let $f: X \to S$ be a log toroidal family over a field $\kk$, then $f$ satisfies \eqref{BC}.
\end{theorem}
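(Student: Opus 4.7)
The plan is to reduce to Corollary~\ref{locBaChaField} by means of the local model definition of a log toroidal family. Since reflexivity of a coherent sheaf is local in the \'etale topology and the canonical map $c^*W^m_{X/S} \to W^m_{Y/T}$ is automatically an isomorphism on $c^{-1}(U)$ (there both sides compute ordinary log K\"ahler differentials, which are compatible with arbitrary base change), I would first verify the statement on an \'etale neighborhood of each geometric point $\bar y \to Y$ lying over a point $\bar x \to X$.

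Fix such a $\bar y$ and pick a local model \eqref{LM} for $f$ at $\bar x$. Because $b \colon T \to S$ is strict, the entire local model base-changes along $b$ to give a local model for $f_T \colon Y \to T$ at $\bar y$: writing $\tilde S_T := \tilde S \times_S T$, $V_T := V \times_S T$ and $L_T := \tilde S_T \times_{A_Q} A_{P,\F}$, we obtain \'etale neighborhoods $V_T \to Y$ and $\tilde S_T \to T$, a morphism $h_T \colon V_T \to L_T$ \'etale on underlying schemes, an open $\tilde U_T := \tilde U \times_S T \subset V_T$ satisfying \eqref{CC}, and an identification of the two log structures on $\tilde U_T$ pulled back from the original model.

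On $V$ the sheaves $g^*W^m_{X/S}$ and $h^*W^m_{L/\tilde S}$ are both reflexive (\'etale pullbacks of reflexive coherent sheaves) and they agree on the \eqref{CC}-open $\tilde U$, so Lemma~\ref{relaNormal} together with $Z$-closedness forces them to be isomorphic over all of $V$. The identical argument produces $g_T^*W^m_{Y/T} \cong h_T^*W^m_{L_T/\tilde S_T}$ on $V_T$. Thus the assertion reduces to showing that the projection $c_L \colon L_T \to L$ induces an isomorphism $c_L^*W^m_{L/\tilde S} \to W^m_{L_T/\tilde S_T}$.

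Now the chart $a \colon \tilde S \to A_Q$ is strict (the log structure on $\tilde S$ is the one induced by the chart), so both $L \to A_{P,\F}$ and $L_T \to A_{P,\F}$ are strict base changes of the elementary log toroidal family $A_{P,\F} \to A_Q$ along the strict composition $\tilde S_T \to \tilde S \to A_Q$. After passing to affine opens, this is exactly the setting of Corollary~\ref{locBaChaField} with $R = \kk$, in which the flatness hypothesis on $R \to R[Q] \to \cT$ is automatic because $\kk$ is a field; the corollary then delivers the required isomorphism. The main obstacle is mainly bookkeeping, namely verifying that strictness is preserved at each base-change step and that the \eqref{CC}-opens $\tilde U$ and $\tilde U_T$ remain large enough to glue the comparisons of reflexive sheaves, but no new ideas are needed beyond those already in Corollary~\ref{locBaChaField}.
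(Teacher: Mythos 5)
Your proposal is correct and fills in the reduction that the paper's one-line proof (``This follows directly from the local statement Corollary~\ref{locBaChaField}'') leaves implicit: a strict base change of a local model is again a local model, and comparing the reflexive (hence $Z$-closed) sheaves through the \'etale morphisms $g,h$ and $g_T,h_T$ transports the base-change question to the elementary case $L_T \to L$, where Corollary~\ref{locBaChaField} applies. The one small point worth naming explicitly is that to invoke Corollary~\ref{locBaChaField} with $R=\kk$ (so that flatness of $R \to R[Q] \to \cT$ is automatic) you should re-read the ETD over $\kk$; this is legitimate because $\tilde S$ and $\tilde S_T$ are $\kk$-schemes, so the morphisms to $A_Q$ factor through $A_Q \otimes_\ZZ \kk$, and the sheaves $W^m_{L/\tilde S}$, $W^m_{L_T/\tilde S_T}$ are intrinsic to the respective morphisms of log schemes.
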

\begin{proof} 
This follows directly from the local statement Corollary~\ref{locBaChaField}.
\end{proof}

\begin{theorem}[Generic Base Change]\label{baseChangeGeneric}
Let $f: X \to S$ be a log toroidal family. 
Then there is a finite set of prime numbers $p_1, ..., p_N \in \ZZ$ so that if 
$f^\circ: X^\circ \to S^\circ$ is obtained from $f$ by inverting $p_1,...,p_N$ (i.e., base change to $\Spec\ZZ_{p_1...p_N}$), then
$f^\circ$ satisfies \eqref{BC}.
\end{theorem}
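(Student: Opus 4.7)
The plan is to reduce, via the local models from Definition~\ref{toroidDef}, to the elementary toroidal setting handled by Corollary~\ref{locBaChaInteger}. Since $S$ is Noetherian and $f$ is of finite type, $X$ is Noetherian and hence quasi-compact, so the family of all domains $V$ appearing in local models \eqref{LM} admits a finite subcover $\{g_i\colon V_i \to X\}_{i=1}^n$. Each $V_i$ carries its local-model data: an ETD $(Q_i \subset P_i, \F_i)$, a strict étale neighborhood $\tilde S_i \to S$ with chart $a_i\colon \tilde S_i \to A_{Q_i}$, an étale map $h_i\colon V_i \to L_i := \tilde S_i \times_{A_{Q_i}} A_{P_i,\F_i}$ of underlying schemes, and an open $\tilde U_i \subset V_i$ satisfying \eqref{CC} on which the pulled-back log structures $g_i^*\M_X$ and $h_i^*\M_{L_i}$ are identified over $S$. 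For each $i$, Corollary~\ref{locBaChaInteger} furnishes a prime $p_0^{(i)} := p_0(Q_i \subset P_i, \F_i)$; I take $\{p_1, \ldots, p_N\}$ to be the set of rational primes strictly less than $\max_i p_0^{(i)}$.

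Now let $b\colon T \to S^\circ$ be a strict morphism of Noetherian fs log schemes and form $Y := X^\circ \times_{S^\circ} T$ together with the square \eqref{BC}. The natural map $\alpha\colon c^* W^m_{X^\circ/S^\circ} \to W^m_{Y/T}$ is a morphism of reflexive coherent $\cO_Y$-modules (using Lemma~\ref{lem-W-reflexive} together with the fact that flat pullback preserves reflexivity), so it suffices to check it is an isomorphism after étale pullback to each $W_i := V_i \times_X Y$. Base-changing the local model along $b$, I set $\tilde T_i := T \times_S \tilde S_i$ (a strict étale cover of $T$) and $L_i^{T_i} := \tilde T_i \times_{A_{Q_i}} A_{P_i,\F_i}$; the induced map $h_i^T\colon W_i \to L_i^{T_i}$ is then étale and identifies log structures on $\tilde U_i^T := \tilde U_i \times_{\tilde S_i} \tilde T_i$, an open satisfying \eqref{CC}. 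The composite $\tilde T_i \to A_{Q_i}$ is strict, and because $\tilde T_i$ lies over $\Spec \ZZ[1/(p_1 \cdots p_N)]$ it has characteristic $\geq p_0^{(i)}$; Corollary~\ref{locBaChaInteger} therefore produces a base change isomorphism for the ETD family, identifying $W^m_{L_i^{T_i}/\tilde T_i}$ with the pullback of $W^m_{A_{P_i,\F_i}/A_{Q_i}}$ to $L_i^{T_i}$.

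To conclude, one transports this identification across the étale spans $V_i \to L_i$ and $W_i \to L_i^{T_i}$. Both $g_i^* W^m_{X/S}$ and $h_i^* W^m_{L_i/\tilde S_i}$ are reflexive $\cO_{V_i}$-modules that agree on the $(CC)$-open $\tilde U_i$ with the pullback of the locally free sheaf of log differentials there, and two reflexive sheaves agreeing outside codimension two are equal, so both coincide with $W^m_{V_i/\tilde S_i} := (j_{\tilde U_i})_* \Omega^m_{\tilde U_i/\tilde S_i}$; the parallel statement on the $Y$-side identifies $\alpha|_{W_i}$ with the pullback of the ETD base change isomorphism from the previous paragraph. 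I expect the main technical obstacle to be precisely this bookkeeping across the double span of local model data---verifying that étale pullback, the pushforward $j_*$ used to define $W^m$, and formation of reflexive hulls commute well enough to identify the abstract map $\alpha$ with the concrete isomorphism of Corollary~\ref{locBaChaInteger}---while all the genuinely arithmetic input is packaged in that local result.
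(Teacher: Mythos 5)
Your proof is correct and follows essentially the same route as the paper, which compresses it into one sentence: combine the local statement of Corollary~\ref{locBaChaInteger} with a finite cover by local models. You correctly note that quasi-compactness of $X$ gives a finite cover by local models, that one can take $\{p_1,\dots,p_N\}$ to be all primes below the maximum of the finitely many thresholds $p_0^{(i)}$, and that the ETD base change isomorphism transports across the \'etale spans by $Z$-closedness of reflexive sheaves. One small misstep worth flagging: you assert in passing that $c^*W^m_{X^\circ/S^\circ}$ is reflexive because ``flat pullback preserves reflexivity,'' but $c$ is the base change of the arbitrary strict $b\colon T\to S^\circ$ and need not be flat --- and reflexivity of $c^*W^m$ is precisely the content of \eqref{BC}, so invoking it there would be circular. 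The good news is you do not actually use this: the reduction to checking an isomorphism on the \'etale cover $\{W_i\}$ is valid by \'etale descent for isomorphisms and needs no a priori reflexivity of the source, so your argument stands as written once the parenthetical is dropped.
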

\begin{proof}
Again, this follows directly from the local statement Corollary \ref{locBaChaInteger} combined with the fact that we can use a finite cover by local models. 
\end{proof}
An application of the above theorems is the following lemma which is crucial for the degeneration of the Hodge--de Rham spectral sequence.
\begin{lemma}\label{deriBaCha}
 (cf.~\cite[Proposition~6.6]{Illusie2002a}) Let $f: X \to S$ be a proper log toroidal family with $S$ affine, and let $b: T \to S$ with $T$ affine. Assume $c^*W^m_{X/S} = W^m_{Y/T}$ holds for all $m$. Then we have isomorphisms
 \begin{align}
  Lb^*Rf_*W^p_{X/S} &\to Rg_*W^p_{Y/T} \label{BC-iso1}  \\
  Lb^*Rf_*W^\bullet_{X/S} &\to Rg_*W^\bullet_{Y/T} \label{BC-iso2}  
 \end{align}
 in $D^b(T)$. If, for fixed $p$, all $R^qf_*W^p_{X/S}$ are locally free of constant rank, then \eqref{BC-iso1} induces an isomorphism 
 $$b^*R^qf_*W^p_{X/S} \xrightarrow{\cong} R^qg_*W^p_{Y/T}.$$
 If, for all $n$, the sheaf $R^nf_*W^\bullet_{X/S}$ is locally free of constant rank, then \eqref{BC-iso2} induces an isomorphism 
 $$b^*R^nf_*W^\bullet_{X/S} \xrightarrow{\cong} R^ng_*W^\bullet_{X/S}.$$
\end{lemma}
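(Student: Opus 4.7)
The plan is to reduce the lemma to the classical derived base change theorem for proper morphisms of Noetherian schemes, which applies cleanly once the terms of \(W^\bullet_{X/S}\) are known to be \(S\)-flat. Establishing this flatness is the one ingredient not directly provided by the hypotheses and will be the main step.

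First, I would verify \(S\)-flatness of each \(W^p_{X/S}\) by an \'etale-local argument on \(X\). Fix a geometric point \(\bar x\) and choose a local model diagram \eqref{LM}: the sheaf \(g^*W^p_{X/S}\) on \(V\) is identified (using reflexivity and condition~\eqref{CC}) with \(h^*W^p_{L/\tilde S}\), where \(L \to \tilde S\) is the log fiber product of the elementary log toroidal family \(A_{P,\F} \to A_Q\) along the strict map \(a\) given by a chart. A direct computation in the spirit of Proposition~\ref{prop-W-gen-case} and Corollary~\ref{relative-on-Y}, extended from bases of the form \(\kk[Q]/(K)\) to an arbitrary strict \(\tilde S \to A_Q\), expresses \(W^p_{L/\tilde S}\) as a direct sum of locally free \(\shO_{\tilde S}\)-modules indexed by essential lattice points, which is evidently \(\tilde S\)-flat. \'Etale descent then yields \(S\)-flatness of \(W^p_{X/S}\).

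Second, with \(W^p_{X/S}\) flat over \(S\), the hypothesis \(c^*W^p_{X/S} = W^p_{Y/T}\) upgrades to a quasi-isomorphism \(Lc^*W^p_{X/S} \simeq W^p_{Y/T}\), since the higher Tor-sheaves vanish by flatness. Because \(f\) is proper and the terms of \(W^\bullet_{X/S}\) are \(S\)-flat coherent sheaves, the derived base change theorem (cf.~\cite{Illusie2002a}, Prop.~6.6) produces natural isomorphisms
$$Lb^*Rf_*W^p_{X/S} \;\xrightarrow{\sim}\; Rg_*Lc^*W^p_{X/S} \;\xrightarrow{\sim}\; Rg_*W^p_{Y/T}$$
in \(D^b(T)\), proving \eqref{BC-iso1}. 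Applying the same formalism to the bounded complex \(W^\bullet_{X/S}\), whose differentials intertwine with \(c^*\) by functoriality, proves \eqref{BC-iso2}.

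Third, to descend to the individual cohomology sheaves, I would invoke the hyper-Tor spectral sequence
$$E_2^{i,j} \;=\; L^i b^*\bigl(R^j f_*\mathcal{G}\bigr) \;\Longrightarrow\; H^{i+j}\bigl(Lb^*Rf_*\mathcal{G}\bigr),$$
applied with \(\mathcal{G} = W^p_{X/S}\) (respectively \(\mathcal{G} = W^\bullet_{X/S}\)). Under the stated local freeness of the \(R^j f_*\mathcal{G}\), all higher Tor-terms \(L^i b^*(R^j f_*\mathcal{G})\) with \(i \neq 0\) vanish, so the spectral sequence degenerates at \(E_2\) and yields \(H^q(Lb^*Rf_*\mathcal{G}) \cong b^* R^q f_*\mathcal{G}\). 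Combined with the derived isomorphism above, this gives the displayed identifications for both \(R^q f_* W^p_{X/S}\) and \(R^n f_* W^\bullet_{X/S}\).

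The main obstacle I anticipate is the flatness step: while the case of a base of the form \(\kk[Q]/(K)\) is treated by Corollary~\ref{relative-on-Y}, extending the explicit face-wise presentation of \(W^p_{L/\tilde S}\) to an arbitrary strict base change \(\tilde S \to A_Q\) requires a careful but routine bookkeeping. Once flatness is secured, the rest of the proof is a formal application of derived base change for proper morphisms together with the degeneration of a first-quadrant spectral sequence.
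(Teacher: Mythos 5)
Your proposal follows essentially the same route as the paper. The paper's proof is a single sentence: it asserts that $W^m_{X/S}$ is flat over $S$ by Corollary~\ref{cor-genRelFlatW-local} (which gives freeness of $\Gamma(A_P,W^m_f)$ over $R[Q]$ for the local ETD model), and then defers entirely to \cite[Proposition~6.6]{Illusie2002a}, which packages both the derived base change isomorphisms and the descent to individual cohomology sheaves under the local-freeness hypotheses. Your steps two and three spell out exactly what Illusie's argument does (lift $c^*W^p = W^p_{Y/T}$ to $Lc^* W^p \simeq W^p_{Y/T}$ via flatness; apply proper derived base change term-by-term and via the stupid filtration for the full complex; degenerate the hyper-Tor spectral sequence under local freeness), so there is no genuine divergence there.

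The one place to be careful is the flatness step, and here your sketch glosses over the same point the paper does. You claim that the face-wise presentation of $W^p_{L/\tilde S}$ (from Proposition~\ref{prop-W-gen-case} and Corollary~\ref{relative-on-Y}) extends to an arbitrary strict $\tilde S \to A_Q$ and exhibits $W^p_{L/\tilde S}$ as a direct sum of locally free $\shO_{\tilde S}$-modules. But $W^p_{L/\tilde S} = j_*\Omega^p_{U_L/\tilde S}$ need not coincide with $c^*W^p_f$: this is precisely the base-change question, and Example~\ref{baChaViolation} shows it can fail. What is actually immediate from Corollary~\ref{cor-genRelFlatW-local} is that $c^*W^p_f$ is $\tilde S$-flat; one then needs $c^*W^p_f \cong W^p_{L/\tilde S}$, which in the paper's applications always holds because the base is over a field (Corollary~\ref{locBaChaField}) or the bad primes have been inverted (Corollary~\ref{locBaChaInteger}, Theorem~\ref{baseChangeGeneric}). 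Since the paper's own proof also cites only Corollary~\ref{cor-genRelFlatW-local} without spelling out this reduction from the general log toroidal family to the ETD model, your level of detail matches theirs; just be aware that the "routine bookkeeping" you invoke is exactly the place where base change enters and is not automatic for an arbitrary $\tilde S$.
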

\begin{proof}
Since $W^m_{X/S}$ is flat over $S$--- this is Corollary~\ref{cor-genRelFlatW-local}---, the proof becomes identical to that in \cite[Proposition~6.6]{Illusie2002a}. 
\end{proof}

\section{Spreading Out Log Toroidal Families}\label{TorSpreadOut}
We fix a sharp toric monoid $Q$, a field $\kk \supset \QQ$ and set $S = \Spec (Q \to \kk)$ where the map $Q \to \kk$ is $q\mapsto 0$ except $0\mapsto 1$. 
We choose distinct subrings $B_\lambda\subset \kk$ for all $\lambda$ in some index set $\Lambda$ so that any two 
$B_{\lambda_1}, B_{\lambda_2}$ are both contained in a third $B_{\lambda}$. 
We say $\lambda_1\le\lambda_2$ if $B_{\lambda_1}\subset B_{\lambda_2}$.
Furthermore, we require $\varinjlim_\lambda B_\lambda = \kk$ and that each $B_\lambda$ is of finite type over $\ZZ$.
We get log schemes $S_\lambda = \Spec (Q \to B_\lambda)$ each with a strict map $S\ra S_\lambda$ and in fact $S=\liminv_\lambda S_\lambda$.

\begin{proposition}\label{spreadToroidal}
Let $f: X \to S$ be a log toroidal family of relative dimension $d=\rk{\Omega^1_{U/S}}$. 
Then there is $\lambda \in \Lambda$ and a log toroidal family $f_\lambda: X_\lambda \to S_\lambda$, so that $f$ is obtained by base change from $f_\lambda$, i.e., there is a Cartesian square
\[
 \begin{CD}
  X @>>> X_\lambda \\
  @VfVV @VV{f_\lambda}V \\
  S @>>> S_\lambda. \\
 \end{CD}
\]
If $f$ is separated and/or proper, we can assume $f_\lambda$ to be so, too.
\end{proposition}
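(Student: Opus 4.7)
The plan is to combine the classical limit/spreading-out machinery of EGA IV, §8 for underlying schemes and their morphisms with a careful treatment of the log-theoretic data, exploiting the fact that the local models $A_{P,\F}\to A_Q$ are already defined over $\ZZ$. First I would pick a finite collection of local model diagrams covering $X$. Since $\underline f$ is of finite type and $\underline S$ is Noetherian, $\underline X$ is Noetherian, hence quasi-compact, so one can choose finitely many local model diagrams \eqref{LM} indexed by $i\in I$ centered at geometric points $\bar x_i$ whose images cover $X$. Each diagram comes with an ETD $(Q\subset P_i,\F_i)$ giving integrally defined schemes $A_{P_i,\F_i}$, $A_Q$ and $L_i=\tilde S_i\times_{A_Q}A_{P_i,\F_i}$; with an \'etale cover $V_i\to X$ and an \'etale morphism of underlying schemes $h_i\colon V_i\to L_i$; with a strict \'etale neighborhood $\tilde S_i\to S$ carrying the chart $Q\to\Gamma(\tilde S_i,\cO)$ induced from $Q\to\kk$; and with an isomorphism $g_i^*\M_U\cong h_i^*\M_{L_i}$ on some open $\tilde U_i\subset V_i\cap g_i^{-1}(U)\cap h_i^{-1}(U_{L_i})$ satisfying \eqref{CC}.

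Next I would run the standard spreading out. By \cite[8.8.2]{Grothendieck1965} applied to the finite-type morphism $\underline f$, there is a $\lambda_0\in\Lambda$ and a scheme morphism $\underline f_{\lambda_0}\colon\underline X_{\lambda_0}\to\underline S_{\lambda_0}$ whose base change to $\underline S$ is $\underline f$. Enlarging $\lambda_0$ if necessary and repeatedly invoking \cite[8.8.2, 8.10.5, 8.11.2, 11.2.6, 12.1.1]{Grothendieck1965}, the following data spread compatibly: the Zariski open $U\subset X$ and the open subsets $\tilde U_i\subset V_i$; the finite \'etale cover $\{V_{i,\lambda_0}\to X_{\lambda_0}\}$ with its descent cocycle; the strict \'etale neighborhoods $\tilde S_{i,\lambda_0}\to S_{\lambda_0}$; the \'etale maps $h_{i,\lambda_0}\colon V_{i,\lambda_0}\to L_{i,\lambda_0}$, where $L_{i,\lambda_0}=\tilde S_{i,\lambda_0}\times_{A_Q}A_{P_i,\F_i}$ (note $A_{P_i,\F_i}$ and $A_Q$ do not need to be spread out); flatness and Cohen--Macaulayness of $\underline f_{\lambda_0}$; and, if present, separatedness or properness of $\underline f$.

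Then I would install the log structure on $U_{\lambda_0}$ and on $X_{\lambda_0}$. Since $\M_{L_i}$ is pulled back from $A_{P_i,\F_i}$, which lives over $\ZZ$, each $h_{i,\lambda_0}^*\M_{L_i}$ is an fs log structure on $\tilde U_{i,\lambda_0}$. The isomorphisms $g_i^*\M_U\cong h_i^*\M_{L_i}$ on $\tilde U_i$ together with the transition isomorphisms on overlaps $\tilde U_i\cap\tilde U_j$ form an \'etale descent datum for an fs log structure on $\bigcup_i \tilde U_{i,\lambda_0}$. As each such isomorphism is an isomorphism of finitely presented sheaves of monoids over the Noetherian base, these isomorphisms and their cocycle compatibilities spread out to a common $\lambda\ge\lambda_0$ by another finite application of \cite[8.8.2]{Grothendieck1965}. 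Descent then produces an fs log structure $\M_{U_\lambda}$ on a dense open $U_\lambda\subset X_\lambda$ containing $\bigcup_i\tilde U_{i,\lambda}$, which pulls back to $\M_U$ on $U$ and is log smooth and saturated over $S_\lambda$ because log smoothness and saturatedness can be tested \'etale-locally on source and target, where they hold by the explicit local models.

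Finally I would check that $f_\lambda\colon X_\lambda\to S_\lambda$ is log toroidal: the diagrams \eqref{LM} for $f_\lambda$ are immediate from the construction, and it only remains to verify \eqref{CC} for $U_\lambda\subset X_\lambda$. Since \eqref{CC} is a fiberwise codimension condition on the Cohen--Macaulay morphism $\underline f_\lambda$, and the relative dimension of $\underline f_\lambda$ is locally constant on $S_\lambda$ and agrees with $d=\rk\Omega^1_{U/S}$ over the image of $S\to S_\lambda$, one obtains \eqref{CC} on a neighborhood of this image; after replacing $S_\lambda$ by such an open and enlarging $\lambda$ to absorb this replacement, \eqref{CC} holds on all of $S_\lambda$. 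I expect the main technical obstacle to be precisely this last piece: organizing the cover, the local model data, the \'etale descent datum for $\M_U$, the preservation of the log-toroidal \emph{local} structure, and the fiberwise condition \eqref{CC} so that they all become valid over a single $\lambda$; the key is that each ingredient is individually governed by a finite amount of data over a Noetherian base, so a single $\lambda$ works after finitely many enlargements.
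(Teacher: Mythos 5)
Your overall strategy is the same as the paper's: spread out the underlying finite-type morphism via EGA~IV \S8, exploit that the local models $A_{P_i,\F_i}\to A_Q$ are already defined over $\ZZ$ to install an fs log structure after spreading out the \'etale maps to them, and spread out the identifications so that the diagrams \eqref{LM} persist over $S_\lambda$. The only organizational difference is that you work with a single finite cover of $X$ by log toroidal local models, whereas the paper first spreads out the log smooth structure on $U$ using a separate \'etale cover of $U$ admitting saturated charts and then treats the $V_i\to X$ cover in a second pass to verify log toroidality; this reorganization is harmless since, as you implicitly use, $\bigcup_i g_i(\tilde U_i)$ itself satisfies \eqref{CC}.

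There is, however, a genuine gap in your treatment of \eqref{CC}. You argue that since ``\eqref{CC} is a fiberwise codimension condition'' and ``the relative dimension of $\underline f_\lambda$ is locally constant,'' one obtains \eqref{CC} near the image of $S\to S_\lambda$. But local constancy of $s\mapsto\dim(X_\lambda)_s$ (which holds for Cohen--Macaulay morphisms) says nothing about the fiber dimension of $Z_\lambda := X_\lambda\setminus U_\lambda$; what you need is an upper bound $\dim (Z_\lambda)_s\le d-2$ for all $s\in S_\lambda$, and that is a statement about a \emph{different} morphism $Z_\lambda\to S_\lambda$. The paper handles this by explicitly spreading out $Z$ and invoking upper semicontinuity of fiber dimension (Stacks Project Lemma~31.16.1, tag~05M5) for $Z_\lambda\to S_\lambda$: since the fiber over the generic point has dimension $\le d-2$, one may enlarge $\lambda$ (shrinking $S_\lambda$ to a principal open) so that every fiber of $Z_\lambda$ has dimension $\le d-2$, and then $U_\lambda := X_\lambda\setminus Z_\lambda$ satisfies \eqref{CC}. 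Your argument as written does not establish this and should be replaced by that semicontinuity step.
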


\begin{proof}
 By \cite[Theorem~8.8.2 (ii)]{Grothendieck1966}, \cite[Theorem~8.10.5]{Grothendieck1966} and \cite[Theorem~11.2.6 (ii)]{Grothendieck1966} we can find a $\lambda \in \Lambda$ and a morphism $f_\lambda: X_\lambda \to S_\lambda$ that is finitely presented and flat, and an isomorphism $S \times_{S_\lambda} X_\lambda \cong X$ over $S$. 
If $f: X \to S$ is separated respective proper, we can choose $f_\lambda$ moreover separated respective proper. 
Using \cite[Corollaire~12.1.7(iii)]{Grothendieck1966} and \cite[Theorem~8.10.5]{Grothendieck1966}, we can choose $\lambda$ 
 such that $f_\lambda$ is a Cohen--Macaulay morphism. Since these decompose disjointly over the relative codimension, again by increasing $\lambda$ if needed, we may assume that $f_\lambda$ has relative dimension $d$.

We next spread out $U$ such that $U_\lambda \subset X_\lambda$ satisfies \eqref{CC}. 
We do this by spreading out its complement $Z$. Indeed, by \cite[05M5, Lemma~31.16.1]{stacks}, we can increase $\lambda$ so that every fiber of $Z_\lambda \to S_\lambda$ has dimension $\leq d - 2$ and then define $U_\lambda:=X_\lambda\setminus Z_\lambda$.

Now a straightforward generalization of the method of \cite[Lemma~4.11.1]{Tsuji1999} yields that, for appropriate $\lambda$, we can find a log structure on $U_\lambda$ and upgrade $f_\lambda$ to a log morphism such that $U_\lambda$ is fs and $f_\lambda$ is log smooth and saturated. More precisely, we choose an affine \'etale cover $\{U_i\}_i$ of $U$ such that $U_i \to S$ admits a local model by a saturated morphism $\theta_i: Q \to P_i$ of monoids; the local model remains a local model for an appropriate spread-out $U_{i,\lambda} \to S_\lambda$ (for appropriate $\lambda$), which thus carries the structure of a saturated log smooth morphism to $S_{\lambda}$. At this point, the $U_{i,\lambda}$ might not cover $U_\lambda$, the log structures on $U_{i,\lambda}$ might not coincide on overlaps, and even if this was the case, we might not have a log morphism to $S_\lambda$. We achieve all this by increasing $\lambda$.
 
Finally---again by possibly increasing $\lambda$---we show that the family $f_\lambda: X_\lambda \to S_\lambda$ is log toroidal.
We fix a finite covering $\{V_i \to X\}$ with local models $(Q\subset P_i, \F_i)$ as in Definition \ref{toroidDef}, and 
for each of them, we construct a diagram as in Figure~\ref{toroidal-figure} below. 
Namely we first spread out $V_i \to S$ to $V_{i,\lambda} \to S_\lambda$. Then $L_{i,\lambda}$ is defined 
by base change, and we construct the \'etale morphisms of schemes 
$g_\lambda: V_{i,\lambda} \to X_\lambda$ and $h_\lambda: V_{i,\lambda} \to L_{i,\lambda}$ also by spreading out. We can assume that $X_\lambda$ is covered by $\{V_{i,\lambda} \to X_\lambda\}$
and that $\tilde U_i \subset V_i$ spreads out to an open $\tilde U_{i,\lambda} \subset V_{i,\lambda}$ satisfying \eqref{CC}. We get two log structures $(g_\lambda)^*_{log}\M_{X_\lambda}$ and 
$(h_\lambda)^*_{log}\M_{L_{i,\lambda}}$ on $\tilde U_{i,\lambda}$ which we identify by 
\cite[Sublemma~4.11.3]{Tsuji1999}. By the same sublemma, the two morphisms 
$(g \circ f)^*_{log}\M_{S_\lambda} \to \M_{\tilde U_{i,\lambda}}$ coming from 
$f_\lambda \circ g_\lambda$ respective $r_\lambda \circ h_\lambda$ coincide. Since $\{V_i \to X\}$ is a 
finite covering, we can find $\lambda$ that admits the above construction 
for all $V_i$ simultaneously. 
\end{proof}
\begin{figure}
 \[
 \xymatrix@R-1pc@C-1pc{
   & V_i \ar[dl]^<<{\!\!\!\!g} \ar[rr] \ar@{.>}[dd]^<<<<{\!h} & & V_{i,\lambda} \ar@{.>}[dd]^<<<<{\!h_\lambda} 
   \ar[dl]^<{\!\!\!\!g_\lambda} & & \\ 
   X \ar[dd]^<<<<{\!f} \ar[rr]_<<<<{p_\lambda} & & X_\lambda \ar[dd]^<<<<{\!f_\lambda} & & & \\
   & L_i \ar[dl]^<<{\!\!\!\!r} \ar[rr] & & L_{i,\lambda} \ar[dl]^<{\!\!\!\!r_\lambda} \ar[rr] & & 
   A_{P_i,\F_i} \ar[dl] \\
   S \ar[rr]_<<<<{q_\lambda} & & S_\lambda \ar[rr]_<<<<{a_\lambda} & & A_Q. & \\
 }
\]
\caption{The diagram constructed in the text.}
\label{toroidal-figure}
\end{figure}

\section{The Cartier Isomorphism}\label{TorCarIso}

In this section, we define the Cartier homomorphism for a generically log smooth family $f: X \to S$ in characteristic $p>0$. 
We then prove that it is an isomorphism if $f$ is log toroidal.
Similar to \cite{Blickle2001}, we first study the situation on $U$ and then examine its extension to all of $X$. 
Let $F_S: S \to S$ be the absolute log Frobenius on the base, i.e., given by taking $p$th power in $\shM_S$ and $\shO_S$ respectively, we similarly define $F_X:X\ra X$. We define $f':X'\ra S$ and the relative Frobenius $F$ by the Cartesian square
\[
 \xymatrix{
 X\ar_f[dr]\ar^(.6)F[r]\ar@/^{1.5pc}/^{F_X}[rr]&X' \ar^{f'}[d]\ar^(.45)s[r] & X\ar^f[d]\\
 &S \ar^{F_S}[r] & S.
 }
\]
Set $U':=s^{-1}(U)$ and $Z'=X'\setminus U'$. 
\begin{theorem}[\cite{kkatoFI}] \label{thm-kato-Cartier}
We have a canonical (Cartier) isomorphism of $\shO_{U'}$-modules
$$C^{-1}_{U} : \Omega^m_{U'/S} \to \cH^m(F_{*}\Omega^\bullet_{U/S})$$
which is compatible with $\wedge$ and satisfies $C^{-1}(a) = F^*(a)$ for $a \in \cO_{X'}$ and\newline $C^{-1}(\dlog(s^*q)) = \dlog(q)$ for $q \in \M_U$.
\end{theorem}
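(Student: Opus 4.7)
The plan is to adapt the classical Cartier isomorphism to the log setting, reproducing Kato's construction from \cite{kkatoFI}. First I note that $F_*\Omega^\bullet_{U/S}$ is naturally a complex of $\shO_{U'}$-modules via $F^\sharp$, and that $\bigoplus_m \cH^m(F_*\Omega^\bullet_{U/S})$ carries a graded-commutative $\shO_{U'}$-algebra structure from the wedge product, well-defined on cohomology because $d$ is a graded derivation. It then suffices to construct $C^{-1}$ as a map of graded $\shO_{U'}$-algebras.

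To build $C^{-1}$ I would invoke the universal property of $\Omega^1_{U'/S}$. Set $C^{-1}(a) := F^*a = a^p$ in degree zero, define $\delta: \shO_{U'} \to \cH^1(F_*\Omega^\bullet_{U/S})$ by $\delta(a) := [a^{p-1} da]$, and $\lambda: \shM_U^{gp} \to \cH^1(F_*\Omega^\bullet_{U/S})$ by $\lambda(q) := [\dlog q]$, where brackets denote cohomology classes. Both land in cocycles, since $d(a^{p-1} da) = (p-1) a^{p-2} da \wedge da = 0$ and $d \circ \dlog = 0$ in the log de Rham complex. Three checks then ensure the pair $(\delta,\lambda)$ descends to an $\shO_{U'}$-linear map $\Omega^1_{U'/S} \to \cH^1(F_*\Omega^\bullet_{U/S})$: first, $\delta$ satisfies Leibniz, which follows directly from $(ab)^{p-1} d(ab) = a^p b^{p-1} db + a^{p-1} b^p da$; second, $\delta$ annihilates Frobenius-twisted pullbacks from $\shO_S$, since $d(s^p) = p s^{p-1} ds = 0$ in characteristic $p$; and third, the log relation $d\alpha(q) = \alpha(q) \dlog q$ translates to $[\alpha(q)^{p-1} d\alpha(q)] = \alpha(q)^p [\dlog q]$, which holds by multiplying the original log relation by $\alpha(q)^{p-1}$. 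Multiplicativity then extends $C^{-1}$ to all degrees, producing the claimed algebra map.

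To prove $C^{-1}$ is an isomorphism I would reduce to a local model. By log smoothness and saturatedness of $U \to S$, \'etale-locally on $U$ there is a strict \'etale morphism to $S \times_{A_Q} A_P$ for some saturated injection $Q \hookrightarrow P$ of fs monoids. Strict \'etale morphisms commute with relative Frobenius and with formation of $\Omega^\bullet$, so I may reduce to $U = S \times_{A_Q} A_P$. Here $\Omega^\bullet_{U/S}$ is globally the exterior algebra on $\shO_U \otimes_\ZZ (P^{gp}/Q^{gp})$ with Koszul-type differential $z^p \otimes \omega \mapsto z^p \otimes ([\bar p] \wedge \omega)$, and $F_*\shO_U$ carries a natural $P^{gp}/pP^{gp}$-grading under which the de Rham differential is homogeneous. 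A weight-by-weight Koszul computation --- the standard toric Cartier calculation --- shows cohomology vanishes in nontrivial weights and recovers $\Omega^\bullet_{U'/S}$ in the trivial weight, with the identification induced precisely by $C^{-1}$.

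The main technical obstacle is the well-definedness of $C^{-1}$, i.e., compatibility with the full presentation of $\Omega^1_{U'/S}$ as a quotient of $\Omega^1_{\ul{U}'/\ul{S}} \oplus (\shO_{U'} \otimes_\ZZ \shM_U^{gp})$: the three checks above are individually elementary but must be verified simultaneously across all the defining relations, and the Frobenius twist on $\shO_S$ requires careful bookkeeping. The local toric reduction of the previous paragraph provides an alternative route, giving explicit closed-form presentations of both source and target and reducing well-definedness to a finite check on monomial generators.
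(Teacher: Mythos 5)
Your proposal reproves Kato's log Cartier isomorphism from scratch, whereas the paper's proof consists entirely of a citation to \cite[Theorem~4.12(1)]{kkatoFI} plus a short but crucial identification argument. In Kato's theorem, the Cartier isomorphism does \emph{not} land in $\Omega^m_{U'/S}$; it lands in $\Omega^m_{U''/S}$, where $U''$ is the middle term of Kato's factorization $U \to U'' \to (U')^{\mathrm{int}} \to U'$ of the relative Frobenius (with $(U')^{\mathrm{int}} \to U'$ the integralization, and $U \to U''$ purely inseparable, $U'' \to (U')^{\mathrm{int}}$ \'etale). The paper's proof is precisely that these are all equalities: $f$ integral forces $(U')^{\mathrm{int}}=U'$, and $f$ saturated forces $F$ to be exact by \cite[Corollary~III.2.5.4]{LoAG18}, which by uniqueness of Kato's factorization forces $U''=(U')^{\mathrm{int}}$. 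So the paper's entire contribution to this statement is that identification.

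Your route is genuinely different: you construct $C^{-1}$ from the universal property of $\Omega^1_{U'/S}$ and then check it is an isomorphism by reducing, via saturated log smoothness, to the local toric model $S\times_{A_Q}A_P$ and running the weight-by-weight Koszul computation. This is essentially how Kato himself proves the theorem, and it is also the technique the paper uses later for the \emph{non}-smooth version (see the explicit splitting $\phi^\bullet$ in the proof of Lemma~\ref{explCarIso}). What your write-up does not make explicit is that the whole subtlety you identify as "the main technical obstacle" (well-definedness of $C^{-1}$) hinges exactly on knowing what $\shM_{U'}$ and $\shO_{U'}$ actually are: the fs fiber product potentially differs from the naive one by integralization and saturation, and it is only because $f$ is integral and saturated that $\shM_{U'}$ has the form your computations assume. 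You invoke saturatedness to get the local chart $Q\hookrightarrow P$ saturated and then things "work out," but you never name the fact that this is \emph{the} reason $U'$ has the expected log structure, i.e.\ the fact that the paper's proof isolates. A minor additional imprecision: $C^{-1}(a)=F^*(a)$ is not "$a^p$" when $S\neq\Spec\FF_p$; under $\shO_{U'}=\shO_U\otimes_{\shO_S,F_S^*}\shO_S$ one has $F^*(b\otimes s)=b^p\cdot f^*s$, and the $\shO_{U'}$-linearity checks for $\delta$ and $\lambda$ need to be carried out against this module structure, not the naive $p$-th power.

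Bottom line: your proof is a valid alternative in outline, redoing Kato's argument rather than citing it, but you should surface the role of integrality and saturatedness as the point that makes $U'$ (rather than some integralization or saturation of it) the correct source of the Cartier isomorphism; that is the single content the paper adds on top of the citation.
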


\begin{proof} 
This is \cite[Theorem~4.12(1)]{kkatoFI} once we identify $U''=U'$: Kato considers the factorization $U\stackrel{g}\ra U''\stackrel{h}\ra (U')^{\op{int}}\stackrel{i}\ra U'$ of $F|_U$ where $i$ is the integralization of $U'$ and $g\circ h$ is the unique factorization of this weakly purely inseparable morphism where $h$ is \'etale and $g$ purely inseparable, using \cite[Proposition~4.10(2)]{kkatoFI}.
Now $i$ is an isomorphism because $f$ is integral. 
By \cite[Corollary~III.2.5.4]{LoAG18}, since $f: U \to S$ is saturated, $F:U\ra U'$ is exact. 
The uniqueness of the factorization $g\circ h$ now implies that $h$ is an isomorphism.
\end{proof}

Since $W^m_{X'/S}$ is $Z'$-closed, pushing forward the inverse of $C^{-1}_{U}$ to $X'$, we obtain a homomorphism 
$$C: \cH^m(F_{*}W^\bullet_{X/S}) \to W^m_{X'/S}$$ 
which is an isomorphism on $U'$. We obtain the following lemma. 
\begin{lemma} \label{lem-C-iso-iff-Zclosed}
The map $C$ is an isomorphism if and only if $\cH^m(F_{*}W^\bullet_{X/S})$ is $Z'$-closed.
\end{lemma}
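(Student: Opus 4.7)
The plan is a short lemma whose content is really about the general principle that a map between two sheaves is an isomorphism iff both sides are $Z'$-closed (given it is an iso on $U'$).

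First I would record the following identification on $U'$. Since the absolute Frobenius $F_X$ is a homeomorphism on underlying spaces, $F^{-1}(U') = F_X^{-1}(U) = U$. Restriction to an open set is exact, so it commutes with taking cohomology sheaves of a complex. Combining,
\[
\cH^m(F_*W^\bullet_{X/S})|_{U'} \;=\; \cH^m\bigl(F|_{U*}\Omega^\bullet_{U/S}\bigr),
\qquad W^m_{X'/S}|_{U'} \;=\; \Omega^m_{U'/S},
\]
and, on $U'$, the map $C$ is by construction the inverse of Kato's isomorphism $C^{-1}_U$ from Theorem~\ref{thm-kato-Cartier}. In particular $C|_{U'}$ is an isomorphism.

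Next, observe that $W^m_{X'/S} = j'_*\Omega^m_{U'/S}$ is $Z'$-closed by definition. Applying the adjunction unit to $C$ gives a commutative square
\[
\begin{CD}
\cH^m(F_*W^\bullet_{X/S}) @>C>> W^m_{X'/S} \\
@VV{\alpha}V @VV{\cong}V \\
j'_*\bigl(\cH^m(F_*W^\bullet_{X/S})|_{U'}\bigr) @>{j'_*(C|_{U'})}>> j'_*\Omega^m_{U'/S}
\end{CD}
\]
in which the right vertical map is an isomorphism because the target is $Z'$-closed, and the bottom horizontal map is an isomorphism because $j'_*$ preserves isomorphisms and $C|_{U'}$ is Kato's isomorphism. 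Consequently, $C$ is an isomorphism if and only if the left vertical adjunction map $\alpha$ is, which is exactly the statement that $\cH^m(F_*W^\bullet_{X/S})$ is $Z'$-closed.

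There is no real obstacle here; the only thing to be careful about is the purely bookkeeping identification $F^{-1}(U')=U$ and the fact that restriction to an open subset commutes with $\cH^m$, so that Kato's local Cartier isomorphism really produces $C|_{U'}$.
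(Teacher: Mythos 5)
Your proof is correct and is exactly the argument implicit in the paper, which states this lemma without proof immediately after constructing $C$ as the $j'_*$-pushforward of (the inverse of) Kato's isomorphism and noting $W^m_{X'/S}$ is $Z'$-closed. The naturality square you write makes precise the tacit reasoning: since the target is $Z'$-closed and $C|_{U'}$ is an isomorphism, $C$ itself is an isomorphism exactly when the source is $Z'$-closed.
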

\begin{definition} \label{def-cartier-iso}
We say that a generically log smooth family $f:X\ra S$ in positive characteristic has the \emph{Cartier isomorphism property} if
$C$ is an isomorphism for all $m\ge 0$.
\end{definition}

By Theorem~\ref{thm-kato-Cartier}, $\cH^m(F_{*}W^\bullet_{X/S})$ is locally free on $U'$, hence it is $Z'$-closed if and only if it is reflexive. Reflexivity can be checked \'etale locally. 

\begin{lemma}\label{explCarIso}
 Let $(Q\subset P,\F)$ be an ETD, let $b: T \to A_Q$ be strict with $\underline T = \Spec \cT$ and consider the Cartesian diagram
\[
 \xymatrix{
Y\ar^c[r]\ar_g[d] & A_{P,\shF}\ar^f[d]\\
T\ar^b[r] & A_Q.
 }
\]
Then $\cH^m(F_*W^\bullet_{Y/T})$ is reflexive.
\end{lemma}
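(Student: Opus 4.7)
The plan is to decompose $F_* W^\bullet_{Y/T}$ as a direct sum of Koszul subcomplexes indexed by $E \subset P$, compute each cohomology explicitly, and compare with the reflexive sheaf $W^m_{Y'/T}$.

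Let $\ell$ be the (prime) characteristic of $\cT$. Since $b\colon T \to A_Q$ is strict and $\FF_\ell$ is a field, Corollary \ref{locBaChaField} with $R = \FF_\ell$ shows that $W^m_{Y/T} \cong c^*W^m_f$ is reflexive, and Proposition \ref{prop-W-gen-case} yields the $E$-graded presentation
\[
\Gamma(Y, W^m_{Y/T}) = \bigoplus_{e \in E} z^e \cdot \bigl(M^m_e \otimes_{\FF_\ell} \cT\bigr),\qquad M^m_e := \bigwedge\nolimits^m \Bigl(\bigcap_{\substack{H \in \F_{\max} \setminus \F \\ e \in H}} (H^{\gp}/Q^{\gp}) \otimes \FF_\ell\Bigr),
\]
using that $M^m_p = M^m_e$ for $p = e + q$ with $q \in Q$ since every $H \in \F_{\max} \setminus \F$ contains $Q$. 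The relative log de Rham differential sends $z^e \otimes \omega$ to $z^e \otimes ([e] \wedge \omega)$, preserving the $E$-grading, so the complex splits as a direct sum over $e \in E$ of Koszul complexes $(M^\bullet_e \otimes \cT,\,[e] \wedge -)$; the decomposition persists under $F_*$ since $F$ is affine.

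Next I would compute each Koszul cohomology using the fact that $M^1_e$ is finitely generated free as a $\ZZ$-module (being a subgroup of $P^{\gp}/Q^{\gp}$). Write $[e] = \ell v + w$ with $v \in M^1_e$ and $w$ representing a class $\bar w \in M^1_e/\ell M^1_e$ that is either zero or, since $\ell$ is prime and $M^1_e/\ell M^1_e$ is thus an $\FF_\ell$-vector space, part of a basis. When $\bar w = 0$, the tensored differential vanishes and $\cH^m = M^m_e \otimes \cT$. When $\bar w \neq 0$, $\bar w \otimes 1$ is a basis element of the free $\cT$-module $M^1_e \otimes_{\FF_\ell} \cT$, so the associated Koszul complex is acyclic. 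The condition $[e] \in \ell M^1_e$ for $e \in E$ translates, using the saturation of $P$ and the injection $E \hookrightarrow P^{\gp}/Q^{\gp}$, to $e \in \ell E$, and $M^m_{\ell e'} = M^m_{e'}$ since $F(\ell e') = F(e')$. Consequently
\[
\cH^m(F_* W^\bullet_{Y/T}) \cong \bigoplus_{e' \in E} z^{\ell e'} \cdot (M^m_{e'} \otimes \cT).
\]

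The last step is to identify this with $W^m_{Y'/T}$. Applying the same analysis to $f'\colon Y' \to T$ yields the monomial decomposition $\bigoplus_{e' \in E} z^{e'} \cdot (M^m_{e'} \otimes \cT)$ with $\cT$-structure twisted by $F_T$. The Cartier map $C$ of Theorem \ref{thm-kato-Cartier} sends $z^{\ell e'}$ to $z^{e'}$ on degree-zero cocycles and matches the gradings in higher degrees via $\dlog$-compatibility, giving a global isomorphism $\cH^m(F_* W^\bullet_{Y/T}) \xrightarrow{\sim} W^m_{Y'/T}$. Since $W^m_{Y'/T}$ is reflexive by Lemma \ref{lem-W-reflexive}, so is $\cH^m(F_* W^\bullet_{Y/T})$. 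The most delicate step is the Koszul computation over a general Noetherian $\cT$: it relies on the primeness of $\ell$ so that nonzero elements of $M^1_e/\ell M^1_e$ extend to bases, and on the $\FF_\ell$-flatness of the $M^\bullet_e$ ensured by their $\ZZ$-freeness, which is also what renders the graded pieces of $W^m_{Y/T}$ flat over $\cT$ via Corollary \ref{cor-genRelFlatW-local}.
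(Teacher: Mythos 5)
Your proposal is correct and follows essentially the same route as the paper: grade $\Gamma(Y',F_*W^\bullet_{Y/T})$ by $e\in E$, observe the differential on the $e$-summand is wedging with $[e]\otimes 1$, note this vanishes precisely when $e=pe'$ for some $e'\in E$ (the Koszul piece being acyclic otherwise, since in characteristic $p$ a nonzero $[e]$ extends to a basis), and identify the resulting cohomology with $W^m_{Y'/T}$, which is reflexive by Lemma~\ref{lem-W-reflexive}. The one point you glide over is that the $E$-grading is only a decomposition of $\cT$-modules, not of $\cO_{Y'}$-modules, so one must still check that the comparison map $z^{e'}\cdot(w\otimes t)\mapsto z^{pe'}\cdot(w\otimes t)$ is $\cO_{Y'}$-linear before invoking reflexivity as a sheaf; the paper does this by writing out the two different $\cO_{Y'}$-actions (the $p$th-power-twisted one on $W^m_{Y'/T}$ versus the $F^*$-action on $F_*W^m_{Y/T}$) and observing they agree because $\sigma(pq)=\sigma(q)^p$, whereas appealing to the Cartier map $C$ here is slightly circular since the lemma is exactly what makes $C$ an isomorphism beyond $U'$. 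Also note a small notational wobble: you first define $M^m_e$ with $\FF_\ell$-coefficients and later treat $M^1_e$ as a free $\ZZ$-module when writing $[e]=\ell v+w$; the computation is fine once one consistently works with the integral lattice and tensors at the end.
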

\begin{corollary}\label{torCartierIso}
 Every log toroidal family $f: X \to S$ over $\FF_p$ has the Cartier isomorphism property.
\end{corollary}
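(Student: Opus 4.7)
The plan is to reduce the statement to Lemma~\ref{explCarIso} by using the local model structure of a log toroidal family. By Lemma~\ref{lem-C-iso-iff-Zclosed}, it suffices to show that $\cH^m(F_*W^\bullet_{X/S})$ is reflexive on $X'$ for every $m$; by Theorem~\ref{thm-kato-Cartier} this sheaf is already locally free on $U' \subset X'$, and $U'$ satisfies \eqref{CC}, so reflexivity amounts to $Z'$-closedness. Since reflexivity can be checked \'etale locally, I would work one local model at a time.

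First I would fix a geometric point $\bar x \to X$ and invoke Definition~\ref{toroidDef} to obtain a diagram \eqref{LM}: an \'etale neighborhood $g \colon V \to X$, a strict \'etale $\tilde S \to S$, an ETD $(Q \subset P, \F)$, the base change $L = \tilde S \times_{A_Q} A_{P,\F}$, and an \'etale morphism of underlying schemes $h \colon V \to L$, together with an open $\tilde U \subset V$ satisfying \eqref{CC} on which $g^*\M_X \cong h^*\M_L$ compatibly with the maps to $\tilde S$. The key comparison step I would carry out next is the following: both $g^*W^m_{X/S}$ and $h^*W^m_{L/\tilde S}$ are reflexive sheaves on $V$ (the sheaves on $X$ and $L$ are reflexive by Lemma~\ref{lem-W-reflexive}, and \'etale pullback preserves reflexivity), and they coincide on $\tilde U$ via the identification of log structures. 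Because two reflexive coherent sheaves that agree on the complement of a closed set of relative codimension $\geq 2$ are canonically isomorphic, these sheaves must coincide on all of $V$; similarly, the de Rham differentials coincide on $\tilde U$ and extend uniquely, yielding an isomorphism of complexes $g^*W^\bullet_{X/S} \cong h^*W^\bullet_{L/\tilde S}$.

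Applying $F_*$ and using that relative Frobenius commutes with \'etale base change, then taking cohomology in degree $m$, produces an isomorphism $g'^*\cH^m(F_*W^\bullet_{X/S}) \cong h'^*\cH^m(F_*W^\bullet_{L/\tilde S})$ on the Frobenius-pullback $V'$, where $g'$ and $h'$ denote the \'etale maps induced after Frobenius pullback. The right-hand side is reflexive by Lemma~\ref{explCarIso} (applied to the strict morphism $\tilde S \to A_Q$), so its \'etale pullback along $h'$ is reflexive; hence so is $g'^*\cH^m(F_*W^\bullet_{X/S})$. Because the maps $\{g' \colon V' \to X'\}$ form an \'etale cover of $X'$ as $\bar x$ ranges over $X$, reflexivity descends and we conclude that $\cH^m(F_*W^\bullet_{X/S})$ is reflexive on $X'$, which via Lemma~\ref{lem-C-iso-iff-Zclosed} gives the desired Cartier isomorphism.

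The hardest step will be the comparison producing the isomorphism of complexes: since $h \colon V \to L$ is only \'etale of underlying schemes and the two log structures on $V$ (coming from $X$ and from $L$) agree only on the codimension-$\geq 2$-complement $\tilde U$, one must lean on reflexivity to canonically extend the identifications from $\tilde U$ to all of $V$. Once this identification and its compatibility with the de Rham differentials are established, the remaining manipulations with $F_*$, cohomology, and \'etale descent are formal.
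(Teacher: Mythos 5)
Your proposal is correct and matches the paper's intended argument. The paper itself states the corollary with no explicit proof beyond the preceding remark that ``Reflexivity can be checked \'etale locally'' together with Lemma~\ref{explCarIso}; your write-up fills in exactly those details — pulling back $W^\bullet$ along the \'etale map $h$ from the local model $L = \tilde S \times_{A_Q} A_{P,\F}$, identifying the resulting reflexive complexes on $\tilde U$ and extending via $Z$-closedness (Lemma~\ref{lem-W-reflexive}/\ref{relaNormal}), commuting $F_*$ with \'etale pullback, and descending reflexivity — so it is the same route, spelled out.
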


\begin{proof}[Proof of Lemma \ref{explCarIso}]
Set $V := c^{-1}(U_{P})$ and let $Y', V'$ be the base changes by the absolute Frobenius $F_T$. Let $F: Y \to Y'$ be the relative Frobenius. 
Inspired by the Frobenius decomposition \cite[Theorem~2.1]{Deligne1987}, we construct a homomorphism 
 $\phi^\bullet : \bigoplus_{m} W^m_{Y'/T}[-m] \to F_*W^\bullet_{Y/T}$
 of \emph{complexes of $\cO_{Y'}$-modules} which induces an isomorphism in cohomology. 
Since the left hand side has zero differentials, the assertion then follows from the reflexivity of $W^m_{Y'/T}$ given by Lemma~\ref{lem-W-reflexive}.

Similar to \S\ref{ChangeBaseSec}, we find explicitly that 
$R' := \Gamma(Y',\cO_{Y'}) = \bigoplus_{e \in E} z^e \cdot \cT$ with 
$$z^{e_1} \cdot z^{e_2} = z^e \cdot \sigma(q)^p \quad \mathrm{whenever} \quad e_1 + e_2 = e + q$$
with $e \in E, q \in Q$. We have $s^*(z^e \cdot t) = z^e \cdot t^p$ and $F^*(z^e \cdot t) = z^{p \cdot e} \cdot t$. After writing $W^m_e := (W^m_f)_e \otimes_{\FF_p} \cT$, 
the module $\Gamma(Y', W^m_{Y'/T})$ is given by the $\cT$-module $\bigoplus_{e \in E} z^e \cdot W^m_e$
on which $R'$ acts as
$$(z^{e_1} \cdot t_1) \cdot [z^{e_2} \cdot (w \otimes t_2)] = z^{e} \cdot (w \otimes \sigma( q)^pt_1t_2) \quad \mathrm{whenever} \quad e_1 + e_2 = e + q$$
with $e \in E, q \in Q$.
Similarly, $\Gamma(Y',F_*W^m_{Y/T})$ is given by the same $\cT$-module, however now $R'$ acting via $F^*$ as 
$$(z^{e_1} \cdot t_1) \cdot [z^{e_2} \cdot (w \otimes t_2)] = z^{e} \cdot (w \otimes \sigma(q)t_1t_2) \quad \mathrm{whenever} \quad p \cdot e_1 + e_2 = e + q.$$
Note the subtle difference. The differential on $F_*W^\bullet_{Y/T}$ is given by $d(z^e \cdot (w \otimes t)) = z^e \cdot ([e] \wedge w \otimes t)$. We define
$$\phi^\bullet : \bigoplus_{m} W^m_{Y'/T}[-m] \to F_*W^\bullet_{Y/T}, \quad z^e \cdot (w \otimes t) \mapsto z^{p \cdot e} \cdot (w \otimes t),$$
and claim $\cH^m(\phi^\bullet)$ is an isomorphism. 
Indeed, first note that $\phi^\bullet$ itself is injective. 
Then set $E_p = \{p \cdot e | e \in E\}$. 
We have 
$\mathrm{im}(\phi^m) = \bigoplus_{e \in E_p} z^e \cdot W^m_e$ because $W^m_e=W^{m}_{e/p}$ for $e\in E_p$ by Proposition~\ref{prop-W-gen-case}.
Denoting the coboundaries of
$F_*W^m_{Y/T}$ by $B^m$, we have $\mathrm{im}(\phi^m) \cap B^m = 0$ since  
$0 = [e] \in W^1_e$ for $e \in E_p$ because $e=pe'$ and $p$ is zero in $\shT$. This readily gives that $\cH^m(\phi^\bullet)$ is injective.
For surjectivity, if $e \notin E_p$, observe that $[e] \not= 0$, so if $w \in W^m_e$, then 
 $[e] \wedge w = 0$ if and only if there is some $w' \in W^{m - 1}_e$ with $[e] \wedge w' = w$.
\end{proof}
\begin{remark}
 We believe that $\cH^m(\phi^\bullet)$ is the log Cartier isomorphism on $V'$.
\end{remark}

\section{The Decomposition of $F_*W^\bullet_{X_0/S_0}$}\label{TorDecompo}

We prove a log version of the decomposition theorem \cite[Theorem~2.1]{Deligne1987} in the setting of generically log smooth families. (We noticed that \cite[Corollary~3.7]{Deligne1987} alias \cite{Illusie2002a} does not generalize well to the generically log smooth setting.) 
The assumption for $f: X \to S$ to be saturated on the log smooth locus allows a simpler approach than \cite[Theorem~4.12]{kkatoFI}. 
Our setting is as 
follows: let $k$ be a perfect field with $\mathrm{char} \ k = p$ (thus $\ZZ/p^2\ZZ\ra W_2(k)$ is flat), and let $Q$ be a sharp toric monoid. 
Set $S_0 = \Spec (Q \to k)$ and $S = \Spec (Q \to W_2(k))$ where in both cases $Q \ni q \mapsto 0$ except $0\mapsto 1$. 
The Frobenius endomorphism on $k$ becomes an endomorphism $F_0$ of $S_0$ via $Q\ni q\mapsto pq$.
Similarly, its lift to $W_2(k)$ defined via $(a_1, a_2) \mapsto (a_1^p, a_2^p)$ becomes\footnote{Warning: This is \emph{not} the $p$th power map on $W_2(k)$ and thus depends on the chosen chart.} an endomorphism $F_S$ of $S$ that restricts to $F_0$ on $S_0$.
Let $f: X \to S$ be a generically log smooth family and let $f_0: X_0 \to S_0$ be its restriction to $S_0$. We consider the commutative diagram of generically log smooth families as in Figure~\ref{Frobenius-diagram},
where $X'_0, X'$ are defined by requiring the front and back square to be Cartesian and $F$ is the relative Frobenius, i.e., $F$ is induced by the back square's Cartesianness using the Frobenius endomorphisms on $X_0$ and $S_0$.
Since $X$ does not have a Frobenius, we do \emph{not} easily obtain the dotted arrow $G$ in a similar way and in general it does not exist globally. We call a locally defined morphism $G$ that fits into the 
diagram a \emph{local Frobenius lifting}. Because the (Zariski or \'etale) topologies are identified 
along $F$ and $i$, we can define Frobenius liftings simply at the level of sheaves:
\begin{definition}
 Let $Y' \to X'$ be an \'etale open. Then a Frobenius lifting $G: Y \to Y'$ on $Y'$ consists of a ring homomorphism $G^*: \cO_{Y'} \to G_*\cO_Y$ yielding a morphism of schemes and a monoid homomorphism 
 $G^*: \M_{Y'}|_{V'} \to G_*\M_Y|_{V'}$ defined on some $V' \subset Y'$ satisfying $(CC)$, yielding a log morphism. Two Frobenius liftings are considered equal if they are equal on some smaller (Zariski) open satisfying $(CC)$. 
The Frobenius liftings form an \'etale sheaf of sets $\F rob(X,X')$.
\end{definition}
\begin{figure}
 
\[
 \xymatrix{
  & X_0 \ar[rr]^{F} \ar@/_1pc/[ddrr]_{f_0}|(.59)\hole \ar[ld]_i & & X'_0 \ar[dd]|\hole^(.3){f_0'} \ar[rr]^{s}  \ar[ld]_{i'} & & X_0 \ar[dd]^{f_0}  \ar[ld] & & \\
  X \ar@/_1pc/[ddrr]_f \ar@{.>}[rr]^G & & X' \ar[rr] \ar[dd]^{f'} & & X \ar[dd]^(.3)f & & & \\
  & & & S_0 \ar_(.3){F_0}|\hole[rr]  \ar[ld] & & S_0 \ar@{}[r]^(.08){}="a"^(.94){}="b"^(1.26){}="A" \ar "a";"b" \ar[ld]  & \hspace{1cm} {\Spec\FF_p} \\
  & & S \ar[rr]^{F_S} & & S \ar@{}[rr]^(.02){}="a"^(.63){}="b"^(.82){}="B" \ar "a";"b"  & & \hspace{-1cm}{\Spec\ZZ/p^2\ZZ}  
\ar "A"-<10pt,10pt>;"B"+<10pt,10pt>{}
 }
\]
\caption{The diagram.}
\label{Frobenius-diagram}
\end{figure}

\begin{remark}
 We need the flexibility of $V'$ in the definition of $\F rob(X,X')$ to construct Frobenius liftings from local models as they occur for log toroidal families. We will see below that we could have as well required the log 
 part to be defined on $Y' \cap U'$, see the proof of Proposition \ref{frobLiftToQuiso}.
\end{remark}

Let $j:U'\hra X'$ denote the pullback of $U\subset X$ and $Z'=X'\setminus U'$.
By Lemma~\ref{relaNormal}, $\F rob(X,X') = j_*(\F rob(X,X')|_{U'})$.
Let $\cI \subset \cO_X$ be the ideal sheaf defining $X_0 \subset X$, flatness gives
$\cI = p \cdot \cO_X \cong \cO_{X_0}$. Using $\cI^2=0$, one checks that $F_*\cI$ is an $\shO_{X'}$-module.
Considering derivations on $U'$ with values in $F_*\cI$, we obtain a sheaf of groups $\G := j_*\shD er_{U'/S}(F_*\cI)=j_*\shH om(\Omega^1_{U'/S},F_*\cI)$ which agrees with $\shH om(W^1_{X'/S},F_*\cI)$ because $F_*\cI$ is $Z'$-closed by Lemma~\ref{relaNormal}.


\begin{lemma}
The restriction $\F rob(X,X')|_{U'}$ is a $\G|_{U'}$-torsor; hence $\F rob(X,X')$ is a $\G$-pseudo-torsor.
\end{lemma}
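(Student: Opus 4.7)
The plan is to follow the classical Deligne--Illusie argument adapted to the log smooth and saturated setting (as in \cite{kkatoFI}), handling the restriction to $U'$ via the $Z'$-closedness built into the definitions of $\F rob(X,X')$ and $\G$. Concretely, I would (i) construct a canonical simply transitive action of $\G|_{U'}$ on $\F rob(X,X')|_{U'}$, (ii) exhibit local Frobenius liftings on $U'$, and (iii) deduce the pseudo-torsor claim on $X'$ by pushforward.

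For (i), given two Frobenius liftings $G_1, G_2$ defined on a common open $V' \subset U'$ satisfying \eqref{CC}, I would produce a log derivation $(D, \Dlog) \in \G|_{U'}(V')$ as follows. On the structure sheaf, set $D(a) := G_1^*(a) - G_2^*(a)$; this takes values in $F_*\cI$ since both $G_i^*$ reduce modulo $\cI$ to $F^*$, and the Leibniz rule
\[
D(ab) = F^*(a) \cdot D(b) + D(a) \cdot F^*(b)
\]
follows from a direct expansion using $\cI^2 = 0$. On the log side, for $m \in \M_{V'}$ the ratio $G_1^*(m)/G_2^*(m)$ lies in $1 + F_*\cI$, so I set $\Dlog(m) := G_1^*(m)/G_2^*(m) - 1$; the compatibility $D(\alpha(m)) = \alpha(m) \cdot \Dlog(m)$ comes from $\alpha \circ G_i^* = G_i^* \circ \alpha$ combined with $\cI^2 = 0$. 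Conversely, given $G$ and $(D, \Dlog) \in \G(V')$, the formulas $(G + D)^*(a) := G^*(a) + D(a)$ and $(G + D)^*(m) := G^*(m)\,(1 + \Dlog(m))$ define a new Frobenius lifting; the two constructions are mutually inverse.

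For (ii), since $f \colon U \to S$ is log smooth and saturated, étale-locally it admits a neat chart $Q \to P$ factoring through a strict étale morphism to $S \times_{A_Q} A_P$. On this local model there is an evident Frobenius lifting given by multiplication by $p$ on $P$ together with the chosen lift $F_S$ on $S$; using saturation of $Q \to P$ to match $F_S$ consistently with the chart, this extends uniquely to the étale neighborhood by infinitesimal lifting along the strict étale map. This provides local non-emptiness of $\F rob(X,X')|_{U'}$ and, together with (i), establishes the torsor claim on $U'$.

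For (iii), note that $\F rob(X,X') = j_*(\F rob(X,X')|_{U'})$ holds by the very definition of Frobenius lifting (whose log part is only required on some subset satisfying \eqref{CC}), and $\G = j_*(\G|_{U'})$ by construction. The pushforward of a torsor under the pushforward of its structure group is a pseudo-torsor: the simply transitive action extends uniquely, but local non-emptiness may fail because a Frobenius lifting on $U'$ need not extend across $Z'$ --- precisely the source of the obstruction controlling the Cartier decomposition in the next section. The main subtle point I anticipate is ensuring that the local Frobenius liftings constructed from neat charts in (ii) are compatible with the saturated rather than merely integral structure on the overlaps, and that the difference construction in (i) descends cleanly across refinements of $V' \subset U'$; both follow from $Z'$-closedness of $\G$ together with the exactness of Frobenius on saturated monoids.
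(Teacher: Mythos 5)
Your proposal is correct and, in essence, takes the same route as the paper. The torsor structure over $U'$ is the classical difference-of-liftings argument: the paper obtains it by citing \cite[Theorem~IV.2.2.2]{LoAG18} for an auxiliary sheaf $\shD$ of deformations of the relative-Frobenius square (whose log part is required on all of $U'$), while you unwind the same computation by hand with the formulas $D(a)=G_1^*(a)-G_2^*(a)$ and $\Dlog(m)=G_1^*(m)/G_2^*(m)-1$. The structural difference is that the paper then needs an extra step --- $\shD$ is locally isomorphic to $(F_*\cI)^{\oplus d}$, hence $\tilde Z$-closed, hence $\shD\cong\F rob(X,X')|_{U'}$ --- whereas you work directly with $\F rob(X,X')|_{U'}$; but the identical subtlety resurfaces in your step (i): $\Dlog$ is a priori only defined on the \eqref{CC} open $V'\subset Y'$ and must be extended to $Y'$ to give a section of $\G$, and your appeal to $Z'$-closedness of $\G$ is exactly the paper's $\tilde Z$-closedness observation, so there is no gap. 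Your step (ii) is where you genuinely go a different way: you build a Frobenius lift explicitly from a neat chart (multiplication by $p$ on $P$ together with $F_S$), whereas the paper simply invokes smoothness of $f'\colon U'\to S$ to get local non-emptiness from the infinitesimal lifting property. Both work; the paper's version is shorter, while your explicit construction is essentially what the paper does later in Proposition~\ref{posCharSplit}, where a chart-level Frobenius lift is genuinely required because $X$ is only log toroidal and not log smooth. (The appeal to exactness of Frobenius on saturated monoids is tangential to this particular lemma; saturation enters in Theorem~\ref{thm-kato-Cartier} to identify $U'$ with Kato's $U''$, not in the torsor argument.)
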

\begin{proof}
 Let $\shD$ be the sheaf of sets on $U'$ given by \'etale local deformations of the diagram 
\[
 \begin{CD}
  U_0 @>i'\circ F>> U' \\
  @ViVV @Vf'VV \\
  U @>f>> S \\
 \end{CD}
\]
in the sense of \cite[Definition~IV.2.2.1]{LoAG18}, i.e., $\shD$ is the sheaf of morphisms $U \to U'$ making the diagram commute. The sheaf $\shD$ is a $\G|_{U'}$-pseudo-torsor by \cite[Theorem~IV.2.2.2]{LoAG18}
and because $f': U' \to S$ is smooth, 
it is a torsor. Because $\Omega^1_{U'/S}$ is locally free, $\shD$ is locally isomorphic to $(F_*\cI)^{\oplus d}$.
By Lemma~\ref{relaNormal}, $\shD$ is $\tilde Z$-closed for every $\tilde Z \subset X'$ satisfying $\codim{\tilde Z}{X'}\ge 2$. 
By this property, the obvious homomorphism $\shD \to \F rob(X,X')|_{U'}$ is an isomorphism of sheaves of sets making $\F rob(X,X')|_{U'}$ a $\G|_{U'}$-torsor. 
\end{proof}

\begin{proposition}\label{frobLiftToQuiso}
 Let $Y' \to X'$ be an \'etale open and $G: Y \to Y'$ a local Frobenius lifting.
Then there is a canonical homomorphism of complexes 
$$\phi_G: W^1_{Y_0'/S_0}[-1] \to F_*W^\bullet_{Y_0/S_0}$$
inducing the Cartier isomorphism in first cohomology on $U_0' \cap Y_0'$. If $h \in \G(Y')$, then 
$\phi_G$ and $\phi_{h \cdot G}$ are related by
$$\phi_{h \cdot G} = \phi_G + (F_*d) \circ \tilde h$$
where $\tilde h: W^1_{Y_0'/S_0} \to F_*\cI \cong F_*W^0_{Y_0/S_0}$ is the induced homomorphism.
\end{proposition}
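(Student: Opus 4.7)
The plan is to adapt the classical Deligne--Illusie construction to the present generically log smooth setting, using that on the intersection $W' := V' \cap U' \cap Y'$ (which satisfies \eqref{CC} as the intersection of two such opens) the morphism $G$ is an honest log morphism between log smooth saturated pieces. On $W'$, I would define $\phi_G$ by the standard formulas
\[
\phi_G(d\bar a) \;=\; \tfrac{1}{p}\, dG^*(a) \bmod p, \qquad \phi_G(\dlog \bar m) \;=\; \tfrac{1}{p}\, \dlog\!\bigl(G^*(m) \cdot F^*(m)^{-1}\bigr) \bmod p,
\]
where $a \in \cO_{Y'}$, resp. $m \in \M_{Y'}$, lift $\bar a$, resp. $\bar m$. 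That $dG^*(a) \in p\,\Omega^1_{W/S}$ follows because $G^* \equiv F^* \pmod p$ and $dF^*$ vanishes in characteristic $p$; that $G^*(m) F^*(m)^{-1} \in 1 + p\cO_W$ follows from the same congruence together with the exact sequence $0 \to 1+\cI \to \M^\gp_W \to \M^\gp_{W_0} \to 0$. Independence of the lifts $a,m$ is a standard check exploiting $p^2 = 0$ in $\cO_Y$, and compatibility with the structural relation $d\alpha(m) = \alpha(m)\dlog m$ yields a well-defined log derivation, hence an $\cO_{W'_0}$-linear map $\Omega^1_{W'_0/S_0} \to F_*\Omega^1_{W_0/S_0}$.

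Next I would propagate this map to all of $Y'$: since $W^1_{Y_0/S_0} = j_*(\Omega^1_{U_0\cap Y_0/S_0})$ is reflexive by Lemma~\ref{lem-W-reflexive}, and $F_*$ preserves the $Z'$-closedness property, the restriction-and-pushforward along the inclusion $W' \subset Y'$ upgrades $\phi_G$ to a homomorphism $\phi_G: W^1_{Y'_0/S_0} \to F_*W^1_{Y_0/S_0}$ of $\cO_{Y'_0}$-modules. To view this as a map of complexes $W^1_{Y'_0/S_0}[-1] \to F_*W^\bullet_{Y_0/S_0}$ I need $(F_*d)\circ\phi_G = 0$, i.e. the image lies in cocycles; on $W'$ this is immediate since $\tfrac1p dG^*$ lands in closed forms (its composition with $d$ is $\tfrac1p d^2 G^* = 0$), and this identity propagates to $Y'$ by the $Z'$-closedness of $F_*W^2_{Y_0/S_0}$.

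For the Cartier assertion, on $U'_0 \cap Y'_0$ the map $\cH^1(\phi_G)$ sends $\dlog \bar m$ to the class of $\tfrac1p \dlog(G^*m/F^*m)$; expanding modulo exact forms and using that $F^*\dlog m \equiv p \cdot \dlog \bar m$ in the correct sense, this class equals $\dlog \bar m$ in $\cH^1(F_*\Omega^\bullet_{U_0\cap Y_0/S_0})$, which is exactly the description of $C^{-1}$ from Theorem~\ref{thm-kato-Cartier}. For the cocycle formula, given $h \in \G(Y')$ the two liftings $G$ and $h\cdot G$ satisfy $(h\cdot G)^*(a) = G^*(a) + p\cdot \tilde h(da)$ and analogously on $\M^\gp$, where $\tilde h$ denotes the induced map $W^1_{Y'_0/S_0} \to F_*\cI \cong F_*\cO_{Y_0}$; substituting into the defining formula yields
\[
\phi_{h G}(\omega) - \phi_G(\omega) \;=\; d\,\tilde h(\omega),
\]
which is precisely $\phi_{h G} = \phi_G + (F_*d)\circ \tilde h$.

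The main obstacles I anticipate are, first, the careful verification of $p$-divisibility and lift-independence in the logarithmic setting where $\M^\gp$ mixes monomial and unit contributions (one must check all relations defining $\Omega^1$ as a quotient, not merely the values on generators), and, second, the faithful transfer of the construction from the log smooth open $W'$ to $Y'$ while preserving both $\cO$-linearity and the cochain structure. Both are manageable because $W'$ satisfies \eqref{CC} in $Y'$ and every sheaf involved on the $Y'$-side is either $Z'$-closed (the $W^\bullet$) or of the form $F_*(\text{something $Z'$-closed on } W$), so equalities checked on $W'$ extend uniquely.
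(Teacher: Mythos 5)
Your proposal is correct and follows essentially the same route as the paper: construct the map on a log smooth, codimension-condition open by the log Deligne--Illusie formulas, then extend to $Y'_0$ via $Z'$-closedness and the vanishing of the higher components $(\phi_G)^m$ for $m\neq 1$, and derive the torsor cocycle formula from the explicit dependence of $G^*$ on $G$. The paper states this more tersely by invoking the ``straightforward log version of [Illusie, Prop.~3.8]'' and Kato's implicit use of it in Theorem~4.12, whereas you spell out the formulas $\phi_G(d\bar a)=\tfrac1p dG^*(a)$ and the $\dlog$-variant; one minor caveat is that your notation $F^*(m)^{-1}$ for a section of $\M_{Y'}$ over $W_2(k)$ should be read as a comparison against a chart-level $p$-th power lift (as in Kato's proof), not as a literal application of the char-$p$ relative Frobenius, but this is exactly what the cited constructions supply.
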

\begin{proof}
We choose $V'=U'\cap Y'$ for the representative of $G$.
The straightforward log version of the construction of \cite[Proposition~3.8]{Illusie2002a} yields a homomorphism 
 $\Omega^1_{V'_0/S_0} \to F_*\Omega^1_{V_0/S_0}$ and this has also been used implicitly by Kato in 
 \cite[Theorem~4.12]{kkatoFI}. Applying $j_*$ yields $(\phi_G)^1$, and we define the other $(\phi_G)^m$ 
 to be $0$. The resulting $\phi_G$ does not depend on $V'$ since the involved sheaves 
are $\tilde Z$-closed for every $\tilde Z \subset Y_0'$ satisfying $\codim{\tilde Z}{Y_0'}\ge 2$, so $\phi_G$ is well-defined. 
The construction yields that $\cH^1(\phi_G)$ is the 
 Cartier isomorphism of Theorem~\ref{thm-kato-Cartier} on $V'_0=U_0' \cap Y_0'$.
The second statement is similar to \cite[Lemma~5.4,(5.4.1)]{Illusie2002a} except that we use the more elegant language of torsors (as already remarked in \cite[Remark\,2.2\,(iii)]{Deligne1987}) which renders the analog of \cite[Lemma~5.4,(5.4.2)]{Illusie2002a} trivial.
\end{proof}


\begin{theorem}\label{genDecompThm}
 Let $f: X \to S$ be a generically log smooth family, assume that $f_0: X_0 \to S_0$ has the Cartier isomorphism property (Definition~\ref{def-cartier-iso}), and assume that $\F rob(X,X')$ is a $\G$-torsor.
 Then we have a quasi-isomorphism
 $$\bigoplus_{m < p} W^m_{X_0'/S_0}[-m] \to \tau_{<p}F_*W^\bullet_{X_0/S_0}$$
 in $D^b(X_0')$ where $\tau_{<p}$ means the truncation of a complex. 
\end{theorem}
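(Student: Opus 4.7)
My plan is to adapt Deligne--Illusie's argument \cite{Deligne1987} to the present generically log smooth setting; the essential modification is that $W^\bullet$ is only reflexive rather than locally free, but every relevant construction is $Z'$-closed and commutes with $j_*$, so the classical arguments should transfer with minimal extra effort. To begin, I would use the torsor hypothesis to choose an \'etale cover $\{Y'_\alpha \to X'\}$ together with Frobenius liftings $G_\alpha \in \F rob(X,X')(Y'_\alpha)$. Proposition~\ref{frobLiftToQuiso} then furnishes chain maps
$$\phi_{G_\alpha}: W^1_{Y'_{\alpha,0}/S_0}[-1] \longrightarrow F_*W^\bullet_{Y_{\alpha,0}/S_0}$$
realising the log Cartier isomorphism in first cohomology (which is genuinely an isomorphism by the Cartier isomorphism property), and on double overlaps, writing $G_\beta = h_{\alpha\beta}\cdot G_\alpha$ with $h_{\alpha\beta}\in\G(Y'_{\alpha\beta})$, the identity $\phi_{G_\beta} - \phi_{G_\alpha} = (F_*d)\circ \tilde h_{\alpha\beta}$ exhibits $\tilde h_{\alpha\beta}$ as a chain homotopy between the two.

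The next step would be to extend these maps to all degrees $m<p$ using the commutative DGA structure on $F_*W^\bullet_{Y_{\alpha,0}/S_0}$, whose wedge product and differential both preserve $Z'$-closedness and so make sense on $j_*$ of the log smooth locus. The formula
$$\phi^m_{G_\alpha}(\omega_1\wedge\cdots\wedge\omega_m) := \phi_{G_\alpha}(\omega_1)\wedge\cdots\wedge\phi_{G_\alpha}(\omega_m)$$
defines a chain map $W^m_{Y'_{\alpha,0}/S_0}[-m]\to F_*W^\bullet_{Y_{\alpha,0}/S_0}$, well-defined on the exterior quotient because the wedge on $F_*W^\bullet$ is antisymmetric; the restriction $m<p$ ensures invertibility of $m!$, which is needed here and, crucially, in the subsequent homotopy calculus. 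By multiplicativity of the log Cartier isomorphism (Theorem~\ref{thm-kato-Cartier}) combined with the Cartier isomorphism property for $f_0$, the aggregate $\Phi_\alpha := \bigoplus_{m<p}\phi^m_{G_\alpha}$ induces an isomorphism on every cohomology sheaf in degrees $<p$ and is therefore a local quasi-isomorphism onto $\tau_{<p}F_*W^\bullet_{Y_{\alpha,0}/S_0}$.

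The hard part will be globalising to a morphism in $D^b(X'_0)$. The level-one homotopy $\tilde h_{\alpha\beta}$ should propagate through the wedge construction by a Leibniz-type formula to chain homotopies $H^m_{\alpha\beta}\colon \phi^m_{G_\beta}\simeq \phi^m_{G_\alpha}$, with the cocycle relation for $h_{\alpha\beta}$ on triple overlaps supplying the higher compatibilities needed for descent. A standard \v{C}ech totalization argument (cf.~\cite[Remarques~2.2(iii)]{Deligne1987}) then assembles the data $(\Phi_\alpha, H_{\alpha\beta})$ into a single morphism $\bigoplus_{m<p}W^m_{X'_0/S_0}[-m] \to \tau_{<p}F_*W^\bullet_{X_0/S_0}$ in $D^b(X'_0)$, which is a quasi-isomorphism because it is so on every $Y'_\alpha$. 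The main technical point to verify is that Deligne--Illusie's Leibniz-type homotopy formulas and the totalization remain valid when the ambient sheaves are merely reflexive rather than locally free; this should be routine since the wedge, differential, and homotopies $\tilde h_{\alpha\beta}$ all respect $Z'$-closedness, so every calculation takes place inside $j_*$ of the log smooth locus where the classical formalism applies verbatim.
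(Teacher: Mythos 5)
Your proposal follows the same Deligne--Illusie strategy as the paper: local Frobenius liftings from the torsor hypothesis, the degree-1 chain maps and homotopies from Proposition~\ref{frobLiftToQuiso}, extension to degrees $m<p$ via wedge, and $j_*$/$Z'$-closedness to pass from the reflexive setting to the locally free one on $U$. The one organizational difference is in the globalization: the paper glues the degree-1 data \emph{first}, following Step~B of Illusie's exposition, to obtain a single chain map $\varphi\colon \Omega^1_{U_0'/S_0}[-1]\to\check\C^\bullet(\Y_0\cap U_0',F_*\Omega^\bullet_{U_0/S_0})$ into the total \v{C}ech complex, and only \emph{then} antisymmetrizes ($a_m$) and wedges inside that complex, so that the higher compatibilities you allude to are automatic; you propose to first form $\phi^m_{G_\alpha}$ locally and then construct higher homotopies $H^m_{\alpha\beta}$ explicitly, which is the route sketched in Deligne--Illusie's Remarque~2.2(iii) and requires actually writing down the symmetrized Leibniz-type homotopies and checking their \v{C}ech compatibilities degree by degree --- feasible, but this is exactly the combinatorial work that the \v{C}ech-complex packaging circumvents. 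Two small imprecisions: (i) the naive wedge $\phi(\omega_1)\wedge\cdots\wedge\phi(\omega_m)$ is already alternating (degree-1 elements anticommute), so $m!$-invertibility is not needed for its well-definedness but only for the antisymmetrization map $a_m$ used in the paper and for your homotopy calculus --- this, not well-definedness, is the genuine source of the constraint $m<p$; (ii) the paper's final step uses the exact identity $j_*\check\C^\bullet_U=\check\C^\bullet(\Y_0,F_*W^\bullet_{X_0/S_0})$ together with the quasi-isomorphism $\psi\colon F_*W^\bullet_{X_0/S_0}\to\check\C^\bullet(\Y_0,F_*W^\bullet_{X_0/S_0})$ to land in $D^b(X_0')$; you gesture at this ("everything happens inside $j_*$") but it deserves to be stated as an equality of complexes.
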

\begin{proof}
 Because $\F rob(X,X')$ is a torsor, we can find an \'etale cover $\Y = \{Y_{\alpha}'\}$ of $X'$ such that we have a local Frobenius lifting $G_{\alpha}: Y_{\alpha} \to Y_{\alpha}'$. We obtain an induced cover $\Y_0$ of $X_0'$.
 On the log smooth locus $U_0' \subset X_0'$, we can apply an argument as implicitly used in \cite[Theorem~4.12]{kkatoFI}:
 using Proposition \ref{frobLiftToQuiso}, the gluing method of Step \textbf{B} in the proof of \cite[Theorem~5.1]{Illusie2002a} yields a 
 homomorphism 
 $$\varphi: \Omega^1_{U_0'/S_0}[-1] \to \check \C^\bullet(\Y_0 \cap U_0', F_*\Omega^\bullet_{U_0/S_0}) =: \check \C^\bullet_{U}$$
 of complexes of sheaves where $\check\C^\bullet(\mathfrak{U},\shF^\bullet)$ refers to the total sheaf \v{C}ech complex for a cover $\mathfrak U$ and a complex of sheaves $\shF^\bullet$. 
We also have the natural quasi-isomorphism $$\psi: F_*W^\bullet_{X_0/S_0} \to \check \C^\bullet(\Y_0,F_*W^\bullet_{X_0/S_0}).$$
Using $\psi$ and that the question is local, Proposition~\ref{frobLiftToQuiso} gives that $\varphi$ induces the Cartier isomorphism on $U_0'$ for $\shH^1$. 
Now let $0 \leq m < p$. With the antisymmetrization map $a_m: \Omega^m_{U_0'/S_0}[-m] \to (\Omega^1_{U_0'/S_0}[-1])^{\otimes m}$ 
 defined by $a_m(\omega_1\wedge...\wedge\omega_m)=\frac1{m!}\sum_{\sigma\in S_m} \op{sgn}(\sigma)\omega_{\sigma(1)}\otimes...\otimes\omega_{\sigma(m)}$, we obtain a morphism 
 $$\varphi^m: \Omega^m_{U_0'/S_0}[-m] \xrightarrow{a_m} (\Omega^1_{U_0'/S_0}[-1])^{\otimes m} \xrightarrow{\varphi^{\otimes m}} (\check \C^\bullet_U)^{\otimes m} \to \check\C^\bullet_U$$
 where the last map is induced by the wedge product on $F_*\Omega^\bullet_{U_0/S_0}$. Note that the various $\varphi^m$ are compatible with the wedge product of $\Omega^\bullet_{U_0'/S_0}$ and of the cohomology of 
 $F_*\Omega^\bullet_{U_0/S_0}$ hence $\varphi^m$ induces the Cartier isomorphism in cohomology. Taking the sum, we obtain a quasi-isomorphism
 $$\varphi^\bullet: \bigoplus_{m < p} \Omega^m_{U_0'/S_0}[-m] \to \tau_{<p}\check\C^\bullet_U.$$
 Since $j_*\check\C^\bullet_U = \check\C^\bullet(\Y_0,F_*W^\bullet_{X_0/S_0})$, we obtain the desired homomorphism in $D^b(X_0')$ as $\psi^{-1} \circ j_*\varphi^\bullet$. It is a quasi-isomorphism because 
 $f_0: X_0 \to S_0$ has the Cartier isomorphism property by assumption.
\end{proof}

We like to apply this theorem to the case of a log toroidal family. It remains only to show that $\F rob(X,X')$ is a torsor:

\begin{proposition} \label{posCharSplit}
 In the above situation assume $f:X \to S$ log toroidal with respect to $S \to A_Q$. Then $\F rob(X,X')$ is a 
 $\G$-torsor, i.e., Frobenius liftings exist locally.
\end{proposition}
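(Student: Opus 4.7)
Since $\F rob(X,X')$ has been shown to be a pseudo-torsor, I need only produce a local section on some \'etale neighborhood of an arbitrary chosen geometric point. The plan is to construct a Frobenius lifting on the local model $L=A_{P,\F}\times_{A_Q}S$ explicitly, and then transfer it to $X$ using the data in a local model diagram \eqref{LM}: \'etale maps $g:V\to X$ and $h:V\to L$ together with an identification $g^*\M_X\cong h^*\M_L$ on an open $\tilde U\subset V$ satisfying \eqref{CC}.

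For the core step, I will use the endomorphism $\Phi:A_{P,\F}\to A_{P,\F}$ induced on the monoid $P$ by multiplication by $p$: for $m\in P$, it sends $z^m\mapsto z^{pm}$ on coordinate rings and $m\mapsto pm$ on charts. It preserves the divisorial log structure since every face of $P$ is stable under multiplication by $p$, and modulo $p$ it reduces to absolute Frobenius. It covers the analogous endomorphism $F_{A_Q}:q\mapsto pq$ of $A_Q$, and the chart $a:S\to A_Q$ intertwines $F_S$ and $F_{A_Q}$: both compositions send $z^q$ to $\sigma(q)^p=\sigma(pq)$ in $W_2(k)$, which agree because $\sigma$ takes only the Teichm\"uller values $0$ and $1$. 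Thus $\Phi$ and $F_S$ assemble to a morphism $\Phi_L:L\to L$ lying over $F_S$, and the universal property of the fiber product $L'=L\times_{S,F_S}S$ produces a unique $S$-morphism $G_L:L\to L'$ whose projection to $L$ equals $\Phi_L$. Since $\Phi_L$ reduces mod $p$ to the absolute Frobenius of $L_0$, $G_L$ reduces to the relative Frobenius $L_0\to L_0'$; in other words $G_L$ is a Frobenius lifting on $L$.

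To obtain a section of $\F rob(X,X')$ on an \'etale neighborhood of $X'$, let $g':V'\to X'$ and $h':V'\to L'$ be the base changes of $g$ and $h$ along $F_S$; both are \'etale. The composition $G_L\circ h:V\to L'$ reduces mod $p$ to $h_0'$ composed with the relative Frobenius of $V_0/S_0$, so by formal \'etaleness of $h'$ it factors uniquely as $h'\circ G_V$ for a unique morphism of underlying schemes $G_V:V\to V'$ lifting that relative Frobenius. The log part of $G_V$ is defined only on $\tilde U\subset V$ (whose image $\tilde U'\subset V'$ still satisfies \eqref{CC}): one pulls the log part of $G_L$ back along $h,h'$ and transports it through the isomorphism $g^*\M_X|_{\tilde U}\cong h^*\M_L|_{\tilde U}$ and its base change, to obtain a log morphism with respect to the log structures coming from $X,X'$.

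The main technical point is this final transfer, where one must keep careful track of which log structure on $V$ one is working with (pulled back from $X$ via $g$ or from $L$ via $h$); the two are identified only on $\tilde U$, but this suffices because the definition of $\F rob(X,X')$ only requires the log part of a Frobenius lifting to be defined on some open satisfying \eqref{CC}. The construction of $\Phi$ itself is essentially forced by the combinatorics of the ETD; the essential verifications are the compatibility $\sigma(q)^p=\sigma(pq)$ of $F_S$ with the chart $a$ and the \'etale lifting of the underlying scheme morphism $G_V$.
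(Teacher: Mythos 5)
Your proposal is correct and uses the same essential idea as the paper: the Frobenius lift on the local model is built from multiplication by $p$ on the monoid, combined with the Witt Frobenius $F_S^*$ on $W_2(k)$, and then transferred to $X$ through the \'etale local model diagram. The presentations diverge in two small ways. The paper constructs $F$ directly on $\cO(L)=\bigoplus_{e\in E}z^e\cdot W_2(k)$ by $F^*(z^e\cdot w)=z^{pe}\cdot F_S^*(w)$ and then builds its log part piecewise on $c^{-1}(U_P)$, using the decomposition $U_P=U_1\cup U_2$ from Corollary~\ref{smooth-type-decompo} and the auxiliary charted models $M=\Spec(P\to\cO(L))$ and $N=\Spec(Q\to\cO(L))$, checking agreement on $W_1\cap W_2$. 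You instead start one level up with $\Phi:A_{P,\F}\to A_{P,\F}$ and invoke that $\Phi$ preserves the divisorial log structure globally (which is true — every face of $P$ is $p$-stable and $z^{pm}=(z^m)^p$ vanishes precisely where $z^m$ does, so $\Phi^{-1}(D)\subset D$), which lets you skip the $W_1/W_2$ case split; this is a genuine, if modest, simplification of the log-part argument. Your justification ``every face of $P$ is stable under multiplication by $p$'' is only half the point (reducedness of $\ZZ[P]$ is the other half), but the claim itself is right. You also spell out the formally \'etale lifting of $G_L\circ h$ along $h':V'\to L'$, a step the paper compresses into the sentence that $F$ ``plays the role of an absolute Frobenius on $L$'' whose relative Frobenius ``gives rise to a local Frobenius lifting on $X'$ via the local model''; making this explicit is a welcome elaboration.
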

\begin{proof}
 Let $(Q\subset P,\F)$ be an ETD from a local model of $f: X \to S$, as given in \eqref{LM} with $S=\tilde S$. Consider the diagram 
\[
 \xymatrixcolsep{3.8em}
 \xymatrix{
  \, L \, \ar[d] \ar@{.>}[r]^{F} & \, L \, \ar[d] \ar[r]^c & A_{P,\F} \ar[d] \\
  \, S \, \ar[r]^{F_S} & \, S \, \ar[r]^a & \, A_Q. \\
 }
\]
We claim that for the local existence of a Frobenius lifting, it suffices to show that there is a scheme morphism $F: \underline L \to \underline L$ that is the underlying morphism of a log morphism on $c^{-1}(U_{P})$ such that the diagram commutes and the induced map 
$F \times_S S_0$ on $L_0 = L \times_S S_0$ is the absolute Frobenius.
Indeed, then $F$ plays the role of an absolute 
Frobenius on $L$, and its induced relative Frobenius gives rise to a local Frobenius lifting on $X'$ via the local model. 

The scheme $\underline L$ is affine with $\cO(L) = \bigoplus_{e \in E} z^e \cdot W_2(k)$
allowing us to define
$F: \underline L \to \underline L$ via $F^*(z^e \cdot w) := z^{pe} \cdot F_S^*(w)$.
It remains to extend $F$ to the log structure on $c^{-1}(U_{P})$.
Consider the maps of log schemes
$$M := \Spec (P \to \cO(L)) \to L \to \Spec(Q \to \cO(L)) =: N.$$
With the notation of Corollary \ref{smooth-type-decompo}, we define $W_i := c^{-1}(U_i)$. 
Observe that 
$M|_{W_1} = L|_{W_1}$ and $L|_{W_2} = N|_{W_2}$.
On $N$ and $M$, we get morphisms $F_N: N \to N$ and $F_M: M \to M$ by mapping $q \mapsto p\cdot q$ on the monoids and using $F^*$ on the rings. They are compatible with each other and with the maps to $S$,
and moreover $F_N \times_S S_0$ and $F_M \times_S S_0$ are the absolute Frobenii on $N_0, M_0$.
We define partially $F|_{W_1} := F_M|_{W_1}$ and $F|_{W_2} := F_N|_{W_2}$.
Because $N|_{W_1 \cap W_2} = L|_{W_1 \cap W_2} = M|_{W_1 \cap W_2}$ these definitions agree 
on $W_1 \cap W_2$
and we obtain a log morphism defined on $c^{-1}(U_{P}) = W_1 \cup W_2$ which gives the desired map.
\end{proof}

\section{The Hodge--de Rham Spectral Sequence}\label{TorAbsDeg}

We put the pieces together to prove Theorem \ref{absDegen} from the introduction. Let $S = \Spec (Q \to \kk)$ for a field $\kk \supset \QQ$ with $Q \ni q \mapsto \delta_{q0}$, and let $f: X \to S$ 
be a proper log toroidal family of relative dimension $d$ with respect to $S \to A_Q$. Setting $h^{pq} = \dime_\kk R^qf_*W^p_{X/S}$ and $h^n = \dime_\kk R^{n}f_*W^\bullet_{X/S}$, it suffices to prove $\sum_{p + q = n} h^{pq} = h^n$.

By Proposition \ref{spreadToroidal}, we can find an $S_\lambda = \Spec (Q \to B_\lambda)$ and a proper log toroidal family with respect to $S_\lambda \to A_Q$. Since $B_\lambda$ is integral, by shrinking $S_\lambda$, 
we can find a spreading out $\phi: \X \to \cS$ such that $R^q\phi_*W^p_{\X/\cS}$ and $R^n\phi_*W^\bullet_{\X/\cS}$ are locally free of constant rank $r^{pq}$ respective $r^n$ and such that $\cS/\ZZ$ is smooth as schemes. By Theorem 
\ref{baseChangeGeneric} we can furthermore assume that $W^m_{\X/\cS}$ is compatible with any base change, and we can assume that $\mathrm{char}\,\kappa(s) > d$ for the residue field $\kappa(s)$ of every closed point $s \in \cS$. Now let $\Spec k \to \cS$ be 
a closed point. Since $\cS/\ZZ$ is smooth, we can find a factorization 
$$\Spec k \to \Spec W_2(k) \to \cS$$
which induces diagram \eqref{SO} from the introduction by strict base change. Setting $g^{pq} := \dime_k R^q(\phi_k)_*W^p_{\X_k/k}$ and $g^{pq} := \dime_k R^n(\phi_k)_*W^\bullet_{\X_k/k}$, Lemma \ref{deriBaCha} yields 
$h^{pq} = r^{pq} = g^{pq}$ and $h^n = r^n = g^n$ hence it suffices to show $\sum_{p + q = n} g^{pq} = g^n$. Note that in diagram \eqref{SO} on the right, we are in the situation of Proposition \ref{posCharSplit}, so by Theorem~\ref{genDecompThm} we have a quasi-isomorphism 
$$\bigoplus_{m} W^m_{\X_k'/k}[-m] \simeq (F_0)_*W^\bullet_{\X_k/k}.$$
Now a computation as in \cite[Corollary~2.4]{Deligne1987} yields $\sum_{p + q = n} g^{pq} = g^n$ concluding the proof of Theorem \ref{absDegen}.

\subsection{The Relative Spectral Sequence}\label{sec-relative-degen} \emph{Proof of Theorem~\ref{rel-degen}}.
By Corollary~\ref{locBaChaField}, the formation of $W^p_{X/S}$ commutes with base change which is an ingredient for the classical base change theorem, e.g. \cite[\S3]{Deligne1968}, \cite[Theorem~(8.0)]{Katz70}. 
It thereby suffices to show the surjectivity of
$$\HH^k(X,W^\bullet_{X/S}) \to \HH^k(X_0,W^\bullet_{X_0/S_0}).$$ 
We prove this with the idea of \cite[Section (2.6)]{Steenbrink1976}, cf.~\cite[Lemma~4.1]{KawamataNamikawa1994} and \cite[Theorem~4.1]{GrossSiebertII}.
We define a complex 
$$\cL^\bullet := W^{\bullet,an}_{X}[u] =\bigoplus_{s = 0}^\infty W^{\bullet,an}_{X} \cdot u^s, 
\qquad d(\alpha_su^s) = d\alpha_s \cdot u^s + s\delta(\rho) \wedge \alpha_s \cdot u^{s - 1}$$
of analytic sheaves where $\rho = f^*(1) \in \M_{X^{an}}$ and 
$\delta: \M_{X^{an}} \to W^{1,an}_{X}$ is the log part of the universal 
derivation. Here $W^{\bullet,an}_{X}$ denotes (the analytification of) absolute differentials as in Corollary \ref{absolute-on-Y}.
Projection to the $u^0$-summand composed with $W^{\bullet,an}_{X} \to W^{\bullet,an}_{X/S}$  yields a map 
$\cL^\bullet \to W^{\bullet,an}_{X/S}$ whose composition with 
$W^{\bullet,an}_{X/S} \to W^{\bullet,an}_{X_0/S_0}$ fits into 
an exact sequence 
$$ 0 \to \K^\bullet \to \cL^\bullet \xrightarrow{\phi^\bullet} W^{\bullet,an}_{X_0/S_0} \to 0$$
of complexes that defines $\K^\bullet$. 
Since $f: X \to S$ has ETD local models, we may use Corollaries~\ref{relative-on-Y}, \ref{absolute-on-Y} and Remark~\ref{rem-compute-anal-stalk} to have a local description of this sequence.
Lemma~\ref{lem-K-acyclic} below shows that $\K^\bullet$ is acyclic for all ETDs with one-dimensional base, so $\phi^\bullet$ 
is a quasi-isomorphism and Theorem \ref{rel-degen} follows by the discussion in \S\ref{analytification}.
\hfill\qedsymbol

\begin{lemma} \label{lem-K-acyclic}
 Let $(\NN \subset P,\F)$ be an ETD, and let $f: X \to S = S_m$ be the base 
 change of $A_{P,\F} \to A_\NN$ along $S_m \to A_\NN$. With $0\in A_{P,\F}$ denoting the origin, we have
 $\cH^k(\K^\bullet)_0 = 0$ for all $k$.
\end{lemma}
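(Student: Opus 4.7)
The plan is to deduce the acyclicity from the equivalent statement that the quotient map $\phi^\bullet: \cL^\bullet \to W^{\bullet,an}_{X_0/S_0}$ is a quasi-isomorphism on stalks at the origin, since the short exact sequence $0 \to \K^\bullet \to \cL^\bullet \to W^{\bullet,an}_{X_0/S_0} \to 0$ then forces $\cH^k(\K^\bullet)_0 = 0$ for all $k$. The whole computation will take place in the local model: apply Corollaries~\ref{relative-on-Y} and \ref{absolute-on-Y}, together with Remark~\ref{rem-compute-anal-stalk} and Lemma~\ref{ana-stalk}, to describe $W^{\bullet,an}_{X,0}$ and $W^{\bullet,an}_{X_0/S_0,0}$ as topological direct sums indexed by the monoidal grading sets $E_K$ (with $K = \{n \ge m+1\} \subset \NN$) and $E = E_{\NN^+}$ respectively. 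Since $d$ and $\phi^\bullet$ preserve the gradings, it suffices to check the quasi-isomorphism on each graded piece.

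For $e \in E_K$, set $V_e := \bigcap_{H \in \F_{\max}\setminus\F,\ e \in H} H^\gp \otimes \CC$ and $v_0 := [1] \in V_e$ (the containment holds because every $H \in \F_{\max}\setminus\F$ contains $\NN$). The $e$-piece of $\cL^\bullet$ is $\bigwedge^\bullet V_e \otimes \CC[u]$ with differential $d(\omega u^s) = e \wedge \omega \cdot u^s + s \cdot v_0 \wedge \omega \cdot u^{s-1}$, while the $e$-piece of $W^\bullet_{X_0/S_0}$ is $(\bigwedge^\bullet \bar V_e, \bar e \wedge)$ with $\bar V_e := V_e/\CC v_0$ when $e \in E$, and is zero otherwise. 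I would then filter $\cL^\bullet|_e$ by the increasing $u$-filtration $F_p := \bigoplus_{s \le p} \bigwedge^\bullet V_e \cdot u^s$, whose associated graded has differential $e\wedge$: for $e \ne 0$ in $V_e$, Koszul acyclicity yields $E_1 = 0$ and hence $\cL^\bullet|_e$ is acyclic. Since moreover $\bar e \ne 0$ in $\bar V_e$ for every $e \in E \setminus \{0\}$ (any $e \in E \cap \CC v_0$ would lie in $\NN \cdot 1 \subset Q$, hence in $E \cap Q = \{0\}$), the piece $W^\bullet_{X_0/S_0}|_e$ is likewise Koszul-acyclic, making the quasi-isomorphism trivial on all pieces with $e \ne 0$.

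The remaining piece $e = 0$ is computed directly: $\cL^\bullet|_0 = \bigwedge^\bullet V_0 \otimes \CC[u]$ carries differential $d(\omega u^s) = s \cdot v_0 \wedge \omega \cdot u^{s-1}$, and a straightforward analysis (via the same $u$-filtration, now whose $E_1$ is $\bigwedge^\bullet V_0 \cdot u^s$) gives cohomology $\bigwedge^\bullet \bar V_0$ concentrated at $u^0$, matching $W^\bullet_{X_0/S_0}|_0 = \bigwedge^\bullet \bar V_0$ (with zero differential) via the natural surjection on representatives. The main obstacle I anticipate is upgrading this graded analysis from a formal to an analytic statement: an element of $\K^k_0$ is a convergent series $\omega = \sum_e z^e \omega_e(u)$ in the sense of Lemma~\ref{ana-stalk} with a uniformly bounded $u$-degree $N$, and the inductive Koszul inversion produces a graded primitive $\eta_e$ satisfying $\|\eta_e\| \le C \cdot \|e\|^{-(N+1)} \|\omega_e\|$ for some constant $C$ depending only on $N$ and the finite set of possible $V_e$. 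Since $\log\|e\|/h(e) \to 0$ as $h(e) \to \infty$ for the local homomorphism $h: P \to \NN$, the quantity $\log\|\eta_e\|/h(e)$ stays bounded, so $\sum_e z^e \eta_e$ converges in the stalk by Lemma~\ref{ana-stalk} and provides the desired primitive of $\omega$.
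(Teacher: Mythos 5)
Your approach is correct and parallels the paper's, but there is one genuine structural difference worth highlighting. The paper constructs, for each cocycle $(\ell_{e,s})\in\K^{k+1}_0$, a primitive $(\tau_{e,s})$ lying \emph{inside} $\K^k_0$; the membership constraint $\tau_{e,0}\in K^k_e$ for $e\in E\setminus\{0\}$ then forces a \emph{constrained} Koszul inversion (Claim~\ref{claim2}, with a norm-loss factor $\gamma$ depending on the face generated by $e$). Your route --- show $\phi^\bullet$ is a quasi-isomorphism at the stalk and invoke the long exact sequence --- reduces injectivity of $\phi_*$ (after subtracting $d$ of a lift of a primitive of $\phi(\ell)$) to finding primitives merely in $\cL^{k-1}_0$, unconstrained, so the elementary inversion of Claim~\ref{claim1} suffices; surjectivity of $\phi_*$ is then a separate, easy step, since each $W^{\bullet,an}_{X_0/S_0}|_e$ with $e\neq 0$ is Koszul-exact with uniformly bounded inverse (norm loss $\|[e]\|^{-1}$, and $\inf_{e\neq 0}\|[e]\|>0$), so every cocycle in $W^{\bullet,an}_{X_0/S_0,0}$ is cohomologous to its $e=0$ component, which is hit by the constant-in-$u$ lift of a preimage in $\bigwedge^\bullet L_0$. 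This modestly streamlines the argument by dodging Claim~\ref{claim2}. Two points to tighten: carry out the surjectivity direction explicitly (your sketch only addresses producing primitives of cocycles in $\K^k_0$), and spell out the descending-in-$s$ recursion $\|\tau_{e,s}\|\le\|e\|^{-1}\sum_{j=s}^N(\|\bar\rho\|/\|e\|)^{j-s}(j!/s!)\|\ell_{e,j}\|$, which yields your estimate $C\,\|e\|^{-(N+1)}\max_j\|\ell_{e,j}\|$ once $C$ absorbs the $\|\bar\rho\|$-powers and factorials (bounded for fixed $N$); combined with $\inf_{e\neq 0}\|e\|>0$ this gives the required $\sup_{e\neq 0}\log\|\tau_{e,s}\|/h(e)<\infty$.
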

\begin{proof}
 We choose Hermitian inner products on the vector spaces $L := P^{gp} \otimes \CC$ and $W := (P^{gp} \otimes \CC)/(\NN^{gp} \otimes \CC)$.
 With $K=(m+1)+\NN\subset\NN$, we recall $E_K$ from \S\ref{subsec-local-analytic}.
 For $e\in E_K$, we define
 $$L_e := \bigcap_{H \in \F_{\max} \setminus \F : e \in H} \hspace{-0.4cm} H^{gp} \otimes \CC \qquad \mathrm{and} \qquad 
 W_e :=  \bigcap_{H \in \F_{\max} \setminus \F : e \in H} \hspace{-0.4cm}
 (H^{gp} \otimes \CC)/(\NN^{gp} \otimes \CC).$$
 By Remark~\ref{rem-compute-anal-stalk} and Lemma~\ref{ana-stalk}, elements of $\cL^k_0$ are formal sums
 $$(\ell_{e,s}) := \sum_{s = 0}^N\sum_{e \in E_K} u^sz^e\ell_{e,s} \ , \quad
 \ell_{e,s} \in \bigwedge^kL_e \ , \quad 
 \mathrm{sup}_{\substack{e \in E_K \setminus 0\\ 1\le s\le N}} \left\{ \mathrm{log} \| \ell_{e,s}\| / h(e) \right\} < \infty ,$$ and elements of 
 $W^{k,an}_{X_0/S_0,0}$ are formal sums 
 $$(w_e) := \sum_{e \in E} z^e \cdot w_e, \quad
 w_e \in \bigwedge^kW_e \ , \quad
 \mathrm{sup}_{e \in E \setminus 0} \left\{ \mathrm{log} \| w_e \| / h(e) \right\} < \infty .$$
 Note that $(\ell_{e,s})$ is summed over $E_K$ whereas $(w_e)$ is summed 
 over $E$. 
 We denote the kernel of $\pi: \bigwedge^kL_e \to \bigwedge^kW_e$ by $K^k_e$ 
 and observe $\phi((\ell_{e,s})) = (\pi(\ell_{e,0}))$, so
 $(\ell_{e,s}) \in \K^k_0$ if and only if $\ell_{e,0} \in K^k_e$ for all $e \in E$.
 With $\bar\rho := 1 \otimes 1 \in \NN^{gp} \otimes \CC$ we have 
 $\delta(\rho) = z^0 \cdot \bar\rho \in W^1_{X}$ and thus 
 \begin{equation} \label{K-differential-explicit}
  d((\ell_{e,s})) = (e \wedge \ell_{e,s} + (s + 1)\bar\rho \wedge \ell_{e,s + 1}).
 \end{equation} 
 Let $(\ell_{e,s}) \in \K^0_0$ and assume $d((\ell_{e,s})) = 0$. 
 Since $\ell_{e,s} \in \CC$, for $e \not = 0$ by descending induction in $s$ starting 
 from $\ell_{e,N}$ we find $\ell_{e,s} = 0$. We have $\ell_{0,0} = 0$ 
 and ascending induction yields $\ell_{0,s} = 0$. Thus $\cH^0(\K^\bullet)_0 = 0$.
 
 Next, let $(\ell_{e,s}) \in \K^{k + 1}_0$ for $k \geq 0$ with $d((\ell_{e,s})) = 0$. 
 Starting with $e = 0$, we construct $(\tau_{e,s}) \in \K^k_0$ with $d((\tau_{e,s})) = (\ell_{e,s})$ using the following claim.
 
 \begin{claim} \label{claim1}
 Let $(L, \langle \cdot, \cdot\rangle)$ be a $\CC$-vector space of finite dimension with a Hermitian inner product. Let $0 \not= p \in L$ and $k \geq 0$, and assume $\ell \in \bigwedge^{k + 1}L$ with 
 $p \wedge \ell = 0$. Then there is a $\tilde \ell \in \bigwedge^k L$ with $p \wedge \tilde \ell = \ell$ and $\| p \| \cdot \| \tilde \ell \| = \| \ell \|$.
\end{claim}
\begin{proof}
 Let $\ell_1 := \frac{p}{\| p \|}, \ell_2, ..., \ell_n$ be an orthonormal basis of $L$, and $\{\ell_{i_1...i_k}\}$ the induced basis of $\bigwedge^k L$.
 If $\ell = \sum \alpha_{i_1...i_{k + 1}} \ell_{i_1...i_{k + 1}}$ satisfies the assumption, then
 $\tilde \ell = \frac{1}{\| p\|}\sum \alpha_{1i_2...i_{k + 1}}\ell_{i_2...i_{k + 1}}$ is a solution.
\end{proof}
We set $\tau_{0,0} = 0$. Writing out \eqref{K-differential-explicit} for $e=0$ yields
$$ d\big(\ell_{0,0}+\ell_{0,1}u+ \ell_{0,2}u^2+...\big)= \bar\rho \wedge \ell_{0,1}\ +\ 2\bar\rho \wedge \ell_{0,2}u\ +\ 3\bar\rho \wedge \ell_{0,3}u^2\ +\ ...$$
and therefore $\bar\rho \wedge \ell_{0,i} = 0$ for $i>0$. Since $\ell_{0,0} \in K^0_0$, we also have $\bar\rho \wedge \ell_{0,0} = 0$.
By Claim~\ref{claim1}, there is 
$\tau_{0,s+1} \in \bigwedge^kL_0$ 
with $\bar\rho\wedge \tau_{0,s + 1} = \ell_{0,s}$ and we are done with the case $e=0$. 
For $e \neq 0$ we need to care about convergence. 
Without loss of generality, $N \geq 1$. Since 
$e \wedge \ell_{e,N} = 0$, we can find by Claim~\ref{claim1}
$\tau_{e,N} \in \bigwedge^kL_e$ with $e \wedge \tau_{e,N} = \ell_{e,N}$ and 
$\|\tau_{e,N}\| \cdot \|e\| = \|\ell_{e,N}\|$. For $s \geq 1$, we construct 
$\tau_{e,s} \in \bigwedge^kL_e$ by descending induction. Because of 
$e \wedge (\ell_{e,s} - (s + 1)\bar\rho \wedge \tau_{e,s + 1}) = 0$, there is 
$\tau_{e,s}$ with 
$e \wedge \tau_{e,s} = \ell_{e,s} - (s + 1)\bar\rho \wedge \tau_{e,s + 1}$ 
and 
\begin{equation}\label{eq-norm-equality}
\|\tau_{e,s}\| \cdot \|e\| = \|\ell_{e,s} - (s + 1)\bar\rho \wedge \tau_{e,s + 1}\|. 
\end{equation}
For $e \notin E$, we go one step further and construct 
$\tau_{e,0} \in \bigwedge^kL_e$ with the same method, but for $e \in E$, the construction of
$\tau_{e,0}\in K^k_e$ is more intricate.
We need another claim:

 \begin{claim}\label{claim2}
 Let $(L,\langle \cdot, \cdot\rangle)$ be a $\CC$-vector space of finite dimension with a Hermitian inner product. 
 Let $0 \not= V, Y \subset L$ be subspaces with $V \cap Y = 0$. Then there is a 
 constant $\gamma>0$ with the following property: for every subspace $H$ with $V \subset H \subset L$ and $k \geq 0$, let $K^k_H$ be the kernel of $\bigwedge^k H \to \bigwedge^k(H/V)$.
 Then for every $0 \not= p \in Y \cap H$ and every $\ell \in K^{k + 1}_H$ with $p \wedge \ell = 0$, there is a $\tilde \ell \in K^k_H$ with $p \wedge \tilde \ell = \ell$ and $\gamma \cdot \| p \| \cdot \|\tilde \ell\| \leq \| \ell \|$.  
\end{claim}
\begin{proof}
Let $p=(p_1,p_2)$ be the decomposition of $p$ under $L=V\oplus V^\perp$, so $\|p\|^2=\|p_1\|^2+\|p_2\|^2$. Since $V\cap Y=0$, we have for
 $\gamma^2 := \inf_{0 \not= p \in Y}  \|p_2\|^2/\|p\|^2$ that $0<\gamma\le 1$.
Let  $\ell_0 := \frac{p_2}{\|p_2\|}, \ell_1,\ell_2...$ be an orthonormal basis of $H$ and then $\bar\ell_0=\frac{p}{\|p\|}$, $\bar\ell_i:=\ell_i$ for $i>0$ is an ordinary basis of $H$.
For $\ell = \sum \alpha_{i_0...i_k} \bar\ell_{i_0...i_k} \in K^{k + 1}_H$ with 
 $p \wedge \ell = 0$, we define 
 $\tilde \ell := \frac{1}{\|p\|}\sum \alpha_{0i_1...i_k} \bar\ell_{i_1...i_k} \in K^k_H$ 
 to have $p \wedge \tilde \ell = \ell$. We also find
 \begin{align}
  \|\ell\|^2 &= \left\|\sum \alpha_{0i_1...i_k}\frac{p}{\|p\|} \wedge \ell_{i_1...i_k} \right\|^2 \geq \left\|\sum \alpha_{0i_1...i_k}\frac{p_2}{\|p\|} \wedge \ell_{i_1...i_k} \right\|^2
 \geq \gamma^2 \cdot \|p\|^2 \cdot \|\tilde \ell\|^2 .\nonumber
 \end{align}
\end{proof}

We apply Claim~\ref{claim2} to $L = P^{gp} \otimes \CC$. Let $F_e \subset P$ be the face generated by $e$ and $Y = F_e^{gp} \otimes \CC$. Let 
$V = \NN^{gp} \otimes \CC$ and $H = L_e$, so $K^k_H = K^k_e$. 
Then $e \wedge (\ell_{e,0} - \bar\rho \wedge \tau_{e,1}) = 0$, so we find 
$\tau_{e,0} \in K^k_e$ with 
$e \wedge \tau_{e,0} = \ell_{e,0} - \bar\rho \wedge \tau_{e,1}$ and
\begin{equation}\label{eq-ind-start-triang-ineq}
\gamma\cdot\|\tau_{e,0}\|\cdot\|e\| \leq \|\ell_{e,0} - \bar\rho \wedge \tau_{e,1}\|.
\end{equation}
The factor $\gamma$ depends on $Y$, but there are only finitely 
many faces generated by elements $e \in E$, so we take for $\gamma$ the minimum over them and furthermore $\gamma<1$.
Applying the triangle inequality to the right hand side of \eqref{eq-ind-start-triang-ineq} and using induction and \eqref{eq-norm-equality} yields
$$\|\tau_{e,s}\| \leq \frac{1}{\gamma} \cdot \frac{1}{\|e\|} \sum_{k = s}^N 
\left(\frac{\|\bar\rho\|}{\|e\|}\right)^{k - s} \cdot \frac{k!}{s!} \cdot \|\ell_{e,k}\|$$
for all $e \not= 0$. Because $\inf_{e \not= 0}\{\|e\|\} > 0$, there is a bound 
$M > 1$ independent of $e$ such that $\|\tau_{e,s}\| \leq M \cdot \max_k\{\|\ell_{e,k}\|\}$ which proves 
$$\mathrm{sup}_{e \in E_K \setminus 0} \left\{ \mathrm{log} \| \tau_{e,s}\| / h(e) \right\} < \infty$$
and thus $(\tau_{e,s}) \in \K^k_0$. By construction, $d((\tau_{e,s})) = (\ell_{e,s})$, so $\cH^k(\K^\bullet)_0 = 0$.
\end{proof}

\section{Smoothings via Maurer--Cartan Solutions} \label{section-smoothing}
In the upcoming sections \S\ref{defo-from-MC} and \S\ref{MC-from-BV}, we adapt the methods of \cite{CLM} to the setup given in the statement of Theorem~\ref{maintheorem-tc}. 
We then argue how to obtain an analytic smoothing from a formal one in \S\ref{formal-to-analytic}.
The combination of all these sections gives a proof of Theorem~\ref{maintheorem-tc}. 
The main ingredients are Theorem~\ref{locally-unique-defos}, Theorem~\ref{absDegen} and Theorem~\ref{rel-degen}.
A key ingredient is also Lemma~\ref{lemma-add-E} to know that $W_{X/S}^d$ is trivial for $d={\dim X}$.

\subsection{Constructing a Formal Deformation from a Solution to the Maurer--Cartan Equation} \label{sec-maurer-Cartan}\label{defo-from-MC}
We define $\kS = \Spec(\NN \stackrel{1\mapsto t}{\longrightarrow} \CC[t]/t^{k + 1})$ and assume to be given a proper log toroidal family ${\oX}\ra {\oS}$. 
Let $\{{{}^0V_\alpha}\}_\alpha$ be an affine cover of ${\oX}$. 
For fixed $\alpha$, let $\{\kV\!_\alpha\ra \kS\}_k$ be a system of deformations, compatible with restriction from $k$ to $k-1$ as obtained from Theorem~\ref{locally-unique-defos}.
Note that $V_{\alpha\beta}:={}^0V_{\alpha}\cap {}^0V_{\beta}$ is affine because ${\oX}$ is separated. 
We give names to the restrictions of thickenings via $\kV\!_{\alpha;\alpha\beta}:={\kV\!_\alpha}|_{V\!_{\alpha\beta}}$.
Again by Theorem~\ref{locally-unique-defos}, we find isomorphisms
$${}^k\phi_{\alpha\beta}: \kV\!_{\alpha;\alpha\beta} \to \kV\!_{\beta;\alpha\beta}$$
of generically log smooth families over ${\kS}$ which are compatible with the restrictions to the base changes via ${{}^{k-1}\!S}\ra {\kS}$ but do not necessarily satisfy a cocycle condition.

We now analytify $\kX\ra\kS$ as well as $\kV\!_{\alpha},\kV\!_{\alpha;\alpha\beta}$. We keep using the same symbols though now refer to the analytifications respectively.

Let $\{U_i\}_{i\in I}$ be a cover of ${\oX}$ by Stein open sets that is also a basis for the analytic topology of ${\oX}$ with $I$ countable and totally ordered. Set $U_{i_0\dots i_l}:=\bigcap_{k=0}^l U_{i_k}$.
We obtain the sheaves of Gerstenhaber algebras
$${}^k\shG^p_\alpha:=\Theta^{-p}_{(\kV\!_{\alpha})/\kS}$$
concentrated in non-positive degrees
via the negative Schouten--Nijenhuis bracket $-[\cdot,\cdot]$ and $\wedge$. 
Set $\blacktriangle_l=\Spec(\CC[x_0,\dots,x_n]/(x_0+\dots+x_n-1))$ and $\shA^q(\blacktriangle_l)=\Omega^q_{\blacktriangle_l}$ and let $d_{j,l}:\blacktriangle_{l-1}\ra \blacktriangle_l$ be given by $x_j\mapsto 0$.
One constructs the Thom--Whitney bicomplex
\begin{equation}
\label{TW-def-eq}\tag{TW}
{^k}\!TW^{p,q}_{\alpha;\alpha_0\dots\alpha_l}=\left\{ (\varphi_{i_0\dots i_l})_{i_0<\dots<i_l}\,\left|\, 
\begin{array}{c}
U_{i_j}\subset V_{\alpha_0}\cap...\cap V_{\alpha_l} \hbox{ for }0\le j\le l,\\
\varphi_{i_0\dots i_l}\in\shA^q(\blacktriangle_l)\otimes_\CC{}^k\!\shG^p_\alpha(U_{i_0\dots i_l}), \\
d^*_{j,l}(\varphi_{i_0\dots i_l})=\varphi_{i_0\dots\hat i_j\dots i_l}|_{U_{i_0\dots i_l}}
\end{array}
\right.\right\}.
\end{equation}
The differential for the index $p$ is trivial and the differential $\bar\partial_\alpha$ for the index $q$ is induced by the de Rham differential on $\shA^q(\blacktriangle_l)$. 
Furthermore, $-[\cdot,\cdot]$ and $\wedge$ turn $TW$ into a Gerstenhaber algebra.
For $W\subset V_\alpha$, let ${^k}\!TW^{p,q}_{\alpha;\alpha}|_W$ be given by \eqref{TW-def-eq} but with the additional requirement to have $U_{i_j}\subset W$. 
The presheaf $W\mapsto {^k}\!TW^{p,\bullet}_{\alpha;\alpha}|_W$ gives a resolution of the sheaf ${}^k\shG^p_\alpha$ on $V_\alpha$, so ${}^k\shG^p_\alpha(W)=H^0_{\bar\partial_\alpha}({^k}\!TW^{p,\bullet}_{\alpha;\alpha}|_W)$.

The isomorphisms ${}^k\phi_{\alpha\beta}$ induce isomorphisms ${}^k\psi_{\alpha\beta}:{}^k\shG^\bullet_\alpha|_{V_{\alpha\beta}}\ra {}^k\shG^\bullet_\beta|_{V_{\alpha\beta}}$
of sheaves of Gerstenhaber algebras which can be used (\cite[Key Lemma~3.21]{CLM}) to construct isomorphisms
$$ {}^kg_{\alpha\beta}:{^k}\!TW^{p,q}_{\alpha;\alpha\beta}\ra {^k}\!TW^{p,q}_{\beta;\alpha\beta}$$
that satisfy the cocycle condition ${}^kg_{\gamma\alpha}{}^kg_{\beta\gamma}{}^kg_{\alpha\beta}=\id$ and are compatible with restriction from $k$ to $k-1$ and with $-[\cdot,\cdot]$ and $\wedge$.
The cocycle condition allows one to glue  $\{{^k}\!TW^{p,q}_{\alpha}\}_\alpha$ to a presheaf ${^k}\!\PV^{p,q}$ on ${\oX}$ compatible with restricting from $k$ to $k-1$. 
We set 
${^k}\!\PV^{n}:=\bigoplus_{p+q=n} {^k}\!\PV^{p,q}$.

While ${}^kg_{\alpha\beta}$ are not necessarily compatible with the differentials $\bar\partial_\alpha,\bar\partial_\beta$, there exist ${^k}\mathfrak{d}_\alpha\in {^k}\!TW^{-1,1}_{\alpha}$ such that
$(\bar\partial_\alpha+[{^k}\mathfrak{d}_\alpha,\cdot])_\alpha$ gives a system of maps compatible with ${}^kg_{\alpha\beta}$ (\cite[Theorem~3.34]{CLM}). This system glues to an operator $\bar\partial$ on ${^k}\!\PV^{p,q}$ compatible with restriction from $k$ to $k-1$.
However, $\bar\partial$ is not a differential because
$$\bar\partial^2 = \left[{^k}\mathfrak{l}_\alpha,\,\cdot\,\right]\quad\hbox{ for }\quad {^k}\mathfrak{l}_\alpha:=\bar\partial_\alpha({^k}\mathfrak{d}_\alpha)+\frac12[{^k}\mathfrak{d}_\alpha,{^k}\mathfrak{d}_\alpha]\in{^k}\!TW_\alpha^{-1,2}.$$
The $\{{^k}\mathfrak{l}_\alpha\}_\alpha$ glue to a global element ${^k}\mathfrak{l}\in {^k}\!\PV^{-1,2}$ that is compatible with restricting from $k$ to $k-1$. If ${^k}\phi\in {^k}\!\PV^{-1,1}$ solves the Maurer--Cartan equation
\begin{equation}\label{MC1}\tag{MC1}
\bar\partial({^k}\phi)+\frac12[{^k}\phi,{^k}\phi]+{^k}\mathfrak{l}=0,
\end{equation}
then $(\bar\partial+[{^k}\phi,\cdot])^2=0$. 
In this case the cohomology $H^\bullet_{(\bar\partial+[{^k}\phi,\cdot])}({^k}\!\PV^\bullet)$ is a presheaf of Gerstenhaber algebras on $\oX$ that is locally isomorphic to ${}^k\shG^\bullet_\alpha$.
The sheafification of its degree zero part gives a sheaf $\shO_{X_k}$ of $\CC[t]/t^{k+1}$-algebras on $\oX$ which we take as the $k$th order deformation of $\oX$. Taking the limit $\shO_{\mathfrak{X}}:=\liminv_{k}\shO_{X_k}$ yields a flat and proper morphism $\mathfrak{X}\ra \mathfrak{S}$ with $\mathfrak{S}:=\operatorname{Spf}(\CC\lfor t\rfor)$.

\subsection{Constructing a Solution to the Maurer--Cartan Equation using the Batalin--Vilkovisky Operator}\label{MC-from-BV}
We assume that $W^d_{\oX/\oS} \cong \cO_\oX$.
We fix a global generator ${}^0\omega\in\Gamma(\oX,W^d_{\oX/\kS})$. 
Let ${}^k\omega_\alpha\in \Gamma({}^0V_\alpha,W^d_{\kV\!_\alpha/\kS})$ be a choice of generator that is a lift to $k$ of ${}^0\omega|_{{}^0V_\alpha}$.
The Batalin--Vilkovisky operator ${}^k\Delta_\alpha$ is the transfer of the de Rham differential $\texttt{d}$ to the polyvector fields, i.e., ${}^k\Delta_\alpha$ is the composition
$$ \Theta^{p}_{(\kV\!_{\alpha})/\kS}\stackrel{\invneg({}^k\omega_\alpha)}{\lra} W^{d-p}_{(\kV\!_{\alpha})/\kS}\stackrel{\texttt{d}}{\lra} W^{d-p+1}_{(\kV\!_{\alpha})/\kS}\stackrel{\invneg({}^k\omega_\alpha)^{-1}}{\lra} \Theta^{p-1}_{(\kV\!_{\alpha})/\kS}$$
and thus a differential ${}^k\shG^p_\alpha\ra {}^k\shG^{p+1}_\alpha$. Choosing ${}^k\omega_\alpha$ compatible with restricting from $k$ to $k-1$, also the ${}^k\Delta_\alpha$ share this property.
For $W\subset {}^0V_\alpha\cap {}^0V_\beta$ there is $\lambda_{\alpha\beta}\in \Gamma(W,{}^k\shG^0_\alpha)$ with ${}^k\omega_\alpha|_W=\lambda_{\alpha\beta}\cdot{}^k\omega_\beta|_W$. 
Setting ${^k}\mathfrak{w_{\alpha\beta}}:=\log(\lambda_{\alpha\beta})$ yields
$$ {}^k\psi_{\beta\alpha}\circ {}^k\Delta_\beta\circ {}^k\psi_{\alpha\beta} - {}^k\Delta_\alpha=[{^k}\mathfrak{w}_{\alpha\beta},\,\cdot\,],$$
and then $\{{^k}\mathfrak{w_{\alpha\beta}}\}_{\alpha\beta}$ can be upgraded (\cite[Theorem~3.34]{CLM}) to a \v{C}ech cocycle for ${^k}\!TW^{0,0}_{\alpha;\alpha\beta}$ which by exactness lifts to a collection ${^k}\mathfrak{f}_\alpha\in TW^{0,0}_{\alpha}$. The collection is compatible with restricting from $k$ to $k-1$ and satisfies
$$ {}^kg_{\beta\alpha}\circ ({}^k\Delta_\beta+[{}^k\mathfrak{f}_\beta,\cdot])\circ {}^kg_{\alpha\beta} =({}^k\Delta_\alpha+[{}^k\mathfrak{f}_\alpha,\cdot]).$$
Since ${}^k\mathfrak{f}_\alpha$ lives in degree $(0,0)$, one has $({}^k\Delta_\alpha+[{}^k\mathfrak{f}_\alpha,\cdot])^2=0$, so we can glue the collection $\{{}^k\Delta_\alpha+[{}^k\mathfrak{f}_\alpha,\cdot]\}_\alpha$ to an operator 
$\Delta:{}^k\PV^{p,q}\ra{}^k\PV^{p+1,q}$ with $\Delta^2=0$. Now,
$$ \Delta \bar\partial + \bar\partial\Delta = [{}^k\mathfrak{y},\cdot]\quad \hbox{ for } \quad {}^k\mathfrak{y}_\alpha:={}^k\Delta_\alpha({}^k\mathfrak{d}_\alpha)+{}^k\bar\partial_\alpha({}^k\mathfrak{f}_\alpha)+[{}^k\mathfrak{d}_\alpha,{}^k\mathfrak{f}_\alpha]$$
and ${}^k\mathfrak{y}\in {}^k\!\PV^{0,1}$ is glued from the collection ${}^k\mathfrak{y}_\alpha$. 
By construction,
$$\breve{d}:=\bar\partial+\Delta+(\mathfrak{l}+\mathfrak{y})\wedge$$
satisfies $\breve{d}^2=0$ and furthermore $(\mathfrak{l}+\mathfrak{y})\equiv 0\mod (t)$. 
\begin{theorem} \label{surjective-PV}
The natural maps $H^i_{\!\breve{d}}({}^k\!\PV^\bullet)\ra H^i_{\!\breve{d}}({}^{k-1}\!\PV^\bullet)$ are surjective for all $i$ and $k$.
\end{theorem}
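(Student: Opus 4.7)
The strategy is to identify the $\breve d$-cohomology of ${}^k\!\PV^\bullet$ with the log de Rham hypercohomology of $\kX\to \kS$, after which Theorem~\ref{rel-degen} delivers the conclusion. The starting point is the observation that the contraction $\invneg\,{}^k\omega_\alpha$ provides a local isomorphism ${}^k\shG^p_\alpha=\Theta^{-p}_{\kV_\alpha/\kS}\xrightarrow{\sim} W^{d+p}_{\kV_\alpha/\kS}$ which, by its very definition, intertwines the BV-operator ${}^k\Delta_\alpha$ with the log de Rham differential $\texttt{d}$. Building the completely analogous Thom--Whitney bicomplex for $W^\bullet_{\kV_\alpha/\kS}$ on the same Stein cover $\{U_i\}$, we obtain a local isomorphism of bicomplexes $({}^k\!TW^{p,q}_\alpha,\bar\partial_\alpha,{}^k\Delta_\alpha)\cong({}^k\!TW^{d+p,q}_{\alpha,\mathrm{dR}},\bar\partial_\alpha,\texttt{d})$.

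I would then globalize this identification in parallel with Sections~\ref{defo-from-MC}--\ref{MC-from-BV}. The isomorphisms ${}^k\psi_{\alpha\beta}$ of generically log smooth families induce gluings on the de Rham side, and the mismatch of the local volume forms---governed by the cocycle $\mathfrak{w}_{\alpha\beta}$ and resolved by ${}^k\mathfrak{f}_\alpha$---is precisely what must be inserted to glue the local $\texttt{d}$'s into a global differential on the Thom--Whitney resolution of $W^\bullet_{\kX/\kS}$. Correspondingly, the correction $\mathfrak{d}_\alpha$ that glues the $\bar\partial_\alpha$ on the polyvector side transfers verbatim; the remaining terms $\mathfrak{l}\wedge$ and $\mathfrak{y}\wedge$ encode the failure of these cocycles to be strict, and assemble, on the de Rham side, into the same total differential computing hypercohomology. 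The result is a canonical isomorphism
\[
H^i_{\breve{d}}({}^k\!\PV^\bullet)\ \cong\ \HH^{i+d}(\kX,\,W^\bullet_{\kX/\kS})
\]
compatible with the restriction maps from level $k$ to level $k-1$, which on the right is simply base change along ${}^{k-1}\!S\hookrightarrow \kS$. Via GAGA as in \S\ref{analytification}, we may equivalently work with the algebraic de Rham hypercohomology to apply the results of \S\ref{TorBaCha}--\S\ref{sec-relative-degen}.

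Given this identification, the surjectivity is immediate from Theorem~\ref{rel-degen}: part~(1) says each $R^qf_*W^p_{\kX/\kS}$ is free over $\CC[t]/(t^{k+1})$ with formation commuting with base change, and part~(2) gives $E_1$-degeneration. An induction on the abutment filtration yields that $\HH^n(\kX,W^\bullet_{\kX/\kS})$ is itself a free $\CC[t]/(t^{k+1})$-module whose formation commutes with base change along ${}^{k-1}\!S\hookrightarrow\kS$. Hence the restriction map to level $k-1$ is the quotient by $(t^k)$, and in particular surjective.

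\emph{Main obstacle.} The delicate technical point is the globalization of the BV identification: one must verify that the four ingredients $\bar\partial$, $\Delta$, $\mathfrak{l}\wedge$ and $\mathfrak{y}\wedge$ of $\breve d$ assemble to a differential on the Thom--Whitney resolution of $W^\bullet_{\kX/\kS}$ that actually computes $\HH^\bullet(\kX,W^\bullet_{\kX/\kS})$ rather than some twisted variant. Concretely, the compatibility $({}^kg_{\beta\alpha})\circ({}^k\Delta_\beta+[{}^k\mathfrak{f}_\beta,\cdot])\circ({}^kg_{\alpha\beta})={}^k\Delta_\alpha+[{}^k\mathfrak{f}_\alpha,\cdot]$ must be seen to correspond, under the volume form isomorphism, to the cocycle condition for the local de Rham differentials under the analogous de Rham gluing, and the obstruction class $\mathfrak{l}+\mathfrak{y}$ must be recognised as the standard homotopy correction appearing in the CLM formalism---so that $\breve d$ is the honest totalization of the Thom--Whitney resolution of $W^\bullet_{\kX/\kS}$. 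This is the analogue for our setting of the corresponding verification in \cite{CLM}.
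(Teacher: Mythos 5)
Your proposal is correct and follows essentially the same route as the paper: contract against a globally glued volume form $\exp({^k}\mathfrak{f}_\alpha\invneg){^k}\omega_\alpha$ to identify $({}^k\!\PV^\bullet,\breve d)$ with the Thom--Whitney de Rham complex $({}^k_\parallel\!\shA^\bullet,d)$, then invoke Theorem~\ref{rel-degen} (via freeness of $E_1$ and degeneration, as in \cite[Lemma~4.17]{CLM}) for surjectivity. Your "main obstacle" paragraph correctly flags the one real verification---that the four pieces of $\breve d$ transport to the honest total de Rham differential rather than a twist---which the paper discharges by citing \cite[Proposition~4.8]{CLM}.
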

\begin{proof} 
As in \cite[Proposition~4.8]{CLM}, the elements $\exp({^k}\mathfrak{f}_\alpha\invneg){^k}\omega_\alpha$ glue to a global element ${^k}\omega$ in the Thom--Whitney de Rham complex $({}^k_\parallel\!\shA^{\bullet},d)$ (constructed from $W^\bullet_{\kV\!_\alpha/\kS}$ in our case) compatible with restricting from $k$ to $k-1$. 
Contracting ${^k}\omega$ gives an isomorphism of complexes ${}^k\!\PV^\bullet\ra {}^k_\parallel\!\shA^{\bullet}$, so it suffices to prove surjectivity of $H^i_{\!{d}}({}^k_\parallel\!\shA^{\bullet})\ra H^i_{\!{d}}({}^{k-1}_{\ \ \,\,\parallel}\!\shA^{\bullet})$. This follows from Theorem~\ref{rel-degen}, cf.~\cite[Lemma~4.17]{CLM}. 
\end{proof} 
\begin{remark}
For a formal variable $u^{\frac12}$, consider on $\PV^\bullet\lfor u^{\frac12}\rfor$ the differential $\breve{d}_u:=\bar\partial+u\Delta+u^{-1}(\mathfrak{l}+u\mathfrak{y})\wedge$. A direct computation gives
$\breve{d}_u=u^{\frac12} I_u^{-1} \circ\breve{d} \circ I_u$ where $I_u$ is defined by $I_u(\varphi) = u^{\frac{p-q-2}2}\varphi$ for $\varphi\in \PV^{p,q}\lfor u^{\frac12}\rfor[u^{-\frac12}]$ (cf.~\cite[Notation~5.1]{CLM}).
Theorem~\ref{surjective-PV} thus implies that
\begin{equation}\label{surjectiveu12}
H^i_{\!\breve{d}_u}({}^k\!\PV^\bullet\lfor u^{\frac12}\rfor[u^{-\frac12}])\ra H^i_{\!\breve{d}_u}({}^{k-1}\!\PV^\bullet\lfor u^{\frac12}\rfor[u^{-\frac12}])
\end{equation}
is surjective for all $i,k$. 
\end{remark}
\begin{theorem} \label{freeness-PV} For all $i$,
$H^i_{\bar\partial+u\Delta}({}^0\!\PV^\bullet\lfor u\rfor)$ is a free $\CC\lfor u\rfor$-module of finite rank.
\end{theorem}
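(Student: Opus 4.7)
The plan is to reduce this statement to the Hodge--de Rham degeneration of Theorem~\ref{absDegen} via the contraction isomorphism that appeared in the proof of Theorem~\ref{surjective-PV}. At $k=0$ the section ${}^0\omega$ is already a global generator of $W^d_{\oX/\oS}$, so ${}^0\mathfrak{w}_{\alpha\beta}=0$ and one may choose ${}^0\mathfrak{f}=0$; then ${}^0\!\Delta$ is exactly the operator obtained by transferring the de Rham differential under contraction with ${}^0\omega$. Applying the Thom--Whitney functor yields an isomorphism of bicomplexes
$$
{}^0\omega\invneg(\,\cdot\,):\ \bigl({}^0\!\PV^{p,q},\bar\partial,\Delta\bigr)\ \xrightarrow{\sim}\ \bigl({}^0_\parallel\!\shA^{d+p,\,q},\bar\partial,d\bigr),
$$
where ${}^0_\parallel\!\shA^{\bullet,\bullet}$ denotes the Thom--Whitney bicomplex built from $W^\bullet_{\oX/\oS}$. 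Under this isomorphism $\bar\partial+u\Delta$ corresponds to $\bar\partial+u\,d$, so it suffices to show that $H^i_{\bar\partial+u\,d}\bigl({}^0_\parallel\!\shA^\bullet\lfor u\rfor\bigr)$ is a free $\CC\lfor u\rfor$-module of finite rank.

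Next I would filter ${}^0_\parallel\!\shA^\bullet\lfor u\rfor$ by the column filtration $F^p:=\bigoplus_{p'\ge p}{}^0_\parallel\!\shA^{p',\bullet}\lfor u\rfor$, which is $(\bar\partial+u\,d)$-stable because $\bar\partial$ preserves and $u\,d$ raises the column index. Since $p'$ is bounded by the relative dimension $d$, the filtration is finite, yielding a convergent spectral sequence. Its $E_0$-term is $\bigl({}^0_\parallel\!\shA^{p,\bullet}\lfor u\rfor,\bar\partial\bigr)$, and by flatness of $\CC\lfor u\rfor$ over $\CC$ together with the fact that the Thom--Whitney construction resolves sheaf cohomology, its $E_1$-page is
$$
E_1^{p,q}\ =\ H^q(\oX,W^p_{\oX/\oS})\otimes_\CC\CC\lfor u\rfor.
$$
By the standard argument relating the filtration of a bicomplex with a formal parameter, the $r$th differential is $d_r=u^r\cdot d_r^{\mathrm{cl}}$, where $d_r^{\mathrm{cl}}$ denotes the corresponding differential in the classical Hodge--de Rham spectral sequence associated to the bicomplex $\bigl({}^0_\parallel\!\shA^{\bullet,\bullet},\bar\partial,d\bigr)$, abutting to $\HH^\bullet(\oX,W^\bullet_{\oX/\oS})$.

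Theorem~\ref{absDegen} ensures that $d_r^{\mathrm{cl}}=0$ for all $r\ge 1$, hence $d_r=0$ as well, so the Rees spectral sequence degenerates at $E_1$. Each $E_\infty^{p,q}=H^q(\oX,W^p_{\oX/\oS})\otimes_\CC\CC\lfor u\rfor$ is free of finite rank over $\CC\lfor u\rfor$ by properness of $\oX$. The induced filtration on $H^i_{\bar\partial+u\,d}\bigl({}^0_\parallel\!\shA^\bullet\lfor u\rfor\bigr)$ is therefore finite with free finite-rank successive quotients; since any short exact sequence of $\CC\lfor u\rfor$-modules whose right-hand term is free automatically splits, an induction on the filtration length produces a free finite-rank $\CC\lfor u\rfor$-module, as required. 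The only mildly delicate step is making the Thom--Whitney identification of $\bar\partial$-cohomology with sheaf cohomology precise in the presence of the formal parameter $u$, but this reduces to the unparametrized statement via the flatness of $\CC\lfor u\rfor$ over $\CC$.
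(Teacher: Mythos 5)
Your proof is correct and follows the same route as the paper: you use the contraction isomorphism $\PV^{\bullet,\bullet}\cong {}^0_\parallel\!\shA^{\bullet,\bullet}$ (which is exactly the quasi-isomorphism the paper invokes, made an honest isomorphism at $k=0$ by choosing ${}^0\mathfrak{f}=0$), and then reduce freeness of the $u$-deformed cohomology to $E_1$-degeneration by Theorem~\ref{absDegen}. The only difference is that you spell out, via the column filtration and the Rees-module identity $d_r=u^r d_r^{\mathrm{cl}}$, the equivalence between degeneration at $E_1$ and freeness of $H^i_{\bar\partial+u d}$ over $\CC\lfor u\rfor$, which the paper states as known without proof.
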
 
\begin{proof} Note that $k=0$.
With $\bar\partial$ the \v{C}ech differential for the cover $\{V_\alpha\}_\alpha$, 
the degeneration of the Hodge--de Rham spectral sequence for $(W^\bullet_{\oX/\oS},d)$ at $E_1$ by Theorem~\ref{absDegen} is equivalent to 
$H^i_{\bar\partial+ud}(\{V_\alpha\}_\alpha,W^\bullet_{\oX/\oS}\lfor u\rfor)$ being a free $\CC\lfor u\rfor$-module of finite rank.
The quasi-isomorphisms $W^\bullet_{\oX/\oS}\lfor u\rfor\ra {}^0_\parallel\!\shA^{\bullet}\lfor u\rfor$ and ${}^0\!\PV^\bullet\lfor u\rfor\ra {}^0_\parallel\!\shA^{\bullet}\lfor u\rfor$ yield the assertion.
\end{proof}

\begin{theorem} \label{main-theorem-MC}
There exist ${}^k\varphi\in {}^k\!\PV^0\lfor u\rfor$ for all $k\ge 0$ with ${}^k\varphi\equiv {}^{k+1}\varphi \mod t^{k+1}$ and ${}^0\varphi=0$ solving
\begin{equation}
\label{MC2}\tag{MC2}
(\bar\partial+u\Delta)({^k}\varphi)+\frac12[{^k}\varphi,{^k}\varphi]+({^k}\mathfrak{l}+u\,{^k}\mathfrak{y})=0.
\end{equation}
Furthermore, setting ${^k}\phi:=({^k}\varphi \mod u)$ with ${^k}\phi=\sum_j {^k}\phi_j$ and ${^k}\phi_j\in {}^k\!\PV^{-j,j}$, it holds ${^k}\phi_0=0$ and thus ${^k}\phi_1\in {}^k\!\PV^{-1,1}$ solves \eqref{MC1}.
\end{theorem}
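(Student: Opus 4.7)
The plan is to build $\{{}^k\varphi\}_{k\ge 0}$ by induction on $k$, starting with ${}^0\varphi := 0$. This base case is valid because ${}^0\mathfrak{l}$ and ${}^0\mathfrak{y}$ each vanish individually: they lie in the distinct bigraded components $\PV^{-1,2}$ and $\PV^{0,1}$ of $\PV^1$, and the given congruence $\mathfrak{l}+\mathfrak{y}\equiv 0\mod t$ therefore forces both summands to vanish at $k=0$.

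For the inductive step, suppose ${}^k\varphi$ is constructed. Choose any lift $\tilde\varphi\in{}^{k+1}\PV^0\lfor u\rfor$ of ${}^k\varphi$ and set
\[
 O := (\bar\partial + u\Delta)(\tilde\varphi) + \tfrac12[\tilde\varphi,\tilde\varphi] + {}^{k+1}\mathfrak{l} + u\,{}^{k+1}\mathfrak{y}.
\]
Since ${}^k\varphi$ solves (MC2) at level $k$, this obstruction lies in the kernel of the restriction to level $k$, which after identification we treat as an element of $t^{k+1}\cdot{}^0\PV^1\lfor u\rfor$. Writing the sought solution as ${}^{k+1}\varphi = \tilde\varphi + \eta$ with $\eta\in t^{k+1}\cdot{}^{k+1}\PV^0\lfor u\rfor$, and noting that inductively $\tilde\varphi\equiv 0\mod t$ (since ${}^0\varphi=0$), the cross terms $[\tilde\varphi,\eta]$ and $[\eta,\eta]$ vanish modulo $t^{k+2}$. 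Thus (MC2) at level $k+1$ reduces to the linear equation $(\bar\partial + u\Delta)\eta = -O$ in ${}^0\PV^\bullet\lfor u\rfor$.

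Closedness of $O$ follows from a direct computation using the Gerstenhaber structure: the identities $\bar\partial^2 = [\mathfrak{l},\cdot]$, $\bar\partial\Delta+\Delta\bar\partial = [\mathfrak{y},\cdot]$, $\Delta^2=0$, the vanishing $[[\tilde\varphi,\tilde\varphi],\tilde\varphi]=0$ (graded Jacobi in characteristic zero), and $(\bar\partial+u\Delta)(\mathfrak{l}+u\mathfrak{y})=0$ (extracted from $\breve{d}_u^2=0$ by collecting powers of $u$) together yield $(\bar\partial + u\Delta)O = [O,\tilde\varphi]$, which vanishes modulo $t^{k+2}$. Hence $[O]\in H^1_{\bar\partial+u\Delta}({}^0\PV^\bullet\lfor u\rfor)$ is well-defined, and it suffices to show $[O]=0$. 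For this I would combine Theorem~\ref{freeness-PV}, giving freeness over $\CC\lfor u\rfor$, with the $u$-variant \eqref{surjectiveu12} of Theorem~\ref{surjective-PV}: the standard BTT-type argument from DGBV deformation theory as in \cite{CLM} converts the surjectivity of restriction on $\breve{d}_u$-cohomology, together with the freeness of the limit, into the exactness of the specific obstruction $[O]$, producing the required primitive $\eta$.

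Finally, setting ${}^{k+1}\varphi := \tilde\varphi + \eta$ completes the induction. Reducing (MC2) modulo $u$ gives precisely (MC1) for ${}^k\phi := {}^k\varphi \mod u$, so ${}^k\phi_1$ solves (MC1); the vanishing ${}^k\phi_0 = 0$ is maintained by using the residual freedom to adjust $\eta$ by a $(\bar\partial+u\Delta)$-closed element in $\PV^{0,0}$, starting from the trivial ${}^0\phi_0=0$. The main obstacle I anticipate is the rigorous execution of the freeness-plus-surjectivity argument, since it must mediate between the $\breve{d}_u$-cohomology (carrying the surjectivity input) and the $(\bar\partial+u\Delta)$-cohomology (carrying the freeness input); the two agree only at $k=0$, where $\mathfrak{l}+u\mathfrak{y}\equiv 0\mod t$, so transporting surjectivity to higher orders will require a careful conjugation-by-$I_u$ argument combined with induction on the filtration by powers of $t$.
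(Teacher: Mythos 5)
Your proposal follows essentially the same route as the paper: the paper also proceeds by induction on $k$, defers the technical execution to \cite[Theorem~5.5]{CLM} (with $\mathbf{I}=(t)$ and $\psi=0$), and identifies the same three ingredients you list---surjectivity at $k=0$ from Theorem~\ref{surjective-PV}, the $u$-twisted surjectivity \eqref{surjectiveu12} for all $k$, and freeness from Theorem~\ref{freeness-PV} to clear negative powers of $u$ at each step. The only real departure is the second assertion: the paper obtains ${}^k\phi_0=0$ by citing \cite[Lemma~5.11]{CLM}, which extracts this from the bidegree structure of \eqref{MC2} itself, rather than by a gauge adjustment of $\eta$ by a $(\bar\partial+u\Delta)$-closed element of $\PV^{0,0}$; your adjustment would only preserve the Maurer--Cartan equation if ${}^{k+1}\phi_0$ were itself closed for the full twisted differential, which is not automatic, so leaning on the CLM lemma is both what the paper does and the safer route.
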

\begin{proof} 
The first assertion becomes \cite[Theorem~5.5]{CLM} if we set $\mathbf{I}=(t)$ and $\psi=0$ and check that we have the ingredients for its proof available. The proof goes by induction over $k$ and uses
(i) the surjectivity in Theorem~\ref{surjective-PV} for $k=0$, (ii) the surjectivity in Equation~\eqref{surjectiveu12} for all $k$ and (iii) Theorem~\ref{freeness-PV} in each step to get rid of negative powers of $u$ in ${^k}\varphi$.
The second statement is \cite[Lemma~5.11]{CLM}.
\end{proof}

\subsection{From a Formal Deformation to an Analytic Deformation} \label{formal-to-analytic}
Let $\mathfrak{S}$ be the completion of an analytic variety $S$ in a non-zero divisor $t\in \Gamma(S,\shO_S)$. 
Let $S_k$ be the closed analytic subvariety defined by $t^k$. If $X\ra S$ is flat, we denote by $X_k\ra S_k$ the base change to $S_k$, similarly for a flat map $\mathfrak{X}\ra \mathfrak{S}$.
\begin{theorem}[\cite{RS19}, Theorem B.1] 
\label{thm-approx-main}
Given a proper and flat formal analytic morphism $\hat\varphi:\mathfrak{X}\ra \mathfrak{S}$, for every $k>0$ there is a proper flat analytic morphism
$\varphi:X\ra S$ together with an $S_k$-isomorphism $\mathfrak{X}_k\ra X_k$ of the base changes of $\hat\varphi$ and $\varphi$ to $S_k$. 
\end{theorem}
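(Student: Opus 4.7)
The plan is to encode the formal family $\hat\varphi$ as a formal point in a Grauert--Douady-type analytic moduli space and then apply the analytic Artin approximation theorem to produce an honest analytic family that agrees with $\hat\varphi$ up to order $k$.

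First I would construct a closed formal embedding of $\mathfrak{X}$ into an ambient analytic family $Y \to S$. In the projective setting, one picks a very ample line bundle $\shL_0$ on the central fiber $X_0$, lifts it order by order to invertible sheaves $\shL_m$ on the thickenings $\mathfrak{X}_m$, and uses the resulting formal line bundle $\hat\shL$ together with enough sections to obtain a closed formal embedding $\mathfrak{X} \hra \PP^N_{\hat S}$. The obstructions to lifting $\shL_m$ lie in $H^2(X_0,\shO_{X_0})$ and may be killed by replacing $\shL_0$ with a sufficiently high tensor power; the sections needed for the embedding exist by base change once the higher cohomologies stabilize. In the general proper (non-projective) case one instead patches local analytic embeddings of $\mathfrak{X}$ into smooth ambient charts and works with a relative version of the Douady space built from these charts.

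Second, Douady's representability theorem provides an analytic space $\shD = D_{Y/S}$ representing the functor of compact analytic subspaces of $Y$ that are flat and proper over $S$. The formal embedding constructed above defines a formal morphism
\[
 \hat\sigma \colon \hat S \lra \shD,
\]
where $\hat S$ denotes the formal completion of $S$ at the origin. Third, the analytic Artin approximation theorem (applied to analytic maps into an analytic space) supplies, for every $k \geq 0$, an open analytic neighborhood $U$ of $0 \in S$ together with an analytic morphism $\sigma \colon U \to \shD$ whose $k$-jet at $0$ coincides with that of $\hat\sigma$. Pulling back the universal subspace along $\sigma$ yields a proper flat analytic morphism $\varphi \colon X \to U$, and the agreement of $k$-jets produces the desired $S_k$-isomorphism $\mathfrak{X}_k \iso X_k$. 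Replacing $S$ by $U$ then concludes the argument.

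The principal obstacle is the construction of the formal embedding: one must secure an ambient space, a relatively ample formal line bundle, and sufficiently many sections for the embedding to be closed. For the projective central fibers that appear in the applications this is a standard cohomological lifting argument, but in the general proper setting it requires a more delicate patching of local embeddings into smooth charts. Once the formal point of $\shD$ is in hand, the remaining two steps (representability of the Douady space and analytic Artin approximation) are classical.
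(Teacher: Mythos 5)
Your overall plan --- parametrize families by a Douady-type space and invoke analytic Artin approximation --- matches the tools the paper credits for \cite{RS19}. But the preparatory step, constructing a closed formal embedding of $\mathfrak{X}$ into an analytic family $Y \to S$, is where the argument breaks. The obstruction to lifting an invertible sheaf $\shL_m$ across a square-zero extension is a class in $H^2(X_0,\shO_{X_0})$, and the assignment $\shL \mapsto \mathrm{obs}(\shL)$ is a group homomorphism; replacing $\shL_0$ by $\shL_0^{\otimes N}$ multiplies the obstruction by $N$, and since $H^2(X_0,\shO_{X_0})$ is a $\CC$-vector space this can never kill a nonzero obstruction. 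So the claim that the obstruction ``may be killed by replacing $\shL_0$ with a sufficiently high tensor power'' is incorrect, and the projective reduction does not go through. Worse, the theorem is stated for \emph{proper}, not projective, formal morphisms, and the suggested fallback --- ``a relative version of the Douady space built from local charts'' --- is not a defined object: the Douady space parametrizes flat families of compact subspaces of a \emph{fixed} ambient space $Y/S$, and without first producing a global ambient family (precisely the step that is missing) there is no way to glue local Douady spaces into a parameter space for the problem.

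A working argument must therefore avoid a global formal embedding of $\mathfrak{X}$. The standard route is to feed the \emph{central fiber} $X_0$ --- a single compact complex space --- into Grauert's theorem on the existence of a semi-universal deformation $\mathcal{X}\to (T,0)$ (whose proof uses the Banach-analytic/Douady-space machinery internally, which is why the paper refers to the ``Grauert--Douady space''), obtain a formal map $\hat{\mathfrak{S}}\to \hat{T}$ inducing $\hat\varphi$ from the completed versal family, and then approximate this formal map by an analytic one using Artin approximation. This keeps the Douady-type construction at the level of $X_0$ and its germ of deformations, rather than demanding an embedding of the whole formal family into an analytic one; the step your proposal treats as ``standard'' is in fact the crux, and without it the rest of the argument does not get started.
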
 

\begin{theorem}[\cite{Ru18}, Theorem~5.5\,(1)]
\label{thm-approx-nbd}
In the situation of Theorem~\ref{thm-approx-main}, given $s\in S_0$ and $X_s=\varphi^{-1}(s)$, there exists an integer $K>0$ such that whenever $\varphi:X\ra S$ is obtained for $k>K$ then every point $x\in X_s$ has a neighborhood in $X$ whose $t$-completion is formally isomorphic to a neighborhood of $x$ in $\mathfrak{X}$, in particular if $\mathfrak{X}$ is a smoothing of a fiber $X_s$ for $t\neq 0$ then so is $X$. 
\end{theorem}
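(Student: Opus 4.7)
The result asserts a local strengthening of the global approximation in Theorem~\ref{thm-approx-main}: while Theorem~\ref{thm-approx-main} only gives an isomorphism $X_k \cong \mathfrak{X}_k$ of $k$-th order thickenings, here we want a formal isomorphism at each point $x \in X_s$ between the $t$-adic completion of an analytic neighborhood of $x$ in $X$ and a neighborhood of $x$ in $\mathfrak{X}$. The natural tool is Artin's approximation theorem applied locally, combined with the compactness of $X_s$.

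The plan is to proceed in three stages. First, localize: at any $x \in X_s$, choose an affine/Stein open $W \ni x$ in $\mathfrak{X}$ and a presentation $\mathfrak{X}|_W = \Spf \CC\{z_1,\ldots,z_n\}\lfor t\rfor/I$ with $I = (f_1,\ldots,f_m)$ finitely generated (using coherence and flatness of $\mathfrak{X}/\mathfrak{S}$). The output of Theorem~\ref{thm-approx-main} furnishes an analytic neighborhood $V \ni x$ in $X$ presented by analytic equations $(\tilde f_1,\ldots,\tilde f_m)$ satisfying $\tilde f_i \equiv f_i \mod t^{k+1}$, after identifying coordinates via the given isomorphism $X_k \cong \mathfrak{X}_k$. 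Second, apply Artin approximation in its strong, relative form (as in Popescu's general Néron desingularization theorem): the datum of a formal isomorphism between $\widehat{V}_{(t)}$ and $\mathfrak{X}|_W$ over $\CC\lfor t\rfor$ is equivalent to the existence of a solution to a finite system of analytic equations in auxiliary variables (coordinate changes together with the matrix expressing $\tilde f_i$ in terms of $f_j$). An approximate solution of order $k$ is given for free, so Artin approximation produces a genuine formal solution provided $k$ exceeds the Artin function $\beta_x$ of that system at $x$.

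Third, use compactness of $X_s$ to assemble a uniform threshold $K$. Shrinking $W$ if necessary and invoking upper semicontinuity of the Artin function together with coherence, the bound $\beta_y$ can be taken constant over $y$ in $W \cap X_s$; covering $X_s$ by finitely many such opens $W_\alpha$ and setting $K := \max_\alpha \beta_\alpha$ yields the required uniform bound. Once the local formal isomorphism $\widehat{V}_{(t)} \cong \mathfrak{X}|_W$ is established, the final clause is immediate: a flat formal isomorphism transports smoothness of the general fiber, so if $\mathfrak{X} \to \mathfrak{S}$ is a smoothing at $x$, then so is $X \to S$ in a neighborhood of $x$, and covering $X_s$ finitely many times propagates this to the entire central fiber.

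The main obstacle I anticipate is securing the \emph{uniform} Artin bound $K$ across the compact $X_s$: Artin approximation is pointwise, and while semicontinuity gives local boundedness in principle, one must verify that the specific equations encoding ``isomorphism of flat formal families over $\CC\lfor t\rfor$'' behave well under restriction. A secondary technical nuisance is that we approximate an isomorphism of formal schemes over the non-henselian base ring $\CC\lfor t\rfor$ rather than over $\CC$; this requires the relative version of Artin approximation, or equivalently an application of Popescu's theorem to the henselization of $\CC\{z_1,\ldots,z_n\}\lfor t\rfor$ along $(z_1,\ldots,z_n)$, which is precisely the setting available in the references cited in \cite{Ru18}.
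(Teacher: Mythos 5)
The paper does not prove this theorem itself; it imports it from \cite{Ru18} by citation, so there is no internal argument for comparison. Your sketch is in the right spirit --- localize the comparison at $x$, treat ``formal isomorphism given $k$-th order agreement'' as solvability of a system of analytic equations, invoke a strong (Greenberg-type) approximation to convert an order-$k$ approximate solution into a genuine one, and use compactness of $X_s$ to extract a uniform threshold $K$. But two steps are asserted rather than proved.

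First, the approximation framework is not set up correctly. What you need is approximation modulo powers of $t$, with solutions sought in the $t$-adic completion $\CC\{z\}\lfor t\rfor$ of the analytic local ring. Strong Artin approximation (Pfister--Popescu, Artin's conjecture on linear Artin functions, etc.) is stated for the $\mathfrak{m}$-adic filtration of an excellent henselian local ring; here $\mathfrak{m} = (z,t) \supsetneq (t)$, and agreement mod $t^{k+1}$ is far weaker than agreement mod $\mathfrak{m}^{k+1}$. You would need a relative/Greenberg version adapted to the $t$-adic filtration over the base $\CC\lfor t\rfor$, and you would need to verify that it applies to the ring $\CC\{z\}\lfor t\rfor$ (which, contrary to your remark, is \emph{not} non-henselian over $\CC\lfor t\rfor$ --- $\CC\lfor t\rfor$ is complete, hence henselian; the actual subtlety lies elsewhere). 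Second, and more seriously, the uniformity of $K$ over $X_s$ is the heart of the theorem, and ``upper semicontinuity of the Artin function together with coherence'' is not a citable black-box fact. Artin--Greenberg functions for analytically varying families of systems are not known to be upper semicontinuous in this generality; proving a uniform bound over a compact set is exactly the nontrivial content of \cite{Ru18}, and would have to be argued (for instance via coherence of suitable obstruction modules or via a single application of approximation at the level of a finite-dimensional versal base rather than pointwise). As written, step 3 is an assertion, not a proof, and the result cannot be recovered from your sketch without filling both gaps.
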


\begin{theorem}[\cite{Ru18}, Theorem~5.5\,(3)] 
\label{thm-approx-log}
In the situation of Theorem~\ref{thm-approx-nbd}, for $X_0$ the base change to $S_0$, the maps of pairs $(X,X_0)\ra (S,S_0)$ and $(\mathfrak{X},\mathfrak{X}_0)\ra (\mathfrak{S},S_0)$ turn $\hat\varphi$ and $\varphi$ into log morphisms via the divisorial log structures. There is an isomorphism of the log fibers over $s\in S$ whose underlying morphism is the restriction to the fiber $X_s$ of the $S_k$-isomorphism $\mathfrak{X}_k\ra X_k$.
\end{theorem}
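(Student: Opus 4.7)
The plan is to prove the log-morphism assertion from flatness, then deduce the isomorphism of log fibers from the local formal isomorphisms of Theorem~\ref{thm-approx-nbd}.

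For the first assertion, since $t \in \Gamma(S, \shO_S)$ is a non-zero divisor and $\varphi$ is flat, $\varphi^*t$ is a non-zero divisor on $X$, and the scheme-theoretic identity $\varphi^{-1}(S_0) = X_0$ holds. The divisorial log structures $\M_{(S,S_0)}$ and $\M_{(X,X_0)}$, consisting of those functions invertible outside the respective divisors, are therefore compatible under pullback: the map $\varphi^{-1}\M_{(S,S_0)} \to \M_{(X,X_0)}$ is well-defined, promoting $\varphi$ to a log morphism. The same reasoning applied to $\hat\varphi$, using that completion in the non-zero divisor $t$ preserves flatness and the divisorial structure, gives the formal analog.

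For the second assertion, take $k$ large enough for Theorem~\ref{thm-approx-nbd} to apply and denote the given $S_k$-isomorphism by $\alpha_k: \mathfrak{X}_k \to X_k$. The approximation theorem furnishes, for each $x \in X_s$ (with $s \in S_0$), a neighborhood $U_x \subset X$ whose $t$-adic completion $\hat U_x$ is formally isomorphic, as a formal $\mathfrak{S}$-scheme, to an open of $\mathfrak{X}$. Such a formal isomorphism is automatically $t$-preserving, hence compatible with the divisorial log structures; in particular it identifies the traces of $\mathfrak{X}_0$ and $X_0$ on the respective neighborhoods. Because the divisorial log structure of a pair (scheme, reduced Cartier divisor) is intrinsic to the pair, the global isomorphism $\alpha_k$---which the local formal isomorphisms certify as identifying the pair $(\mathfrak{X}_k, \mathfrak{X}_0)$ with $(X_k, X_0)$---is a log isomorphism for the restrictions of $\M_{(\mathfrak{X},\mathfrak{X}_0)}$ and $\M_{(X,X_0)}$ to the $k$-th thickenings. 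Restricting along $\mathfrak{X}_s \hookrightarrow \mathfrak{X}_k$ (valid since $s \in S_0 \subset S_k$) now yields the sought log isomorphism of fibers, with underlying morphism of schemes being the restriction of $\alpha_k$.

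The main obstacle is confirming that the log structure on the fiber $X_s$ depends only on the data of $X_k$ for some sufficiently large $k$, rather than on the full analytic space $X$. This is precisely what the local formal isomorphisms of Theorem~\ref{thm-approx-nbd} provide: they show that the formal completion of $X$ along $X_0$ agrees locally with that of $\mathfrak{X}$ along $\mathfrak{X}_0$, so the log structure restricted to any fiber inside $X_0$ is insensitive to the difference between $X$ and $\mathfrak{X}$. Once this is clear, $\alpha_k$ respects the induced log structures on fibers and the proof concludes.
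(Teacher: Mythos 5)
The paper does not supply a proof of Theorem~\ref{thm-approx-log}; it is attributed to \cite{Ru18}, Theorem~5.5\,(3), so there is no internal argument to compare against. Evaluating your attempt on its own: the first assertion (that the pairs furnish log-morphism structures on $\varphi$ and $\hat\varphi$) is handled adequately via flatness and the non-zero-divisor property of $t$.

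The second assertion has a genuine gap. You assert that ``the divisorial log structure of a pair (scheme, reduced Cartier divisor) is intrinsic to the pair,'' and conclude that $\alpha_k$ is a log isomorphism for ``the restrictions of $\M_{(\mathfrak{X},\mathfrak{X}_0)}$ and $\M_{(X,X_0)}$ to the $k$-th thickenings.'' But the divisorial log structure of the pair $(X_k, X_0)$ in the literal sense is degenerate: $X_k$ and $X_0$ share the same underlying topological space, so ``functions on $X_k$ invertible off $X_0$'' is a vacuous condition, and this ``divisorial log structure'' is all of $\shO_{X_k}$ --- the trivial one. What the statement actually requires is the pullback $i^*\M_{(X,X_0)}$ along $i: X_s \hookrightarrow X$, which depends on the embedding into the ambient space $X$ rather than merely on the pair $(X_k, X_0)$. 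Your closing paragraph correctly identifies this as ``the main obstacle'' but then asserts, rather than demonstrates, that the local formal isomorphisms of Theorem~\ref{thm-approx-nbd} resolve it. Filling the gap would require two further steps: (a) proving that $i^*\M_{(X,X_0)}$ is determined by the formal completion of $X$ along $X_0$ (plausible via local toroidal charts, but it needs an argument, since a priori germs of $\M_X$ involve functions on $X$, not on $\hat X$), and (b) arranging the local formal isomorphisms of Theorem~\ref{thm-approx-nbd} to be compatible with the global isomorphism $\alpha_k$ modulo $t^{k+1}$ --- an Artin-approximation-style refinement which that theorem, as quoted, does not promise, and without which one cannot conclude that the log isomorphism of fibers has underlying morphism equal to $\alpha_k|_{X_s}$ as required. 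These are precisely the technical points that the cited proof in \cite{Ru18} must supply.
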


{\footnotesize
\bibliography{smoothy-pi}
\bibliographystyle{plain}
}

\end{document}